\setlist[enumerate,1]{label={(\alph*)}}
\setlist[enumerate,2]{label={(\roman*)}}
\newtheorem{thm}{Theorem}[section]
\newtheorem{prop}[thm]{Proposition}
\newtheorem{lm}[thm]{Lemma}
\newtheorem{lemma}[thm]{Lemma}
\newtheorem{cor}[thm]{Corollary}
\newtheorem{conj}{Conjecture}
\newcommand{\p}{\partial}
\theoremstyle{definition}
\newtheorem{definition}[thm]{Definition}
\newtheorem{nn}[thm]{Notation}
\theoremstyle{remark}
\newtheorem{rmk}[thm]{Remark}
\newtheorem{ex}[thm]{Example}
\newtheorem{obs}[thm]{Observation}
\numberwithin{equation}{section}
\newcommand{\R}{\mathbb R}
\newcommand{\C}{\mathbb C}
\newcommand{\Q}{\mathbb Q}
\newcommand{\N}{\mathbb N}
\newcommand{\HH}{\mathbb H}
\newcommand{\LL}{\mathfrak L}
\newcommand{\Z}{\mathbb Z}
\newcommand{\CM}{{\mathcal{M}}}
\newcommand{\oCM}{{\overline{\mathcal{M}}}}
\newcommand{\CMm}{\CM^{main}}
\newcommand{\oCMm}{\oCM^{main}}
\newcommand{\CC}{{\mathcal{C}}}
\newcommand{\CL}{{\mathbb{L}}}
\newcommand{\CF}{{\mathcal{F}}}
\newcommand{\CS}{{\mathcal{S}}}
\newcommand{\CB}{{\mathcal{B}}}
\newcommand{\E}{{\mathbb{E}}}
\newcommand{\CE}{{\mathcal{E}}}
\newcommand{\CG}{{\mathcal{G}}}
\newcommand{\CGDi}{{\CG_{D_i}}}
\newcommand{\bv}[2]{{v_{#1}^\star(#2)}}
\newcommand{\CO}{{\mathcal{O}}}
\newcommand{\CODt}[1]{{\CO(\mathcal{D}_{#1})}}
\newcommand{\CV}{{\mathcal{V}}}
\newcommand{\CU}{{\mathcal{U}}}
\newcommand{\pB}{{\partial^B}}
\newcommand{\pu}{{\partial^!}}
\newcommand{\blangle}{{\big\langle}}
\newcommand{\bblangle}{{\big\langle}{\big\langle}}
\newcommand{\brangle}{{\big\rangle}}
\newcommand{\bbrangle}{{\big\rangle}{\big\rangle}}
\newcommand{\s}{\mathbf{s}}
\newcommand{\sr}{\mathbf{s}_1}
\newcommand{\tsr}{\tilde{\mathbf{s}}_1}
\newcommand{\ba}{\mathbf{a}}
\newcommand{\bb}{\mathbf{b}}
\newcommand{\bc}{\mathbf{c}}
\newcommand{\be}{\mathbf{e}}
\newcommand{\bze}{\mathbf{0}}
\newcommand{\tr}{\rho}
\newcommand{\trs}{\mathbf{r}}
\DeclareMathOperator{\ior}{int}
\renewcommand{\mathring}{\ior}
\DeclareMathOperator{\im}{Im}
\DeclareMathOperator{\re}{Re}
\DeclareMathOperator{\id}{Id}
\DeclareMathOperator{\rk}{rk}
\DeclareMathOperator{\supp}{supp}
\DeclareMathOperator{\HB}{HB}
\DeclareMathOperator{\Sym}{Sym}
\newcommand{\rz}[1]{#1^\circ}
\title{Intersection theory on moduli of disks, open KdV and Virasoro}
\subjclass[2020]{32G15; 14H15 (Primary); 37K20; 14N35; 53D45 (Secondary)}
\date{March 2022}
\author[R. Pandharipande]{Rahul Pandharipande}
\address{Departement Mathematik \\
ETH Z\"urich}
\email{rahul@math.ethz.ch}
\author[J. Solomon]{Jake P. Solomon}
\address{Institute of Mathematics\\ Hebrew University}
\email{jake@math.huji.ac.il}
\author[R. Tessler]{Ran J. Tessler}
\address{Department of Mathematics\\ Weizmann Institute of Science}
\email{ran.tessler@weizmann.ac.il}
\begin{document}

\begin{abstract}
We define a theory of descendent integration on the moduli spaces of
stable pointed disks. The descendent integrals are proved to be coefficients of the $\tau$-function of an open KdV hierarchy. A relation between the integrals and a representation of half the Virasoro algebra is also
proved. The construction of the theory requires an in depth study of homotopy
classes of multivalued boundary conditions. Geometric recursions based on the
combined structure of the boundary conditions and the moduli space are used to compute the integrals. We also provide a detailed analysis of orientations.

Our open KdV and Virasoro constraints uniquely specify a  theory of
higher genus open descendent integrals. As a result, we obtain an open
analog (governing all genera) of Witten's conjectures concerning descendent integrals on the Deligne-Mumford space of stable curves.
\end{abstract}

\maketitle

\pagestyle{plain}

\tableofcontents

\section{Introduction}
\subsection{Moduli of closed Riemann surfaces}
Let $C$ be a connected complex manifold of dimension 1. If $C$
is closed, the underlying topology is
classified by the genus $g$. The moduli space $\CM_g$ of complex
structures of genus $g$ has been studied since Riemann \cite{R} in the
$19^{th}$ century.
Deligne and Mumford  defined a natural compactification
$$\CM_g \subset \oCM_g$$
via stable curves (with possible nodal singularities) in 1969.
The moduli $\CM_{g,l}$ of curves
$(C,p_1,\ldots,p_l)$ with $l$ distinct marked points has a parallel
treatment
with compactification
$$\CM_{g,l} \subset \oCM_{g,l}\ .$$
We refer the reader to \cite{DM,HM} for the basic theory.
The moduli space $\oCM_{g,l}$ is a nonsingular
complex orbifold of dimension $3g-3+l$.

\subsection{Witten's conjectures} \label{wittc}
A new direction in the study of the moduli of curves
was opened by Witten \cite{Witten} in 1992 motivated
by theories of 2-dimensional quantum gravity.
For each marking index $i$, a complex {\em cotangent} line bundle
$$\CL_i \rightarrow \oCM_{g,l}\ $$
is defined as follows.
The fiber of $\CL_i$ over the point
$$[C,p_1,\ldots,p_l]\in \oCM_{g,l}$$ is the complex
cotangent{\footnote{By stability, $p_i$
lies in the nonsingular locus of $C$.}}
space $T_{C,p_i}^*$ of $C$ at $p_i$.
Let $$\psi_i\in H^2(\oCM_{g,l},\mathbb{Q})$$
 denote the first Chern class of
$\CL_i$.
Witten considered the intersection products of the classes $\psi_i$.
We will follow the standard bracket notation:
\begin{equation}
\label{products}
\blangle\tau_{a_1} \tau_{a_2} \cdots \tau_{a_l}\brangle_g = \int
_{\overline {M}_{g,l}} \psi_1^{a_1} \psi_2^{a_2} \cdots \psi_l^{a_l}.
\end{equation}
The integral on the right of \eqref{products} is
 well-defined when the stability condition
$$2g-2+l >0$$
is satisfied,
all the $a_i$ are
nonnegative integers, and the dimension constraint
\begin{equation}\label{dd344}
3g-3+l=\sum a_i
\end{equation} holds.
In all other cases, $\blangle\prod_{i=1}^{l} \tau_{a_i}\brangle_g$ is
defined to be zero.
The empty bracket $\blangle 1 \brangle_1$ is also set to zero.
The intersection products \eqref{products} are often called
{\em descendent integrals}.

By the dimension constraint \eqref{dd344}, a unique genus
$g$ is determined by the $a_i$. For brackets without
a genus subscript, the genus specified by the dimension
constraint is assumed (the bracket is set to zero if the specified
genus is fractional).
The simplest integral is
\begin{equation}\label{fvv23}
\blangle\tau_0^3\brangle =\blangle\tau_0^3\brangle_0 =1\ .
\end{equation}

Let $t_i$ (for $i\geq 0$) be a set of variables.
Let $\gamma=\sum_{i=0}^{\infty} t_i \tau_i$ be the formal sum.
Let
$$F_g(t_0, t_1, ...)= \sum_{n=0}^{\infty} \frac{\blangle\gamma^n\brangle_g}{n!}$$
be the
generating function of genus $g$ descendent integrals \eqref{products}.
The bracket $\blangle\gamma^n\brangle_g$  is defined by monomial expansion
and multilinearity in the variables $t_i$. Concretely,
$$F_g(t_0, t_1, ...)= \sum_{\{n_i\}} \prod_{i=1}^{\infty}
\frac{t_i^{n_i}}{n_i!} \blangle\tau_0^{n_0} \tau_1^{n_1} \tau_2^{n_2} \cdots\brangle_g,$$
where the sum is over all sequences of nonnegative integers $\{n_i\}$
with finitely many nonzero terms.
The generating function
\begin{equation}\label{v34}
F= \sum_{g=0}^{\infty} u^{2g-2} F_g
\end{equation}
arises as a partition function in $2$-dimensional quantum gravity. Based on a
different physical
realization of this function in terms of matrix integrals, Witten \cite{Witten}
conjectured $F$ satisfies two distinct systems of differential equations.
Each system determines $F$ uniquely and provides
explicit recursions which compute all the brackets \eqref{products}.
Witten's conjectures were proven by Kontsevich \cite{Kont}. Other
proofs can be found in \cite{Mir,OP}.

Before describing the full systems of equations,
we recall two basic properties.
The first is the {\em string equation}: for $2g-2+l>0$,
\[
\left\langle\tau_0 \prod_{i=1}^{l} \tau_{a_i}\right\rangle_g =
\sum_{j=1}^{l} \left\langle\tau_{a_j-1} \prod_{i\neq j} \tau_{a_i}\right\rangle_g.
\]
The second property is the {\em dilaton equation}:
for $2g-2+l>0$,
\begin{equation}\label{eq:cdil}
\left\langle\tau_1 \prod_{i=1}^{l} \tau_{a_i}\right\rangle_g =
(2g-2+l) \left\langle\prod_{i=1}^{l} \tau_{a_i}\right\rangle_g.
\end{equation}
The string and dilaton equations may be written as differential operators
annihilating  ${\rm exp}(F)$ in the following way:
\begin{eqnarray}
\label{lminus}
L_{-1}&=& -\frac{\p}{\p t_0} + \frac{u^{-2}}{2} t_0^2 +\sum_{i=0}^{\infty} t_{i+1}
\frac{\p}{\p t_i}\ , \\
\nonumber
L_0 &=& - \frac{3}{2} \frac{\p}{\p t_1}+ \sum_{i=0}^{\infty}
\frac{2i+1}{2} t_i \frac{\p}{\p t_i} + \frac{1}{16}\ .
\end{eqnarray}
Both the string and dilaton equations are derived \cite{Witten}
from a comparison
result describing the behavior of the $\psi$ classes under
pull-back via the forgetful map $$\pi: \oCM_{g,l+1} \rightarrow
\oCM_{g,l}\ .$$

The string equation and the evaluation \eqref{fvv23}
together determine all the genus 0 brackets.
The string equation, dilaton equation, and the
evaluation
\begin{equation}\label{bvvn}
\blangle\tau_1\brangle_1=\frac{1}{24}
\end{equation}
determine all the genus 1 brackets. In higher genus,
further constraints are needed.

The first differential equations conjectured by Witten are the
KdV equations. We define
the functions
\begin{equation}
\label{pproductss}
\blangle\blangle\tau_{a_1} \tau_{a_2} \cdots \tau_{a_l}\brangle\brangle =
\frac{\p}{\p t_{a_1}} \frac{\p}{\p t_{a_2}} \cdots \frac{\p}{\p t_{a_l}} F.
\end{equation}
Of course, we have
$$\blangle\blangle\tau_{a_1} \tau_{a_2}
\cdots \tau_{a_l}\brangle\brangle\Big|_{t_i=0,u = 1}= \blangle\tau_{a_1}
\tau_{a_2} \cdots \tau_{a_l}\brangle\ .$$
The KdV equations are equivalent to the following set
of equations for $n\geq 1$:
\begin{align*}
&(2n+1)u^{-2} \blangle\blangle\tau_n \tau_0^2 \brangle\brangle = \\
&  \qquad\qquad =\blangle\blangle\tau_{n-1} \tau_0\brangle\brangle
\blangle\blangle\tau_0^3\brangle\brangle +
2\blangle\blangle\tau_{n-1}\tau_0^2\brangle\brangle\blangle\blangle\tau_0^2\brangle\brangle+
\frac{1}{4}\blangle\blangle\tau_{n-1} \tau_0^4\brangle\brangle.
\end{align*}
For example, consider the KdV equation for $n=3$ evaluated at
$t_i=0$. We obtain
$$7\blangle\tau_3 \tau_0^2\brangle_1=
\blangle\tau_2 \tau_0\brangle_1 \blangle\tau_0^3\brangle_0 +{\frac{1}{4}} \blangle\tau_2
\tau_0^4\brangle_0.$$
Use of the string equation yields:
$$7\blangle\tau_1\brangle_1=
\blangle\tau_1\brangle_1 + \frac{1}{4} \blangle\tau_0^3\brangle_0.$$
Hence, we conclude  \eqref{bvvn}. In fact,
the KdV equations  and the string equation {\em together} determine all the products (\ref{products}) and thus
uniquely determine $F$.

The second system of differential equations for $F$ is determined by a
representation of a subalgebra of the Virasoro algebra.
Consider the Lie algebra $\mathbf{L}$ of holomorphic differential operators spanned by
$$L_n= -z^{n+1} \frac{\p}{\p z}$$
for $n\geq -1$. The bracket is given by $[L_n, L_m]=(n-m) L_{n+m}$.

The equations (\ref{lminus}) may be viewed as the beginning of
a representation of $\mathbf{L}$ in a Lie algebra of differential operators.
In fact, with certain homogeneity restrictions, there is a
unique way to extend the  assignment of
$L_{-1}$ and $L_{0}$ to a complete representation
of $\mathbf{L}$. For $n\geq 1$, the expression for $L_n$ takes the form
\begin{align}
&L_n = \notag\\
& =-\frac{3\cdot 5\cdot 7 \cdots (2n+3)}{2^{n+1}} \frac{\p}{\p t_{n+1}}\notag\\
&\quad\, + \sum_{i=0}^{\infty} \frac{(2i+1)(2i+3) \cdots (2i+2n+1)}{2^{n+1}} t_i \frac{\p}
{\p t_{i+n}}\notag\\
&\quad\, + \frac{u^2}{2} \sum_{i=0}^{n-1} (-1)^{i+1} \frac{(-2i-1)(-2i+1) \cdots
(-2i+2n-1)}{2^{n+1}} \frac{\p^2}{\p t_i \p t_{n-1-i}}.\notag
\end{align}

The second form of Witten's conjecture is  that the above representation of
$\mathbf{L}$ annihilates ${\rm exp} (F)$:
\begin{equation}
\label{vira}
\forall n\geq -1, \ \ \ \ L_n \ {\rm exp}(F) =0\ .
\end{equation}
The system of equations (\ref{vira}) also uniquely
determines $F$.

The KdV equations and the Virasoro constraints provide a very
satisfactory approach to the products \eqref{products}. The aim
of our paper is to develop a parallel theory for open Riemann surfaces.  An {\em open Riemann surface} for
us is obtained
by removing open disks from a closed Riemann surface. See Section \ref{sec:rswb} below for a more detailed discussion.
Hence, the terminology {\em Riemann surface with boundary}
is more appropriate. We will use the terms {\em open} and
{\em with boundary} synonymously.

For the remainder of the paper, a superscript $c$ will signal
integration over the moduli of closed Riemann surfaces.
For example, we will write the generating series of descendent integrals
\eqref{v34} as
$$F^c(u,t_0, t_1, ...)= \sum_{g=0}^\infty u^{2g-2} \sum_{n=0}^\infty
\frac{\blangle {\gamma^n}
\brangle_g^c}{n!}\ .$$
We will later introduce a generating series $F^o$ of descendent
integrals over the moduli of
open Riemann surfaces.

\subsection{Moduli of Riemann surfaces with boundary}\label{sec:rswb}
Let $\Delta \subset \mathbb{C}$ be the open unit disk, and let
$\overline{\Delta}$ be the closure.
An {\em extendable} embedding of the open disk in a closed Riemann surface
$$f: \Delta \rightarrow C$$
is a holomorphic map which extends to a holomorphic embedding of an open
neighborhood of $\overline{\Delta}$.
Two extendable embeddings in $C$ are {\em disjoint} if the
images of $\overline{\Delta}$ are disjoint.

A {\em Riemann surface with boundary} $(X,\partial X)$ is obtained by removing finitely many
disjoint extendably embedded open disks from a connected
closed Riemann surface.
The boundary $\partial X$ is the union of images of the
unit circle boundaries of embedded disks $\Delta$.
Alternatively, a Riemann surface with boundary is defined like a Riemann surface except that the coordinate charts are allowed to map homeomorphically to an open subset of the closed upper half plane.

Given a Riemann surface with boundary $(X,\partial X)$, we
can canonically construct a {\em double}
via Schwarz reflection through the boundary~\cite[Sec. II.1.3]{AhS60}.
The double $D(X,\partial X)$ of
$(X,\partial X)$ is a closed Riemann surface. The {\em doubled
genus} of $(X,\partial X)$ is defined to be the usual genus of
$D(X,\partial X)$.

On a Riemann surface with boundary $(X,\partial X)$, we
consider two
types of marked points. The markings of {\em interior type}
are points of $X\setminus \partial X$.
The markings of {\em boundary type} are points of
$\partial X$.
Let
$\CM_{g,k,l}$ denote the moduli space of Riemann surfaces with boundary of
doubled genus $g$ with $k$ distinct boundary markings and
$l$ distinct interior markings.
The moduli space $\CM_{g,k,l}$ is defined to be empty unless
the stability condition,
$$2g-2+k+2l >0, $$
is satisfied.
The moduli space $\CM_{g,k,l}$
may have several connected components depending upon the
topology of $(X,\partial X)$ and the cyclic orderings of the
boundary markings. Foundational issues concerning the construction
of $\CM_{g,k,l}$ are addressed in \cite{Li03}.

We view $\CM_{g,k,l}$ as a real orbifold of real dimension
$3g-3+k+2l$. Of course, $\CM_{g,k,l}$ is not compact (in addition to
the nodal degenerations present in the moduli of closed Riemann surfaces,
new issues involving the boundary approach of interior markings and
the  meeting of boundary circles arise). Furthermore, $\CM_{g,k,l}$ may
not be orientable. Non-orientability presents serious obstacles for the definition of a theory of
descendent integration over the moduli spaces of Riemann surfaces with boundary.

We will often refer to connected Riemann surfaces with boundary
as {\em open} Riemann surfaces or {\em open} geometries (as the
interior is open). The {\em genus} of an open Riemann surface will
always be the doubled genus.

\subsection{Descendents} \label{odesc}
Since interior marked points have well-defined cotangent
spaces,
there is no difficulty in defining the cotangent line bundles
$$\CL_i \rightarrow \CM_{g,k,l}$$
for each interior marking, $i = 1,\ldots,l.$
We do {\em not} consider the cotangent lines at the boundary
points.

Naively, we would like to consider a descendent theory via integration of products of the first Chern classes $\psi_i = c_1(\CL_i) \in H^2(\oCM_{g,k,l})$ over a compactification $\oCM_{g,k,l}$ of $\CM_{g,k,l}$. Namely,
\begin{equation}
\label{p9}
\blangle\tau_{a_1} \tau_{a_2} \cdots \tau_{a_l}\sigma^k\brangle_{g}^o = \int
_{\overline {\CM}_{g,k,l}} \psi_1^{a_1} \psi_2^{a_2} \cdots \psi_l^{a_l}\ .
\end{equation}
when
\[
2\sum_{i = 1}^l a_i = 3g-3 + k +2l,
\]
and in all other cases $\blangle \tau_{a_1}\cdots\tau_{a_l} \sigma^k\brangle_g^o = 0.$
Here, $\tau_a$ corresponds to the $a^{th}$ power of a cotangent class $\psi^a$
as before. The new insertion
$\sigma$ corresponds to the addition
of a boundary marking.{\footnote{The power of $\sigma$ specifies the
number of boundary markings.}}
To rigorously define the right-hand side of \eqref{p9},
at least three significant steps must be taken:
\begin{enumerate}
\item[(i)]
A compact moduli space $\oCM_{g,k,l}$ must be constructed. Because degenerations of Riemann surfaces with boundary occur in real codimension one, candidates for $\oCM_{g,k,l}$ are real orbifolds with boundary $\partial \oCM_{g,k,l}$.
\item[(ii)]
For integration over $\oCM_{g,k,l}$ to be well-defined, boundary conditions of the integrand must be specified along $\partial \oCM_{g,k,l}$. That is, the integrand must be lifted to the relative cohomology group $H^{3g-3+k+2l}(\oCM_{g,k,l},\partial\oCM_{g,k,l})$.
\item[(iii)]
Orientation issues must be addressed.
\end{enumerate}
The most challenging aspect of defining open descendent integrals is the specification of boundary conditions (ii). At first glance, one might hope to find a natural lift of $\psi_i$ to $H^2(\oCM_{g,k,l},\partial \oCM_{g,k,l}).$ However, this does not appear feasible. Rather, consider the bundle
\begin{equation}\label{eq:intrE}
E = \bigoplus_{i = 1}^l \CL_i^{\oplus a_i}.
\end{equation}
The Euler class of $E$ is given by
\[
e(E) = \psi_1^{a_1} \psi_2^{a_2} \cdots \psi_l^{a_l}.
\]
So, it suffices to find a natural lift of $e(E)$ to relative cohomology. This is the approach we follow. Such an approach leads to considerable difficulties in proving recursive relations between descendent integrals. Indeed, for closed descendent integrals, the proofs of recursive relations use heavily the factorization of the integrand as a product of cohomology classes~\cite{Witten}.

The need to specify boundary conditions and the orientation issues impose serious constraints on the ultimate definition of $\oCM_{g,k,l}.$ For $g>0,$ it appears these constraints can only be satisfied if $\oCM_{g,k,l}$ is the compactification of a covering space of $\CM_{g,k,l}$ that arises as the moduli space of open Riemann surfaces with an additional structure. The construction of $\oCM_{g,k,l}$ for $g>0$ will be given in~\cite{JSRT}. In the case $g=0$ treated here, we take $\oCM_{0,k,l}$ to be the space of stable disks studied previously in the context of the Fukaya category~\cite{FO09,Li03,Seidel}.

In this paper, we complete steps (i-iii) in the doubled genus 0 case. The outcome
is a fully rigorous theory of descendent integration on the
moduli space of disks with interior and boundary markings.
Moreover, we prove analogs for $\oCM_{0,k,l}$ of the
string and dilation equations as well as the topological recursion relations, which allow us to completely solve the theory.

\subsection{Construction of the descendent theory of pointed disks}\label{ssec:iconst}
By the Riemann Mapping Theorem,
the open geometry of genus 0 is just the disk with a single boundary circle.
The simplest moduli space is $\CM_{0,3,0}$ parameterizing
disks with 3 boundary markings. There are exactly two
disks with 3 distinct boundary points (corresponding to
the two possible cyclic orders). Thus $\CM_{0,3,0}$ is already compact, and it has no boundary. So, we can evaluate the corresponding open descendent integral without reference to boundary conditions.
In our definition of open descendent
integrals~\eqref{eq:maindef}, the geometric integral over $\oCM_{0,k,l}$ is multiplied by $2^{\frac{1-k}2}.$ In particular, for $\CM_{0,3,0}$ the power is $2^{-1}.$
We conclude that
\begin{equation}\label{vvbb}
\blangle \sigma^3 \brangle_0^o = 1\ .
\end{equation}
Similarly, the moduli space $\CM_{0,1,1}$, parameterizing disks with $1$ boundary point and $1$ interior point, consists of a single point. It follows that also $\blangle \tau_0\sigma\brangle^o = 1.$

In general, the compact moduli space $\oCM_{0,k,l}$ of our construction is a compactification of $\CM_{0,k,l}$ stemming from ideas very close to Deligne-Mumford stability. It allows for internal sphere bubbles and
boundary disk bubbles following the approach familiar from the Fukaya category~\cite{FO09,Li03,Seidel}. See Figure~\ref{fig:nodal}.
\begin{figure}[t]
\centering
\includegraphics[scale=.7]{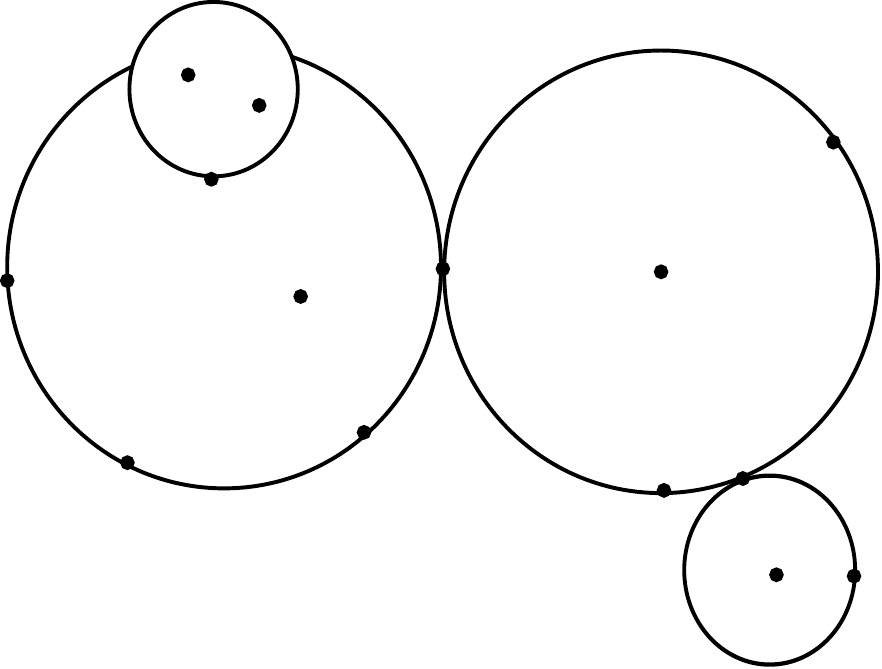}
\caption{A nodal disk with $3$ disk components, one sphere component, $5$ internal marked points and $6$ boundary marked points.}
\label{fig:nodal}
\end{figure}

The boundary conditions we impose for our definition of
the descendent integrals are the most delicate aspect of the
construction. As mentioned above, our strategy is to lift the Euler class $e(E)$ to the relative cohomology group $H^{k+2l-3}(\oCM_{0,k,l},\partial\oCM_{0,k,l})$. Here, $E\to \oCM_{0,k,l}$ is the bundle given by equation~\eqref{eq:intrE}. Such a lift can be given by constructing a non-vanishing section $\s$ of the restriction $E|_{\partial\oCM_{0,k,l}}.$ There is no unique construction of such a section $\s.$ Rather, we give a construction that is well-defined up to non-vanishing homotopy. It follows that the resulting lift of $e(E)$ to relative cohomology is well-defined. Our construction of $\s$ relies on the decomposition of the boundary $\partial \oCM_{0,k,l}$ into products of moduli spaces of open Riemann surfaces with fewer marked points.

A surprising feature of our construction is that the section $\s$ must be multi-valued. Multiple valued sections are forced on us by a non-trivial monodromy in the geometric constraint defining $\s.$ An explicit example of how this comes about is given in Remark~\ref{rm:msec}. In genus zero, the moduli space $\oCM_{0,k,l}$ is always a smooth manifold. So, the phenomenon of multi-valued sections is not the result of orbifold isotropy groups.

Another unintuitive aspect of the boundary conditions is the complexity of their dependence on the boundary marked points. Indeed, we consider only the cotangent lines $\CL_i$ at interior marked points. So, by analogy with the string equation, one would expect a simple geometric recursion to govern the dependence of open descendent integrals on the number of boundary marked points. This is not the case. To the contrary, in Section~\ref{sec:formulae} we observe a parallel between the formulas for open descendent integrals on $\oCM_{0,k,l}$ and closed $\lambda_g\lambda_{g-1}$ descendent integrals on $\oCM_{g,l},$ where $g$ is proportional to $k.$ That is, the number of boundary marked points in open genus zero descendent integrals plays a role analogous to the genus in closed $\lambda_g\lambda_{g-1}$ descendent integrals. This is one indication of the complex dependency of the boundary conditions on the boundary marked points.

Our proofs of the open analogs of the string, dilaton, and topological recursion
relations all use the boundary conditions in an essential way.
The boundary conditions are defined and constructed in Section \ref{sec:bdry}. The idea of the definition is outlined in Section~\ref{sssec:intuition}.

\subsection{Differential equations} \label{deq}
\subsubsection{Partition functions}
Though the resolution of the issues \mbox{(i-iii)} of Section \ref{odesc}
for the moduli of pointed disks (the genus 0 case)
requires a substantial mathematical development,
the evaluation of the theory is remarkably simple.
The answer guides the higher genus open cases.
We propose here an evaluation of the theory of descendent
integration over the moduli of Riemann surfaces with boundary
for {all} $g$, $k$, and $l$. For the genus $0$
case, we prove our proposal is correct using
our foundational development.
The main conjectures of the paper concern the  $g>0$
cases.  Even before giving complete definitions resolving~\mbox{(i-iii)} for $g>0$,
we are able to conjecture a complete
solution.

The solution is again via differential equations for
the generating series of descendent invariants.
Recall the descendent series for the moduli of closed Riemann
surfaces,
$$F^c(u,t_0, t_1,\ldots)= \sum_{g=0}^{\infty} u^{2g-2} F_g^c(t_0,t_1,\ldots) = \sum_{g=0}^\infty u^{2g-2} \sum_{n=0}^\infty
\frac{\blangle {\gamma^n}
\brangle_g^c}{n!}\ ,$$
where $\gamma= \sum_{i=0}^\infty t_i \tau_i$. Similarly, we define
the open descendent series as
$$F^o(u,s,t_0, t_1,\ldots)= \sum_{g = 0}^\infty u^{g-1} F_g^o(t_0,t_1,\ldots) = \sum_{g=0}^\infty u^{g-1} \sum_{n=0}^\infty
\frac{\blangle {\gamma^n \delta^k}
\brangle_g^o}{n!k!}\ ,$$
where $\gamma =\sum_{i=0}^\infty t_i \tau_i$ is as before and $\delta=s\sigma $.
The associated partition functions are
$$\mathsf{Z}^c= \exp(F^c)\ , \ \ \ \mathsf{Z}^o= \exp(F^o)\ .$$
We define the full partition function by
$$\mathsf{Z}= \exp(F^c + F^o)\ .$$

\subsubsection{Virasoro constraints}


Let $L_n$ be the differential operators in the variables
$u$ and $t_i$
defined in Section \ref{wittc}.
We define an $s$ extension
$\mathcal{L}_n$ of
$L_n$ by the following formula:
\begin{equation}
\mathcal{L}_n = L_n + u^n s \frac{\partial^{n+1}}{\partial s^{n+1}}
+\frac{3n+3}{4}u^n\frac{\partial^{n}}{\partial s^{n}}\ ,
\end{equation}
for $n\geq -1$.
Using the relations
$$[L_n,L_m]= (n-m)L_{n+m}$$
and the commutation of $L_n$ with the operators $u$, $s$, and
$\frac{\partial}{\partial s}$, we easily obtain the Virasoro relation
$$[\mathcal{L}_n,\mathcal{L}_m]= (n-m)\mathcal{L}_{n+m}$$
By Witten's conjecture, $L_n$
annihilates $\mathsf{Z}^c$.

\begin{conj} \label{cc11} The operators $\mathcal{L}_n$ annihilate the
full partition function,
$$\forall n \geq -1, \ \ \ \ \mathcal{L}_n \ \mathsf{Z} = 0.$$
\end{conj}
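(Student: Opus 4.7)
The plan is to decompose the single equation $\mathcal{L}_n\,\mathsf{Z}=0$ into a hierarchy of geometric identities among open descendent integrals, establish the low-level cases on the genus-zero geometric side, and bootstrap the remainder from the Virasoro bracket. Since $\mathsf{Z}^c$ is independent of $s$, expanding $\mathcal{L}_n(\mathsf{Z}^c \mathsf{Z}^o)$ by Leibniz and invoking Witten's identity $L_n \mathsf{Z}^c=0$ reduces the conjecture to the vanishing of
\begin{equation*}
\mathsf{Z}^c\,\bigl(L_n+u^n s\,\partial_s^{n+1}+\tfrac{3n+3}{4}u^n\partial_s^n\bigr)\mathsf{Z}^o \;+\; u^2\sum_{i=0}^{n-1}c_i\,(\partial_{t_i}\mathsf{Z}^c)(\partial_{t_{n-1-i}}\mathsf{Z}^o),
\end{equation*}
where $c_i$ are the coefficients appearing in the second-order part of $L_n$. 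Reading off the coefficient of each monomial in $(t_i,s,u)$ converts the identity into one geometric relation per monomial. At $n=-1$ the relation collapses to $\partial_{t_0}F^o=\sum_i t_{i+1}\partial_{t_i}F^o+u^{-1}s$, the open string equation, and at $n=0$ to $\tfrac{3}{2}\partial_{t_1}F^o=\sum_i\tfrac{2i+1}{2}t_i\partial_{t_i}F^o+s\,\partial_s F^o+\tfrac{3}{4}$, the open dilaton equation.

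Next, in the genus-zero case where open descendent integrals are rigorously defined here, prove the two base cases geometrically. For the open string equation, analyze the forgetful map $\pi\colon \oCM_{0,k,l+1}\to\oCM_{0,k,l}$ erasing the interior marking that carries the $\tau_0$ insertion; the key input is that the relative Euler-class lift afforded by the multivalued section $\mathbf{s}$ pulls back compatibly under $\pi$, so only the exceptional strata contribute, and the initial-condition terms arise from the base evaluations $\blangle\tau_0\sigma\brangle^o=1$ and $\blangle\sigma^3\brangle^o=1$, together with the normalizing factor $2^{(1-k)/2}$. The open dilaton equation follows in a parallel fashion from the corresponding $\pi_*$-computation on disks, combined with the same boundary-condition analysis. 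For $n\geq 1$ one then uses the bracket $[\mathcal{L}_n,\mathcal{L}_m]=(n-m)\mathcal{L}_{n+m}$, which holds automatically since the $L_n$ satisfy the closed Virasoro algebra and commute with $s$, $u$, and $\partial_s$, to reduce the general constraint to finitely many low-level verifications; these are handled by combining the string and dilaton equations with the open topological recursion relations proved earlier in the paper (or, equivalently, with the open KdV hierarchy).

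The main obstacle, and the reason the statement is left as a conjecture rather than a theorem, is the extension to $g>0$. In higher genus, $\oCM_{g,k,l}$, the multivalued boundary section, and the very definition of $\blangle\prod\tau_{a_i}\sigma^k\brangle_g^o$ have not been constructed in this paper. Even granting the foundations of the companion work, the serious difficulty will be in proving the higher-genus open string, dilaton, and topological recursion relations in the presence of the multivalued section $\mathbf{s}$: the monodromy, orientation, and gluing analysis of boundary strata becomes substantially more intricate once boundary circles may meet and once positive-genus disk components can appear in degenerations. Once those geometric identities are available, however, the algebraic reduction described above should propagate the constraints to all $n\geq -1$ and all $g\geq 0$ uniformly.
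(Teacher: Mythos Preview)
The statement is Conjecture~\ref{cc11}, and the paper does \emph{not} prove it; only the genus~$0$ truncation (Theorem~\ref{thm:vir}) is established. You recognize this in your last paragraph, so your ``proof proposal'' is really an outline of a strategy together with an honest acknowledgment that the higher-genus input is missing. That is the correct assessment, and it matches the paper's own stance: the paper remarks that a full proof was later given in \cite{ABRT} using the combinatorial formula of \cite{RT}, not by the methods here.

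For the genus~$0$ part your outline is close to what the paper does in Section~\ref{VirSec}, but it is vague at exactly the point where the real work lies. You say the Virasoro bracket reduces $n\geq 1$ to ``finitely many low-level verifications'' handled by string, dilaton, and open TRR. The paper makes this precise: one needs $n=-1,0,1,2$ (since $\mathcal{L}_{-1}$ and $\mathcal{L}_2$ generate the half-Virasoro, with $[\mathcal{L}_{-1},\mathcal{L}_2]=-3\mathcal{L}_1$ and $[\mathcal{L}_{-1},\mathcal{L}_1]=-2\mathcal{L}_0$). The cases $n=-1,0$ are the open string and dilaton equations, as you say. The cases $n=1$ and $n=2$, however, are \emph{not} handled directly by open TRR in the paper. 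Instead, the paper first uses the open TRR and dilaton equation to prove the closed-form evaluation of Theorem~\ref{ttxx}, substitutes that formula into the explicit $\mathcal{L}_1$ and $\mathcal{L}_2$ constraints (after using $[\mathcal{L}_{-1},\mathcal{L}_r]$ to strip the $t_0$-dependence), and then reduces each to a binomial identity. Those binomial identities are themselves proved by reading off a specially chosen instance of the \emph{closed} genus~$0$ TRR. So the actual mechanism is: open TRR $\Rightarrow$ explicit formula $\Rightarrow$ combinatorial identity $\Leftarrow$ closed TRR, which is more circuitous than your sketch suggests.

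One further inaccuracy: your Leibniz expansion is written for the full $\mathsf{Z}=\mathsf{Z}^c\mathsf{Z}^o$, but the genus~$0$ statement concerns $\exp(u^{-2}F_0^c+u^{-1}F_0^o)$ and the coefficient of $u^{-1}$ after dividing back. The bracket argument for this truncated statement requires as input the closed genus~$0$ Virasoro (the $u^{-2}$ level), which you do not mention; without it, the commutator $[\mathcal{L}_n,\mathcal{L}_m]\exp(\cdots)$ does not cleanly reduce at the $u^{-1}$ level.
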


The restriction of the full partition function $\mathsf{Z}$ to the
subspace defined by $t_i=0$ for all $i$ is easily evaluated,
\begin{equation}\label{fbbz}
\mathsf{Z}(s,t_0=0,t_1=0,t_2=0, \ldots) = \blangle\sigma^3\brangle^o_0\
\frac{s^3}{3!}= \frac{s^3}{3!}\ .
\end{equation}
By a dimension analysis, the descendent $\blangle\sigma^3\brangle^o_0$,
evaluated by \eqref{vvbb},
is the only nonzero term which survives the restriction.
The Virasoro constraints of Conjecture \ref{cc11} then determine
$\mathsf{Z}$ from the restriction \eqref{fbbz}.
In other words, $\mathsf{Z}^o$ is uniquely and effectively
specified by
Conjecture~\ref{cc11}, the restriction \eqref{fbbz}, and
$\mathsf{Z}^c$.

Using our construction of the descendent theory of pointed disks,
we prove the  genus 0 part of Conjecture \ref{cc11}.

\begin{thm} \label{thm:vir} The operators $\mathcal{L}_n$ annihilate the genus zero partition function up to terms of higher genus. That is,
for $n \geq -1,$ the coefficient of $u^{-1}$ in
\[
\mathcal{L}_n \ \exp(u^{-2}F^c_0+u^{-1}F^o_0)
\]
vanishes.
\end{thm}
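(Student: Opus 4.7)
The plan is to identify the $u^{-1}$ coefficient of $\mathcal{L}_n\exp(u^{-2}F^c_0+u^{-1}F^o_0)$, viewed as the leading-order (WKB) Virasoro constraint, and to reduce the resulting identity to three geometric foundational relations on $\oCM_{0,k,l}$ that are established in the main body of the paper.

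Set $G = u^{-2}F^c_0 + u^{-1}F^o_0$ and split $\mathcal{L}_n = L_n + \mathcal{S}_n$ with $\mathcal{S}_n = u^n s\,\partial_s^{n+1} + \tfrac{3n+3}{4}u^n\partial_s^n$. I would compute $e^{-G}\mathcal{L}_n e^G$ and book-keep $u$-weights. Exactly four types of contributions reach order $u^{-1}$: (a) the first-order piece of $L_n$ applied to $F^o_0$; (b) the cross term $\sum b^n_{ij}(\partial_{t_i}F^c_0)(\partial_{t_j}F^o_0)$ from the $\tfrac{u^2}{2}$-weighted second-order piece of $L_n$, where the product $(\partial F^c_0)(\partial F^o_0)$ sits at $u^{-3}$ and is promoted by $u^2$; (c) the leading $(\partial_s G)^{n+1}$ term of $\partial_s^{n+1}e^G/e^G$, which sits at $u^{-n-1}$ and produces $s(\partial_s F^o_0)^{n+1}$ after multiplication by $u^n s$; and (d) for $n=-1$ the direct contribution $s$ from the multiplication operator $u^{-1}s$. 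The source terms $u^{-2}t_0^2/2$ in $L_{-1}$ and $\tfrac{1}{16}$ in $L_0$ live at $u^{-2}$ and $u^0$, and the $u^n\partial_s^n$ piece of $\mathcal{S}_n$ has lowest $u$-weight $u^0$, so none of them reach $u^{-1}$. The $u^{-2}$ coefficient is the closed genus-zero Virasoro expression, which vanishes by the Witten--Kontsevich theorem. Thus the theorem reduces to the identity
\[
L_n^{\mathrm{lin}}F^o_0 + \sum_{i+j=n-1} b^n_{ij}(\partial_{t_i}F^c_0)(\partial_{t_j}F^o_0) + s(\partial_s F^o_0)^{n+1} + (\text{$s$ for $n=-1$}) = 0.
\]

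Matching coefficients of $\prod t_i^{n_i}\,s^k/\prod n_i!\,k!$, this identity encodes, respectively, the open genus-zero string equation for $n=-1$, the open genus-zero dilaton equation for $n=0$, and the open genus-zero topological recursion relation for $n\geq 1$ (the last together with the closed genus-zero TRR underlying Witten--Kontsevich). I would then invoke these three families, each proved earlier by combining the forgetful map $\pi\colon\oCM_{0,k,l+1}\to\oCM_{0,k,l}$ (for string and dilaton) or restriction to a sphere-bubble boundary divisor (for TRR) with the multivalued boundary section $\s$ of Section \ref{sec:bdry} and the orientation analysis.

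The hardest input is the genus-zero open TRR for $n\geq 1$, because the cross term $\sum b^n_{ij}(\partial_{t_i}F^c_0)(\partial_{t_j}F^o_0)$ must be realized geometrically as the contribution of the stratum of $\partial\oCM_{0,k,l}$ parametrizing configurations with a sphere bubble: the lifted Euler class $e(E)\in H^*(\oCM_{0,k,l},\partial\oCM_{0,k,l})$ must restrict compatibly, up to non-vanishing homotopy, to the product of a closed and an open moduli factor, and the orientation match between the two sides has to be controlled despite the multi-valuedness of $\s$. Once these three inputs are in place, the matching of coefficients is purely algebraic. As an alternative to proving the TRR for every $n$ independently, after verifying $\mathcal{L}_{-1},\mathcal{L}_0,\mathcal{L}_1$ at the $u^{-1}$ level one could appeal to the Virasoro commutator $[\mathcal{L}_n,\mathcal{L}_m]=(n-m)\mathcal{L}_{n+m}$ to propagate annihilation to all $n\geq -1$ by induction on $n$.
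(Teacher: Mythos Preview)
Your WKB bookkeeping is clean and your identification of the $n=-1$ and $n=0$ constraints with the open string and dilaton equations is correct. But both of your routes to $n\geq 1$ have genuine gaps.

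First, the ``alternative'' is wrong: $\mathcal{L}_{-1},\mathcal{L}_0,\mathcal{L}_1$ span a closed $\mathfrak{sl}_2$ subalgebra, since $[\mathcal{L}_1,\mathcal{L}_1]=0$ and the remaining brackets only reproduce $\mathcal{L}_{-1},\mathcal{L}_0,\mathcal{L}_1$. You cannot reach $\mathcal{L}_2$ this way. The bracket induction only runs once you have verified $\mathcal{L}_2$ independently; then $[\mathcal{L}_1,\mathcal{L}_2]=-\mathcal{L}_3$, etc.

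Second, the claim that for $n\geq 1$ the $u^{-1}$ identity ``is'' the open TRR is not correct as stated. The constraint you derive contains the term $s(\partial_s F^o_0)^{n+1}$, a degree-$(n{+}1)$ nonlinearity in first derivatives of $F^o_0$, and the linear piece $L_n^{\mathrm{lin}}$ shifts every $\tau_i$ to $\tau_{i+n}$ simultaneously. The open TRR I and II of Theorem~\ref{thm:trr1_2} are quadratic and shift a single descendent index by one. One can hope to deduce the $\mathcal{L}_n$ identity by iterating TRR $n$ times and combining with the closed genus-zero Virasoro, as in the Dijkgraaf--Verlinde--Verlinde argument for the closed theory, but this is a nontrivial algebraic derivation that you have not supplied, and it is not a one-line matching of coefficients.

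The paper proceeds differently. It uses the bracket reduction correctly, reducing to $\mathcal{L}_{-1},\mathcal{L}_0,\mathcal{L}_1,\mathcal{L}_2$; string and dilaton handle the first two. For $\mathcal{L}_1$ and $\mathcal{L}_2$ it does \emph{not} invoke the open TRR directly. Instead it substitutes the closed-form evaluation of Theorem~\ref{ttxx} (itself proved from TRR and dilaton) into the explicit $u^{-1}$ constraints, reducing each to a binomial identity, and then verifies those identities by expanding a specific instance of the \emph{closed} genus-zero TRR. So the paper trades your missing ``iterated-TRR $\Rightarrow$ Virasoro'' lemma for an explicit evaluation plus two elementary computations.
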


\noindent
The proof of Theorem \ref{thm:vir} is presented in Section
\ref{VirSec}.


\subsubsection{String and dilaton equations}
The string and dilaton equations for $F^o$ are
obtained from the operators $\mathcal{L}_{-1}$ and
$\mathcal{L}_0$ respectively.
The string equation for the open geometry is
\begin{equation}\label{gvvt}
\frac{\partial F^o}{\partial t_0} = \sum_{i=0}^{\infty}t_{i+1}
\frac{\partial F^o}{\partial t_i} + u^{-1}s.
\end{equation}
The dilaton equation is
\begin{equation*}
\frac{\partial F^o}{\partial t_1} = \sum_{i=0}^{\infty}\left(\frac{2i+1}{3}\right)t_{i}\frac{\partial F^o}{\partial t_i} + \frac{2}{3}s\frac{\partial
F^o}{\partial s}+\frac{1}{2}.
\end{equation*}
The string equation implies that for $2g-2+k + 2l > 0,$
\[
\left\langle\tau_0 \prod_{i = 1}^l \tau_{a_i}\sigma^k\right\rangle_{g}^o
            = \sum_j \left\langle \tau_{a_j-1} \prod_{i\neq j} \tau_{a_i}\sigma^k\right\rangle_{g}^o.
\]
The dilaton equation implies that for $2g-2 + k + 2l > 0,$
\begin{equation}\label{eq:dilaton}
\left\langle\tau_1 \prod_{i = 1}^l \tau_{a_i}\sigma^k\right\rangle_{g}^o
            = \left(g-1+k+l\right) \left\langle\prod_{i = 1}^l \tau_{a_i}\sigma^k\right\rangle_{g}^o.
\end{equation}
The string and dilaton equations for $F^c$ together with the
Virasoro relations
$$\mathcal{L}_{-1} \ \mathsf{Z} = \mathcal{L}_0 \ \mathsf{Z} = 0$$
imply the string and dilaton equations for $F^o$. The following
result is therefore a consequence of Theorem \ref{thm:vir}. It is also an important step in the proof.
\begin{thm} \label{thm:string_dilaton}
The string and dilaton equations hold for $F^o_0$.
\end{thm}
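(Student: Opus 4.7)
The plan is to follow the classical forgetful-map strategy used by Witten for the closed case, with the essential adaptation of tracking the multivalued boundary section $\s$ that implements the relative cohomology lift of $e(E)$. Both equations proceed by constructing the forgetful morphism $\pi\colon \oCM_{0,k,l+1}\to\oCM_{0,k,l}$ that drops one distinguished interior marked point (carrying the $\tau_0$ for string, or $\tau_1$ for dilaton) and stabilizes. I would first verify that $\pi$ restricts to $\partial\oCM_{0,k,l+1}\to\partial\oCM_{0,k,l}$, identify its fibers with the universal bordered genus zero surface, and single out the bubble divisors $D_i\subset\oCM_{0,k,l+1}$ where the forgotten point and $p_i$ together lie on a contracted sphere component.

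The main obstacle, and the step requiring the most work, is a compatibility statement for boundary sections. Since the open descendent integral is the pairing of a relative cohomology class with the fundamental class, to push forward along $\pi$ I need to show that the section $\s_{k,l+1}$ of $E|_{\partial\oCM_{0,k,l+1}}$ given by the construction of Section \ref{sec:bdry} is homotopic, through non-vanishing sections, to $\pi^{*}\s_{k,l}$. This reduces to checking that the geometric constraints defining $\s$ (which are local at each boundary stratum) are preserved when the extra interior point is forgotten, and that no additional monodromy is introduced by the stabilization. Granting this, integration against $(e(E),\s)$ commutes with $\pi_{*}$ in the usual way.

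For the string equation, I would then use the cotangent comparison $\psi_i^{\mathrm{src}} = \pi^{*}\psi_i + D_i$ together with $\psi_i^{\mathrm{src}}\cdot[D_i] = 0$ and $D_i\cap D_j = \emptyset$ for $i\neq j$ to obtain
\[
\prod_{i=1}^{l}(\psi_i^{\mathrm{src}})^{a_i} = \pi^{*}\!\Bigl(\prod_{i}\psi_i^{a_i}\Bigr) + \sum_{j\colon a_j\geq 1} D_j\cdot\pi^{*}\!\Bigl(\psi_j^{a_j-1}\!\!\prod_{i\neq j}\psi_i^{a_i}\Bigr).
\]
Pushing forward, the first summand vanishes by degree, while each $D_j$ is a section of $\pi$ so its contribution is $\psi_j^{a_j-1}\prod_{i\neq j}\psi_i^{a_i}$ on $\oCM_{0,k,l}$. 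Summing reproduces the string recursion whenever the target is stable. The only unstable target is $(k,l)=(1,0)$, where $\oCM_{0,1,1}$ is a point and the evaluation $\blangle\tau_0\sigma\brangle_0^o = 1$ already recorded in the introduction contributes exactly the extra $u^{-1}s$ term of \eqref{gvvt}.

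For the dilaton equation, I would apply the same forgetful map but with a $\tau_1$ insertion. The comparison now forces us to compute $\pi_{*}(\psi_{\mathrm{new}}\cdot\pi^{*}\prod\psi_i^{a_i})$, which reduces via projection to a fiber integral of $\psi_{\mathrm{new}}$ over the universal bordered disk with $l$ interior and $k$ boundary marked points removed. A direct Poincaré–Hopf / Euler-class calculation on the bordered fiber (analogous to the closed formula $2g-2+l$) yields the value $k+l-1 = g-1+k+l|_{g=0}$, producing the dilaton coefficient in \eqref{eq:dilaton}. As in the string case, boundary-section compatibility from the first paragraph is what allows the fiber-integral identity to descend to the relative cohomology setting, and the base case $\blangle\sigma^3\brangle_0^o = 1$ together with the stability threshold closes the argument.
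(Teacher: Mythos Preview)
Your strategy matches the paper's at the coarse level --- forgetful map, cotangent-line comparison, divisors $D_i$ --- but the formulation of the key boundary-compatibility step contains a genuine gap.

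You write that you need $\s_{k,l+1}$ homotopic through non-vanishing sections to $\pi^*\s_{k,l}$. This cannot hold as stated: these are sections of different bundles. On $\oCM_{0,k,l+1}$ the relevant bundle is $\bigoplus\CL_i^{\oplus a_i}$, while $\pi^*\s_{k,l}$ lives in $\bigoplus (\CL_i')^{\oplus a_i}$ with $\CL_i' = \pi^*\CL_i$. The canonical morphism $\tilde t_i \colon \CL_i' \to \CL_i$ (Lemma~\ref{lm:fmor}) vanishes along $D_i$, so the image of $\pi^*\s_{k,l}$ under the induced map vanishes on each $\partial D_i$, and no non-vanishing homotopy to a non-vanishing $\s_{k,l+1}$ exists. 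Consequently the cohomological expansion $\prod\psi_i^{a_i} = \pi^*(\cdots) + \sum_j D_j \cdot(\cdots)$ does not lift to an identity of relative Euler classes by simply ``granting compatibility''; the paper flags exactly this difficulty in the introduction when it notes that the integrand cannot be factored as a product of $\psi$-classes.

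What the paper does instead is work one line-bundle summand at a time via $\CL_i \simeq \CL_i' \otimes \CO(D_i)$. For a chosen $(i_0,j_0)$ it constructs the special canonical section in the form $s_{i_0 j_0} = s'\, t_{i_0}$ while keeping the remaining $s_{ij}$ non-vanishing on $\partial D_{i_0}$ (Proposition~\ref{prop:single_section}\ref{it:single_section_b}, Lemma~\ref{lem:main_lem}\ref{it:D_i}), and then applies the section-level splitting Lemma~\ref{lm:prod}: for $L = L_1\otimes L_2$ and $s = s_1 s_2$ one has $e(L\oplus E';s\oplus\s') = e(L_1\oplus E';s_1\oplus\s') + e(L_2\oplus E';s_2\oplus\s')$. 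Iterating, and invoking Observations~\ref{obs:1} and~\ref{obs:2} to kill the cross-terms, yields precisely your expansion --- but derived entirely inside the relative-Euler-class framework, never treating $\psi_i$ as an independent cohomology class.

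For the dilaton equation your fiber answer $k+l-1$ is correct, but ``Poincar\'e--Hopf on the bordered fiber'' underestimates the work. The restriction of the $\CL_{l+1}$-section to $\partial F_p$ is forced to be special canonical, and as Remark~\ref{rm:msec} illustrates, such sections are \emph{not} constant along the boundary: they twist as the nodal point moves through the boundary arcs. The paper's Lemma~\ref{lm:cool} handles this by an explicit construction on the real blow-up of the disk at the $k$ boundary marked points, verifying that the resulting section is special canonical on each arc and then computing its winding number; the factor $e^{-2\sqrt{-1}\theta_i}$ inserted near each blow-up arc is exactly what is needed to match the special canonical condition and contributes the $k$ in the final count $k-1$ (to which the interior-point reduction then adds $l$).
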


\subsubsection{KdV equations}
We have already defined \eqref{pproductss}
double brackets in the compact case. For the open invariants, the
definition is parallel:
\begin{equation*}
\blangle\blangle\tau_{a_1} \tau_{a_2} \cdots \tau_{a_l} \sigma^k
\brangle\brangle^o =
\frac{\p}{\p t_{a_1}} \frac{\p}{\p t_{a_2}} \cdots \frac{\p}{\p t_{a_l}}
\frac{\p^k}{\p s^k}
 F^o.
\end{equation*}
Also,
\begin{equation*}
\blangle\blangle\tau_{a_1} \tau_{a_2} \cdots \tau_{a_l} \sigma^k
\brangle\brangle^o_g =
\frac{\p}{\p t_{a_1}} \frac{\p}{\p t_{a_2}} \cdots \frac{\p}{\p t_{a_l}}
\frac{\p^k}{\p s^k}
 F^o_g.
\end{equation*}
We conjecture an analog of Witten's KdV equations in the compact case.

\begin{conj} For $n\geq 1$, we have \label{cc22}
\begin{eqnarray*}
(2n+1)u^{-1}\bblangle \tau_n \bbrangle^o  &= &
\ \ u\bblangle
\tau_{n-1} \tau_0\bbrangle^c  \bblangle \tau_0\bbrangle^o \\  & &
                       + 2    \bblangle \tau_{n-1}  \bbrangle^o
\bblangle\sigma\bbrangle^o
                  + 2 \bblangle \tau_{n-1} \sigma \bbrangle^o
\\ & & -\frac{u}{2} \bblangle \tau_{n-1}\tau_0^2 \bbrangle^c \ .\\
\end{eqnarray*}
\end{conj}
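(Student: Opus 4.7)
The plan is to prove Conjecture \ref{cc22} in two complementary ways. First, in genus zero, via direct geometric recursion on the moduli spaces $\oCM_{0,k,l}$. Second, in all genera, as a formal consequence of Virasoro Conjecture \ref{cc11} together with the closed Witten--Kontsevich KdV hierarchy. Since Theorem \ref{thm:vir} already establishes Conjecture \ref{cc11} in genus zero, these two derivations converge to give an unconditional genus-zero proof, while the higher-genus statement remains conjectural to the same extent as Conjecture \ref{cc11}.

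For the geometric step, apply topological recursion at the interior marked point carrying $\tau_n$. Write $\psi_1^n = \psi_1\cdot \psi_1^{n-1}$ on $\oCM_{0,k,l+1}$, and express $\psi_1$ via a boundary divisor whose strata parameterize nodal pointed disks in which the first marked point is separated from the remaining insertions. Each such stratum splits as a product of moduli of disks and moduli of closed Riemann surfaces, and the integral factors accordingly: strata where a closed sphere bubbles off carrying $\tau_{n-1}$ with one new $\tau_0$ at the node contribute $u\bblangle\tau_{n-1}\tau_0\bbrangle^c\bblangle\tau_0\bbrangle^o$; strata with a disk--disk boundary node contribute both $2\bblangle\tau_{n-1}\sigma\bbrangle^o$ and $2\bblangle\tau_{n-1}\bbrangle^o\bblangle\sigma\bbrangle^o$; and the self-gluing contribution from a sphere attached to itself accounts for $-\tfrac{u}{2}\bblangle\tau_{n-1}\tau_0^2\bbrangle^c$. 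The overall coefficient $(2n+1)$ arises from the standard identity expressing $\psi^n$ as $(2n+1)^{-1}$ times a boundary pull-back, familiar from the closed KdV proof.

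For the algebraic step, apply $\mathcal{L}_n$ to $\mathsf{Z} = \exp(F^c+F^o)$ and use the Witten--Kontsevich annihilation $L_n\mathsf{Z}^c = 0$ to reduce the constraint $\mathcal{L}_n\mathsf{Z} = 0$ to an identity in the double brackets $\bblangle\bbrangle^c$ and $\bblangle\bbrangle^o$. The genuinely open contributions come from the $s$-derivative terms $u^n s\p^{n+1}_s + \tfrac{3n+3}{4}u^n\p^n_s$ in $\mathcal{L}_n$, while expansion of the exponential produces the mixed product terms. Matching coefficients against the right-hand side of Conjecture \ref{cc22} is then mechanical; an induction on $n$ using the Virasoro commutators $[\mathcal{L}_{n+1},\mathcal{L}_{-1}] = (n+2)\mathcal{L}_n$ propagates the identity from low $n$ to all $n\geq 1$.

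The hard part will be the geometric step: tracking how the multivalued boundary section $\s$ of Section \ref{sec:bdry} restricts to each boundary stratum. When a disk or sphere bubbles off the marked point carrying $\psi_1$, the section $\s$ on $E|_{\partial\oCM_{0,k,l+1}}$ must split coherently into sections on each component, and these restricted sections must pair with the residual $\psi_1^{n-1}$ to yield precisely the integer and fractional coefficients in the conjecture. In particular, the factors of $\tfrac{1}{2}$ and $2$ should emerge from the interplay between the two cyclic orderings of boundary markings and the homotopy class of $\s$ near each boundary node. Making this bookkeeping systematic, while simultaneously respecting the relative orientation conventions of step (iii) in Section \ref{odesc}, is where the real technical weight of the argument sits.
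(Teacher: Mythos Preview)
The statement is a \emph{conjecture}; the paper does not prove it in full. What the paper does prove is the genus-zero specialization, recorded separately as Theorem~\ref{tt22}, and its argument (Section~\ref{KDVSec}) is quite different from your sketch. The paper first differentiates the conjectured relation by $s$, then subtracts off the already-established TRR~I to reduce to the auxiliary identity
\[
2n\,\bblangle\tau_n\sigma\bbrangle^o_0 = 2\,\bblangle\tau_{n-1}\sigma\bbrangle^o_0\,\bblangle\sigma\bbrangle^o_0 + \bblangle\tau_{n-1}\bbrangle^o_0\,\bblangle\sigma^2\bbrangle^o_0,
\]
and finally verifies this identity by substituting the closed-form evaluation of Theorem~\ref{ttxx} and checking a binomial identity via closed TRR. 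There is no direct boundary-stratum computation of the open KdV relation.

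Your geometric step contains genuine errors. In genus zero there is no ``self-gluing contribution from a sphere attached to itself'': the term $-\tfrac{u}{2}\bblangle\tau_{n-1}\tau_0^2\bbrangle^c$ is a higher-genus correction that is simply absent from Theorem~\ref{tt22}, as is the term $2\bblangle\tau_{n-1}\sigma\bbrangle^o$. A genus-zero stratum count cannot produce them. Moreover, the factor $(2n+1)$ does not come from any ``standard identity expressing $\psi^n$ as $(2n+1)^{-1}$ times a boundary pull-back''; no such identity exists in the closed theory, and in the paper's argument the $(2n+1)$ appears only after combining TRR~I with the separate identity above.

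Your algebraic step is also too optimistic. The paper explicitly remarks that the equivalence of Conjectures~\ref{cc11} and~\ref{cc22} ``is certainly not obvious'' and was established only later by Buryak. In particular it does not follow by matching coefficients in $\mathcal{L}_n\mathsf{Z}=0$ and inducting via $[\mathcal{L}_{n+1},\mathcal{L}_{-1}]=(n+2)\mathcal{L}_n$: the Virasoro constraints are a much larger system than the single KdV-type recursion, and extracting the latter from the former requires the full closed KdV hierarchy in a nontrivial way.
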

Together with the string equation \eqref{gvvt}, the
system of differential equations of Conjecture \ref{cc22}
uniquely determines $F^o$ from $\blangle \sigma^3 \brangle^o_0$ and
$F^c$. For example, we calculate (using $n=1$):
$$3 \blangle \tau_1 \brangle_1^o = 2 \blangle \tau_0 \sigma \brangle_0^o
-\frac{1}{2} \blangle \tau_0^3 \brangle_0^c = \frac{3}{2}\ ,$$
so $\blangle \tau_1\brangle= \frac{1}{2}$.
In fact, the system is significantly
overdetermined. We speculate
the differential equations for $F^o$ of Conjecture \ref{cc22}
have a solution
if and only if $F^c$ satisfies Witten's KdV equations.
The agreement of Conjectures \ref{cc11} and \ref{cc22}
is certainly not obvious. However, recent work of Buryak~\cite{Bur} proves they are equivalent. Moreover, Buryak proves the consistency of the open KdV equations.

Using our construction of the descendent theory of pointed disks,
we prove the  genus 0 part of Conjecture \ref{cc22}.

\begin{thm} \label{tt22} The open analogs of the KdV equations
hold in genus zero. Namely,
\begin{equation*}
(2n+1)\blangle \blangle \tau_n  \brangle \brangle_0^o
=  \blangle \blangle \tau_{n-1} \tau_0\brangle \brangle^c_0
\blangle \blangle \tau_0  \brangle \brangle^o_0
+ 2    \bblangle \tau_{n-1}  \bbrangle^o_0
\bblangle\sigma\bbrangle^o_0 \
\end{equation*}
for $n\geq 1$.
\end{thm}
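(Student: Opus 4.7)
The plan is to reduce Theorem~\ref{tt22} to a family of integral identities on the moduli spaces $\oCM_{0,k,l+1}$ by expanding both sides of the claimed identity as formal power series in $s$ and $t_0, t_1, \ldots$. Matching Taylor coefficients produces, for each admissible tuple $(k, a_1, \ldots, a_l)$ satisfying the open dimension constraint, a finite identity of the schematic form
\begin{align*}
(2n+1)&\blangle \tau_n \prod_{i=1}^l \tau_{a_i} \sigma^k \brangle_0^o \\
&= \sum_{I \sqcup J = [l]} \binom{l}{|I|} \blangle \tau_{n-1}\tau_0 \prod_{i\in I}\tau_{a_i}\brangle_0^c \blangle \tau_0 \prod_{j\in J}\tau_{a_j}\sigma^{k}\brangle_0^o \\
&\quad + 2\sum_{\substack{I\sqcup J = [l]\\ k_1+k_2 = k+1}} \binom{l}{|I|}\binom{k}{k_1} \blangle \tau_{n-1}\prod_{i\in I} \tau_{a_i}\sigma^{k_1}\brangle_0^o \blangle \prod_{j\in J}\tau_{a_j}\sigma^{k_2}\brangle_0^o.
\end{align*}
The two sums correspond to the two codimension-one types of strata in the boundary of $\oCM_{0,k,l+1}$: interior (sphere-bubble) nodes and disk-bubble nodes.

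The geometric core of the argument is an open analog of the genus zero topological recursion relation. On $\oCM_{0,k,l+1}$, with distinguished interior marking $\star$ carrying $\tau_n$, I would construct a lift of $\psi_\star$ to the relative cohomology group $H^2(\oCM_{0,k,l+1},\p\oCM_{0,k,l+1})$ that is compatible with the multi-valued boundary section $\s$ constructed in Section~\ref{sec:bdry}, and show that this lift is Poincar\'e dual to a sum of boundary divisor classes of two kinds: (a) divisors carrying sphere-bubble nodes, where $\star$ sits on a sphere component joined to the remaining open component through an interior node, and (b) divisors carrying disk-bubble nodes, where $\star$ sits on a disk component joined through a boundary node. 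Concretely, one trivializes $\CL_\star$ using three auxiliary rigidifying points on the component containing $\star$ (a combination of interior and boundary data chosen from the available markings) and verifies that the resulting section of $\CL_\star$ admits an extension across $\p \oCM$ in the prescribed homotopy class of $\s$, away from the two listed types of boundary strata.

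With the open TRR in place, I would integrate $\psi_\star^{n-1}$ against both sides and invoke the product decomposition of the moduli space at each boundary divisor. At a type~(a) divisor, the normalized node becomes an interior marking on each side of the splitting, contributing a $\tau_0$ insertion to a closed descendent integral on the $\star$-side and to an open descendent integral on the other side; summing over distributions of the remaining $\psi_\star^{n-1}$ across the two sides yields the first term $\bblangle\tau_{n-1}\tau_0\bbrangle_0^c \bblangle \tau_0\bbrangle_0^o$. At a type~(b) divisor, the normalized node becomes a boundary marking on each side, contributing a $\sigma$ insertion to each, producing the second term. The coefficient $(2n+1)$ on the left collects the usual normalization from the genus zero TRR together with combinatorial factors from the $(n-1)$-fold redistribution. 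The factor of $2$ in front of the disk-node term reflects the interaction of the normalization $2^{(1-k)/2}$ in the definition of open descendent integrals with the splitting arithmetic at a disk node (two new boundary markings are introduced), combined with a combinatorial contribution arising from the multi-valuedness of $\s$ at a boundary node.

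The principal obstacle is Step~2: formulating and proving the open TRR. In the closed case, the TRR is a transparent cohomological identity on $\oCM_{0,n}$, but in the open setting every class must be lifted to relative cohomology using the section $\s$, whose homotopy class is tightly constrained and multi-valued. Verifying that the canonical rigidifying section of $\CL_\star$ extends across $\p \oCM_{0,k,l+1}$ in the prescribed homotopy class of $\s$ near each non-contributing boundary stratum, and that the contributions of sphere-node and disk-node divisors assemble with the correct signs and multiplicities, will require a careful local model analysis paralleling the boundary structure study of Section~\ref{sec:bdry} together with the orientation computations developed elsewhere in the paper. Once this compatibility is established, the remainder of the argument is a combinatorial matching exercise directly analogous to the derivation of the closed genus zero KdV equation from the closed TRR, and the overall identity follows by summing the geometric contributions and comparing with the Taylor expansion of the claimed right hand side.
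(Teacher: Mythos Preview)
Your plan has a real gap: the coefficient $(2n+1)$ on the left (and the factor $2$ on the open--open term) cannot be produced by the mechanism you describe. A single application of an open TRR, in the sense you outline, would at best prove an identity of the shape
\[
\bblangle \tau_n\bbrangle_0^o = \bblangle\tau_{n-1}\tau_0\bbrangle_0^c\,\bblangle\tau_0\bbrangle_0^o + c\,\bblangle\tau_{n-1}\bbrangle_0^o\,\bblangle\sigma\bbrangle_0^o
\]
with a single constant $c$ coming from the boundary-node geometry; indeed this is exactly what the paper's TRR~I (Theorem~\ref{thm:trr1_2}) gives after one $s$-differentiation, with $c=1$. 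Your sentence that ``$(2n+1)$ collects the usual normalization from the genus zero TRR together with combinatorial factors from the $(n-1)$-fold redistribution'' does not identify any actual mechanism: there is no redistribution of $\psi_\star^{n-1}$ across a single node that yields an $n$-dependent prefactor, and iterated TRR lowers $n$ rather than producing $(2n+1)\tau_n$. Likewise, the $2^{(1-k)/2}$ normalization is already absorbed in the proof of TRR~I and cannot be invoked a second time to explain the extra factor of $2$. (There are also stray $\binom{l}{|I|}$ factors in your expanded identity; summing over ordered partitions $I\sqcup J=[l]$ already accounts for the Leibniz combinatorics.)

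The paper's proof is organized quite differently and makes the source of $(2n+1)$ visible. One first differentiates the claimed identity in $s$ and subtracts TRR~I; what remains is
\[
2n\,\bblangle\tau_n\sigma\bbrangle_0^o = 2\,\bblangle\tau_{n-1}\sigma\bbrangle_0^o\,\bblangle\sigma\bbrangle_0^o + \bblangle\tau_{n-1}\bbrangle_0^o\,\bblangle\sigma^2\bbrangle_0^o,
\]
an identity with no closed factor at all. This residual relation is \emph{not} another TRR; it is verified by substituting the closed-form evaluation of Theorem~\ref{ttxx} (itself proved from TRR~I and the dilaton equation), reducing to a binomial identity, and recognizing that identity as a specialization of the closed genus~$0$ TRR applied to a carefully chosen closed descendent. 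So the $(2n+1)$ ultimately emerges from the explicit formula of Theorem~\ref{ttxx}, not from a direct geometric decomposition of $\psi_\star$.
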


A complete proposal for a theory of descendent integration in higher genus
will be presented in a forthcoming paper by J.S. and R.T.~\cite{JSRT}.
Via the construction of~\cite{JSRT}, R.T. has found a combinatorial formula that allows effective calculation of the descendent integrals in arbitrary genus~\cite{RT}. Several months after the first version of this paper appeared, Conjectures~\ref{cc11} and~\ref{cc22} were proved by A. Buryak and R.T.~\cite{ABRT} using the combinatorial formula of~\cite{RT}. A matrix model for refined open descendent integrals distinguishing contributions from surfaces with different numbers of boundary components has been conjectured in~\cite{ABT17}.

\subsection{Formulas in genus 0}\label{sec:formulae}
Descendent integration over the moduli space of compact
genus 0 Riemann surfaces with marked points has a very
simple answer,
$$\blangle \tau_{a_1} \ldots \tau_{a_l} \brangle_0^c =
\binom{l-3}{a_1,\ldots,a_l}  \ .$$
The above evaluation is easily derived from the string
equation for $F^c$ and the initial value
$$\blangle \tau_0^3 \brangle_0^o = 1 \ .$$
Alternatively, the evaluation can be derived from the
topological recursion relations \cite{Witten} for $F^c_0$.

A explicit evaluation also can be obtained for the open invariants
$$\blangle \tau_{a_1} \ldots \tau_{a_l} \sigma^k\brangle_0^o$$
in genus 0.
Using the string equation for $F^o$ of Theorem~\ref{thm:string_dilaton},
we can assume $a_i\geq 1 $ for all $i$.
By the dimension constraint,
$$-3+k+2l = \sum_{i=1}^l 2a_i\ .$$

\begin{thm}\label{ttxx}
We have the evaluation
$$\blangle \tau_{a_1} \ldots \tau_{a_l} \sigma^k\brangle_0^o =
\frac{(\sum_{i=1}^l 2a_i -l +1)!}{\prod_{i=1}^l (2a_i-1)!!} \ $$
in case $a_i \geq 1$ for all $i$.
\end{thm}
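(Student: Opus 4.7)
I would prove Theorem~\ref{ttxx} by induction on $N := \sum_{i=1}^l a_i$, using the dilaton equation (Theorem~\ref{thm:string_dilaton}) and the open KdV equation (Theorem~\ref{tt22}). The base case $N = 0$ forces $l = 0$ and $k = 3$, and the claim reduces to $\langle\sigma^3\rangle^o_0 = 1 = 1!/1$.

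For the inductive step, set $n := a_l$. If $n = 1$, apply the dilaton equation: $\langle\tau_1\prod_{i<l}\tau_{a_i}\sigma^k\rangle^o_0 = (k+l-2)\langle\prod_{i<l}\tau_{a_i}\sigma^k\rangle^o_0$. Combining the inductive formula on the right with the identity $k+l-2 = 2N-l+1$ recovers the formula on the left. If instead every $a_i \geq 2$, apply the open KdV relation of Theorem~\ref{tt22} in double-bracket form; differentiate by $\prod_{i<l}\partial/\partial t_{a_i}\cdot\partial^k/\partial s^k$ and evaluate at $t = s = 0$. The contribution $\bblangle\tau_{n-1}\tau_0\bbrangle^c_0\bblangle\tau_0\bbrangle^o_0$ produces terms involving the closed correlators $\langle\tau_{n-1}\tau_0\prod_{i\in S}\tau_{a_i}\rangle^c_0$; these vanish because the dimension equation $\sum_{i\in S}a_i = |S|-n$ cannot be satisfied for $n\geq 2$ and $a_i\geq 1$. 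The surviving recursion reads
\begin{equation*}
(2n+1)\langle\tau_n\prod_{i<l}\tau_{a_i}\sigma^k\rangle^o_0 = 2\sum_{S\subseteq\{1,\ldots,l-1\}}\binom{k}{k_1(S)}\langle\tau_{n-1}\prod_{i\in S}\tau_{a_i}\sigma^{k_1(S)}\rangle^o_0\langle\sigma^{k_2(S)+1}\prod_{i\notin S}\tau_{a_i}\rangle^o_0,
\end{equation*}
where $k_1(S), k_2(S)$ are determined by dimension. Every factor on the right is primitive with all tau-indices at least $1$ and total tau-exponent strictly less than $N$, so the inductive hypothesis applies.

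Substituting the formula into both sides and cancelling common factors reduces the inductive step to the combinatorial identity
\begin{equation*}
(2n+1)(k+l-2)! = 2(2n-1)\sum_{S\subseteq\{1,\ldots,l-1\}}\binom{k}{k_1(S)}\bigl(k_1(S)+|S|-1\bigr)!\bigl(k_2(S)+|S^c|-1\bigr)!.
\end{equation*}
\textbf{Verifying this identity is the main obstacle.} My approach combines the beta relation $(p-1)!(q-1)! = (p+q-1)!\int_0^1 t^{p-1}(1-t)^{q-1}\,dt$ (with $p = k_1(S)+|S|$, $q = k_2(S)+|S^c|$, $p+q-1 = k+l-2$) with the coefficient-extraction $\binom{k}{k_1(S)} = [u^{k_1(S)}](1+u)^k$. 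After exchanging the sum over $S$ with the coefficient extraction and the $t$-integral, the $S$-sum collapses into the product $\prod_{i<l}\bigl(t^{2a_i-1}+u^{2(a_i-1)}(1-t)^{2a_i-1}\bigr)$. The identity then reduces to showing that, for every choice of exponents $a_i\geq 1$,
\begin{equation*}
[u^{k-2}]\left((1+u)^k\int_0^1 t^{2n-2}(1-t)\prod_{i<l}\bigl(t^{2a_i-1}+u^{2(a_i-1)}(1-t)^{2a_i-1}\bigr)\,dt\right) = \frac{2n+1}{2(2n-1)},
\end{equation*}
an evaluation independent of the $a_i$'s; this can be checked by a residue calculation after the substitution $w = t/(1-t)$, or by expanding the product and summing the resulting beta integrals. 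Carrying out this evaluation cleanly is the heart of the proof.
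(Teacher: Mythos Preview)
Your proposal has a genuine circularity problem. You invoke Theorem~\ref{tt22} (the genus~$0$ open KdV equation) as an input, but in this paper Theorem~\ref{tt22} is \emph{derived from} Theorem~\ref{ttxx}: Section~\ref{KDVSec} substitutes the evaluation~\eqref{gzzq} into the KdV recursion and then verifies the resulting binomial identity~\eqref{xxzz} via a closed TRR. The plan of the paper (Section~1.7) makes the dependence explicit: the string, dilaton, and TRR relations are proved geometrically in Section~\ref{sec:geo_rec}; Theorem~\ref{ttxx} is then proved from TRR and dilaton; only afterwards are Virasoro and KdV deduced. So as written, your inductive step appeals to a result whose only available proof already assumes the theorem you are trying to establish.

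The fix is to replace KdV by TRR~I from Theorem~\ref{thm:trr1_2}, which is exactly what the paper does. The paper's induction is organized the same way as yours (dilaton handles the base case where all $a_i=1$; for $n\geq 2$ one reduces $\tau_n$ to $\tau_{n-1}$), but the inductive engine is the geometrically proven TRR~I rather than KdV. When all $a_i\geq 1$ the closed term of TRR~I vanishes for the same reason your closed KdV term does, leaving the open--open recursion~\eqref{ckkq}. Substituting the claimed evaluation reduces the inductive step to the binomial identity~\eqref{xxz}, which the paper verifies not by beta integrals or residues but by recognizing it as the content of a specific closed genus~$0$ TRR instance, equation~\eqref{ggtss}. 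This is considerably cleaner than the integral/coefficient-extraction scheme you outline, whose final evaluation you yourself flag as unfinished.
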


\noindent The double factorial of an odd positive integer is the product
of all odd integers not exceeding the argument,
$$9!!= 9 \cdot 7 \cdot 5 \cdot 3 \cdot 1\ .$$
While
such double factorials also occur~\cite{GetzlerPan} in the
formula for $\lambda_g\lambda_{g-1}$ descendent integrals
over the moduli space of $\oCM_{g,l}$ of higher
genus curves,
a direct connection is not known to us.

We derive Theorem \ref{ttxx} as a consequence of the following topological recursion relations for the open theory in genus 0.

\begin{thm}\label{thm:trr1_2}
For $n>0$, two topological recursion relations hold for~$F^o_0$:
\begin{align}
\left\langle\left\langle \tau_n \sigma \right\rangle\right\rangle^o_0 &= \left\langle\left\langle \tau_{n-1} \tau_0 \right\rangle\right\rangle^c_0 \left\langle\left\langle \tau_0 \sigma \right\rangle\right\rangle^o_0 + \left\langle\left\langle \tau_{n-1} \right\rangle\right\rangle^o_0
\left\langle\left\langle\sigma^2 \right\rangle\right\rangle^o_0, \tag{\emph{TRR I}} \label{TRRI} \\
\left\langle\left\langle \tau_n \tau_m \right\rangle\right\rangle^o_0 &= \left\langle\left\langle \tau_{n-1} \tau_0 \right\rangle\right\rangle^c_0 \left\langle\left\langle \tau_0 \tau_m \right\rangle\right\rangle^o_0 + \left\langle\left\langle \tau_{n-1} \right\rangle\right\rangle^o_0
\left\langle\left\langle\tau_m \sigma\right\rangle\right\rangle^o_0. \tag{\emph{TRR II}} \label{TRRII}
\end{align}
\end{thm}
As noted in~\cite{BaB19}, a straightforward manipulation of the above topological recursion relations gives the following.
\begin{cor}\label{cor:owdvv}
The following two open WDVV relations hold for $F_0^o:$
\[
\left\langle\left\langle \tau_m \tau_n \tau_0\right\rangle \right \rangle^c_0 \left\langle\left\langle \tau_0 \sigma \right\rangle \right \rangle^o_0 + \left\langle\left\langle \tau_m \tau_n \right\rangle \right \rangle^o_0 \left\langle\left\langle \sigma^2 \right\rangle \right \rangle^o_0 = \left\langle\left\langle \tau_m \sigma \right\rangle \right \rangle^o_0 \left\langle\left\langle \tau_n \sigma \right\rangle \right \rangle^o_0, 
\]
\begin{multline*}
 \left\langle\left\langle \tau_l \tau_m \tau_0 \right\rangle \right \rangle^c_0 \left\langle\left\langle \tau_0 \tau_n \right\rangle \right \rangle^o_0 + \left\langle\left\langle \tau_l \tau_m \right\rangle \right \rangle^o_0 \left\langle\left\langle \tau_n \sigma \right\rangle \right \rangle^o_0 =  \\ =\left\langle\left\langle \tau_m \tau_n \tau_0 \right\rangle \right \rangle^c_0 \left\langle\left\langle \tau_0 \tau_l \right\rangle \right \rangle^o_0 + \left\langle\left\langle \tau_m \tau_n \right\rangle \right \rangle^o_0 \left\langle\left\langle \tau_l \sigma \right\rangle \right \rangle^o_0.
\end{multline*}
\end{cor}

\subsection{Context and motivation}
\subsubsection{Boundary conditions}\label{sssec:intuition}
The original motivation for the definition of the boundary conditions of this paper was the construction of open Gromov-Witten invariants by Liu~\cite{Li03} in the presence of an $S^1$ action on the target space. Roughly speaking, Liu shows that the induced $S^1$ action on the boundary of the moduli space of stable $J$-holomorphic disks maps is free because it acts non-trivially on boundary nodes. So one can reduce the dimension of the boundary from real codimension $1$ to real codimension $2$ by taking the quotient. This is crucial for the definition of numerical invariants. In the context of the open descendent integrals constructed here, the target space is a point, which does not admit a non-trivial $S^1$ action. However, there is still a $1$-dimensional foliation of the boundary of the moduli space of stable disks reminiscent of the orbits of the induced $S^1$ action on the boundary of the moduli space used in Liu's construction. The leaves are paths traced when boundary nodes of stable maps are allowed to change in a certain way. We define the non-vanishing section $\s$ of $E|_{\partial\oCM_{0,k,l}}$ mentioned in Section~\ref{ssec:iconst} by the property that it is pulled back from the leaf space of the foliation. The leaf space is of real dimension two less than the real rank of $E,$ so any two non-vanishing sections pulled back from the leaf space can be connected by non-vanishing homotopy. Consequently, the open descendent integrals do not depend on the choice of $\s.$

\subsubsection{Mathematical and physical corroboration}
The open descendent integrals fit nicely in the broader field of open Gromov-Witten theory. The open WDVV equations for descendent integrals of Corollary~\ref{cor:owdvv} are closely analogous to the open WDVV equations for open Gromov-Witten invariants that have been studied extensively in~\cite{Che18,ChZ21,HS12,So07,ST19}. Open descendent integrals enter naturally in Zernik's~\cite{Zer17b,Zer17a} fixed point localization formula for equivariant open Gromov-Witten invariants of even dimensional projective spaces. The non-equivariant limit of Zernik's formula in dimension two gives Welschinger's invariants~\cite{We05} of the projective plane. Since the open string, dilaton, topological recursions, KdV equations and Virasoro constraints are highly overdetermined, their existence is in itself an indication of the naturality of the definition of open descendent integrals.


From a physics perspective, Witten and Dijkgraaf~\cite{DiW18} have explained how the open descendent integrals of the present work arise in the context of topological field theory in the spirit of topological sigma models~\cite{Wi88}. They have also derived the open Virasoro constraints from random matrix models for topological gravity with vector degrees of freedom that arise from including open strings. Connections with condensed matter physics are described as well.

\subsubsection{Open versus closed}\label{sssec:ovc}
The recursion relations for the open descendent integrals exhibit behavior that does not have a direct analog in the closed theory.
This is most clearly visible in the open topological recursion relations of Theorem~\ref{thm:trr1_2}. On the one hand, the closed-open terms
\begin{equation}\label{eq:coterms}
\left\langle\left\langle \tau_{n-1} \tau_0 \right\rangle\right\rangle^c_0 \left\langle\left\langle \tau_0 \sigma \right\rangle\right\rangle^o_0, \qquad \left\langle\left\langle \tau_{n-1} \tau_0 \right\rangle\right\rangle^c_0 \left\langle\left\langle \tau_0 \tau_m \right\rangle\right\rangle^o_0,
\end{equation}
from equations~\eqref{TRRI} and~\eqref{TRRII} are analogous to the right hand side of the closed topological recursion relations,
\begin{equation}\label{eq:ctrr}
\left\langle\left\langle \tau_{n} \tau_m \tau_l \right\rangle\right\rangle^c_0 =
\left\langle\left\langle \tau_{n-1} \tau_0 \right\rangle\right\rangle^c_0 \left\langle\left\langle \tau_0 \tau_m \tau_l\right\rangle\right\rangle^c_0.
\end{equation}
In both cases, the product of correlators arises from a stratum $T^{i}$ in the relevant moduli space consisting of two component stable Riemann surfaces with a single interior node. The interior node contributes an interior marked point to each component of the surface, and hence a $\tau_0$ insertion in each correlator of the product.

On the other hand, the open-open terms
\begin{equation}\label{eq:ooterms}
\left\langle\left\langle \tau_{n-1} \right\rangle\right\rangle^o_0
\left\langle\left\langle\sigma^2 \right\rangle\right\rangle^o_0, \qquad \left\langle\left\langle \tau_{n-1} \right\rangle\right\rangle^o_0
\left\langle\left\langle\tau_m \sigma\right\rangle\right\rangle^o_0
\end{equation}
from equations~\eqref{TRRI} and~\eqref{TRRII} exhibit unexpected behavior. The product of correlators arises from a stratum $T^{b}$ in the stable disk moduli space consisting of two component stable disks with a boundary node. Unlike the case of the interior node, the boundary node contributes a boundary marked point to each component of the surface, but only contributes a $\sigma$ insertion to one correlator of the product.

The unusual form of the open-open terms reflects a fundamental difference in the way they arise. The proof of both the open and closed topological recursion relations uses a special section $t$ of one of the cotangent line bundles. The section $t$ vanishes on the stratum~$T^{i}$ giving rise to the closed-open terms~\eqref{eq:coterms} and the right hand side of~\eqref{eq:ctrr}. Over the locus $T^b$, the section $t$ does not vanish, but rather it fails to agree with the boundary condition. The open-open terms~\eqref{eq:ooterms} quantify the lack of agreement.

The open dilaton equation~\eqref{eq:dilaton} also exhibits unexpected behavior. According to the closed dilaton equation~\eqref{eq:cdil}, a $\tau_1$ insertion multiplies the value of a descendent integral by the Euler characteristic of the relevant surface after removing marked points. A natural guess would be that a $\tau_1$ insertion multiplies the value of an open descendent integral by one-half the Euler characteristic of the closed double surface after removing marked points. That would result in a factor of $(g-1 + k/2 +l)$ in~\eqref{eq:dilaton} instead of the correct factor of $(g-1 + k + l).$ It will be seen in the proof that this discrepancy directly reflects the definition of the boundary conditions.

\subsubsection{The obstacle to conjugation symmetry arguments}
Before considering the approach to open descendent integrals based on boundary conditions presented in this paper, we attempted to use a complex conjugation symmetry argument as in~\cite{Cho,Ge16,So06}. Roughly speaking, the argument seeks to cancel certain boundary components of the stable disk moduli space by gluing pairs of components related by complex conjugation. For this to work, the gluing maps must always reverse orientation relative to the bundle $E$ of~\eqref{eq:intrE}. However, in the case of open descendent integrals, the signs of the gluing maps vary. Equivalently, the possibility of defining open descendent integrals by integration over the moduli space of real stable curves obtained by a doubling construction is obstructed by lack of orientability.

The phenomenon of boundaries that cannot be cancelled by complex conjugation symmetry appears also in the context of open Gromov-Witten invariants in dimensions greater than three~\cite{ST21}. The problem occurs when the invariants include interior constraints of even complex codimension or boundary constraints. Similarly, the problem with open descendent invariants arises on account of even powers of the classes $\psi_i$ and boundary marked points. Genus zero open descendent invariants necessarily involve boundary marked points. Otherwise, the real dimension of the moduli space is odd, while the cohomology class to be integrated is of even degree.

\subsubsection{Beyond the point}
Building on the present work, Buryak, Clader and R.T.~\cite{BCT20a,BCT20b} define a genus zero open $r$-spin theory and prove an open analog of Witten's conjectures~\cite{Wi91}. Buryak, R.P, R.T. and Zernik~\cite{BCTZ20} define stationary open descendent invariants for $(\C P^1, \R P^1)$ and calculate them by fixed point localization. They also give conjectural formulas for higher genus invariants.

Let $X$ be a symplectic manifold. In forthcoming work of Giterman and J.S.~\cite{GiS}, descendent open Gromov-Witten invariants are defined for a large class of Lagrangian submanifolds $L \subset X.$ The definition recasts the construction of the present paper in the general framework for open Gromov-Witten theory developed by J.S. and Tukachinsky~\cite{ST21}. The foliation used in defining the boundary conditions for open descendent invariants is related to the unit in a deformation of the Fukaya $A_\infty$ algebra of $L$ that arises from descendent classes. The unit plays a role in the definition of weak bounding cochains, which are used to cancel boundaries of moduli spaces of stable disk maps. A major challenge arises because the strict cyclic structure of the usual Fukaya $A_\infty$ algebra deforms only in the homotopy sense in the presence of descendent classes.

Basalaev and Buryak~\cite{BaB19} have formulated a conjectural extension of the open Virasoro constraints of the present work in any situation where the open WDVV equations hold. For example, J.S. and Tukachinsky~\cite{ST19} showed that the genus zero open Gromov-Witten invariants of~\cite{ST21} satisfy the open WDVV equations. So, the Virasoro constraints of Basalaev-Buryak predict the behavior of conjectural higher genus open Gromov-Witten invariants in that context.

\subsubsection{Descendents in SFT}
Descendent classes in the context of moduli spaces with boundary have previously been considered in the work of Fabert~\cite{Fab11} and Fabert-Rossi~\cite{FaR11,FaR13} on symplectic field theory (SFT), based on ideas of Eliashberg~\cite{Eli07}. A collection of homological invariants is defined that satisfy analogs of the divisor, dilaton and string equations as well as a version of the topological recursion relations.

A similar construction could be used to deform the Fukaya $A_\infty$ algebra of a Lagrangian submanifold. However, to define descendent open Gromov-Witten invariants within the framework of~\cite{ST21} additional steps are required. The descendent deformation of the Fukaya $A_\infty$ algebra must preserve the unit and the cyclic structure. The cyclic structure can only be deformed in the homotopy sense. Moreover, one must construct a canonical class of bounding cochains in the deformed algebra. These constructions are carried out in~\cite{GiS}. The open descendent integrals of the present work can be thought of as the descendent open Gromov-Witten invariants of the point. The structures and constructions used in the general definition of descendent open Gromov-Witten invariants were found in part by unwinding the definition of boundary conditions given here.

The works on descendent classes in SFT~\cite{Fab11,FaR11,FaR13} consider the algebraic structures in SFT that arise from Riemann surfaces without boundary. These structures are analogous to closed Gromov-Witten theory. There are also algebraic structures in SFT that arise from Riemann surfaces with boundary, analogous to the Fukaya category and open Gromov-Witten theory. Some aspects of the structures in SFT arising from surfaces with boundary are outlined in Section~2.8 of~\cite{EGH00} with an emphasis on the new challenges that arise.
The open descendent integrals of the present work belong to open Gromov-Witten theory. As explained in Section~\ref{sssec:ovc}, phenomena that are unique to Riemann surfaces with boundary play a central role.

\subsection{Plan of the paper}
In Section \ref{sec:mod} we review the moduli space of stable marked disks and discuss stable graphs.
In Section \ref{sec:bdry} we define the canonical boundary conditions and the open descendent integrals. We then define the more subtle special canonical boundary conditions and show they exist. We prove the string and dilaton equations and the topological recursion relations using geometric methods in Section~\ref{sec:geo_rec}. In Sections~\ref{VirSec} and~\ref{KDVSec}, we prove the genus $0$ open Virasoro relations and KdV equations using the string and dilaton equations and assuming the genus $0$ formula of Theorem~\ref{ttxx}. Finally, in Section~\ref{EVSec} we prove the genus $0$ formula using the open topological recursion relations and the dilaton equation.

\subsection{Acknowledgments}
We thank A. Buryak, Y.-P. Lee, A. Okounkov, Y. Ruan, S. Shadrin, A. Solomon, J. Walcher, E. Witten and A. Zernik,
for discussions related to the work presented here.

R.P. was supported by SNF-200020-182181, ERC-2017-AdG-786580-MACI, and SwissMAP. The project has received funding from the European Research Council (ERC) under the European Union Horizon 2020 research and innovation program (grant agreement No 786580). J.S. and R.T. were supported by BSF grant 2008314 and ERC Starting Grant 337560. J.S. was supported by ISF grant 569/18.

The project was started at Princeton University in 2007 with support by the NSF. Part of the work was completed during visits of J.S. and R.T. to the Forschungsinstitut f\"ur Mathematik at ETH Z\"urich in 2013.

\section{Moduli of disks}\label{sec:mod}
\subsection{Conventions}
We begin with some useful notations and comments.
\begin{nn}
Throughout this paper the notation $\dim_\C$ ($rk_\C$) will mean $\frac{\dim_\R}{2}$ ($\frac{rk_\R}{2}$).
\end{nn}
Throughout this paper whenever we say a manifold, unless specified otherwise, we mean a smooth manifold with corners in the sense of \cite{Joyce}. Similarly, notions which relate to manifolds or maps between them are in accordance with that article.
\begin{nn}
We write $\Delta$ for the standard unit disk in $\C,$ with the standard complex structure.
\end{nn}

\begin{nn}
For a set $A$ denote by $\rz{A}$ the set
\[
\left\{\rz{x}~\text{for}~x\in A\right\}.
\]
For $l\in\N$, we use the notation $[l]$ to denote $\left\{1,2,\ldots,l\right\}.$ We write $[0]$ for the empty set.
We also denote by $[\rz{l}]$ the set $\rz{[l]}.$
\end{nn}
\begin{nn}
For a set $A$ write $2^A_{fin}$ for the collection of finite subsets of $A$.
We say that $B\subseteq 2^A_{fin}$ is a \emph{disjoint subset} if its elements are pairwise disjoint.
\end{nn}
\begin{nn}
Put $\LL = 2^{\Z\cup\rz{\Z}}_{fin}.$ Throughout the article we identify $i\in\Z\cup\rz{\Z}$ with $\left\{i\right\}\in\LL,$ without further mention.
We denote by $2^\LL_{fin,disj}$ the collection of finite disjoint subsets $A$ of $\LL$, such that $\emptyset \notin A.$ For $A\in 2^\LL_{fin,disj},$ let $\cup A \in \LL$ denote the union of its elements as sets.
\end{nn}

\begin{rmk}
We use the set $\LL$ to label the marked points and nodal points of stable disks. Here is an informal description of how the labeling procedure works.
Typically, we begin with a stable disk with boundary marked points labeled by single element subsets of $\Z^\circ$ and interior marked points labeled by single element subsets of $\Z.$ See, for example, Notation~\ref{nn:0BI}. Each nodal point of the stable disk, whether boundary or interior, is labeled canonically on each of the two components to which it belongs by a finite set. The finite set is the union of the labels of the marked points on the components of the stable disk that can be reached by starting from the given component and passing through the given nodal point. A formal description of the labeling procedure for nodal points is given in Definition~\ref{df:iv}. Ultimately, we split the stable disk into subdisks by cutting it apart at one or more nodal points. The nodal points at which the splitting occurs become marked points on subdisks. The new marked points inherit the canonical labels of their parent nodal points. The splitting procedure is formalized in Section~\ref{subs:defs_bundles} and more generally in Definition~\ref{def:base_operator}.
\end{rmk}

\subsection{Stable disks}
Throughout the paper markings will be taken from $\LL$.
We recall the notion of a \emph{stable marked disk}.

\begin{definition}
Given $B,I\in 2^\LL_{fin,disj}$ with $B \cap I = \emptyset$ and $B\cup I$ disjoint, we define a $(B,I)-$\emph{marked smooth surface} to be a triple
\[
\left(\Sigma, \{z_i\}_{i\in B}, \{z_i\}_{i\in I}\right)
\]
where
\begin{enumerate}
\item
$\Sigma$ is a Riemann surface with boundary.
\item
For each $i\in B,~z_i\in \partial \Sigma.$
\item
For each $i\in I,~z_i\in \mathring{ \Sigma}.$
\end{enumerate}
We call \emph{B the set of boundary labels}. We call \emph{I the set of interior labels}.
\end{definition}
We sometimes omit the marked points from our notations. Given a smooth marked surface $\Sigma$, we write $B\left(\Sigma\right)$ for the set of its boundary labels. We also use $B\left(\Sigma\right)$ to denote the set of boundary marked points of $\Sigma.$ Similarly, we write $I\left(\Sigma\right)$ the set of interior labels of $\Sigma$, and again, we also write $I\left(\Sigma\right)$ for the set of internal marked points of $\Sigma.$

\begin{definition}\label{def:prestable}
Given $B,I\in 2^\LL_{fin,disj}$ with $B \cap I = \emptyset$ and $B\cup I$ disjoint, a $(B,I)-$\emph{pre-stable marked} genus $0$ surface is a tuple
\[
\Sigma = \left(\left\{\Sigma_\alpha\right\}_{\alpha\in\mathcal{D}\coprod \mathcal{S}}, \sim_B , \sim_I\right),
\]
where
\begin{enumerate}
\item
$\mathcal{D}$ and $\mathcal{S}$ are finite sets.
For $\alpha\in\mathcal{D},$ $\Sigma_\alpha$ is a smooth marked disk; for $\alpha\in\mathcal{S},$ $\Sigma_\alpha$ is a smooth marked sphere.
\item
An equivalence relation $\sim_B$ on the set of all boundary marked points, with equivalence classes of size at most $2$. An equivalence relation $\sim_I$ on the set of all internal marked points, with equivalence classes of size at most $2$.
\end{enumerate}
The two equivalence relations $\sim_B$ and $\sim_I$ taken together are denoted by $\sim.$ The above data  satisfies
\begin{enumerate}
\item
$B$ is the set of labels of points belonging to $\sim_B$ equivalence classes of size $1.$ $I$ is the set of labels of points belonging to $\sim_I$ equivalence classes of size $1.$
\item
The topological space $\coprod_{\alpha\in\mathcal{D}\cup\mathcal{S}}\Sigma_\alpha/\!\!\sim$ is connected and simply connected.
\item\label{it:g0}
The topological space $\coprod_{\alpha\in\mathcal{D}}\Sigma_\alpha/\!\!\sim_B$ is connected or empty.
\end{enumerate}
We also write $\Sigma = \coprod_{\alpha\in\mathcal{D}\cup\mathcal{S}}\Sigma_\alpha/\!\!\sim.$ If $\mathcal{D}$ is empty, $\Sigma$ is called a \emph{pre-stable marked sphere}. Otherwise it is called a \emph{pre-stable marked disk}.
We denote by $\mathcal{M}_B \left(\Sigma_\alpha\right)$ the set of labels of boundary marked points of $\Sigma_\alpha$ which belong to $\sim_B$ equivalence classes of size~$1.$ We define $\mathcal{M}_I \left(\Sigma_\alpha\right)$ similarly. The $\sim_B$ (resp. $\sim_I$) equivalence classes of size $2$ are called boundary (resp. interior) nodes.

A smooth marked disk $D$ is called \emph{stable} if
\[
|B\left(D\right)|+2|I\left(D\right)|\geq 3.
\]
A smooth marked sphere is \emph{stable} if it has at least $3$ marked points.
A pre-stable marked genus $0$ surface is called a \emph{stable marked genus $0$ surface} if each of its constituent smooth marked spheres and smooth marked disks are stable.
\end{definition}

\begin{nn}
In case $B = \rz{A}$ for some $A$, we denote the marked point~$z_{\rz{i}},$ for $\rz{i}\in B,$ by $x_i.$ In this case we also use the notation $\left(\Sigma, \mathbf x, \mathbf z\right)$ to denote a stable marked surface, where $\mathbf x = \{x_i\}_{i^\circ \in B(\Sigma)}$ and $\mathbf z = \{z_i\}_{i \in I(\Sigma)}.$
\end{nn}

\begin{definition}
Let $\Sigma,\Sigma',$ be stable marked genus $0$ surfaces with $B\left(\Sigma\right) = B\left(\Sigma'\right)$ and $I\left(\Sigma\right) = I\left(\Sigma'\right).$ An \emph{isomorphism} $f : \Sigma \to \Sigma'$ is a homeomorphism such that
\begin{enumerate}
\item
For each $\alpha \in \mathcal D \cup \mathcal S,$ the restriction $\left.f\right|_{\Sigma_\alpha}$ maps $\Sigma_\alpha$ biholomorphically to some $\Sigma_{\alpha'}$ for $\alpha' \in \mathcal D' \cup \mathcal S'.$
\item
For each $i \in B\left(\Sigma\right)\cup I\left(\Sigma\right)$ there holds $f\left(z_i\right) = z'_i.$
\end{enumerate}
\end{definition}

\begin{rmk}
The automorphism group of a stable marked genus $0$ surface is trivial.
\end{rmk}

\subsection{Stable graphs}
It is useful to encode some of the combinatorial data of stable marked disks in graphs.
\begin{definition}\label{def:stg}
A (not necessarily connected, genus $0$) \emph{pre-stable graph $\Gamma$} is a tuple
$\left(V = V^{O}\cup V^{C} ,E,\ell_I,\ell_B\right)$,
where
\begin{enumerate}
\item
$V^O , V^C$, are finite sets.
\item
$E$ is a subset of the set of (unordered) pairs of elements of $V$.
\item
$\ell_I : V\rightarrow 2^\LL_{fin,disj}$, $\ell_B : V^O\rightarrow2^\LL_{fin,disj}$.
\end{enumerate}
We call the elements of $V$ the \emph{vertices} of $\Gamma$, where $V^O$ are the \emph{open vertices}, and $V^C$  are the \emph{closed vertices}. We call the elements of $E$ the \emph{edges} of $\Gamma$. An edge between open vertices is called a \emph{boundary edge}. The other edges are called \emph{interior edges}. We call $\ell_I\left(v\right)$ the \emph{interior labels} of $v$, and $\ell_B\left(v\right)$ the \emph{boundary labels}.
We demand that $\Gamma$ satisfies
\begin{enumerate}
\item
The graph $\left(V,E\right)$ is a forest, namely, a collection of trees.
\item\label{it:blabla}
If $v,u\in V^O$ belong to the same connected component of $\Gamma$, they also belong to the same connected component in the subgraph of $\Gamma$ spanned by $V^O$.
\item
The sets $\ell_I\left(v\right)$ for $v\in V$ and $\ell_B(v)$ for $v \in V^O$ are collectively pairwise disjoint. That is, labels are unique.
\item\label{it:annoyance}
\begin{enumerate}
\item\label{it:1ct}
For $W \subset V$ spanning a connected component of $\Gamma,$ the subset $\cup_{v \in W} (\ell_B(v) \cup \ell_I(v)) \subset 2^\LL_{fin}$ is disjoint.
\item\label{it:2ct}
For $i = 1,2,$ let $W_i \subset V$ span connected components of $\Gamma$ and let $U_i \subset \cup_{v \in W_i} (\ell_B(v) \cup \ell_I(v))$ be proper subsets that are disjoint. Then $\cup U_1 \neq \cup U_2.$
\end{enumerate}
\end{enumerate}
We say that $\Gamma$ is connected if its underlying graph, $\left(V,E\right)$ is connected.
\end{definition}

\begin{rmk}\label{rmk:4}
Condition \ref{it:blabla} above means that each connected component of closed vertices is a tree rooted in a neighbor of an open vertex. Other vertices in this tree have no open neighbors. The root has a unique open neighbor. This is a combinatorial analog of the geometric condition~\ref{it:g0} of Definition~\ref{def:prestable}.
\end{rmk}

\begin{rmk}
Condition~\ref{it:annoyance} is designed to achieve the following:
\begin{itemize}
\item
The operator $\CB$ of Definition~\ref{def:base_operator} takes stable graphs to stable graphs.
\item
The operator $\partial$ of Definition~\ref{def:bdry_maps} takes stable graphs to stable graphs.
\end{itemize}
Part~\ref{it:1ct} ensures the label sets $\ell_I(v),\ell_B(v) \subset 2^\LL_{fin},$ remain disjoint under the above operations. Part~\ref{it:2ct} ensures labels remain unique.
\end{rmk}

\begin{nn}
For a vertex $v\in V$, denote by $E_v \subseteq E$ the set of edges containing $v.$ We denote by $E_v^{I}$ the set of interior edges of $v$ and by $E^I$ the set of all interior edges of $\Gamma.$
For $v\in V^{O}$, denote by $E_v^{B}$ the set of boundary edges of $v$. Denote by $E^B$ the set of all boundary edges of $\Gamma.$
We define
\[
B\left(v\right) = \ell_B\left(v\right)\cup E_v^{B}, \qquad I\left(v\right) = \ell_I\left(v\right)\cup E_v^{I},
\]
and we set $k\left(v\right) = \left|B\left(v\right)\right|, l\left(v\right) = \left|I\left(v\right)\right|.$
We also write \[
B\left(\Gamma\right) = \cup_{v\in V^O}\ell_B\left(v\right),\qquad I\left(\Gamma\right) = \cup_{v\in V}\ell_I\left(v\right).
\]
We define $k\left(\Gamma\right) = \left|B\left(\Gamma\right)\right|, l\left(\Gamma\right) = \left|I\left(\Gamma\right)\right|$.
Finally, if $i\in I\left(\Gamma\right)$ we define $v_i = v_i\left(\Gamma\right)\in V$ to be the unique vertex $v\in V$ with $i\in\ell_I\left(v\right).$

For $\Gamma$ a pre-stable graph, we write $V\left(\Gamma\right), E\left(\Gamma\right),\ell_I^\Gamma,\ell_B^\Gamma,$ for the sets of vertices, edges, interior labels and boundary labels respectively. Similarly, we write $V^C(\Gamma)$ and so on. We also use analogously defined notation $I^\Gamma(v), B^\Gamma(v).$

Given a pre-stable graph $\Gamma$, we define
\[
\varepsilon = \varepsilon_\Gamma:V\left(\Gamma\right)\to\left\{O,C\right\}
\]
by $\varepsilon\left(v\right) = O$ if and only if $v\in V^O$. In specifying a stable graph, we may specify $\varepsilon$ instead of specifying the partition $V = V^O \cup V^C.$
\end{nn}

Although $\ell_B$ was defined only for boundary vertices, we sometimes write $\ell_B\left(v\right)=\emptyset$ for $v\in V^C.$ Similarly, we set $B\left(\Gamma\right)=\emptyset$ in case $V^O=\emptyset.$
\begin{definition}
An open vertex $v$ in a pre-stable graph $\Gamma$ is called \emph{stable} if
$k\left(v\right)+2l\left(v\right)\geq 3.$
A closed vertex $v$ in a pre-stable graph $\Gamma$ is called \emph{stable} if
$l\left(v\right)\geq 3.$
If all the vertices of $\Gamma$ are stable we say that $\Gamma$ is \emph{stable}.
We denote by $\CG$ the collection of all stable graphs.
\end{definition}

To each stable marked genus $0$ surface $\Sigma$ we associate a connected stable graph as follows. We set $V^O = \mathcal D$ and $V^C = \mathcal S.$ For $v \in V,$ we set
\[
\ell_B \left(v\right) = \mathcal{M}_B\left(\Sigma_v\right),\qquad \ell_I\left(v\right) = \mathcal{M}_I\left(\Sigma_v\right).
\]
An edge between two vertices corresponds to a node between their corresponding components. One easily checks that the associated stable graph is well defined and satisfies all the requirements of the definitions. Moreover, $\Sigma$ is a stable marked disk if and only if $V^O\neq\emptyset$. Otherwise it is a stable marked sphere.
\begin{nn}
The graph associated to a stable disk $\Sigma$ is denoted by $\Gamma\left(\Sigma\right)$.
\end{nn}

\subsection{Smoothing and boundary}
\begin{definition}\label{def:smoothing}
The \emph{smoothing} of a stable graph $\Gamma$ at an edge $e$ is the stable graph
\[
d_e\Gamma = d_{\left\{e\right\}}\Gamma = \Gamma' = \left(V', E', \ell'_I, \ell'_B\right)
\]
defined as follows. Write $e = \left\{u,v\right\}.$ The vertex set is given by
\[
V'= \left(V\setminus\left\{u,v\right\}\right)\cup\left\{uv\right\}.
\]
The new vertex $uv$ is closed if and only if both $u$ and $v$ are closed.
Writing
\begin{equation*}
E_{uv}' = \left\{\left.\left\{w,uv\right\}\right| \left\{w,u\right\}\in E \text{ or }\left\{w,v\right\}\in E\text{ and }w\neq u,v\right\},
\end{equation*}
we set
\[
E' = \left(E\setminus \left(E_u \cup E_v\right)\right) \cup E_{uv}'.
\]
Furthermore,
\begin{align*}
&\ell'_I (w) = \ell_I (w), \qquad w\in V'\setminus\left\{uv\right\}, \\
&\ell'_I (uv) = \ell_I (u)\cup\ell_I (v),
\end{align*}
and similarly for $\ell_B$.

Observe that there is a natural proper injection $E'\hookrightarrow E$, so we may identify $E'$ with a subset of $E$. Using the identification, we extend the definition of smoothing in the following manner.
Given a set $S=\left\{e_1 , \ldots, e_n\right\}\subseteq E\left(\Gamma\right)$, define the smoothing at $S$ as
\[
d_S\Gamma = d_{e_n}\left(\ldots d_{e_2}\left(d_{e_1}\Gamma\right)\ldots\right).
\]
Observe that $d_S\Gamma$ does not depend on the order of smoothings performed.
\end{definition}
Note that in case $\Gamma = d_S\Gamma'$, we have a natural identification between $E\left(\Gamma\right)$ and $E\left(\Gamma'\right)\setminus S$.
\begin{definition}\label{def:bdry_maps}
We define the boundary maps
\begin{gather*}
\partial : \CG \to 2^\CG, \qquad \pu : \CG \to 2^{\CG}, \qquad \pB : \CG \to 2^{\CG},
\end{gather*}
by
\begin{gather*}
\partial\Gamma = \left\{\left.\Gamma' \right| \exists \emptyset\neq S\subseteq E\left(\Gamma'\right),\, \Gamma = d_S\Gamma'\right\},\qquad
\pu \Gamma = \{\Gamma\} \cup \partial \Gamma,\\
\pB\Gamma = \left\{\Gamma' \left| \Gamma' \in \partial \Gamma, \, |E^B(\Gamma')| \geq 1\right.\right\}.
\end{gather*}
Denote also by $\partial$ the map $2^\CG \to 2^\CG$ given by
\[
\partial\{\Gamma_\alpha\}_{\alpha \in A}  =  \bigcup_{\alpha \in A} \partial \Gamma_\alpha.
\]
and similarly for $\pu,\pB$.
\end{definition}

\subsection{Moduli and orientations}
\begin{nn}\label{nn:0BI}
For $B,I\in 2^\LL_{fin,disj}$ with $B \cap I = \emptyset$ and $B\cup I$ disjoint, denote by $\oCM_{0,B,I}$ the set of isomorphism classes of stable marked disks whose set of boundary labels is $B$ and whose set of interior labels is $I.$
Denote by $\CM_{0,B,I}$ the subset of $\oCM_{0,B,I}$ consisting of isomorphism classes of smooth marked disks.
We denote by $\oCM_{0,I}$ the set of isomorphism classes of stable marked spheres whose label set is $I.$
Let $\CM_{0,I}$ be the set of isomorphism classes of smooth marked spheres with label set $I$.
For $\Gamma \in \CG$, denote by $\CM_\Gamma$ the set of isomorphism classes of stable marked genus zero surfaces with associated graph $\Gamma$.
Define
\[
\oCM_\Gamma = \coprod_{\Gamma'\in\pu\Gamma}\CM_{\Gamma'}.
\]
We abbreviate $\oCM_{0,k,l} = \oCM_{0,[\rz{k}],[l]}$. We may also write $\oCM_{0,k,I}$, $\oCM_{0,B,l},$ with the obvious meanings. Similarly, we abbreviate $\oCM_{0,n} = \oCM_{0,[n]}$.
\end{nn}
When we say that a stable marked disk belongs to $\oCM_{0,B,I},$ we mean that its isomorphism class is in $\oCM_{0,B,I}$. The same applies to the other sets defined above as well.
\begin{nn}\label{nn:G0kl}
Given nonnegative integers $k,l$ with $k+2l\geq 3,$ denote by $\Gamma_{0,k,l}$ the stable graph with $V^O = \left\{*\right\}, V^C = \emptyset,$ and with
\[
\ell_B\left(*\right)=[\rz{k}], \qquad \ell_I\left(*\right) = [l].
\]
\end{nn}
\begin{rmk}
The above moduli of stable marked disks are smooth manifolds with corners. We have
\[
\dim_\R \oCM_{0,k,l} = k + 2l - 3.
\]
A stable marked disk with $b$ boundary nodes belongs to a corner of the moduli space $\oCM_{0,k,l}$ of codimension $b.$ Thus $\partial \oCM_{0,k,l}$ consists of stable marked disks with at least one boundary node. That is,
\[
\partial\oCM_{0,k,l} = \coprod_{\Gamma \in \pB\Gamma_{0,k,l}} \CM_{\Gamma}.
\]
For foundational work on the manifold with corners structure of moduli spaces of stable marked disks, several references are available. One approach is Theorem~7.1.44 of~\cite{FO09}. A treatment based on hyperbolic geometry is given in Section~4 of~\cite{Li03}, which considers moduli spaces of stable marked bordered Riemann surfaces of arbitrary genus. Moduli spaces of stable disk maps to homogenous spaces are studied in~\cite{Zer17} using complex geometry. In the special case the target symplectic manifold and Lagrangian submanifold are both the point, one obtains moduli spaces of stable marked disks.
\end{rmk}
In the following, building on the discussion in~\cite[Section 2.1.2]{FO09}, we describe a natural orientation on the spaces $\oCM_{0,k,l}$ for $k$ odd. We start by recalling a few useful facts and conventions. Let $\Sigma$ be a genus zero smooth marked surface with boundary and denote by $j$ its complex structure. For $p \in \Sigma,$ and $v \in T_p\Sigma,$ we follow the convention that $\{v,jv\}$ is a complex oriented basis. The complex orientation $\Sigma$ induces an orientation of $\partial\Sigma$ by requiring the outward normal at $p \in \partial\Sigma$ followed by an oriented vector in $T_p\partial\Sigma$ to be an oriented basis of $T_p\Sigma.$ The orientation of $\partial\Sigma$ gives rise to a cyclic order of the boundary marked points. Denote by $\CMm_{0,k,l} \subset \CM_{0,k,l}$ the component where the induced cyclic order on the boundary marked points is the usual order on $[k].$  Denote by $\oCMm_{0,k,l}$ the corresponding component of $\oCM_{0,k,l}.$

The fiber of the forgetful map $\oCM_{0,k,l+1} \to \oCM_{0,k,l}$ is homeomorphic to a disk. It inherits the complex orientation from an open dense subset that carries a tautological complex structure. For $k \geq 1,$ the fiber of the forgetful map $\oCM_{0,k+1,l} \to \oCM_{0,k,l}$ is homeomorphic to a closed interval. An open subset of this closed interval comes with a canonical embedding into the boundary of a disk, which induces an orientation on the fiber. The fiber of the forgetful map $\oCMm_{0,k+2,l} \to \oCMm_{0,k,l}$ is homeomorphic to $[0,1]^2.$
\begin{figure}[ht]
\centering
\includegraphics[scale=.15]{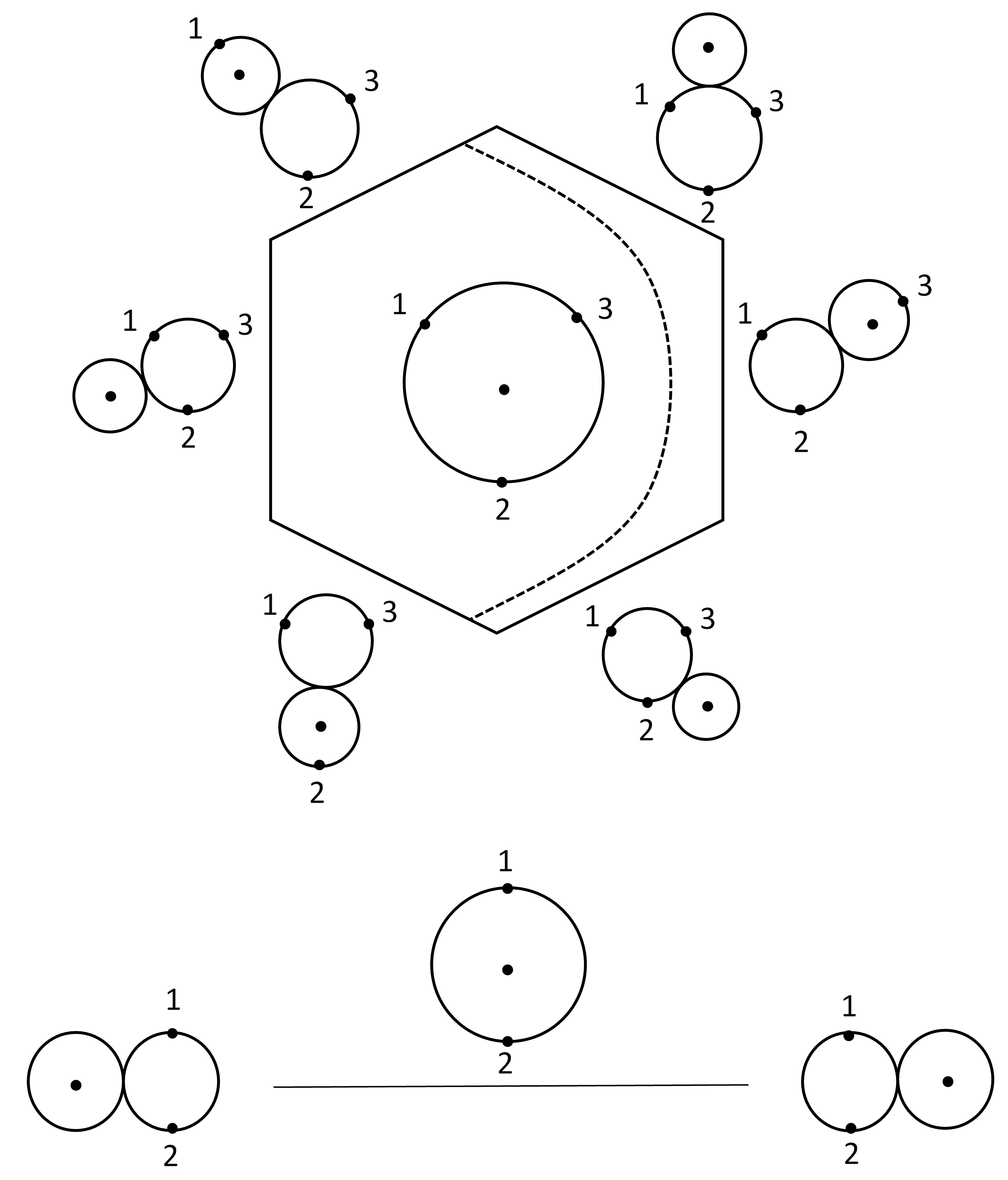}
\caption{The hexagon represents $\oCMm_{0,3,1}$ and the line segment represents $\oCMm_{0,2,1}.$ The dotted line represents a typical fiber of the forgetful map. Stable marked disks are shown that represent a typical point of each space as well as a typical point of each boundary component.}
\label{fig:fibers}
\end{figure}
\begin{ex}
The space $\oCMm_{0,2,1} = \oCM_{0,2,1}$ is diffeomorphic to an interval. The endpoints are given by products of the form $\oCM_{0,1,1} \times \oCM_{0,3,0}.$ See the bottom of Figure~\ref{fig:fibers}.
The space $\oCMm_{0,2,1}$ is the fiber of the forgetful map $\oCMm_{0,2,1} \to \oCMm_{0,1,1}.$

The moduli space $\oCMm_{0,3,1}$ is diffeomorphic to a hexagon. Boundary components can be either $\oCMm_{0,3,0} \times \oCMm_{0,2,1}$ or $\oCMm_{0,4,0} \times \oCMm_{0,1,1}.$ See the top of Figure~\ref{fig:fibers}.


The space $\oCMm_{0,3,1}$ is the fiber both of the forgetful map $\oCMm_{0,3,1} \to \oCMm_{0,1,1}$ and the forgetful map $\oCMm_{0,3,1} \to \oCMm_{0,3,0}.$ It is homeomorphic but not diffeomorphic to both $D^2$ and $[0,1]^2.$ The dotted line in the hexagon in Figure~\ref{fig:fibers} represents the fiber of the forgetful map $\oCMm_{0,3,1} \to \oCMm_{0,2,1}.$ It is diffeomorphic to the interval $[0,1].$ The fiber over the right hand endpoint of the interval is the union of three boundary components, so it is not a smooth submanifold, but it is still homeomorphic to the interval $[0,1].$
\end{ex}

We fix the orientation of the fiber of the forgetful map $\oCMm_{0,k+2,l} \to \oCMm_{0,k,l}$ by identifying the first factor of $[0,1]^2$ with the fiber of the map forgetting the $k+1$ marked point and the second factor with the fiber of the map forgetting the $k+2$ marked point. This orientation is called the \emph{natural orientation} below.
\begin{lm}\label{lm:or}
Let $k$ be odd and $l$ arbitrary. There exists a unique collection of orientations $o_{0,k,l}$ for the spaces $\oCM_{0,k,l}$ with the following properties:
\begin{enumerate}
\item\label{it:zero}
In the zero dimensional cases $k = 1,l=1,$ and $k = 3,l=0,$ the orientations $o_{0,k,l}$ are positive at each point.
\item\label{it:perm}
$o_{0,k,l}$ is invariant under permutations of interior and boundary labels.
\item\label{it:int}
$o_{0,k,l+1}$ agrees with the orientation induced from $o_{0,k,l}$ by the fibration $\oCM_{0,k,l+1} \to \oCM_{0,k,l}$ and the complex orientation on the fiber.
\item\label{it:bdry}
$o_{0,k+2,l}$ agrees with the orientation induced from $o_{0,k,l}$ by the fibration $\oCMm_{0,k+2,l} \to \oCMm_{0,k,l}$ and the natural orientation on the fiber.
\end{enumerate}
\end{lm}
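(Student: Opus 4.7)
The plan is to prove existence and uniqueness of $o_{0,k,l}$ by induction on the real dimension $N = k + 2l - 3$ of $\oCM_{0,k,l}.$ The base cases $N = 0,$ namely $(k,l) \in \{(1,1), (3,0)\},$ are prescribed by~\ref{it:zero}. For the inductive step at dimension $N > 0,$ either $l \geq 1$ (so~\ref{it:int} forces $o_{0,k,l}$ from $o_{0,k,l-1}$) or $l = 0$ and $k \geq 5$ (so~\ref{it:bdry} forces $o_{0,k,0}$ on $\oCMm_{0,k,0}$ from $o_{0,k-2,0}$). Property~\ref{it:perm} then forces the orientation on every other component of $\oCM_{0,k,l},$ since the $S_k$-action on boundary labels is transitive on the set of cyclic orders. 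Uniqueness is immediate from this prescription, so I would adopt it as the definition of $o_{0,k,l}$ and prove existence by verifying two consistency checks.

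The first check handles the ambiguity when both $l \geq 1$ and $k \geq 3,$ in which case $o_{0,k,l}$ on $\oCMm_{0,k,l}$ could be constructed either by~\ref{it:int} from $o_{0,k,l-1}$ or by~\ref{it:bdry} from $o_{0,k-2,l}.$ Both constructions, by induction, originate from $o_{0,k-2,l-1}$ pulled back along the composite fibration $\oCMm_{0,k,l} \to \oCMm_{0,k-2,l-1}$ and wedged with one of two orientations on its four-dimensional fiber: either $(\text{complex disk fiber}) \wedge ([0,1]^2 \text{ fiber})$ or the reverse. Since each factor has even real dimension two, the two wedge products agree, so the candidates coincide.

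The second and harder check is permutation invariance~\ref{it:perm}, which amounts to requiring that every element of the stabilizer of $\oCMm_{0,k,l}$ under the $S_k \times S_l$-action on labels act as an orientation-preserving diffeomorphism, so that the extension to every other component is well defined. Interior-label permutations pose no difficulty: they preserve each complex interior forgetful fiber up to holomorphic identification, which is automatically orientation-preserving. The stabilizer of $\oCMm_{0,k,l}$ under the $S_k$-action on boundary labels is the cyclic group $\langle \rho_k \rangle$ with $\rho_k : i \mapsto i+1 \pmod k,$ and showing that $\rho_k$ preserves $o_{0,k,l}$ is the main obstacle. I would argue by induction on $k$: the base $k = 3$ is handled because $\rho_3$ acts on each marked stable disk by a real M\"obius transformation, restricting to a holomorphic automorphism on each iterated complex interior fiber. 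For the inductive step, one decomposes $\rho_k$ via an auxiliary fibration and computes the induced Jacobian sign on the $[0,1]^2$-fiber; the sign works out to $(-1)^{k-1},$ which equals $+1$ precisely when $k$ is odd, exactly matching the odd-$k$ hypothesis of the lemma. As an alternative path to existence, one can invoke the orientation on moduli of bordered stable maps from~\cite[Section~2.1.2]{FO09}, which is manifestly permutation invariant, and verify that it satisfies~\ref{it:zero}, \ref{it:int}, and~\ref{it:bdry}, thereby bypassing the delicate rotation sign calculation.
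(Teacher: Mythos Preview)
Your overall structure matches the paper's, but there is a genuine gap in your first consistency check. You argue that when both $l \geq 1$ and $k \geq 3$, the two candidate orientations on $\oCMm_{0,k,l}$ both originate from $o_{0,k-2,l-1}$ via the commutative square of forgetful maps. This fails at the base case $(k,l) = (3,1)$: there $o_{0,k-2,l-1} = o_{0,1,0}$ does not exist, since $\oCM_{0,1,0}$ is unstable. So the two routes---via~\ref{it:int} from $o_{0,3,0}$ and via~\ref{it:bdry} from $o_{0,1,1}$---have no common ancestor, and their agreement must be verified by a direct calculation. The paper does exactly this: it checks explicitly that the two induced orientations on $\oCM_{0,3,1}$ coincide, and only then invokes the commutative diagram for larger $(k,l)$. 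Your inductive argument works once this base case is in hand, but without it the induction does not get started.

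Your second check is correct in spirit but more elaborate than necessary. The paper handles permutation invariance in one step by passing to the configuration space
\[
U^{main} \subset (S^1)^k \times (\ior D^2)^l,
\]
so that $\CMm_{0,k,l} = U^{main}/PSL_2(\R)$. A cyclic permutation of the $k$ boundary factors acts on $(S^1)^k$ with sign $(-1)^{k-1}$, which is $+1$ exactly when $k$ is odd; interior permutations act on a product of even-dimensional factors and are automatically orientation-preserving. This bypasses your inductive decomposition of $\rho_k$ entirely and makes the role of the odd-$k$ hypothesis transparent. Your alternative of importing the orientation from \cite{FO09} is viable, but then you must still verify the compatibility~\ref{it:int} and~\ref{it:bdry} for that orientation, which is not obviously less work.
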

\begin{rmk}
In the preceding lemma, it does not matter which ordering convention we use for the induced orientation on the total space of a fibration. Indeed, the base and fiber are always even dimensional.
\end{rmk}
\begin{rmk}
The orientation induced on the boundary of $\oCMm_{0,k,l}$ by $o_{0,k,l}$ agrees with the orientation induced by the product structure up to a sign computed in Proposition~8.3.3 of~\cite{FO09}. However, we will not need to know this sign in the present work. Rather, we will use the compatibility property of the induced orientation on the boundary proved in Lemma~\ref{lm:Bor} below. This compatibility property arises naturally in the proof of the open topological recursion relations in Lemma~\ref{lm:brbs} below.
\end{rmk}
\begin{proof}[Proof of Lemma~\ref{lm:or}]
If the orientations $o_{0,k,l}$ exist, properties~\ref{it:zero}-\ref{it:bdry} imply they are unique. It remains to check existence.

For property~\ref{it:perm} to hold, we must show that permutations of labels that map the component $\oCMm_{0,k,l}$ to itself are orientation preserving. Indeed, let
\[
U  = \left\{(z,w)\left|
\begin{array}{lll}
z = (z_1,\ldots,z_k) \in (S^1)^k, & z_i \neq z_j, & i\neq j \\
w = (w_1,\ldots,w_l) \in (\ior D^2)^l, & w_i \neq w_j, & i \neq j
\end{array}
\right.\right\}.
\]
Denote by $U^{main} \subset U$ the subset where the cyclic order of $z_1,\ldots,z_k,$ on $S^1 = \partial D^2$ with respect to the orientation induced from the complex orientation of $D^2$ agrees with the standard order of $[k]$. Then
\[
\CMm_{0,k,l} = U^{main}/PSL_2(\R).
\]
Permutations of labels preserving the component $\oCMm_{0,k,l}$ are powers of the cyclic permutation $(z_1,\ldots,z_k) \to (z_k,z_1,\ldots,z_{k-1}).$ The cyclic permutation diffeomorphism $\CMm_{0,k,l} \to \CMm_{0,k,l}$ is induced by the diffeomorphism $U^{main} \to U^{main}$ obtained by restricting the diffeomorphism $\varpi: (S^1)^k \to (S^1)^k$ given by cyclically permuting the factors. Since $\dim S^1 = 1,$ the sign of $\varpi$ is $(-1)^{1 \cdot (k-1)}.$
Since $k$ is odd, we see that $\varpi$ preserves orientation. So, cyclic permutations of the boundary labels preserve the orientation of $U^{main}$ and thus also $\CM^{main}_{0,k,l}$ and $\oCM^{main}_{0,k,l}.$ Similarly, since $\dim D^2 = 2,$ arbitrary permutations of the interior labels preserve the orientation of $\oCM^{main}_{0,k,l}.$

A direct calculation shows that the orientation on $\oCM_{0,3,1}$ induced by property~\ref{it:int} from $o_{0,3,0}$ agrees with the orientation induced by property~\ref{it:bdry} from $o_{0,1,1}.$ See Figure~\ref{fig:fibers}. So $o_{0,3,1}$ exists. Existence of $o_{0,k,l}$ satisfying properties~\ref{it:int} and~\ref{it:bdry} for other $k,l,$ follows from the commutativity of the diagram of forgetful maps
\[
\xymatrix{
\oCMm_{0,k+2,l+1} \ar[r]\ar[d] & \oCMm_{0,k+2,l} \ar[d] \\
\oCMm_{0,k,l+1} \ar[r] & \oCMm_{0,k,l}.
}
\]
\end{proof}

For the remainder of the paper, we always consider the spaces $\oCM_{0,k,l}$ for $k$ odd equipped with the orientations $o_{0,k,l}.$

\subsection{Edge labels}
In constructing the boundary conditions for open descendent integrals, it is necessary to be able to refer to the edges of a vertex and their corresponding nodal points unambiguously. We do this as follows.
\begin{nn}
Let $\Gamma$ be a stable graph and let $e$ be an edge connecting vertices $u,v \in V(\Gamma).$ That is,
\[
e = \left\{u,v\right\}\in E\left(\Gamma\right).
\]
We denote by $\Gamma_e$ the (not necessarily stable) graph obtained from $\Gamma$ by removing $e$.
We denote by $\Gamma_{e,u}$ the pre-stable graph which is the connected component of $u$ in $\Gamma_e$. We define $\Gamma_{e,v}$ similarly.
\end{nn}
\begin{definition}\label{df:iv}
Denote by $i_v^\Gamma : I\left(v\right)\cup B\left(v\right)\to\LL$ the map defined by
\begin{enumerate}
\item
for $x\in\ell_I\left(v\right)\cup \ell_B(v),~i_v^\Gamma\left(x\right)=x,$
\item
for $e=\left\{u,v\right\}\in E, ~i_v^\Gamma\left(e\right) = \cup I\left(\Gamma_{e,u}\right)\cup\cup B\left(\Gamma_{e,u}\right),$
\end{enumerate}
When the graph $\Gamma$ is clear from the context, we write $i_v$ instead of $i_v^\Gamma.$
\end{definition}
\begin{rmk}\label{rmk:iv}
It is easy to see that $i_v$ is actually an injection.
Hence, we may identify $I\left(v\right) \cup B\left(v\right)$ with its image under $i_v.$ In addition, it follows from Definition~\ref{def:stg} part~\ref{it:annoyance}\ref{it:1ct} that
\[
i_v\left(I\left(v\right)\cup B\left(v\right)\right)\in 2^\LL_{fin,disj}.
\]
\end{rmk}

\begin{definition}\label{def:si}
Let $\Gamma \in \CG$ and $\Lambda \in \pu\Gamma.$ In light of Definitions~\ref{def:smoothing} and~\ref{def:bdry_maps}, we have canonical maps
\[
\varsigma = \varsigma_{\Lambda,\Gamma} : V(\Lambda) \longrightarrow V(\Gamma), \qquad \iota = \iota_{\Gamma,\Lambda} : E(\Gamma) \longrightarrow E(\Lambda).
\]
Namely, let $S \subset E(\Lambda)$ such that $\Gamma = d_S \Lambda.$ Then $V(\Gamma)$ is the quotient of $V(\Lambda)$ under the equivalence relation generated by $u \sim v$ if $\{u,v\} \in S.$ The map $\varsigma$ is the quotient map. Let
\[
\Sym^2 \varsigma : \Sym^2 V(\Lambda) \to \Sym^2 V(\Gamma)
\]
denote the induced map on unordered pairs. Then
\[
E(\Gamma) = \Sym^2 \varsigma(E(\Lambda)\setminus S).
\]
Since $\Lambda$ is a forest, it follows that $\Sym^2 \varsigma|_{E(\Lambda)\setminus S}$ is injective. Thus, we define $\iota = \left(\Sym^2\varsigma|_{E(\Lambda)\setminus S}\right)^{-1}.$

Alternatively, the maps $\varsigma$ and $\iota$ can be characterized in terms of labels as follows.
The map $\varsigma$ is uniquely determined by the condition that for $v \in V(\Lambda),$ there exists a partition $\im i^\Gamma_{\varsigma(v)} = P_1 \coprod \ldots \coprod P_n$ such that
\[
\im i^\Lambda_v = \{\cup P_i\}_{i = 1}^n.
\]
If $e = \{u,v\} \in E(\Gamma),$ then $\iota(e)$ is the unique edge of the form $\{\tilde u, \tilde v\}$ where $\varsigma(\tilde u) = u$ and $\varsigma(\tilde v) = v.$
\end{definition}

\begin{figure}[t]
\centering
\includegraphics[scale=.13]{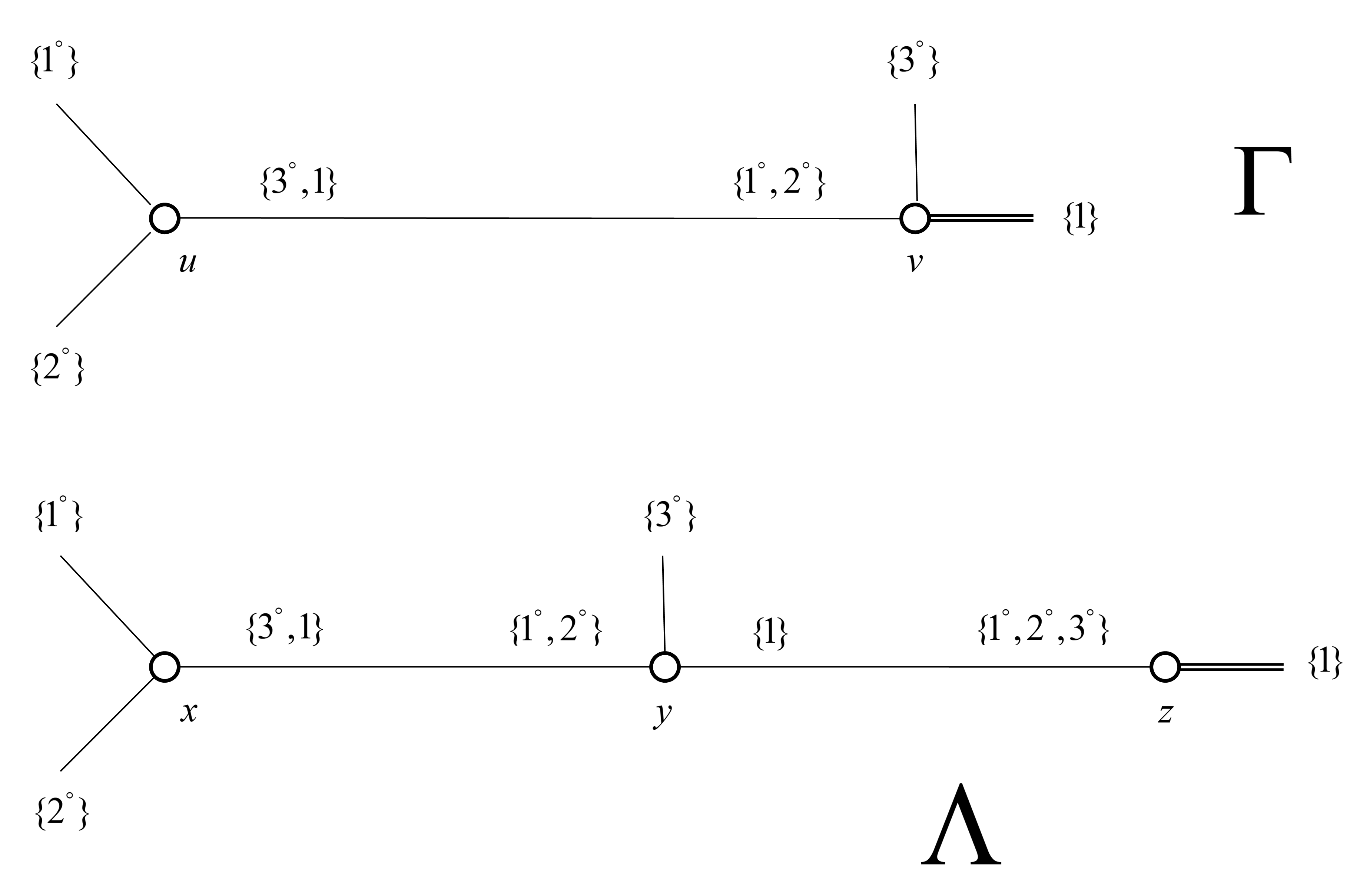}
\caption{We show the stable graphs from Example~\ref{ex:edgelabels}. Boundary labels are shown as half-edges and interior labels are shown as double half-edges.}
\label{fig:edgelabels}
\end{figure}
\begin{ex}\label{ex:edgelabels}
To see how the edge labels of Definition~\ref{df:iv} and the maps $\varsigma$ and $\iota$ of Definition~\ref{def:si} work, it is helpful to consider concrete examples. We refer the reader to Figure~\ref{fig:edgelabels}. Let $\Gamma \in \partial \Gamma_{0,3,1}$ be given by
\begin{gather*}
V(\Gamma) = V^o(\Gamma) = \{u,v\}, \qquad E(\Gamma) = \{\{u,v\}\}, \\
\ell_B(u) = \{\{\rz{1}\},\{\rz{2}\}\},
 \qquad \ell_B(v) = \{\{\rz{3}\}\},  \\
 \qquad \ell_I(u) = \emptyset, \qquad \ell_I(v) = \{\{1\}\}.
\end{gather*}
Then,
\[
i^\Gamma_u(\{u,v\}) = \{\rz{3},1\}, \qquad i^\Gamma_v(\{u,v\}) = \{\rz{1},\rz{2}\}.
\]
Let $\Lambda \in \partial \Gamma$ be given by
\begin{gather*}
V(\Lambda) = V^o(\Lambda) =  \{x,y,z\}, \qquad E(\Lambda) = \{\{x,y\},\{y,z\}\}, \\
 \ell_B(x) = \{\{\rz{1}\},\{\rz{2}\}\},
\qquad \ell_B(y) = \{\{\rz{3}\}\}, \qquad \ell_B(z) = \emptyset, \\
\ell_I(x) = \emptyset, \qquad \ell_I(y) = \emptyset, \qquad \ell_I(z) = \{\{1\}\}.
\end{gather*}
Then,
\begin{gather*}
i^\Lambda_x(\{x,y\}) = \{\rz{3},1\}, \qquad i^\Lambda_y(\{x,y\}) = \{\rz{1},\rz{2}\}, \\
i^\Lambda_y(\{y,z\}) = \{1\}, \qquad i^\Lambda_z(\{y,z\}) = \{\rz{1},\rz{2},\rz{3}\}.
\end{gather*}
The map $\varsigma = \varsigma_{\Lambda,\Gamma} : V(\Lambda) \longrightarrow V(\Gamma)$ is given by
\begin{equation*}
\varsigma(x) = u, \qquad \varsigma(y) = v, \qquad \varsigma(z) = v.
\end{equation*}
The map $\iota = \iota_{\Gamma,\Lambda} : E(\Gamma) \longrightarrow E(\Lambda)$ is given by
\[
\iota(\{u,v\}) = \{x,y\}.
\]
To verify the characterization of $\varsigma$ in terms of labels from Definition~\ref{def:si}, we observe that
\begin{gather*}
\im^\Gamma_u =\im i^\Lambda_x = \{\{\rz{1}\},\{\rz{2}\},\{\rz{3},1\}\} \\
\im i^\Gamma_v = \im i^\Lambda_y = \{\{\rz{1},\rz{2}\}, \{\rz{3}\}, \{1\} \}, \\
\im i^\Lambda_z = \{ \{\rz{1},\rz{2},\rz{3}\},\{1\}\}.
\end{gather*}
So, to verify the characterization for $\varsigma(x) = u,$ and $\varsigma(y) = v,$ the required partitions of $\im^\Gamma_u$ and $\im i^\Gamma_v$ are trivial. To verify it for $\varsigma(z) = v,$ consider the partition
\[
\im i^\Gamma_v = \{\{\rz{1},\rz{2}\}, \{\rz{3}\} \} \coprod \{ \{1\} \}.
\]

\end{ex}

\begin{definition}\label{def:span}
For $\Gamma \in \CG$ and $U \subset V(\Gamma),$ let $\Gamma_U$ be the stable graph spanned by $U$ with labels added in place of edges connecting $U$ to its complement. Specifically,
\begin{gather*}
V(\Gamma_U) = U, \qquad \varepsilon_{\Gamma_U} = \varepsilon_\Gamma|_U, \qquad
E(\Gamma_U) = \{\{u,v\} \in E(\Gamma) | u,v \in U\}, \\
\forall v \in U, \qquad \ell_I^{\Gamma_U}(v) = \ell_I^\Gamma(v) \cup \left\{\left.i_v^\Gamma(e)\right|e = \{u,v\}\in E^I(\Gamma), u \notin U\right\}, \\
\ell_B(v) = \ell_B^\Gamma(v) \cup \left\{\left.i_v^\Gamma(e)\right|e = \{u,v\}\in E^B(\Gamma), u \notin U\right\}.
\end{gather*}
For $v \in V(\Gamma),$ abbreviate $\Gamma_v = \Gamma_{\{v\}}$ and
\[
\CM_v = \CM_{\Gamma_v}.
\]
\end{definition}

\subsection{Forgetful maps}
We now define forgetful maps for stable graphs.
Let $\Gamma$ be a connected pre-stable graph. Set
\[
k = k\left(\Gamma\right),\quad l=l\left(\Gamma\right),\quad I = I\left(\Gamma\right),\quad B=B\left(\Gamma\right).
\]
In case $V^O=\emptyset,$ assume $l\geq 3.$ In case $V^O\neq\emptyset,$ assume $k+2l\geq 3.$
Define the graph $stab\left(\Gamma\right)$ as follows. Take any unstable vertex $v\in V^O\cup V^C.$
\begin{enumerate}
\item
In case $v\in V^O$ ($V^C$) has no boundary or interior labels and exactly $2$ boundary (interior) edges
\[
e_1 = \left\{v,u\right\},\quad e_2 = \left\{v,w\right\},
\]
remove $v$ and its edges from the graph and add the new boundary (interior) edge $\left\{u,w\right\}.$
\item
In case $v\in V^O$ has a single boundary edge $\left\{v,u\right\}$, a single boundary label $i$ and no interior edges or labels, remove $v$ and its edge from the graph and add $i$ to $\ell_B\left(u\right).$
\item
In case $v\in V^C$ has a single interior edge $\left\{v,u\right\}$ and a single interior label $i$, remove $v$ and its edge from the graph and add $i$ to $\ell_I\left(u\right).$
\item
In case $v$ has a single edge, and no labels, remove $v$ and its edge from the graph.
\end{enumerate}
Other cases are not possible.
We iterate this procedure until we get a stable graph. Note that the process does stop, and that the final result does not depend on the order of the above steps. We extend the definition of $stab$ to not necessarily connected graphs by applying it to each component if each component satisfies the assumptions.
\begin{definition}
The graph $stab\left(\Gamma\right)$ is called the \emph{stabilization} of $\Gamma.$
\end{definition}

\begin{nn}
Consider a stable graph $\Gamma$ such that
\[
k\left(\Gamma\right)+2\left(l\left(\Gamma\right)-1\right)\geq 3.
\]
If $i \notin I(\Gamma),$ we define $for_i(\Gamma) = \Gamma.$
If $i\in I(\Gamma),$ we define $for_i\left(\Gamma\right)$ to be the graph obtained by removing the label $i$ from the vertex $v_i$ and stabilizing.
\end{nn}
\begin{obs}\label{obs:very_trivial}
Let $\Gamma\in \CG.$ The natural map
\[
\prod_{v\in V\left(\Gamma\right)}\CM_v\to\CM_\Gamma,
\]
is an isomorphism of smooth manifolds with corners.
\end{obs}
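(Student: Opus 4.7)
The plan is to exhibit an explicit inverse to the natural map and check smoothness on the open stratum. In one direction, the natural map sends a tuple $(\Sigma_v)_{v \in V(\Gamma)} \in \prod_v \CM_v$ to the nodal surface obtained by gluing: for each edge $e = \{u,v\} \in E(\Gamma)$, identify the marked point labeled $i_u^\Gamma(e)$ on $\Sigma_u$ with the marked point labeled $i_v^\Gamma(e)$ on $\Sigma_v$ into a single node (boundary or interior, according to whether $e \in E^B$ or $e \in E^I$), while the remaining marked points with labels in $\ell_B^\Gamma(v) \cup \ell_I^\Gamma(v)$ retain their labels. Stability of each $v \in V(\Gamma)$ guarantees stability of $\Sigma_v$ as a marked surface, so the result is stable, and it has associated graph exactly $\Gamma$ by construction.

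In the other direction, given $\Sigma \in \CM_\Gamma$, the irreducible components of $\Sigma$ are in canonical bijection with $V(\Gamma)$. To each $v$ we associate the component $\Sigma_v$ equipped with: the original marked points with labels in $\ell_I^\Gamma(v) \cup \ell_B^\Gamma(v)$; and, for each edge $e \in E_v$, the nodal point of $\Sigma_v$ lying on $e$, which we mark with the label $i_v^\Gamma(e)$ supplied by Definition~\ref{df:iv}. By Remark~\ref{rmk:iv}, the resulting label set lies in $2^\LL_{fin,disj}$, so $\Sigma_v$ is a genuine element of $\CM_v$. It is immediate from the definitions that these two constructions are mutually inverse at the level of underlying sets, and the triviality of automorphisms noted after Definition~\ref{def:prestable} shows both pass to isomorphism classes without ambiguity.

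It remains to check that this bijection is a diffeomorphism of smooth manifolds with corners. The key point is that $\CM_v$ and $\CM_\Gamma$ are both \emph{smooth} strata: $\CM_v$ parametrizes smooth (non-nodal) marked surfaces, and $\CM_\Gamma$ is the locally closed stratum of $\oCM_{0,k(\Gamma),l(\Gamma)}$ where the nodal configuration is held fixed at $\Gamma$. Neither carries corners, since corners of $\oCM_{0,k,l}$ come from additional boundary nodes, which are forbidden inside $\CM_\Gamma$. Thus the claim reduces to a smoothness statement between ordinary smooth manifolds, which follows from the standard local description of $\CM_\Gamma$ near a nodal surface as an (unobstructed) product of deformation spaces of its components with fixed nodal identifications; in particular, no gluing parameters appear so long as we remain on this stratum.

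The main potential obstacle is just verifying that the label bookkeeping in Definition~\ref{df:iv} is consistent with the gluing/ungluing operations, i.e., that $i_v^\Gamma(e)$ indeed produces a label disjoint from all others on $\Sigma_v$ and distinct from labels on other components so that conditions~\ref{it:annoyance}\ref{it:1ct}--\ref{it:2ct} of Definition~\ref{def:stg} are respected throughout. This is precisely what condition~\ref{it:annoyance} of Definition~\ref{def:stg} was designed to guarantee, so once that is unpacked the observation follows.
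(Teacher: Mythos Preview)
The paper gives no proof for this observation---it is stated as self-evident and immediately used to identify the two spaces. Your proposal is correct and supplies exactly the details one would want: the explicit inverse via ungluing components, the label bookkeeping through $i_v^\Gamma$ (with Remark~\ref{rmk:iv} ensuring the result lands in $\CM_v$), and the observation that both $\CM_v$ and $\CM_\Gamma$ are open strata with no corners, so the diffeomorphism claim reduces to the standard product description of the deformation space of a fixed-topology nodal curve. One cosmetic remark: the phrase ``manifolds with corners'' in the statement is just the paper's blanket convention (all manifolds are manifolds with corners in Joyce's sense), not a claim that corners are actually present here---your reading that both sides are in fact corner-free is correct.
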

We shall use the preceding observation to identify the two moduli spaces throughout the article.

\begin{nn}
Let $\Gamma,\Gamma',$ be stable graphs. Let
\[
f : V\left(\Gamma'\right) \longrightarrow V\left(\Gamma\right),
\]
and
\[
f_v : \im \left(i_{v}^{\Gamma'}\right)\longrightarrow \im\left(i_{f(v)}^\Gamma\right), \qquad v \in V(\Gamma'),
\]
be injective mappings such that
\begin{equation}
S \subset f_v(S), \qquad \forall v \in V(\Gamma'), \quad S \in \im \left(i_{v}^{\Gamma'}\right).
\end{equation}
Given $f,$ if the maps $f_v$ exist, they are unique by Remark~\ref{rmk:iv}.
We say that $(\Gamma',f)$ is a \emph{stable subgraph} of $\Gamma.$ When the map $f$ is clear from context, it is omitted from the notation. For example, $for_i(\Gamma)$ is a stable subgraph of $\Gamma.$ Indeed, $V(for_i(\Gamma))$ is a subset of $V(\Gamma),$ and one verifies that the maps $f_v$ exist.

The map $f_v$ induces a forgetful map
\[
For_{v}:\CM_{f(v)}\to\CM_{v}, \qquad v \in V(\Gamma').
\]
Denote by
\[
\pi_{\Gamma,\Gamma'} : \prod_{v \in V(\Gamma)} \CM_v \to \prod_{v \in V(\Gamma')} \CM_{f(v)}
\]
the projection.
We define the forgetful map
\[
For_{\Gamma,\Gamma'}:\CM_\Gamma\to\CM_{\Gamma'}
\]
by
\[
For_{\Gamma,\Gamma'} = \left(\prod_{v \in V(\Gamma')} For_v\right) \circ \pi_{\Gamma,\Gamma'}.
\]
We abbreviate
\[
For_i = For_{\Gamma,for_i(\Gamma)} : \CM_{\Gamma} \longrightarrow \CM_{for_i(\Gamma)}.
\]
\end{nn}

\begin{obs}\label{obs:fuf}
If $\Gamma''$ is a stable subgraph of $\Gamma'$ and $\Gamma'$ is a stable subgraph of $\Gamma,$ then
\[
F_{\Gamma,\Gamma''} = F_{\Gamma',\Gamma''} \circ F_{\Gamma,\Gamma'}.
\]
\end{obs}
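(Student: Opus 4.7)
The plan is to reduce the statement to two simpler facts: transitivity of the ``stable subgraph'' relation on the combinatorial side, and functoriality of the single-vertex forgetful maps on the geometric side. Throughout I will use the identification $\CM_\Gamma \cong \prod_{v \in V(\Gamma)} \CM_v$ provided by Observation~\ref{obs:very_trivial}.

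First, I would verify that if $\Gamma''$ is a stable subgraph of $\Gamma'$ witnessed by $f', f'_v$, and $\Gamma'$ is a stable subgraph of $\Gamma$ witnessed by $f, f_v$, then the composite maps
\[
f\circ f' : V(\Gamma'') \to V(\Gamma), \qquad f_{f'(v)} \circ f'_v : \im i_v^{\Gamma''} \to \im i_{f \circ f'(v)}^\Gamma
\]
witness that $\Gamma''$ is a stable subgraph of $\Gamma$. Injectivity is immediate, and for each $S \in \im i_v^{\Gamma''}$ the chain of inclusions $S \subset f'_v(S) \subset f_{f'(v)}(f'_v(S))$ gives the required containment. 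By the uniqueness clause in the notation preceding Observation~\ref{obs:fuf}, these composites must coincide with the unique injections witnessing $\Gamma''\subset \Gamma$.

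Next, unwinding the definition of the three forgetful maps, I would observe that under the product identifications the projections $\pi_{\Gamma,\Gamma'}$, $\pi_{\Gamma',\Gamma''}$, $\pi_{\Gamma,\Gamma''}$ are simply coordinate selections, so they satisfy the tautological relation
\[
\pi_{\Gamma,\Gamma''} = \pi_{\Gamma',\Gamma''} \circ \pi_{\Gamma,\Gamma'}
\]
after identifying $V(\Gamma'')$ with its images in $V(\Gamma')$ and $V(\Gamma)$. Combining this with the fact that $\prod_{v\in V(\Gamma')} For_v^{\Gamma,\Gamma'}$ commutes (in the obvious sense) with $\pi_{\Gamma',\Gamma''}$ because it acts coordinate-wise, the equality $For_{\Gamma,\Gamma''} = For_{\Gamma',\Gamma''} \circ For_{\Gamma,\Gamma'}$ reduces to the vertex-by-vertex identities
\[
For_v^{\Gamma,\Gamma''} = For_v^{\Gamma',\Gamma''} \circ For_{f'(v)}^{\Gamma,\Gamma'}, \qquad v \in V(\Gamma'').
\]

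The hard part—and the reason this is stated as an observation rather than proved by a one-line calculation—is precisely this vertex-level functoriality. It is the classical statement that forgetting markings on a stable marked disk (or sphere, for closed vertices) in two stages gives the same result as forgetting them all at once. The key point to check is bookkeeping: that the map of label sets $f_v^{\Gamma,\Gamma''}$ really does agree with $f_{f'(v)}^{\Gamma,\Gamma'} \circ f_v^{\Gamma',\Gamma''}$, which was established in the first step. Once the labels are matched, the identity of forgetful maps follows from the universal property of the moduli spaces $\oCM_{0,k,l}$ together with their iterated fibration structure $\oCM_{0,k,l+1} \to \oCM_{0,k,l}$ and $\oCMm_{0,k+2,l} \to \oCMm_{0,k,l}$ used in Lemma~\ref{lm:or}: forgetting a collection of markings factors through forgetting any subcollection first. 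This completes the proof.
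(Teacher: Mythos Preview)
The paper does not supply a proof of this observation at all; it is stated and immediately followed by the next subsection. Your argument is therefore not being compared against an existing proof but rather filling in what the authors regarded as routine. Your reduction is correct: you establish transitivity of the stable-subgraph relation (with uniqueness giving that the composite witnesses agree with the direct one), factor the global forgetful map through the product projections, and then reduce to the single-vertex identity $For_v^{\Gamma,\Gamma''} = For_v^{\Gamma',\Gamma''} \circ For_{f'(v)}^{\Gamma,\Gamma'}$, which is indeed the standard functoriality of forgetful maps on $\oCM_{0,k,l}$ and $\oCM_{0,l}$. This is exactly the unpacking the authors had in mind, and your level of detail is appropriate for what they labelled an observation.
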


\section{Line bundles and relative Euler classes}\label{sec:bdry}
\subsection{Cotangent lines and canonical boundary conditions}\label{subs:defs_bundles}
For $i \in I,$ denote by
\[
\CL_i\to\oCM_{0,B,I}
\]
the $i^{th}$ tautological line bundle. The fiber of $\CL_i$ over a stable disk $\Sigma$ is the cotangent line at the $i^{th}$ marked point $T_{z_i}\Sigma.$ For any stable graph $\Gamma$ with $i\in I\left(\Gamma\right)$, define
\[
\CL_i\to\CM_\Gamma
\]
using the canonical identification of Observation \ref{obs:very_trivial}. This definition of $\CL_i \to \CM_\Gamma$ agrees with restriction of $\CL_i \to \oCM_{0,k,l}$ to $\CM_\Gamma\subset \oCM_{0,k,l}$ for~$\Gamma \in \partial \Gamma_{0,k,l}.$

Let
\[
E = \bigoplus_{i\in [l]}\CL_i^{\oplus a_i}\rightarrow \oCM_{0,k,l},
\]
where $a_i,k,l,$ are non-negative integers such that
\begin{gather}\label{eq:cdim}
\rk_\C E = \sum_{i \in [l]} a_i =  \dim_\C\oCM_{0,k,l} = \frac{2l+k-3}{2}, \\
k + 2l - 2 > 0. \notag
\end{gather}
In particular, since $\dim_\C\oCM_{0,k,l}$ is an integer, $k$ must be odd.
We shall begin by defining the vector space $\CS$ of canonical boundary conditions for $E.$ It is a vector subspace of the vector space of multisections of~$E|_{\partial\oCM_{0,k,l}}.$ See Appendix~\ref{app:euler} for background on multisections.

Consider a stable graph $\Gamma \in \partial \Gamma_{0,k,l}$ corresponding to a codimension one corner of $\oCM_{0,k,l}.$ Thus,
\[
\left|E(\Gamma)\right| = \left|E^B(\Gamma)\right| = 1.
\]
Write $V(\Gamma)=\left\{v_1, v_2\right\}$.
Exactly one of $k(v_1), k(v_2),$ is even. Without loss of generality, it is $k(v_2)$. Let $\Gamma'$ be the stable graph with no edges and two open vertices $v'_1,v'_2,$ with
\begin{gather*}
\ell_B(v_1') = i_{v_1}(B(v_1)), \qquad \ell_B(v_2') = \ell_B(v_2)\\
\ell_I(v_1') = \ell_I(v_1'), \qquad \ell_I(v_2') = \ell_I(v_2).
\end{gather*}
Here, $\Gamma'$ is stable because of the assumption on the parity of $k(v_2).$
The definition of $\Gamma'$ implies that $\CM_{\Gamma'}$ is the same as $\CM_\Gamma$ except that the marked point corresponding to the edge of $\Gamma$ on the component of $v_2$ has been forgotten.
Let $E'$ be the vector bundle given by
\[
E' = \bigoplus_{i\in [l]}\CL_i^{\oplus a_i}\rightarrow \CM_{\Gamma'}.
\]
Since the map $For_{\Gamma,\Gamma'}$ does not contract any components of the stable disks in $\CM_\Gamma,$ we have
\[
\left.E\right|_{\CM_\Gamma} \simeq For_{\Gamma,\Gamma'}^* E'.
\]
See Observation~\ref{obs:identification_bundle_over_base} and the preceding discussion for details.

Recall that the boundary $\partial X$ of a manifold with corners $X$ is itself a manifold with corners, equipped with a map
\[
i_X : \partial X\to X,
\]
which may not be injective. A section $s$ of a bundle $F\to \partial X$ is \emph{consistent} if
\[
\forall~p_1,p_2\in X,\text{~such that~}i_X\left(p_1\right) = i_X\left(p_2\right)\text{~we have~} s\left(p_1\right) = s\left(p_2\right).
\]
Consistency for multisections is similar. For a vector bundle $F \to X,$ we write $F |_{\partial X} = i_X^*F$ and similarly for sections of $F.$

\begin{definition}
A smooth consistent multisection $s$ of $\left. E\right|_{\partial \oCM_{0,k,l}}$ is called a \emph{canonical multisection} if for each graph $\Gamma \in \pB\Gamma_{0,k,l}$ with a single edge,
\[
\left.s\right|_{\CM_\Gamma} = For_{\Gamma,\Gamma'}^*s',
\]
where $s'$ is a multisection of $E'\rightarrow\CM_{\Gamma'}$. The vector space of all canonical multisections is denoted by $\CS$.
\end{definition}

\subsection{Definition of open descendent integrals}
\begin{nn}
Given a complex vector bundle $F\to X,$ where $X$ is a manifold with corners,
denote by $C_m^\infty\left(F\right)$ the space of smooth multisections.
Given a nowhere vanishing smooth consistent multisection
\[
\mathbf{s}\in C_m^\infty\left(F|_{\partial X}\right),
\]
denote by
\[
 e\left(F ; \mathbf{s} \right) \in H^*\left(X,\partial X\right)
\]
the \emph{relative Euler class}. This is by definition the Poincar\'e dual of the vanishing set of a transverse extension of $\mathbf{s}$ to $X.$ See Appendix \ref{app:euler} for details.
\end{nn}

\begin{thm}\label{thm:intersection_numbers_well_defined}
When condition~\eqref{eq:cdim} holds, one can find a nowhere vanishing multisection $\mathbf{s}\in\CS$. Hence one can define
$e\left(E ; \mathbf{s}\right)$. Moreover, any two nowhere vanishing multisections of $\CS$ define the same relative Euler class.
\end{thm}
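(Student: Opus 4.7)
The plan is to prove both assertions by exploiting the dimensional redundancy that the canonical boundary condition builds in. The key input is that on each codimension-one stratum $\CM_\Gamma$ with $|E(\Gamma)|=|E^B(\Gamma)|=1,$ canonicality forces $\s|_{\CM_\Gamma}=For_{\Gamma,\Gamma'}^{*}s'_\Gamma$ for some $s'_\Gamma\in C_m^\infty(E'),$ while a direct count gives
\[
\dim_\C\CM_{\Gamma'}=\frac{k+2l-5}{2}=\rk_\C E'-1.
\]
Thus over $\CM_{\Gamma'}$ the bundle has real rank two greater than the real dimension of the base, so by transversality generic smooth multisections of $E'$ are nowhere vanishing, and the space of such multisections is path-connected.

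For existence, I would construct $\s$ by induction on the number of edges of $\Gamma\in\pB\Gamma_{0,k,l}.$ Iterating the rule $\s|_{\CM_\Gamma}=For_{\Gamma,\Gamma'}^{*}s'_\Gamma,$ one sees that on a graph $\Lambda$ with several boundary edges the induced section is the pullback of a multisection from the moduli space $\CM_{\widetilde\Lambda}$ obtained by forgetting, at each boundary edge, the nodal point sitting on the vertex with even $k.$ Compatibility of the choices at a codimension-two corner is then exactly the commutativity of iterated forgetful maps, namely Observation~\ref{obs:fuf}. Making the choices on the smallest such bases first and pulling back accordingly produces a globally defined canonical $\s,$ which is nowhere vanishing by the stratum-wise dimension count above.

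For the uniqueness of the Euler class, given two nowhere vanishing $\s_0,\s_1\in\CS,$ I would use the affine homotopy $\s_t=(1-t)\s_0+t\s_1\in\CS.$ On each stratum $\CM_\Gamma$ this restricts to the pullback of a one-parameter family of multisections of $E'$ over $[0,1]\times\CM_{\Gamma'},$ whose generic vanishing locus has expected real dimension $1+(k+2l-5)-(k+2l-3)=-1$ and is therefore empty after an arbitrarily small perturbation. Perturbing consistently stratum by stratum, using the same functorial pullback scheme as in the existence step, produces a homotopy in $\CS$ through nowhere vanishing canonical multisections between $\s_0$ and $\s_1,$ and the homotopy invariance of the relative Euler class developed in Appendix~\ref{app:euler} then yields $e(E;\s_0)=e(E;\s_1).$

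The hardest part will be the combinatorial bookkeeping at deeply nested strata: the recursive pullback prescription must yield the same value regardless of which codimension-one face is used to approach a given corner, while simultaneously preserving nowhere vanishing under the perturbations required for the homotopy. I expect this to be dispatched by a clean induction on the edge-smoothing poset of stable graphs, with Observation~\ref{obs:fuf} supplying the coherence of iterated forgetful maps and the rank/dimension gap of two providing enough flexibility to keep every intermediate multisection away from the zero section.
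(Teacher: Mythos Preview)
Your outline is essentially correct and matches the paper's strategy: exploit the complex-dimension drop on each boundary base to get existence and a nowhere-vanishing homotopy. The implementation in the paper is organized differently, and it is worth noting where.

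For existence, the paper does not induct on the number of edges of $\Gamma$ directly. Instead it introduces the \emph{base} operator $\CB$ (so your $\widetilde\Lambda$ is $\CB\Lambda$), observes that $\CM_{\CB\Gamma}$ factors as a product over \emph{abstract vertices}, and builds a refined class of \emph{special canonical} multisections: those pulled back from abstract-vertex moduli $\CM_v$ and invariant under the symmetric group $S_{k(v)}$ permuting boundary points. The induction (Proposition~\ref{prop:single_section}) is then on $\dim_\C\CM_v$, extending an $S_{k(v)}$-invariant section from $\partial\oCM_v$ to $\oCM_v$ and symmetrizing. The invariance is what resolves the corner-compatibility issue you flag: the identifications $\CM_v\simeq\CM_{\eta(v)}$ involve arbitrary orderings of boundary edges, so the diagrams relating $\Phi_{\Gamma',i}$ and $\Phi_{\Lambda,i}$ at a deeper stratum commute only up to an element of $S_{k(v')}$ (Observation~\ref{obs:bdry_of_base_moduli}). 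Without invariance, Observation~\ref{obs:fuf} alone would not give you a globally consistent section. Having built a spanning family of such multisections, the paper then applies a parametric transversality/Sard argument (Theorem~\ref{thm: hirsch}) to select a nowhere-vanishing linear combination.

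For uniqueness, rather than perturbing the affine homotopy stratum-by-stratum, the paper (Lemma~\ref{lem:trickey_homotopy}) takes
\[
H(p,t)=(1-t)\mathbf{s}(p)+t\mathbf{r}(p)+t(1-t)w(p)
\]
with $w$ a single global canonical multisection, so $H$ is automatically canonical for every $t$; then the same parametric transversality argument, applied on each $\CM_{\CB\Gamma}$, picks $w$ so that $H$ is transverse and (by the dimension count you gave) nowhere vanishing. This sidesteps the need to reassemble stratum-wise perturbations coherently.

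In short: your plan is right, but the paper's abstract-vertex/$S_k$-invariance machinery is precisely the device that handles the ``combinatorial bookkeeping at deeply nested strata'' you anticipated, and it is reused heavily in the proofs of the topological recursion relations.
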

\begin{definition}
When condition~\eqref{eq:cdim} holds, define $e\left(E ; \CS\right)$ to be the relative Euler class $e(E,\s)$ for any $\s \in \CS.$ This notation is unambiguous by the preceding theorem. The genus zero \emph{open descendent integrals} are defined by
\begin{equation}\label{eq:maindef}
\left\langle \tau_{a_1}\ldots\tau_{a_l}\sigma^k \right\rangle _{0}^o = 2^{-\frac{k-1}{2}}\int_{\overline{\CM}_{0,k,l}}e\left(E,\CS\right)
\end{equation}
when condition~\eqref{eq:cdim} holds. Otherwise, they are defined to be zero.
\end{definition}
The division by the power of $2$ in the preceding definition is only for convenience. When $r_0$ of the $a_i$ are equal to $0$, $r_1$ of them equal to $1$, and so on, we sometimes use the notation $\left\langle \tau_0^{r_0}\tau_1^{r_1}\ldots\sigma^k\right\rangle_0^o$ for the above quantity by analogy with the widely used notation for the closed correlators. When clear from the context, we may drop the genus subscript or the $o$ superscript, and write $\left\langle \tau_0^{r_0}\tau_1^{r_1}\ldots\sigma^k\right\rangle^o$ or
$\left\langle \tau_0^{r_0}\tau_1^{r_1}\ldots\sigma^k\right\rangle.$
\begin{rmk}\label{rm:msec}
A surprising feature of our construction is the use multisections rather than sections. The reason for this is that in general one cannot find a non-vanishing section in $\CS.$ This fact will be transparent later when we calculate intersection numbers. We shall see that often the intersection numbers will not be a multiple of the number of components of $\oCM_{0,k,l}.$ However, each component contributes equally to the intersection number, so each component must contribute a non-integer to the intersection number.

But we want also to understand geometrically what happens. Consider the case of $\oCM_{0,5,1}$ and $E = \CL_1^{\oplus 2}.$ For simplicity we illustrate a section of $\CL_1$ as a tangent vector at the interior marked point. Consider Figure \ref{fig:why_multi}. We may take the interior marked point to be the center of the disk, as a result of the $PSL_2\left(\R\right)$ equivalence relation.
Let $\Gamma\in\partial\Gamma_{0,5,1}$ be the unique stable graph with two vertices, $v_1,v_2,$ both open, and
\[
\ell_I(v_2) = \emptyset, \qquad \ell_B(v_2) = \left\{4,5\right\}.
\]
Consider a non-vanishing section $s$ of $\CL_1,$ which is a component of a canonical section of $E|_{\partial \oCM_{0,5,1}}.$

\begin{figure}[t]
\centering
\includegraphics[scale=.7]{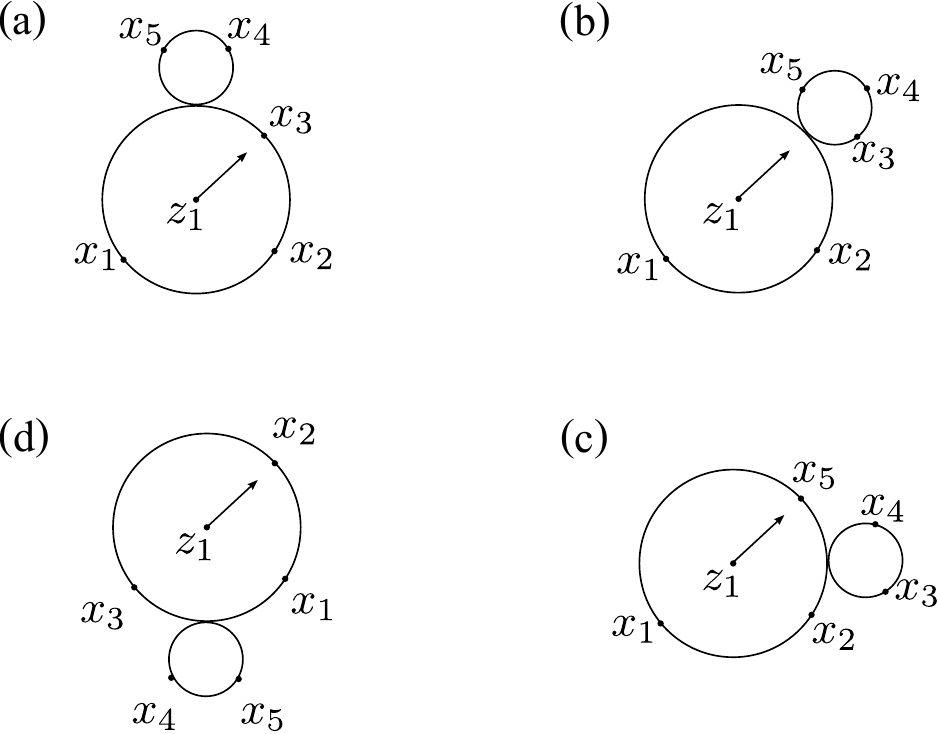}
\caption{A canonical multisection at different boundary points.}
\label{fig:why_multi}
\end{figure}

In item $\left(a\right)$ of the figure, we depict a stable disk $\Sigma \in \CM_\Gamma,$ at which $s$ points to the boundary marked point $x_3.$ Note that pointing at $x_3$ is preserved by the action of $PSL_2\left(\R\right).$ When the bubble of $x_4,x_5,$ approaches $x_3,$ while nothing else changes in the component of $z_1,x_1,x_2,x_3$, the section keeps on pointing towards $x_3$ by the definition of canonical boundary conditions. After the bubble reaches $x_3,$ we move to item $\left(b\right)$ of the figure. By continuity, the section still points in the direction of $x_3,$ only that now there is a boundary node there. Continuous changes in the component of $x_3,x_4,x_5,$ do not affect $s,$ again by the definition of canonical boundary conditions. In particular, $s$ does not change when $x_5$ approaches the node. After $x_5$ reaches the node we pass to item $\left(c\right).$ Again continuity guarantees no change in $s.$ Continuing in this manner, we finally reach item $\left(d\right).$ When we finish, $s$ points at $x_{2}.$

Now, let $\Sigma$ be the unique marked disk in $\CM_{\Gamma}$ such that if we take the interior marked point $z_1$ to be the center of the disk as before, we have the angle condition
\[
\measuredangle x_1 z_1 x_2 = \measuredangle x_2 z_1 x_3 = \measuredangle x_3 z_1 x_1 = \frac{2\pi}{3}.
\]
If a canonical multisection of $E|_{\partial\oCM_{0,5,1}}$ does not vanish at $\Sigma,$ then without loss of generality we may assume that its first component, $s,$ does not vanish there. Moreover, after possibly multiplying by a complex scalar, we may assume $s$ points at $x_3.$ So, we are in item $\left(a\right).$ Using the above reasoning we see that on the surface $\Sigma'$ of item $\left(d\right),$ the section $s$ must point at $x_{2}.$ As a consequence of the choice of $\Sigma,$
\[
\Sigma' \simeq \Sigma,
\]
which is a contradiction. Of course, this example generalizes beyond $\oCM_{0,5,1}$ and establishes the need for multisections.
\end{rmk}

\subsection{The base}
In order to prove Theorem \ref{thm:intersection_numbers_well_defined} we need to understand how canonical multisections behave on boundary strata of arbitrary codimension. We encode the relevant combinatorics in an operation on graphs called the base.

\begin{definition}
Let $\Gamma \in \CG.$ A boundary edge $e = \left\{u,v\right\} \in E^B(\Gamma)$ is said to be \emph{illegal} for the vertex $v$ if $k(\Gamma_{e,v})$ is odd. Otherwise it is \emph{legal}. Denote by $E_{legal}\left(v\right)$ the set of legal edges of $v$.
Recall that a boundary node in a stable curve $\Sigma\in\CM_{\Gamma}$ corresponds to a boundary edge of $\Gamma$, and a component of $\Sigma$ corresponds to a vertex of $\Gamma.$ We define a boundary node of $\Sigma$ to be \emph{legal} for a component $\Sigma_\alpha$ if the corresponding edge is legal for the corresponding vertex. Otherwise it is \emph{illegal}.
\end{definition}

\begin{nn}
Denote by $\CG_{odd}$ the set of all $\Gamma \in \CG$ such that for every connected component $\Gamma_i$ of $\Gamma$, either $V^O\left(\Gamma_i\right)=\emptyset$ or $k\left(\Gamma_i\right)$ is odd.
\end{nn}

Simple parity considerations show the following.
\begin{obs}
If $\Gamma\in\CG_{odd}$ and $e=\left\{u,v\right\} \in E^B(\Gamma),$ then $e$ is legal for exactly one of $u,v$.
\end{obs}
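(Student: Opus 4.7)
My plan is to prove this by a direct parity argument. The key observation is that $k(\Gamma)$ counts only boundary labels (elements of $\cup_{v \in V^O}\ell_B(v)$), not boundary half-edges, so removing an edge does not alter the count of boundary labels on either side.

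First I would let $C$ denote the connected component of $\Gamma$ containing the boundary edge $e = \{u,v\}$. Since $e$ is a boundary edge, both endpoints $u$ and $v$ are open vertices, so $V^O(C) \neq \emptyset$. Because $\Gamma \in \CG_{odd}$, this forces $k(C)$ to be odd.

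Next I would observe that removing $e$ from $C$ disconnects $C$ into exactly the two pieces $\Gamma_{e,u}$ and $\Gamma_{e,v}$ (this uses that $\Gamma$ is a forest, so removing any edge of a tree component splits it into exactly two subtrees). Since each open vertex of $C$ lies in exactly one of $\Gamma_{e,u}, \Gamma_{e,v}$ and keeps all of its $\ell_B$ labels intact, we obtain the partition identity
\[
k(\Gamma_{e,u}) + k(\Gamma_{e,v}) = k(C).
\]

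Finally, since $k(C)$ is odd, exactly one of the two summands $k(\Gamma_{e,u})$, $k(\Gamma_{e,v})$ is odd. By definition, $e$ is legal for $u$ iff $k(\Gamma_{e,u})$ is odd and legal for $v$ iff $k(\Gamma_{e,v})$ is odd, so $e$ is legal for exactly one of $u$ and $v$. There is no real obstacle here — the only subtlety worth flagging explicitly is that $k$ counts boundary labels and not half-edges coming from edges, so the partition identity above is clean; once that is noted, the conclusion is immediate from the oddness assumption built into $\CG_{odd}$.
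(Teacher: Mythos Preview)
Your argument is exactly the ``simple parity consideration'' the paper invokes (the paper gives no further proof), so the approach matches. There is one slip in your final sentence: by the paper's definition, $e$ is \emph{illegal} for $v$ when $k(\Gamma_{e,v})$ is odd, not legal; so from your partition identity and the oddness of $k(C)$ you conclude that $e$ is illegal for exactly one of $u,v$, hence legal for exactly the other --- the conclusion is unchanged, only the labeling in that sentence is reversed.
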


\begin{obs}\label{obs:parity}
Let $\Gamma\in\CG_{odd}$ and $v \in V^O(\Gamma).$ Then the total number of legal edges and boundary labels of $v$ is an odd number.
Moreover, in case $\ell_I\left(v\right)\neq\emptyset$, even if we erase from $\Gamma$ the edges which are illegal for $v$, the vertex $v$ remains stable.
\end{obs}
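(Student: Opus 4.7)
The plan is to prove both parts of the observation by a single parity computation on the connected component $\Gamma_0$ of $\Gamma$ containing $v$, exploiting the tree structure together with the connectedness constraint (Definition~\ref{def:stg}\ref{it:blabla}) that forces closed vertices to ``dangle off'' the open subtree.

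\medskip

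\noindent\textbf{Step 1: Reduce to a single connected component.} Since $v$ is open, $V^O(\Gamma_0)\neq\emptyset$, and $\Gamma\in\CG_{odd}$ gives $k(\Gamma_0)$ odd. All edges at $v$ lie in $\Gamma_0$, so the whole computation takes place inside this tree.

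\noindent\textbf{Step 2: Partition $k(\Gamma_0)$ by edges at $v$.} Removing $v$ from $\Gamma_0$ splits it into a forest of subtrees, one for each edge $e\in E_v$; the subtree rooted at the opposite endpoint $u_e$ is exactly $\Gamma_{e,u_e}$. Since every boundary label of $\Gamma_0$ either sits at $v$ or in some $\Gamma_{e,u_e}$, I get
\[
k(\Gamma_0)=|\ell_B(v)|+\sum_{e\in E_v} k(\Gamma_{e,u_e}).
\]

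\noindent\textbf{Step 3: Kill the interior-edge contributions.} If $e=\{u,v\}$ is an interior edge then $u$ is closed, and by Definition~\ref{def:stg}\ref{it:blabla} (equivalently Remark~\ref{rmk:4}), the connected component $\Gamma_{e,u}$ contains no open vertices. Hence $k(\Gamma_{e,u})=0$, and the sum collapses to
\[
k(\Gamma_0)=|\ell_B(v)|+\sum_{e\in E_v^B} k(\Gamma_{e,u_e}).
\]

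\noindent\textbf{Step 4: Translate parity into legality.} For a boundary edge $e=\{u,v\}$, the identity $k(\Gamma_{e,u})+k(\Gamma_{e,v})=k(\Gamma_0)$ together with the oddness of $k(\Gamma_0)$ shows that $k(\Gamma_{e,u_e})$ is odd precisely when $k(\Gamma_{e,v})$ is even, i.e.\ precisely when $e$ is legal for $v$. Reducing the displayed equation modulo $2$ gives
\[
1\equiv k(\Gamma_0)\equiv |\ell_B(v)|+|E_{legal}(v)|\pmod 2,
\]
which is the first assertion.

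\medskip

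\noindent\textbf{Step 5: Deduce the stability claim.} After erasing the edges of $v$ that are illegal for $v$, the vertex $v$ still carries all its interior data and all its legal boundary half-edges, so the new values are $k'(v)=|\ell_B(v)|+|E_{legal}(v)|$ and $l'(v)=l(v)$. By Step~4, $k'(v)$ is a non-negative odd integer, hence $k'(v)\geq 1$. The hypothesis $\ell_I(v)\neq\emptyset$ gives $l'(v)\geq 1$, so $k'(v)+2l'(v)\geq 1+2=3$ and $v$ remains stable.

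\medskip

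There is no real obstacle here: once the tree is decomposed at $v$ and one notices that closed subtrees contribute nothing to $k$, both parts of the observation follow from a single mod-$2$ counting argument and the trivial inequality in the last step. The only point that needs a little care is verifying that the subtrees obtained by deleting $v$ coincide with the graphs $\Gamma_{e,u_e}$ used to define legality; this is immediate because $\Gamma_0$ is a tree, so removing any vertex or any single edge from a tree has a predictable effect.
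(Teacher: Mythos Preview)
Your proof is correct and follows essentially the same approach as the paper's: both partition $k(\Gamma_0)$ according to the edges at $v$, observe that interior-edge subtrees contribute nothing, and reduce modulo $2$ using the equivalence between legality of $e$ for $v$ and oddness of $k(\Gamma_{e,u})$. Your write-up is somewhat more explicit about the reduction to a single connected component and the role of condition~\ref{it:blabla} in killing the interior-edge terms, but the argument is the same.
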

\begin{proof}
Let $e = \left\{u,v\right\}$ be a boundary edge of $v$.
If $e$ is legal for $v$, then it is illegal for $u$, so $k(\Gamma_{e,u})$ is odd. Otherwise, $k(\Gamma_{e,u})$ is even. Thus
\begin{align*}
\left|\ell_B(v) \cup E_{legal}(v)\right| &\cong |\ell_B(v)| + \sum_{e = \{u,v\} \in E_{legal}(v)} k(\Gamma_{e,u}) \\
& \cong k(\Gamma) \cong 1 \pmod 2,
\end{align*}
which is the first claim of the lemma.

Regarding stability, we have just seen that $\left| \ell_B(v)\cup E_{legal}(v) \right|$ is odd, hence at least $1$, and by assumption $\ell_I(v)\neq\emptyset$. Stability follows.
\end{proof}

\begin{definition}\label{def:base_operator}
The \emph{base} is an operation on graphs
\[
\CB:\CG_{odd}\to\CG_{odd}
\]
defined as follows. For $\Gamma \in \CG_{odd}$ the graph $\CB \Gamma$ is given by
\begin{gather*}
V(\CB \Gamma) = \left\{ v \in \Gamma \left| 2l(v) + \left| \ell_B(v) \cup E_{legal}(v)\right| \geq 3 \right.\right\}, \\
\varepsilon_{\CB \Gamma} = \varepsilon_\Gamma|_{V(\CB \Gamma)}, \qquad \qquad E(\CB \Gamma) = \emptyset, \\
\ell_I^{\CB\Gamma}(v) = i_v^\Gamma\left(I^\Gamma(v)\right), \quad \ell_B^{\CB\Gamma}(v)= i_v^\Gamma\left(\ell^\Gamma_B(v) \cup E_{legal}^\Gamma(v)\right), \quad v \in V(\CB\Gamma).
\end{gather*}
We abbreviate
\[
F_\Gamma = For_{\Gamma,\CB\Gamma} : \CM_\Gamma \to \CM_{\CB\Gamma}.
\]
\end{definition}

\begin{obs}\label{obs:extension_of_canonical_conds}
A multisection $s$ of
\[
E = \bigoplus_{i\in\left[l\right]}\CL_i^{\oplus a_i}\rightarrow\partial\oCM_{0,k,l}
\]
is canonical if and only if for each $\Gamma \in \pB \Gamma_{0,k,l}$, there exists a multisection $s^{\CB\Gamma}$ of
\[
\bigoplus_{i\in\left[l\right]}\CL_i^{\oplus a_i}\rightarrow\CM_{\CB\Gamma}
\]
such that $s|_{\CM_\Gamma} = F_{\Gamma}^*s^{\CB\Gamma}.$
\end{obs}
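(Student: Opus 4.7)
My plan is to prove each direction of the equivalence separately.

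For the implication $(\Leftarrow)$, I would specialize the hypothesis to codimension-one graphs $\Gamma \in \pB\Gamma_{0,k,l}$ with $|E(\Gamma)| = 1$. Writing $V(\Gamma) = \{v_1, v_2\}$ with $k(v_2)$ even, the unique edge $e$ is legal for $v_1$ and illegal for $v_2$. A direct inspection of Definition~\ref{def:base_operator} then shows that $\CB\Gamma$ has vertex set $\{v_1, v_2\}$, no edges, and labels
\[
\ell_B^{\CB\Gamma}(v_1) = i_{v_1}^\Gamma(\ell_B(v_1)\cup\{e\}) = i_{v_1}^\Gamma(B(v_1)), \qquad \ell_B^{\CB\Gamma}(v_2) = \ell_B(v_2),
\]
with interior labels unchanged. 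Comparing with the two-vertex graph $\Gamma'$ used in the definition of canonical boundary conditions, one sees $\CB\Gamma = \Gamma'$ and $F_\Gamma = For_{\Gamma,\Gamma'}$. Hence the hypothesized factorization is precisely the canonical condition.

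For the implication $(\Rightarrow)$, I would argue by induction on $|E(\Gamma)|$. The base case $|E(\Gamma)| = 1$ is the canonical condition, via the identification above. For the inductive step with $|E(\Gamma)| \geq 2$, I would choose an edge $e_0 \in E(\Gamma)$ such that $\Gamma_0 := d_{e_0}\Gamma$ still lies in $\pB\Gamma_{0,k,l}$; this is always possible, by taking $e_0$ to be a boundary edge when $|E^B(\Gamma)| \geq 2$ and an interior edge when $|E^B(\Gamma)| = 1$. The inductive hypothesis yields a multisection $s^{\CB\Gamma_0}$ on $\CM_{\CB\Gamma_0}$ with $s|_{\CM_{\Gamma_0}} = F_{\Gamma_0}^* s^{\CB\Gamma_0}$. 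Since $s$ is smooth and consistent on $\partial\oCM_{0,k,l}$, its restriction to the stratum $\CM_\Gamma \subset \overline{\CM_{\Gamma_0}}$ is the boundary limit of $F_{\Gamma_0}^* s^{\CB\Gamma_0}$, so $s^{\CB\Gamma_0}$ itself extends smoothly to a neighborhood of $\overline{F_{\Gamma_0}}(\CM_\Gamma)$ in $\overline{\CM_{\CB\Gamma_0}}$. The key combinatorial step is to establish a natural forgetful map $\phi : \CM_{\CB\Gamma} \to \overline{\CM_{\CB\Gamma_0}}$ arising from the relation between $\CB\Gamma$ and $\CB\Gamma_0$, fitting into a commutative square
\[
\begin{array}{ccc}
\CM_\Gamma & \hookrightarrow & \overline{\CM_{\Gamma_0}} \\
F_\Gamma\big\downarrow & & \big\downarrow\overline{F_{\Gamma_0}} \\
\CM_{\CB\Gamma} & \stackrel{\phi}{\longrightarrow} & \overline{\CM_{\CB\Gamma_0}}.
\end{array}
\]
Defining $s^{\CB\Gamma} := \phi^* s^{\CB\Gamma_0}$ then yields $s|_{\CM_\Gamma} = F_\Gamma^* s^{\CB\Gamma}$ by commutativity.

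The main obstacle would be the careful combinatorial verification of the relation between $\CB\Gamma$ and $\CB\Gamma_0$ needed to produce $\phi$. Several subcases must be distinguished: whether $e_0$ is interior (in which case smoothing merely merges two vertices' interior labels) or boundary (where the illegal side's edge-label is discarded by $\CB$), with special care required when smoothing $e_0$ changes the legality of remaining edges or alters which vertices satisfy the stability condition defining $V(\CB\Gamma)$. Throughout, the parity analysis of Observation~\ref{obs:parity} would guarantee that stability is preserved under these operations, while the functoriality of forgetful maps (Observation~\ref{obs:fuf}) would provide the compositions needed to construct $\phi$ as an honest forgetful map between products of moduli spaces.
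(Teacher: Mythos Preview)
Your $(\Leftarrow)$ direction is correct and is essentially the paper's observation that for a single-edge $\Gamma$ one has $\CB\Gamma = \Gamma'$ and $F_\Gamma = For_{\Gamma,\Gamma'}$, so the factorization hypothesis specializes to the canonical condition.

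Your $(\Rightarrow)$ direction, however, has a genuine gap: the map $\phi : \CM_{\CB\Gamma} \to \overline{\CM_{\CB\Gamma_0}}$ you propose does not exist in general when $e_0$ is a boundary edge. The point is that $F_\Gamma$ forgets the $e_0$-nodal point on the side for which $e_0$ is illegal, whereas $\overline{F_{\Gamma_0}}|_{\CM_\Gamma}$ retains it, since $e_0 \notin E(\Gamma_0)$ and so $F_{\Gamma_0}$ knows nothing about it. Thus $\overline{F_{\Gamma_0}}|_{\CM_\Gamma}$ cannot factor through $F_\Gamma$; if anything the factoring goes the other way. Concretely, take $\Gamma$ with three open vertices $v_1,v_2,v_3$ in a chain, boundary edges $e_0 = \{v_1,v_2\}$ and $e_1 = \{v_2,v_3\}$ both illegal for $v_2$, and $l(v_2)\geq 1$ so that $v_2$ survives in $\CB\Gamma$. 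Then $F_\Gamma$ forgets \emph{both} illegal nodal points on the $v_2$-component, but with $e_0$ smoothed, $\overline{F_{\Gamma_0}}|_{\CM_\Gamma}$ forgets only the $e_1$-node there. No map $\phi$ can reconstruct the discarded $e_0$-node position from $\CM_{\CB\Gamma}$. The same obstruction occurs for either choice of which edge to call $e_0$, so the inductive step cannot be completed as written.

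The paper's one-line argument sidesteps this entirely. Instead of smoothing one edge inductively, it uses continuity from \emph{every} single-edge smoothing at once: for each illegal edge $e$ of $\Gamma$, the stratum $\CM_\Gamma$ lies in the closure of $\CM_{\Lambda_e}$ with $\Lambda_e = d_{E(\Gamma)\setminus\{e\}}\Gamma$, and the canonical condition on $\CM_{\Lambda_e}$ says $s$ is independent of the $e$-nodal point on the illegal side; continuity of $s$ then gives the same independence on $\CM_\Gamma$. Since this holds for every illegal edge, $s|_{\CM_\Gamma}$ is constant along the fibers of $F_\Gamma$ and hence factors through it.
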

\begin{proof}
The case where $\Gamma$ has a single edge is exactly the definition. The general case follows from the continuity of $s.$
\end{proof}

\begin{obs}\label{obs:Bmaps}
Observation~\ref{obs:parity} implies that
$I(\Gamma) \subseteq I(\CB\Gamma).$
It follows from the definition of $\CB$ that there is a canonical inclusion $$\iota_\CB^V : V(\CB \Gamma) \hookrightarrow V(\Gamma).$$
\end{obs}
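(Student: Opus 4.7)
The plan is to verify both claims essentially by unpacking definitions, with the substantive input being Observation~\ref{obs:parity} for the inclusion of interior labels.

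For the inclusion $\iota_\CB^V : V(\CB\Gamma) \hookrightarrow V(\Gamma)$, I would simply note that Definition~\ref{def:base_operator} sets
\[
V(\CB\Gamma) = \left\{v \in V(\Gamma) \,\big|\, 2l(v) + \left|\ell_B(v) \cup E_{legal}(v)\right| \geq 3\right\},
\]
so $V(\CB\Gamma)$ is literally a subset of $V(\Gamma)$ and the map $\iota_\CB^V$ is taken to be set-theoretic inclusion. This is the canonical inclusion referenced in the statement; there is nothing to prove beyond pointing to the definition.

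For the inclusion $I(\Gamma) \subseteq I(\CB\Gamma)$, I would fix an arbitrary label $i \in I(\Gamma)$ and let $v = v_i(\Gamma) \in V(\Gamma)$ be the unique vertex with $i \in \ell_I^\Gamma(v)$. The goal is to show $v \in V(\CB\Gamma)$; once this is done, the inclusion follows because $i_v^\Gamma$ restricts to the identity on $\ell_I^\Gamma(v) \subseteq I^\Gamma(v)$ by Definition~\ref{df:iv}(1), and thus
\[
i \in i_v^\Gamma\!\left(I^\Gamma(v)\right) = \ell_I^{\CB\Gamma}(v) \subseteq I(\CB\Gamma).
\]
I would split into two cases by the type of $v$. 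If $v \in V^C(\Gamma)$, then by the stability of $\Gamma$ we have $l(v) \geq 3$, so $2l(v) \geq 6 \geq 3$ and $v \in V(\CB\Gamma)$ automatically (note $\ell_B(v) = \emptyset = E_{legal}(v)$ for a closed vertex, since closed vertices carry no boundary edges). If instead $v \in V^O(\Gamma)$, then $\ell_I^\Gamma(v) \ni i$ is nonempty, so Observation~\ref{obs:parity} applies and guarantees that after erasing the edges illegal for $v$ the vertex remains stable. This stability condition for the truncated graph is precisely $2l(v) + |\ell_B(v) \cup E_{legal}(v)| \geq 3$, which places $v \in V(\CB\Gamma)$.

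I do not expect any real obstacles here: the closed-vertex case is immediate from stability of $\Gamma$, and the open-vertex case is exactly the content of the second assertion of Observation~\ref{obs:parity} translated through the defining inequality of $V(\CB\Gamma)$. The only point requiring a moment's care is remembering that $E_{legal}(v)$ is empty for closed vertices (since legality is a notion defined only for boundary edges), so that stability of a closed vertex of $\Gamma$ directly forces membership in $V(\CB\Gamma)$.
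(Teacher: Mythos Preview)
Your proposal is correct and matches the paper's intent: the observation in the paper is stated without a separate proof, simply attributing the first claim to Observation~\ref{obs:parity} and the second to the definition of $\CB$, and your argument fills in exactly those details.
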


The following observation is straightforward.
\begin{obs}\label{obs:illegals_increase}
Recall Definition~\ref{def:si}. Let $e = \left\{u,v\right\}\in E\left(\Gamma\right)$ and let $\Lambda\in\partial\Gamma.$  Let $\tilde e = \iota_{\Gamma,\Lambda}(e)$ and let $\tilde u,\tilde v \in V(\Lambda)$ be such that $\varsigma_{\Lambda,\Gamma}(\tilde u) = u,\,\varsigma_{\Lambda,\Gamma}(\tilde v) =v$ and $\tilde e = \{\tilde u,\tilde v\}.$
Then $\tilde e$ is illegal for $\tilde v$ if and only if $e$ is illegal for $v.$
\end{obs}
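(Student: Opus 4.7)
The plan is to reduce the equivalence of illegality to the equality $k(\Gamma_{e,v}) = k(\Lambda_{\tilde e, \tilde v})$, and then to invoke the invariance of the total boundary-label count under smoothing. First I would unpack the hypothesis $\Lambda \in \partial\Gamma$: by Definition~\ref{def:bdry_maps}, there is a nonempty $S \subseteq E(\Lambda)$ with $\Gamma = d_S \Lambda$, and the natural inclusion identifies $E(\Gamma)$ with $E(\Lambda) \setminus S$. In particular $\tilde e = \iota_{\Gamma,\Lambda}(e) \in E(\Lambda)\setminus S$.

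Next I would observe that deleting $\tilde e$ from $\Lambda$ commutes with smoothing the edges of $S$, and that this smoothing restricts cleanly to each side of $\Lambda_{\tilde e}$. By definition, $\tilde e$ is the unique edge of $\Lambda$ joining the two connected components $\Lambda_{\tilde e,\tilde u}$ and $\Lambda_{\tilde e,\tilde v}$ of $\Lambda_{\tilde e}$, so no edge of $S$ crosses between them. Writing $S_v = S \cap E(\Lambda_{\tilde e,\tilde v})$ and tracking $\varsigma_{\Lambda,\Gamma}$, one then obtains
\[
\Gamma_{e,v} \;=\; d_{S_v}\, \Lambda_{\tilde e, \tilde v}.
\]

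The key arithmetic input is that an elementary smoothing preserves $k(\cdot)$: when $\{a,b\}$ is smoothed, the only change to boundary-label data is that $\ell_B(a)$ and $\ell_B(b)$ are replaced by their union $\ell_B(a)\cup\ell_B(b)$ at the merged vertex, and this union is disjoint by the label-uniqueness clause of Definition~\ref{def:stg}\ref{it:annoyance}\ref{it:2ct}. Iterating over $S_v$ yields $k(\Gamma_{e,v}) = k(\Lambda_{\tilde e,\tilde v})$; in particular their parities agree, which is exactly the equivalence of illegality of $\tilde e$ for $\tilde v$ with illegality of $e$ for $v$.

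The proof is essentially bookkeeping, with no substantive obstacle. The small point meriting a line of verification is that $\tilde e$ really is a boundary edge of $\Lambda$, so that its legality is even defined. This uses condition~\ref{it:blabla} of Definition~\ref{def:stg} applied to $\Lambda$: if $\tilde u$ were closed, then by Remark~\ref{rmk:4} it would lie in a tree of closed vertices with a unique open gateway, which would have to be reached via a smoothed edge in order for $\varsigma_{\Lambda,\Gamma}(\tilde u) = u$ to be open; but then $\tilde e$, an unsmoothed edge to the open vertex $\tilde v$, would contradict uniqueness of the gateway. Hence $\tilde u$ (and symmetrically $\tilde v$) is open, and $\tilde e \in E^B(\Lambda)$.
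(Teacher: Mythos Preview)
Your argument is correct and supplies the verification the paper omits; there the observation is stated without proof, prefaced only by ``the following observation is straightforward.'' Your reduction to $k(\Gamma_{e,v}) = k(\Lambda_{\tilde e,\tilde v})$ via the identity $\Gamma_{e,v} = d_{S_v}\Lambda_{\tilde e,\tilde v}$ and the invariance of the total boundary-label count under smoothing is exactly the natural route.

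Two small corrections. The disjointness $\ell_B(a)\cap\ell_B(b)=\emptyset$ you need for additivity of cardinalities is the label-uniqueness clause of Definition~\ref{def:stg} (the third condition), not part~\ref{it:annoyance}\ref{it:2ct}. More substantively, your check that $\tilde e\in E^B(\Lambda)$ has a circularity: the contradiction from ``$\tilde u$ closed'' invokes ``the open vertex $\tilde v$,'' but the openness of $\tilde v$ is precisely what remains to be shown, so ``symmetrically $\tilde v$'' does not close the loop. A clean fix: pick open $w\in\varsigma^{-1}(u)$ and $w'\in\varsigma^{-1}(v)$ (these exist since $u,v\in V^O(\Gamma)$), observe that the unique tree path in $\Lambda$ from $w$ to $w'$ must traverse $\tilde e$ (it is the only edge between $\varsigma^{-1}(V(\Gamma_{e,u}))$ and $\varsigma^{-1}(V(\Gamma_{e,v}))$), and apply condition~\ref{it:blabla} directly to conclude that every vertex on that path---in particular $\tilde u$ and $\tilde v$---is open.
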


The following is a consequence of the preceding observation.
\begin{obs}\label{obs:BpB}
We have
\[
\CB \circ \pu = \CB \circ \pu \circ \CB.
\]
\end{obs}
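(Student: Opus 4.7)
The core idea is that Observation~\ref{obs:illegals_increase} lets the base operation $\CB$ commute with degeneration in a precise sense. We establish the two inclusions separately. A structural fact used throughout: since $\CB\Gamma$ has no edges, $\pu(\CB\Gamma)$ consists precisely of graphs $\Lambda'$ whose connected components $\Lambda'_v$ (indexed by $v\in V(\CB\Gamma)$) smooth via all their edges to a single vertex of $\CB\Gamma$ with matching labels. Analogously, any $\Lambda\in\pu\Gamma$ with $\Gamma=d_S\Lambda$ admits a canonical partition $V(\Lambda)=\coprod_{v^\circ\in V(\Gamma)} V_{v^\circ}$, where $V_{v^\circ}$ spans a connected subgraph $\Lambda_{v^\circ}$ (given the induced stable graph structure of Definition~\ref{def:span}) that smooths to $v^\circ$.

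For $\CB(\pu\Gamma)\subseteq \CB(\pu(\CB\Gamma))$: given $\Lambda\in\pu\Gamma$, for each $v\in V(\CB\Gamma)$ let $v^\circ=\iota_\CB^V(v)\in V(\Gamma)$ and set $\Lambda'_v:=\CB(\Lambda_{v^\circ})$. Define $\Lambda'$ to be the disjoint union of the $\Lambda'_v$. By Observation~\ref{obs:illegals_increase} the legality of an edge of $\Lambda$ internal to $\Lambda_{v^\circ}$ agrees in $\Lambda$ and in $\Lambda_{v^\circ}$, so $\CB(\Lambda_{v^\circ})$ is precisely the portion of the $\CB\Lambda$ computation supported at the vertices in $V_{v^\circ}$. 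This yields $\CB\Lambda'=\CB\Lambda$. Moreover, $\Lambda'\in\pu(\CB\Gamma)$: the labels of $\Lambda'_v$ collectively equal the labels of $v\in V(\CB\Gamma)$ by the definition of $\CB$ applied to $\Gamma$ restricted to the subgraph corresponding to $v$ (the packaged labels $i_{v^\circ}^\Gamma(e)$ arising from illegal edges at $v^\circ$ match on both sides).

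For $\CB(\pu(\CB\Gamma))\subseteq \CB(\pu\Gamma)$: given $\Lambda'\in\pu(\CB\Gamma)$ with components $\Lambda'_v$, construct $\Lambda\in\pu\Gamma$ by \emph{unpackaging}. For every packaged label $L=i_{v^\circ}^\Gamma(e)\in\LL$ (coming from an illegal edge $e\in E^B(\Gamma)$ at $v^\circ=\iota_\CB^V(v)$) that appears on some vertex $w$ of $\Lambda'_v$, remove $L$ from $\ell_B^{\Lambda'}(w)$ and attach at $w$ a copy of the edge $e$ leading to the original illegal-side subgraph of $\Gamma$; vertices of $\Gamma$ outside the image of $\iota_\CB^V$ re-enter the picture precisely via such reattachments. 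Smoothing the edges of $\Lambda'$ inside $\Lambda$ collapses each $\Lambda'_v$ back to $v^\circ$ with its original labels and illegal sides reattached, recovering $\Gamma$; hence $\Lambda\in\pu\Gamma$. Applying $\CB$ to $\Lambda$ re-absorbs each reattached illegal side into the same packaged label we removed, so $\CB\Lambda=\CB\Lambda'$.

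The main obstacle will be verifying that the constructed $\Lambda$ and $\Lambda'$ satisfy the stability axioms of Definition~\ref{def:stg}, particularly the label uniqueness and disjointness conditions \ref{it:annoyance}\ref{it:1ct}--\ref{it:annoyance}\ref{it:2ct}. The bookkeeping is nontrivial because labels live in $\LL=2^{\Z\cup\rz{\Z}}_{fin}$ and packaged labels $i_v^\Gamma(e)$ are atomic elements of $\LL$ whose constituents also appear individually elsewhere; they must be tracked coherently via the identifications $\iota_{\Gamma,\Lambda}$ and $\varsigma_{\Lambda,\Gamma}$ of Definition~\ref{def:si}. A secondary technical point is checking that $\CB$ applied to the span subgraph $\Lambda_{v^\circ}$ agrees, as a piece of graph data, with the corresponding local piece of $\CB\Lambda$, which relies on the invariance in Observation~\ref{obs:illegals_increase}.
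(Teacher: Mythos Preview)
The paper gives no proof beyond the sentence ``The following is a consequence of the preceding observation.'' Your proposal is far more detailed, but it contains a genuine gap.

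You assert that Observation~\ref{obs:illegals_increase} forces the legality of an edge $f$ internal to the degeneration of $v^\circ$ to agree in $\Lambda$ and in the span subgraph $\Lambda_{v^\circ}$ of Definition~\ref{def:span}. Observation~\ref{obs:illegals_increase} says no such thing: it only compares the legality of an edge $e\in E(\Gamma)$ with that of its image $\iota_{\Gamma,\Lambda}(e)$ in $\Lambda\in\partial\Gamma$; it is silent about edges created by the degeneration and about passage to span subgraphs. In fact the assertion is false. For $f=\{w_1,w_2\}$ internal to the degeneration of $v^\circ$, an edge $e\in E^B_{v^\circ}(\Gamma)$ on the $w_1$ side contributes $k(\Gamma_{e,u_e})$ to $k(\Lambda_{f,w_1})$ but only $1$ (as a single packaged label) to $k\bigl((\Lambda_{v^\circ})_{f,w_1}\bigr)$; since $k(\Gamma_{e,u_e})$ is even exactly when $e$ is illegal for $v^\circ$, the two counts differ in parity by the number of illegal-for-$v^\circ$ edges on the $w_1$ side, and legality of $f$ can flip. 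Two further symptoms of the same problem: $k(\Lambda_{v^\circ})=k(v^\circ)$ need not be odd (take $v^\circ$ with an odd number of illegal boundary edges), so $\CB(\Lambda_{v^\circ})$ may be undefined; and even when it is defined it carries the packaged labels $i_{v^\circ}^\Gamma(e)$ for illegal $e$, which do not occur at the corresponding vertex of $\CB\Gamma$, so your $\Lambda'_v$ cannot lie in $\pu(\CB\Gamma)$. The natural repair is to form the span over $\varsigma^{-1}(v^\circ)$ after first discarding the illegal boundary edges of $v^\circ$, retaining as added boundary labels only the packages $i_{v^\circ}^\Gamma(e)$ for legal $e$; the parity obstruction then vanishes by the same computation, the result lies in $\CG_{odd}$ by Observation~\ref{obs:parity}, and (after stabilizing) one obtains a genuine element of $\pu$ of the $v$-component of $\CB\Gamma$.
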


The key to constructing homotopies between canonical multisections is the following.
\begin{obs}\label{obs:dim_of_base_mod}
For $\Gamma \in \partial \Gamma_{0,k,l}$ with $k$ odd, we have
\[
\dim_\C \CM_{\CB\Gamma} \leq \dim_\C \CM_{0,k,l}-1.
\]
In addition, for any $v \in V(\CB\Gamma),$ we have $\dim_\C \CM_{v} \in \Z.$ It follows that $\dim_\C \CM_{\CB\Gamma}\in\Z.$
\end{obs}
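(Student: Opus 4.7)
My plan is to derive both assertions from direct dimension bookkeeping on the product decomposition $\CM_{\CB\Gamma} = \prod_{v \in V(\CB\Gamma)} \CM_v$ provided by Observation~\ref{obs:very_trivial}, combined with the parity constraint of Observation~\ref{obs:parity}.

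I would first handle the integrality claim. For an open vertex $v \in V(\CB\Gamma),$ the boundary label count in $\CB\Gamma$ is $k^{\CB\Gamma}(v) = |\ell_B^\Gamma(v) \cup E_{legal}^\Gamma(v)|,$ which is odd by Observation~\ref{obs:parity}; hence $\dim_\R \CM_v = k^{\CB\Gamma}(v) + 2 l^{\CB\Gamma}(v) - 3$ is even. For a closed vertex, $\dim_\R \CM_v = 2 l^{\CB\Gamma}(v) - 6$ is automatically even. Summing over $V(\CB\Gamma)$ gives $\dim_\C \CM_{\CB\Gamma} \in \Z.$

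For the dimension inequality, write $X := V(\Gamma) \setminus V(\CB\Gamma).$ A closed vertex is never in $X$ because stability forces $l^\Gamma(v) \geq 3.$ For $v \in X$ open, the exclusion inequality $2 l^\Gamma(v) + |\ell_B(v) \cup E_{legal}(v)| < 3,$ combined with the odd parity supplied by Observation~\ref{obs:parity}, pins down $l^\Gamma(v) = 0$ and $|\ell_B(v) \cup E_{legal}(v)| = 1.$ Using the identity $\sum_{v \in V^O(\Gamma)} |E_{illegal}(v)| = |E^B(\Gamma)|$ (each boundary edge is illegal for exactly one endpoint) and the codimension formula $\dim_\R \CM_\Gamma = (k + 2l - 3) - |E^B(\Gamma)| - 2|E^I(\Gamma)|,$ a vertex-by-vertex comparison---using $\dim_\R \CM_v^\Gamma - \dim_\R \CM_v^{\CB\Gamma} = |E_{illegal}(v)|$ for open $v \in V(\CB\Gamma)$ and $\dim_\R \CM_v^\Gamma = |E_{illegal}(v)| - 2$ for $v \in X$---should produce
\[
\dim_\R \CM_{0,k,l} - \dim_\R \CM_{\CB\Gamma} = 2\bigl(|E(\Gamma)| - |X|\bigr).
\]

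It remains to show $|X| < |E(\Gamma)|.$ The key observation is that every $v \in X$ has degree at least $2$ in $\Gamma$: stability gives $k^\Gamma(v) \geq 3,$ and since $l^\Gamma(v) = 0$ and $|\ell_B(v)| \leq 1,$ we obtain $|E_v| = |E_v^B| \geq 2.$ Since $\Gamma$ is a connected tree (connectedness is inherited from $\Gamma_{0,k,l}$ because smoothing never disconnects) and $|E(\Gamma)| \geq 1$ by the hypothesis $\Gamma \in \partial \Gamma_{0,k,l},$ the tree $\Gamma$ has at least two leaves, and no leaf can lie in $X.$ Hence $|X| \leq |V(\Gamma)| - 2 = |E(\Gamma)| - 1,$ yielding the desired bound $\dim_\C \CM_{0,k,l} - \dim_\C \CM_{\CB\Gamma} \geq 1.$ The main obstacle I anticipate is executing the middle dimension algebra cleanly; the geometric core---excluded vertices are forced to be interior nodes of the tree---is straightforward once $X$ has been characterized.
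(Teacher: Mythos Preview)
Your argument is correct, and it yields more than the paper's proof: you obtain the exact formula $\dim_\R \CM_{0,k,l} - \dim_\R \CM_{\CB\Gamma} = 2(|E(\Gamma)| - |X|)$, whereas the paper only proves the inequality. The paper proceeds by a short case split instead of global bookkeeping: if $\Gamma$ has an interior edge or at least two boundary edges, then already $\dim_\C \CM_\Gamma \leq \dim_\C \CM_{0,k,l} - 1$ and one simply uses $\dim_\C \CM_{\CB\Gamma} \leq \dim_\C \CM_\Gamma$; in the remaining case $\Gamma$ has exactly one edge, necessarily boundary, connecting two open vertices, and one checks directly that the vertex for which the edge is illegal does not destabilize upon dropping it, so $\dim_\C \CM_{\CB\Gamma} = \dim_\C \CM_\Gamma - \tfrac{1}{2}$. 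Your approach trades this ad hoc case analysis for a uniform computation plus a tree-leaf argument; the paper's approach is quicker but less informative. The ``middle algebra'' you flag as a potential obstacle is fine: using $\dim_\R \CM_{0,k,l} - \dim_\R \CM_\Gamma = |E^B| + 2|E^I|$ together with your per-vertex contributions and $\sum_{v \in V^O} |E_{illegal}(v)| = |E^B|$, the two pieces sum to $2|E^B| + 2|E^I| - 2|X|$ as claimed.
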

\begin{proof}
If $\Gamma$ has at least one interior edge or two boundary edges, then $\dim_\C \CM_\Gamma \leq \dim_\C \CM_{0,k,l}-1.$ So, since $\dim_\C\CM_{\CB\Gamma} \leq \dim_\C \CM_{\Gamma},$ the desired inequality follows.
It remains to consider the case that $\Gamma$ consists of two vertices $u,v,$ connected by a single boundary edge $e$. Then $e$ is illegal for exactly one of the vertices, say $v.$ The stability of $v$ and the illegality of $e$ for $v$ imply $k(v) \geq 4$. So, dropping $e$ in passing to $\CB\Gamma$ does not destabilize $v.$ Thus there is a corresponding vertex $v'$ in $\CB\Gamma$ with $k(v') = k(v)-1.$ It follows that
\begin{equation*}
\dim_\C \CM_{\CB\Gamma} \leq \dim_\C \CM_{\Gamma}-\frac{1}{2} = \dim_\C \CM_{0,k,l} - 1.
\end{equation*}
This completes the proof of the first claim.
The integrality follows immediately from Observation~\ref{obs:parity}.
\end{proof}
Recall Lemma~\ref{lm:or}. Let $k$ be odd, and let $\Gamma \in \pB \Gamma_{0,k,l}$ consist of two open vertices $v^\pm_\Gamma,$ connected by a single boundary edge that is legal for $v^+_\Gamma$ and illegal for $v^-_\Gamma.$ In particular, $\CM_\Gamma$ is an open subset of $\partial \oCM_{0,k,l}$. Denote by $o_\Gamma$ the orientation on $\oCM_\Gamma$ induced by $o_{0,k,l}$ and the outward normal vector, ordering the outward normal first. Furthermore, writing $k_\Gamma^+ = k(v^+_\Gamma), k_\Gamma^- = k(v^-_\Gamma)-1$ and $l_\Gamma^\pm = l(v^\pm_\Gamma),$ we have
\begin{equation}\label{eq:Bprod}
\CM_{\CB\Gamma} \simeq \CM_{0,k_\Gamma^+,l_\Gamma^+} \times \CM_{0,k_\Gamma^-,l_\Gamma^-}
\end{equation}
where $k^\pm_\Gamma$ are both odd. So we define the orientation $o_{\CB\Gamma}$ of $\CM_{\CB\Gamma}$ to be the product of the orientations $o_{0,k^\pm_\Gamma,l^\pm_\Gamma}$. The choice of isomorphism~\eqref{eq:Bprod} does not affect $o_\Gamma$ because of property~\ref{it:perm} of $o_{0,k,l}.$ The fiber of the map $F_\Gamma$ is a collection of open intervals in the boundary of a disk, and thus carries an induced orientation, which we call \emph{natural} below.
\begin{lm}\label{lm:Bor}
The orientation $o_\Gamma$ agrees with the orientation induced from $o_{\CB \Gamma}$ by the fibration $F_\Gamma:\CM_\Gamma \to \CM_{\CB\Gamma}$ and the natural orientation on the fiber.
\end{lm}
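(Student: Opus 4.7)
The plan is to prove the lemma by induction on $l$ and $k$, using the inductive characterization of $o_{0,k,l}$ from Lemma~\ref{lm:or}. First, I unpack the setup via Observation~\ref{obs:very_trivial}: $\CM_\Gamma \simeq \CM_{v^+_\Gamma} \times \CM_{v^-_\Gamma}$, with $\CM_{v^+_\Gamma} = \CM_{0,k^+_\Gamma,l^+_\Gamma}$ oriented by $o_{0,k^+_\Gamma,l^+_\Gamma}$ (since $k^+_\Gamma$ is odd), while $\CM_{v^-_\Gamma} \simeq \CM_{0,k^-_\Gamma+1,l^-_\Gamma}$ has even boundary count and hence no intrinsic orientation. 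Under~\eqref{eq:Bprod}, $F_\Gamma$ acts as the identity on the first factor and, on the second factor, as the forgetful map $\CM_{0,k^-_\Gamma+1,l^-_\Gamma}\to \CM_{0,k^-_\Gamma,l^-_\Gamma}$ dropping the nodal boundary marking; its fiber is an interval on the $v^-$-boundary circle carrying the natural orientation. The claim thus reduces to showing that $o_\Gamma$ equals the product orientation on $\CM_{\CB\Gamma}$ combined with this fiber orientation.

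For the inductive step on $l$, suppose $l^\pm_\Gamma$ exceeds the minimum required for stability, and pick an interior label $i$ on that side. Form $\Gamma_0 = for_i(\Gamma)$ and consider the commutative square of forgetful maps relating $F_\Gamma, F_{\Gamma_0}, For_i,$ and the analogous map on bases. By property~\ref{it:int} of Lemma~\ref{lm:or}, the orientation $o_{0,k,l}$ is induced from $o_{0,k,l-1}$ and the complex orientation on the fiber of $\oCM_{0,k,l}\to \oCM_{0,k,l-1}$; passing to the boundary stratum, $o_\Gamma$ is likewise induced from $o_{\Gamma_0}$ and the complex fiber of $\CM_\Gamma\to\CM_{\Gamma_0}$. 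The same holds for the product-plus-fiber orientation on $\CM_\Gamma$ by applying property~\ref{it:int} on the relevant factor of $\CM_{\CB\Gamma}$. Hence the lemma for $\Gamma_0$ implies it for $\Gamma$. An analogous reduction handles the inductive step on $k$ via property~\ref{it:bdry}, provided we forget two boundary labels from the same side of $\Gamma$ so that the $[0,1]^2$-fiber orientation is compatible on both sides of the claimed equality.

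These reductions leave a short, finite list of base cases in which every $k^\pm_\Gamma$ and $l^\pm_\Gamma$ is at its stability minimum. The simplest is $(k^+_\Gamma,l^+_\Gamma,k^-_\Gamma,l^-_\Gamma) = (1,1,1,1),$ for which $\CM_\Gamma$ is a $1$-dimensional stratum of $\partial\oCM_{0,1,2}$ and $\CM_{\CB\Gamma}$ is a point. In the explicit model where $\oCM_{0,1,2}\simeq\overline\Delta$ (with interior markings at $0$ and a variable $w\in\overline\Delta,$ boundary marking at $1$), the stratum $\CM_\Gamma$ corresponds to $w=e^{i\theta},$ the outward normal is $\partial_r,$ the defining orientation $o_{0,1,2}$ is the complex orientation $\partial_u\wedge \partial_v = \tfrac{1}{r}\partial_r\wedge\partial_\theta,$ and the natural fiber orientation of the boundary-position interval is the counterclockwise tangent $\partial_\theta.$ One checks directly that both orientations point in the direction $\partial_\theta$. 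The remaining base cases, corresponding to the minimum configurations inside $\oCM_{0,3,1}$ and $\oCM_{0,5,0},$ are handled by analogous explicit computations.

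The main obstacle is careful sign bookkeeping throughout the reductions. One must track the interplay of the outward-normal convention for $o_\Gamma$, the product ordering of the two factors of $\CM_{\CB\Gamma}$, the insertion of the natural fiber orientation, and the compatibility with all forgetful maps used in the induction. The subtlest point is the reduction via property~\ref{it:bdry}: the two forgotten boundary markings must lie on the same vertex of $\Gamma$ and in a specific relative order, and one must verify that this ordering is compatible both with the boundary construction of $o_\Gamma$ and with the natural orientation on the fiber interval of $F_\Gamma$.
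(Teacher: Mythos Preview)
Your proposal is correct and follows essentially the same approach as the paper: induction via the forgetful maps of Lemma~\ref{lm:or}, reducing to the three two-dimensional base cases $\oCM_{0,1,2},\oCM_{0,3,1},\oCM_{0,5,0}$, with the inductive step handled by a commutative diagram comparing $F_\Gamma$ with $F_{\Gamma'}$ for $\Gamma'$ obtained by forgetting either one interior label or a pair of boundary labels on one side. The paper organizes this as a single induction on $\dim_\R\oCM_{0,k,l}$ rather than separate inductions on $k$ and $l$, and draws the diagram explicitly, but the content is the same.
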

\begin{proof}
The claim can be checked explicitly in the three cases when $\dim_\R \oCM_{0,k,l} = 2.$ We use induction on $\dim_\R \oCM_{0,k,l}$ to reduce to the two dimensional case.
Indeed, assume $\dim_\R \oCM_{0,k,l} \geq 4.$ Since $k_\Gamma^\pm$ are odd and
\[
4 \leq \dim_\R \oCM_{0,k,l} = k_\Gamma^+ + k_\Gamma^- + 2(l_\Gamma^+ + l_\Gamma^-)-4,
\]
either $k_\Gamma^+ +2 l_\Gamma^+ \geq 5$ or $k_\Gamma^- + 2l_\Gamma^- \geq 5.$ By property~\ref{it:perm} of $o_{0,k,l}$, in case $k_\Gamma^+ + 2l_\Gamma^+ \geq 5,$ we may assume $\ell_B(v^+_\Gamma) = \{k^-_\Gamma+1,\ldots,k\}.$ Otherwise, we may assume $\ell_B(v^-_\Gamma) = \{k^+_\Gamma,\ldots,k\}.$  Again by property~\ref{it:perm}, it suffices to prove the claim for $o_\Gamma$ restricted to $\CMm_\Gamma : = \CM_\Gamma \cap \CMm_{0,k,l}.$ Denote by $\CMm_{\CB\Gamma}$ the corresponding component of $\CM_{\CB\Gamma}.$ We choose isomorphism~\eqref{eq:Bprod} so it induces an isomorphism
\[
\CMm_{\CB\Gamma} \simeq \CMm_{0,k_\Gamma^+,l_\Gamma^+} \times \CMm_{0,k_\Gamma^-,l_\Gamma^-}
\]
such that the $k^{th}$ marked point of $\CM_{\CB\Gamma}$ corresponds to either the $k^+_\Gamma$ boundary marked point of $\CM_{0,k_\Gamma^+,l_\Gamma^+}$ or the $k_\Gamma^-$ boundary marked point of $\CM_{0,k_\Gamma^-,l_\Gamma^-}.$
If $k_\Gamma^+ +2 l_\Gamma^+ \geq 5$ and $k_\Gamma^+ \geq 3,$ or if $k_\Gamma^- +2 l_\Gamma^- \geq 5$ and $k_\Gamma^- \geq 3$, let $(k',l') = (k-2,l).$ Otherwise, let $(k',l') = (k,l-1).$
If $k' = k-2,$ let $\Gamma'\in \pB\Gamma_{0,k',l'}$ be the graph obtained from $\Gamma$ by forgetting the boundary markings $k,k-1.$ Otherwise, let $\Gamma'= for_l(\Gamma).$
Consider the following commutative diagram.
\[
\xymatrix@!=.4cm{
&&&& D \ar[ddd]\ar@{_{(}->}[dl]\\
&&& D' \ar[ddd] \\
&&A\ar[dl]&&& B \ar[ld] \ar@{_{(}->}'[l]'[ll][lll]\\
&  \oCMm_{0,k,l} \ar[ld]^(.4)a &&& \CMm_\Gamma \ar@{_{(}->}'[l][lll]\ar[ld]^(.4)b\ar[ddd]_{F_\Gamma} \\
 \oCMm_{0,k',l'} &&& \CMm_{\Gamma'} \ar[ddd]_{F_{\Gamma'}} \ar@{_{(}->}[lll] \\
&&& &&&& C\ar@/_/[llld]\ar@/^1pc/[d] \ar@{_{(}->}[uuull]\\
&&&&\CMm_{\CB\Gamma}\ar[ld]^(.4)c &&& \ar@{-}[lll]_(.65){\sim} \CMm_{0,k_\Gamma^+,l_\Gamma^+} \times \CMm_{0,k_\Gamma^-,l_\Gamma^-} \ar[dl]^(.35){c'}\\
&&& \CMm_{\CB\Gamma'} &&& \CMm_{0,k_{\Gamma'}^+,l_{\Gamma'}^+} \times \CMm_{0,k_{\Gamma'}^-,l_{\Gamma'}^-}\ar@{-}[lll]_(.65){\sim}
}
\]
The spaces $A,B$ and $C,$ are the fibers of the forgetful maps $a,b,$ and~$c,$ respectively. So, if $k' = k-2$, then $A\simeq[0,1]^2.$ Otherwise, $A~\simeq~D^2.$ The fibers $B$ and $C$ are open subsets of $A$ and the inclusions preserve the natural or complex orientations. The spaces $D,D',$ are the fibers of the maps $F_{\Gamma},F_{\Gamma'},$ respectively. Both $D$ and $D'$ are homeomorphic to an open interval, and the open inclusion $D \hookrightarrow D'$ preserves the natural orientations. By our assumption on $\ell_B(v^\pm_\Gamma),$ the map $c'$ is the identity on one of the factors $\CMm_{0,k_\Gamma^\pm,l_\Gamma^\pm}$ and the forgetful map on the other.

We say a fibration is oriented if the orientation on the total space is induced by that on the base and fiber. Thus $a$ and $c'$ are oriented by properties~\ref{it:int} and~\ref{it:bdry} of the orientations $o_{0,k,l}.$ By the definition of the orientations $o_\Gamma$ and $o_{\CB\Gamma},$ it follows that $b$ and $c$ are oriented. By induction $F_{\Gamma'}$ is oriented. So the diagram implies $F_\Gamma$ is oriented as well.
\end{proof}

\subsection{Abstract vertices}
For proving theorems, a refinement of canonical multisections is helpful. The relevant definition, given in Section~\ref{ssec:scbc}, uses the notion of an abstract vertex.
\begin{definition}
An \emph{abstract vertex} $v$ is a triple $\left(\varepsilon, k, I\right)$, where
\begin{enumerate}
\item
$\varepsilon\in \left\{C , O\right\}$.
\item
$k\in\mathbb{Z}_{\geq 0}$.
\item
$I = \left\{i_1 , i_2 ,\ldots, i_l\right\}\in 2^\LL_{fin,disj}$.
\end{enumerate}
We demand that if $\varepsilon = C$, then $k=0$.
We call $k = k\left(v\right)$ the number of boundary labels of the abstract vertex and $l = l\left(v\right) = \left|I\right|$ the number of interior labels. We also use the notation $I\left(v\right)$ for $I$, and we call the elements of $I\left(v\right)$ the interior labels of $v$. An abstract vertex is said to be \emph{open} if $\varepsilon = O$, and otherwise it is \emph{closed}.
An abstract vertex is called \emph{stable} if $k+2l \geq 3$.

Denote by $\mathcal{V}$ the set of all stable abstract vertices.
\end{definition}
\begin{nn}
Let $v\in\mathcal{V}$. We define
\[
\CM_v =
\begin{cases}
\CM_{0,I\left(v\right)}, & \varepsilon\left(v\right) = C, \\
\CM_{0,k\left(v\right),I\left(v\right)}, & \varepsilon\left(v\right) = O.
\end{cases}
\]
We define $\oCM_v$ similarly.
\end{nn}
We turn to the boundary of an abstract vertex, soon to be related with the boundary of a stable graph.
\begin{definition}
Given an abstract vertex $v= \left(\varepsilon, k, I\right),$ we define the \emph{boundary} of $v$, denoted by $\partial v$, as the collection of abstract vertices $v' = \left(\varepsilon', k', I'\right)\neq v$ which satisfy
\begin{enumerate}
\item
If $\varepsilon = C$, then $\varepsilon' = C$.
\item
$k'\leq k.$
\item
Every element in $I'$ is a union of elements of $I.$
\end{enumerate}
\end{definition}
\begin{definition}
For $\Gamma \in \CG$, we define the map
\[
\eta=\eta_\Gamma:V\left(\Gamma\right)\to\mathcal{V}
\]
by
\[
\eta\left(v\right) = \left(\varepsilon\left(v\right),k\left(v\right),i_v\left(I\left(v\right)\right)\right).
\]
Here, $i_v$ is as in Definition~\ref{df:iv}.
\end{definition}

\begin{definition}\label{def:CM_av}
Let $\Gamma \in \CG$ and $v\in V\left(\Gamma\right)$. Each bijection
\[
B\left(v\right)\simeq [\rz{k\left(v\right)}]
\]
induces a natural diffeomorphism
\[
\phi_v:\CM_v\to\CM_{\eta\left(v\right)}.
\]
For the rest of the article, we fix one such diffeomorphism for each open vertex in each stable graph.
For $v\in V^C\left(\Gamma\right),$ we have a natural identification $\phi_v : \CM_v \to \CM_{\eta(v)}$ without making any choices. The maps $\phi_v$ extend to the boundary of $\oCM_v,$ and we use the same notation for the extension:
\[
\phi_v:\oCM_v\to\oCM_{\eta\left(v\right)}.
\]
\end{definition}

\begin{rmk}\label{rmk:perm_grp_action}
The symmetric group $S_k$ acts naturally on $\CM_{0,k,l}$ by sending
$\left(\Sigma, \left\{x_i\right\}_1^k , \left\{z_i\right\}_1^l\right)$  to
$\left(\Sigma, \left\{x_{\sigma\cdot{i}}\right\}_1^k, \left\{z_i\right\}_1^l\right)$ for $\sigma\in S_k$.
In the same way, given $\Gamma \in \CG$, the group $\prod_{v\in V\left(\Gamma\right)} S_{k\left(v\right) }$ acts on $\CM_\Gamma$. In particular, for every vertex $v\in V\left(\Gamma\right),$ the symmetric group $S_{k\left(v\right)}$ acts on $\CM_\Gamma$.

Thus, if we had chosen another map $\phi{'}_{v}$ for a vertex $v\in V\left(\Gamma\right)$, then $\phi_v, \phi{'}_v,$ would differ by the action of some $\sigma \in S_{k\left(v\right) }$. That is,
\[
\phi{'}_{v} = \phi_{v}\circ\mathbf{\sigma},
\]
where we denote the group element and its action by the same notation.
\end{rmk}

We also have a map $\nu: \mathcal{V}\to\CG$ which takes the abstract vertex $v = \left(\varepsilon, k, I\right)\in\mathcal{V}$ to the stable graph
$\left(V = V^{O}\cup V^{C} ,E,\ell_I,\ell_B\right)$ such that
\begin{enumerate}
\item
If $\varepsilon = C$, then $V=V^C=\left\{*\right\}$. Otherwise $V=V^O=\left\{*\right\}$.
\item
$E =\emptyset$.
\item
If $\varepsilon = O,$ then $\ell_B\left(*\right)=[\rz{k}]$.
\item
$\ell_I\left(*\right) = I$.
\end{enumerate}
One can easily verify that $\eta\circ\nu = id.$
\begin{nn}
Denote by $\CV_{odd} \subset \CV$ the set of all abstract vertices $v$ such that either $\varepsilon(v) = C$ or $k(v)$ is odd.
\end{nn}

\begin{nn}
Let $\Gamma \in \CG_{odd}$ and $i \in I(\Gamma).$ By Observation~\ref{obs:Bmaps}, we have $i \in I(\CB\Gamma).$ So, we write
\[
\bv{i}{\Gamma} = \eta(v_i(\CB\Gamma)).
\]
\end{nn}

From Observations~\ref{obs:parity} and~\ref{obs:illegals_increase}, we immediately obtain the following.
\begin{obs}\label{obs:bdry_of_abs_vrtx}
Let $\Gamma\in\CG_{odd}, i\in I\left(\Gamma\right),$ and let $v = \bv{i}{\Gamma}.$ Then $v \in \CV_{odd}.$ In addition, for every $\Gamma'\in\partial\Gamma$, either $\bv{i}{\Gamma'} = v$ or $\bv{i}{\Gamma'}\in\partial v$. In the latter case,
\[
\dim_{\C}\CM_\bv{i}{\Gamma'} < \dim_{\C}\CM_v.
\]
\end{obs}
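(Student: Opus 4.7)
The plan is to verify each assertion separately using the cluster structure under smoothing, with Observations~\ref{obs:parity} and~\ref{obs:illegals_increase} providing the main bookkeeping.

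For the first assertion $v \in \CV_{odd}$, let $u = v_i(\Gamma)$. Since $i \in \ell_I^\Gamma(u)$, Observation~\ref{obs:parity} ensures that $u$ is stable in $\CB\Gamma$, so $v = \eta(u)$. If $u$ is closed, then $\varepsilon(v) = C$ and the claim is immediate. If $u$ is open, the injectivity of $i_u^\Gamma$ (Remark~\ref{rmk:iv}) gives $k(v) = |\ell_B^\Gamma(u) \cup E_{legal}^\Gamma(u)|$, which is odd by Observation~\ref{obs:parity}.

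For the second assertion, given $\Gamma' \in \partial\Gamma$ with $\Gamma = d_S\Gamma'$, let $u' = v_i(\Gamma')$ and $U = \varsigma_{\Gamma',\Gamma}^{-1}(u) \subseteq V(\Gamma')$, the cluster of vertices merging to $u$ under smoothing; note that $U$ spans a subtree of $\Gamma'$ whose internal edges all lie in $S$. When $|U|=1$, the incident edges of $u'$ correspond bijectively to those of $u$ via $\iota$, legality is preserved by Observation~\ref{obs:illegals_increase}, and the ``other-side'' label sets $i_u^\Gamma(e) = i_{u'}^{\Gamma'}(\iota(e))$ are invariant under smoothing, so $v' = v$. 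When $|U| \geq 2$, I verify the three conditions for $v' \in \partial v$: the closed condition holds because a smoothed vertex is closed only when both merged vertices are closed; the bound $k(v') \leq k(v)$ follows by writing $k(v) = \sum_{w \in U} \bigl(|\ell_B^{\Gamma'}(w)| + |E_{legal,\bar S}^{\Gamma'}(w)|\bigr)$ via Observation~\ref{obs:illegals_increase} and absorbing the legal $S$-edges at $u'$ against the parity bound of Observation~\ref{obs:parity} applied at each $w \in U \setminus \{u'\}$; and the union condition follows by tracking how each element of $I(v')$ (either a singleton direct label at $u'$, or of the form $i_{u'}^{\Gamma'}(e')$ for some edge $e'$ at $u'$ in $\Gamma'$) decomposes into singletons and external-edge label sets which are themselves elements of $I(v)$.

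The main obstacle is the strict dimension inequality when $v \neq v'$. Expanding $(k(v) + 2l(v)) - (k(v') + 2l(v'))$ as a sum of contributions indexed by $w \in U \setminus \{u'\}$, each contribution is non-negative, using stability of $w$ together with the parity result of Observation~\ref{obs:parity} for open $w$, or the condition $l(w) \geq 3$ for closed $w$. Strict positivity reduces to verifying that if all contributions vanish simultaneously then the cluster must be trivial, contradicting $|U| \geq 2$; the delicate part is the case analysis on whether each $w$ is open or closed and whether the $S$-edges incident to $w$ are boundary or interior, since each case requires a slightly different combination of the parity and stability bounds to produce a strictly positive contribution.
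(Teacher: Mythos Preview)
The paper's ``proof'' is the single sentence preceding the observation, so there is nothing to compare at a technical level: your proposal is a detailed unpacking of that sentence via Observations~\ref{obs:parity} and~\ref{obs:illegals_increase}, and the first assertion together with the case $|U|=1$ are handled correctly.

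There is, however, a genuine gap when $u'=v_i(\Gamma')$ is closed while $u=v_i(\Gamma)$ is open. Take $\Gamma$ to be a single open vertex with interior labels $1,2$ and boundary label $1^\circ$, and $\Gamma'\in\partial\Gamma$ to have a closed vertex $u'$ carrying $1,2$ joined by an interior edge $e'$ to an open leaf $w$ carrying only $1^\circ$. Here $v=(O,1,\{\{1\},\{2\}\})$ and $v'=(C,0,\{\{1\},\{2\},\{1^\circ\}\})$, and two of your steps fail on this example. First, $i_{u'}^{\Gamma'}(e')=\{1^\circ\}\in I(v')$ is not a union of elements of $I(v)=\{\{1\},\{2\}\}$, since the boundary label $1^\circ$ lies in $B(v)$ rather than $I(v)$; so your tracking argument for condition~(c) does not go through in the mixed open/closed case. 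Second, your quantity $(k(v)+2l(v))-(k(v')+2l(v'))$ equals $5-6=-1$, so the claim ``each contribution is non-negative'' is false. The point is that this expression equals $2(\dim_\C\CM_v-\dim_\C\CM_{v'})$ only when $\varepsilon(v)=\varepsilon(v')$; when $v$ is open and $v'$ is closed there is an additive correction of $+3$ coming from the difference between the formulae $k+2l-3$ and $2l-6$. The actual dimension drop (here $1-0=1>0$) survives once this is accounted for, but your argument as written does not cover it. Since the operative use of the observation downstream is precisely this dimension drop, a corrected case split on $\varepsilon(v')$ versus $\varepsilon(v)$ repairs the second issue; the first may also reflect a slight imprecision in the paper's definition of $\partial v$.
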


\begin{definition}
Let $\Gamma \in \CG_{odd}$. The \emph{base component} of the interior label $i\in I\left(\Gamma\right)$ is $\CM_\bv{i}{\Gamma}.$
The \emph{base moduli} of $\Gamma$ is the space $\CM_{\CB\Gamma}.$
\end{definition}

Recall Definition~\ref{def:span}. Let $\Gamma\in\CG_{odd},$ let $v \in V(\CB \Gamma)$ and let $i \in I(\Gamma).$
Define
\begin{align}\label{eq:phigi}
\Phi_{\Gamma,v}&:=\phi_v \circ For_{\Gamma,\CB\Gamma_v}:\CM_\Gamma\to\CM_{\eta\left(v\right)}, \\ \Phi_{\Gamma,i} &:= \Phi_{\Gamma,v_i(\CB \Gamma)} : \CM_\Gamma \to \CM_{\bv{i}{\Gamma}}. \notag
\end{align}
We use the same notation for the natural extensions of these maps to the appropriate compactified moduli spaces.
\begin{nn}
Let $v \in \mathcal V_{odd}$ be an abstract vertex. We define a map
\[
\partial_v : \CG_{odd} \to 2^{\CG_{odd}}
\]
by
\[
\partial_v\Gamma = \left\{\left.\Lambda\in\partial\Gamma \right|\exists u\in V\left(\CB\Lambda\right),~\eta\left(u\right)=v \right\}.
\]
Moreover, for $\Gamma \in \CG_{odd}$ we write
\[
\partial_v\CM_{\Gamma}=\coprod_{\Lambda\in\partial_v\Gamma}\CM_{\Lambda}.
\]
By abuse of notation, we define a map
\[
\partial_v : \mathcal V_{odd} \longrightarrow 2^{\CG_{odd}}
\]
by
\[
\partial_{v}u = \partial_{v}\left(\nu\left(u\right)\right).
\]
For $u \in \mathcal V_{odd}$ an abstract vertex, we write
\[
\partial_{v}\CM_u=\phi_{\nu\left(u\right)}\left(\partial_{v}\CM_{\nu\left(u\right)}\right) \subset \oCM_u.
\]
\end{nn}
A crucial property of the base is the following.
Fix $\Gamma \in \CG_{odd}$, a label $i\in I\left(\Gamma\right),$ and $\Gamma'\in\partial\Gamma.$ Write $v = \bv{i}{\Gamma}.$ Let
\[
\Phi_{\Gamma,i}^{\Gamma'}:\CM_{\Gamma'}\to\oCM_{\nu(v)}
\]
be given by the composition
\[
\CM_{\Gamma'} \hookrightarrow \oCM_{\Gamma} \overset{\Phi_{\Gamma,i}}{\longrightarrow} \oCM_{v} \overset{\phi_{\nu(v)}^{-1}}{\longrightarrow} \oCM_{\nu(v)}.
\]
The image of $\Phi_{\Gamma,i}^{\Gamma'}$ is a unique stratum $\CM_{\Lambda} \subset \oCM_{\nu(v)},$ where $\Lambda \in \partial(\nu(v))$ or $\Lambda = \nu(v)$. Note that
\[
\bv{i}{\Gamma'} = \bv{i}{\Lambda},
\]
and denote this abstract vertex by $v'.$ Then by Observation~\ref{obs:bdry_of_abs_vrtx} we have either $v' = v$ or $v'\in\partial v$ and $\Lambda\in\partial_{v'}v.$ See Figure \ref{fig:bdry_base}.
\begin{figure}[t]
\centering
\includegraphics[scale=.25]{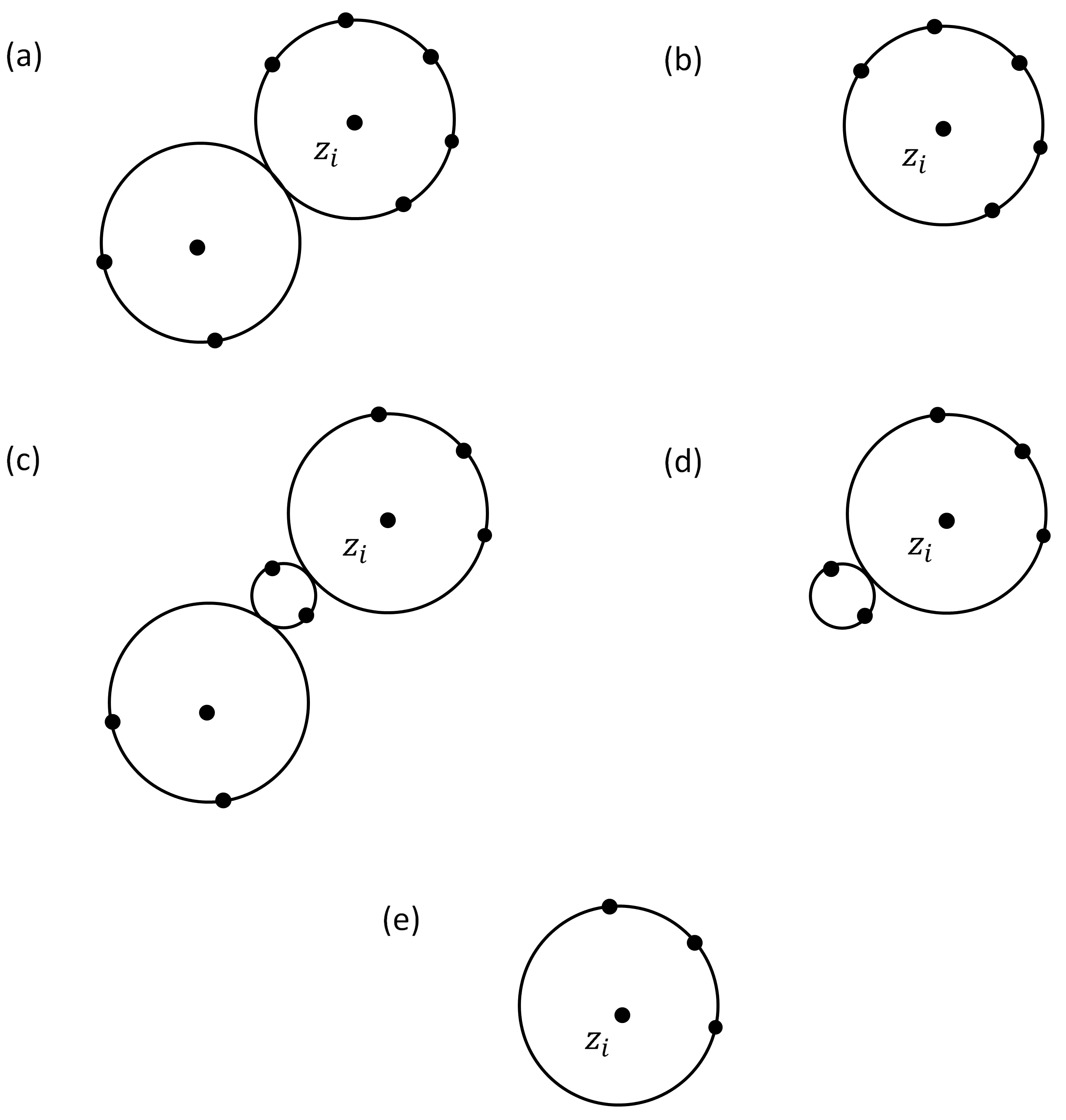}
\caption{(a) shows $\Gamma,$ (b) shows $v,$ (c)~shows $\Gamma',$ (d)~shows $\Lambda,$ and (e) shows $v'.$}
\label{fig:bdry_base}
\end{figure}

\begin{obs}\label{obs:bdry_of_base_moduli}
In the scenario described above, the diagram
\begin{equation}\label{eq:dibbm}
\xymatrix{
    \CM_{\Gamma'}   \ar[rr]^{\Phi_{\Gamma,i}^{\Gamma'}} \ar[dr]_{\Phi_{\Gamma',i}}  &  &
    \CM_{\Lambda} \ar[dl]^{\Phi_{\Lambda,i}}  \\
           &   \CM_{v'}   &
        }
\end{equation}
commutes up to the action of $\sigma\in S_{k\left(v'\right)}.$
\end{obs}
\begin{proof}
Let $u = v_i(\Gamma)$ and let $w = v_i(\CB \Gamma),$ so $\eta(w)= v.$ By Definition~\ref{def:base_operator}, we have a canonical inclusion $j : B(w) \to B(u).$ Let $\tilde \phi_w : B(w) \overset{\sim}{\to} [k(w)^\circ]$ denote the bijection that induces the diffeomorphism $\phi_w : \CM_w\to\CM_v$ as in Definition~\ref{def:CM_av}. Recalling the map $\varsigma_{\Gamma',\Gamma} : V(\Gamma') \to V(\Gamma)$ from Definition~\ref{def:si}, let $U = \varsigma_{\Gamma',\Gamma}^{-1}(u) \subset V(\Gamma').$ Let $\Gamma'_U$ be the graph spanned by $U$ as in Definition~\ref{def:span}. Observe that there is a canonical bijection $h: B(\Gamma'_U) \overset{\sim}{\to} B(u).$ Let $\Gamma'_{U,w}$ denote the stable subgraph of $\Gamma'_U$ obtained by forgetting the boundary marked points not belonging to $h^{-1}(j(B(w)))$ and stabilizing. So,
\[
B(\Gamma'_{U,w}) = h^{-1}(j(B(w))).
\]
More explicitly,
\begin{align}\label{eq:bg'uw}
B(&\Gamma'_{U,w}) =\\
& \quad\, = \quad \bigcup_{x \in U} \ell_B^\Gamma(x) \cup \left\{\left.i_x^{\Gamma'}(e)\right|e = \{x,y\}\in E^B_{legal}(x), y \notin U\right\}\notag\\
&\overset{\text{Obs.~\ref{obs:illegals_increase}}}{=} \ell_B^\Gamma(u) \cup \left\{\left.i_u^\Gamma(e)\right|e \in E^B_{legal}(u)\right\} \notag\\
&\overset{\text{Def.~\ref{def:base_operator}}}{=} B(w) \notag\\
& \quad\, \subset i_u^{\Gamma}(B(u)).\notag
\end{align}
Let
\[
\hat \phi_w : B(\Gamma'_{U,w}) \overset{\sim}{\longrightarrow} [k(w)^\circ]
\]
be the bijection given by
$
\hat \phi_w = \tilde \phi_w \circ j^{-1} \circ h.
$
Since
\[
|B(\Gamma'_{U,w})| = k(w),
\]
we have $\Gamma'_{U,w} \in \CG_{odd}.$
Also,
\[
I(\Gamma'_{U,w}) = I(\Gamma'_U) = i_u^{\Gamma}(I(u)) = i_w^{\CB\Gamma}(I(w)) = I(v).
\]
The stable graph $\Lambda$ is isomorphic to $\Gamma'_{U,w}$ as an abstract graph, the interior labels are the same, and the boundary labels are obtained by applying $\hat \phi_w$ to the boundary labels of $\Gamma'_{U,w}.$ Let
\[
\phi_w' : \CM_{\Gamma'_{U,w}} \overset{\sim}{\longrightarrow} \CM_\Lambda
\]
denote the induced diffeomorphism.
\begin{figure}[t]
\centering
\[
\xymatrix{
&\oCM_{\Gamma} \ar[r]^{For} & \oCM_{\CB\Gamma_w} \ar[r]^{\phi_w} & \oCM_{v} \ar[d]^{\phi_{\nu(v)}^{-1}}\\
& & & \oCM_{\nu(v)}\\
&\CM_{\Gamma'} \ar `l[d]`[dd]_{\Phi_{\Gamma',i}}[ddr] \ar[uu]\ar[r]_{For}\ar[d]^{For} \ar@(ur,ul)[rr]^{\Phi_{\Gamma,i}^{\Gamma'}}  & \CM_{\Gamma'_{U,w}} \ar[r]^\sim_{\phi'_w} \ar[d]_{For} & \CM_{\Lambda} \ar[d]_{For}\ar[u] \ar `r[d]`[dd]^{\Phi_{\Lambda,i}}[ddl] & \\
&\CM_{(\CB\Gamma')_i} \ar[r]^\sim\ar[dr]_{\phi_{\bv{i}{\Gamma'}}} & \CM_{(\CB\Gamma'_{U,w})_i} \ar[r]^\sim_{\bar\phi'_w} & \CM_{(\CB \Lambda)_i} \ar[dl]^{\phi_{\bv{i}{\Lambda}}} & \\
&& \CM_{v'} &
}
\]
\caption{}\label{fig:diagram}
\end{figure}
For a stable graph $\Upsilon$ and $m  \in I(\Upsilon),$ write $\Upsilon_m$ for the single vertex graph spanned by $\{v_m(\Upsilon)\},$ that is, $\Upsilon_m = \Upsilon_{v_m(\Upsilon)}.$ It follows from the first equality of~\eqref{eq:bg'uw} that
\[
(\CB \Gamma')_i = (\CB \Gamma'_{U,w})_i.
\]
Since $\Gamma'_{U,w}$ and $\Lambda$ are the same up to applying $\hat\phi_w$ to the boundary labels, the same is true for $(\CB\Gamma'_{U,w})_i$ and $(\CB\Lambda)_i.$  Let
\[
\bar\phi_w': \CM_{(\CB\Gamma'_{U,w})_i} \overset{\sim}{\longrightarrow} \CM_{(\CB\Lambda)_i}.
\]
denote the induced diffeomorphism.
We sum up the situation with the diagram of Figure~\ref{fig:diagram}. All forgetful maps are labeled by $For$ omitting the usual subscripts since they are clear from context. The left small square commutes by Observation~\ref{obs:fuf}. The right small square commutes because the forgetful map does not change when $\hat \phi_w$ is applied to boundary labels. The triangle commutes only up to the action of $S_{k(v')}$ because the maps $\phi_{\bv{i}{\Gamma'}}$ and $\phi_{\bv{i}{\Lambda}}$ depend on the arbitrary choice made in Definition~\ref{def:CM_av}. The observation follows.
\end{proof}

\begin{nn}
For $v \in \CV_{odd}$ write
\[
\partial^{eff} v = \{ v' \in \CV_{odd}|\partial_{v'}v \neq \emptyset\},
\]
and
\[
\partial^{eff}_i v = \left.\left\{v' \in \partial^{eff}v \right| i \in I(v)\right\}.
\]

For $v' \in \partial^{eff} v$ we define
\[
\Phi_{v,v'}:\partial_{v'}\oCM_v\to\CM_{v'}
\]
by
\[
\Phi_{v,v'} = \coprod_{\Lambda\in\partial_{v'}v}\Phi_{\Lambda,v'}.
\]
\end{nn}

\begin{definition}
Given $\CC \subseteq \CG_{odd},$ define
\[
\mathcal{V}_{\CC} = \{\eta\left(v\right)|v\in V\left(\CB\Gamma\right), \;\Gamma\in \CC\}.
\]
Define
\[
\mathcal{V}_\CC^i = \left.\left\{v \in \mathcal{V}_\CC\right| i \in I(v)\right\}.
\]
\end{definition}

\subsection{Special canonical boundary conditions}\label{ssec:scbc}
We return to the line bundles $\CL_i\to\oCM_{0,k,l}$ in order to define special canonical boundary conditions. We consider only the case $k$ is odd, which is necessary for $\dim_\C\oCM_{0,k,l}$ to be an integer.

Denote by $\tilde \pi_\Gamma : \CC_\Gamma \to \CM_\Gamma$ the universal curve. Thus $\tilde \pi_\Gamma^{-1}([\Sigma]) = \Sigma.$ Denote by $\CU_\Gamma \subset \CC_\Gamma$ the open subset on which $\tilde\pi_\Gamma$ is a submersion, and let $\pi_\Gamma = \tilde \pi_\Gamma|_{\CU_\Gamma}.$ Thus $\pi_\Gamma^{-1}([\Sigma])$ is the smooth locus of $\Sigma.$ For $i \in I(\Gamma),$ denote by $\mu_i : \CM_\Gamma \to \CU_\Gamma$ the section of $\pi_\Gamma$ corresponding to the $i^{th}$ interior marked point. Denote by $\CL_\Gamma \to \CU_\Gamma$ the vertical cotangent line bundle, which is by definition the cokernel of the map $d\pi_\Gamma^* : T^*\CM_\Gamma \to T^*\CU_\Gamma.$ So, $\CL_i = \mu_i^* \CL_\Gamma.$ Let $\Gamma'$ be a stable subgraph of $\Gamma.$ Then the forgetful map $For_{\Gamma,\Gamma'} : \CM_\Gamma \to \CM_{\Gamma'}$ lifts canonically to a map $\widetilde{For}_{\Gamma,\Gamma'} : \CU_{\Gamma} \to \CU_{\Gamma'}.$
Let $t_{\Gamma,\Gamma'}$ be defined by the following diagram.
\[
\xymatrix{
\pi_\Gamma^* T^*\CM_\Gamma \ar[d]^{d\pi_\Gamma^*} &&  \pi_{\Gamma}^* For_{\Gamma,\Gamma'}^* T^*\CM_{\Gamma'} \ar[ll]^(.55){\pi^*_\Gamma dFor_{\Gamma,\Gamma'}^*}\ar@{-}[r]^{\sim} & \widetilde{For}_{\Gamma,\Gamma'}^*\pi_{\Gamma}^* T^*\CM_{\Gamma'}\ar[d]^{\widetilde{For}_{\Gamma,\Gamma'}^*d\pi_{\Gamma'}^*} \\
T^*\CU_\Gamma \ar[d]  &&& \widetilde{For}_{\Gamma,\Gamma'}^* T^*\CU_{\Gamma'} \ar[d]\ar[lll]^{d\widetilde{For}_{\Gamma,\Gamma,'}} \\
\CL_\Gamma &&& \widetilde{For}_{\Gamma,\Gamma'}^*\CL_{\Gamma'} \ar@{-->}[lll]^{t_{\Gamma,\Gamma'}}
}
\]
The diagram implies the following.
\begin{obs}\label{obs:tGGp}
The morphism $t_{\Gamma,\Gamma'}$ is an isomorphism except on components of $\CU_\Gamma$ that are contracted by $\widetilde{For}_{\Gamma,\Gamma'}$, where it vanishes identically.
\end{obs}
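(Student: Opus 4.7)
The plan is to analyze $t_{\Gamma,\Gamma'}$ pointwise on $\CU_\Gamma$ and reduce the statement to a fiberwise calculation of $d\widetilde{For}_{\Gamma,\Gamma'}$ restricted to the vertical tangent direction of $\pi_\Gamma.$ The starting input is the geometric description of $\widetilde{For}_{\Gamma,\Gamma'}$: on the fiber $\Sigma = \pi_\Gamma^{-1}([\Sigma])$, its restriction is exactly the stabilization map $\Sigma \to \Sigma' = For_{\Gamma,\Gamma'}([\Sigma])$. By construction, this stabilization restricts to a biholomorphism on every component that is not contracted, and collapses each contracted component to the single point at which it attaches to the rest of $\Sigma'$. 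Because $\CU_\Gamma$ is the smooth locus of $\pi_\Gamma,$ we need not worry about nodes, and the two cases separate cleanly.

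Chasing the defining diagram, the fiber of $\widetilde{For}_{\Gamma,\Gamma'}^* \CL_{\Gamma'}$ at $p \in \CU_\Gamma$ is canonically $(T^{vert}_{\widetilde{For}_{\Gamma,\Gamma'}(p)} \CU_{\Gamma'})^*,$ the fiber of $\CL_\Gamma$ at $p$ is $(T^{vert}_p \CU_\Gamma)^*,$ and $t_{\Gamma,\Gamma'}$ is precisely the dual of the restriction of $d\widetilde{For}_{\Gamma,\Gamma'}$ to vertical tangents. At a point $p$ on a non-contracted component, the fiberwise biholomorphism shows that this restriction is an isomorphism of one-complex-dimensional spaces; dualizing, $t_{\Gamma,\Gamma'}$ is an isomorphism at $p.$ At a point $p$ on a contracted component, fiberwise constancy forces the restriction to vanish, so $d\widetilde{For}_{\Gamma,\Gamma'}^*$ on cotangents lands in the image of $d\pi_\Gamma^*$, which is exactly what we quotient out to form $\CL_\Gamma.$ Hence $t_{\Gamma,\Gamma'}$ vanishes identically on the contracted locus, which, by definition of the stabilization procedure, is a union of whole components of $\CU_\Gamma.$

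The main obstacle I anticipate is purely notational: carefully implementing the canonical identification $\widetilde{For}_{\Gamma,\Gamma'}^* \pi_{\Gamma'}^* T^*\CM_{\Gamma'} \simeq \pi_\Gamma^* For_{\Gamma,\Gamma'}^* T^*\CM_{\Gamma'}$ supplied by $\pi_{\Gamma'} \circ \widetilde{For}_{\Gamma,\Gamma'} = For_{\Gamma,\Gamma'} \circ \pi_\Gamma,$ and keeping track of the horizontal/vertical splitting so that the two cokernels in the diagram fit together to define $t_{\Gamma,\Gamma'}$ in the first place. Once this bookkeeping is in place, both conclusions --- isomorphism on non-contracted components and identical vanishing on contracted ones --- follow immediately from the fiberwise geometric description.
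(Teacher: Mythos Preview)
Your argument is correct and is exactly the intended one: the paper offers no proof beyond the sentence ``The diagram implies the following,'' and your pointwise analysis of $d\widetilde{For}_{\Gamma,\Gamma'}$ on vertical tangents is precisely what that sentence unpacks to.
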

For $i \in I(\Gamma),$ Observation~\ref{obs:parity} implies that $\widetilde{For}_{\Gamma,\CB\Gamma}$ and $\widetilde{For}_{\Gamma,\CB\Gamma_{v_i(\CB\Gamma)}}$ do not contract the component containing the $i^{th}$ interior marked point. The following is an immediate consequence.
\begin{obs}\label{obs:identification_bundle_over_base}
For $\Gamma \in \CG_{odd},$ we have isomorphisms
\[
\CL_i \simeq F_\Gamma^* \CL_i, \qquad \CL_i  \simeq \Phi_{\Gamma,i}^*\CL_i.
\]
given by $\mu_i ^* t_{\Gamma,\CB\Gamma}$ and $\mu_i^* t_{\Gamma,\CB\Gamma_{v_i(\CB\Gamma)}}.$
\end{obs}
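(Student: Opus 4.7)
The plan is to deduce the observation as a direct corollary of Observation~\ref{obs:tGGp} together with the non-contraction fact stated in the sentence preceding it. Concretely, pulling back the morphism
\[
t_{\Gamma,\Gamma'} : \widetilde{For}_{\Gamma,\Gamma'}^* \CL_{\Gamma'} \longrightarrow \CL_\Gamma
\]
along the section $\mu_i : \CM_\Gamma \to \CU_\Gamma$ yields a morphism $\mu_i^* \CL_\Gamma = \CL_i \to (\widetilde{For}_{\Gamma,\Gamma'} \circ \mu_i)^* \CL_{\Gamma'}$, which is the pullback of $\CL_i$ along the corresponding map of bases (here $\Gamma' = \CB\Gamma$ for the first isomorphism and $\Gamma' = \CB\Gamma_{v_i(\CB\Gamma)}$ for the second). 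By Observation~\ref{obs:tGGp}, this pullback is an isomorphism at a point $[\Sigma] \in \CM_\Gamma$ provided the image $\mu_i([\Sigma])$ lies on an irreducible component of $\Sigma$ not contracted by $\widetilde{For}_{\Gamma,\Gamma'}$.

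Hence the task reduces to verifying that the irreducible component $\Sigma_{v_i(\Gamma)}$ carrying the $i^{th}$ interior marked point survives the stabilization involved in forming $\CB\Gamma$ and $\CB\Gamma_{v_i(\CB\Gamma)}$. This is exactly what the sentence preceding the observation asserts, and it follows from Observation~\ref{obs:parity}: since $i \in \ell_I(v_i(\Gamma))$, the vertex $v_i(\Gamma)$ has $\ell_I(v_i(\Gamma)) \neq \emptyset$, so by Observation~\ref{obs:parity} it remains stable even after erasing the edges that are illegal for it. Thus $v_i(\Gamma)$ persists as a vertex of $\CB\Gamma$ (and correspondingly of $\CB\Gamma_{v_i(\CB\Gamma)}$), and the geometric component $\Sigma_{v_i(\Gamma)}$ is not contracted by either of the two forgetful maps in question.

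Combining these two points, the morphisms $\mu_i^* t_{\Gamma,\CB\Gamma}$ and $\mu_i^* t_{\Gamma, \CB\Gamma_{v_i(\CB\Gamma)}}$ are fiberwise isomorphisms of line bundles, hence isomorphisms. I do not expect any essential obstacle: both inputs (Observation~\ref{obs:tGGp} and Observation~\ref{obs:parity}) have been established, and the only content is to identify the pullback along $\mu_i$ of the forgetful maps $\widetilde{For}$ on universal curves with the forgetful maps $F_\Gamma$ and $\Phi_{\Gamma,i}$ on moduli, which is immediate from the definitions of $F_\Gamma$, $\Phi_{\Gamma,i}$, and of the universal curves $\CC_\Gamma$.
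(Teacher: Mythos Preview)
Your proposal is correct and follows exactly the paper's own reasoning: the paper states the observation as ``an immediate consequence'' of the preceding sentence, which invokes Observation~\ref{obs:parity} to guarantee non-contraction and Observation~\ref{obs:tGGp} to conclude that $t_{\Gamma,\Gamma'}$ is an isomorphism on the component of the $i^{th}$ marked point. One cosmetic point: the map $t_{\Gamma,\Gamma'}$ goes from $\widetilde{For}_{\Gamma,\Gamma'}^*\CL_{\Gamma'}$ to $\CL_\Gamma$, so after pulling back by $\mu_i$ the arrow points into $\CL_i$ rather than out of it; since you are concluding it is an isomorphism, this does not affect the argument.
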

\begin{rmk}\label{rm:equiv}
The natural action of the symmetric group $S_{k\left(v_i\left(\Gamma\right)\right)}$ on $\CM_\Gamma$ by permutations of the boundary labels and edges lifts canonically to a natural action on the bundle $\CL_i\to\CM_\Gamma.$ The same goes for the natural action of $S_{k\left(v_i\left(\Gamma\right)\right)}$ on $\CM_{v}$ and $\CM_{\eta(v)}.$ The isomorphisms of Observation~\ref{obs:identification_bundle_over_base} are equivariant with respect to these actions.
\end{rmk}
\begin{nn}
Let $\Upsilon \in \CG_{odd}.$ For a subset $C \subseteq \partial\oCM_{\Upsilon},$ a vector bundle $E \to C$, a multisection $s\in C_m^\infty\left(C,E\right)$, and $\Gamma \in \pB\Upsilon,$ we write
\[
s^\Gamma:=\left. s\right|_{\CM_\Gamma \cap C}.
\]
\end{nn}

Observation~\ref{obs:extension_of_canonical_conds} allows us to generalize the definition of canonical multisection as follows. Let $\Upsilon \in \CG_{odd}$, let
$C \subseteq \partial\oCM_{\Upsilon},$ and let
\[
\CC = \{ \Gamma \in \pB\Upsilon|C \cap \CM_{\Gamma} \neq \emptyset\}.
\]
\begin{definition}
A multisection $s$ of
\[
E = \bigoplus_{i\in\left[l\right]}\CL_i^{\oplus a_i}\rightarrow C
\]
is called \emph{canonical} if for each $\Gamma \in \CC$, there exists a section $s^{\CB\Gamma}$ of
\[
\bigoplus_{i\in\left[l\right]}\CL_i^{\oplus a_i}\rightarrow\CM_{\CB\Gamma}
\]
such that $s^\Gamma = F_\Gamma^*s^{\CB\Gamma}|_{C\cap\CM_\Gamma}.$
\end{definition}
The following refinement of canonical multisections is useful in proofs.
\begin{definition}\label{def:scm}
A multisection $s\in C_m^\infty\left(C,\CL_i\right)$ is said to be \emph{pulled back from the base component}, or \emph{pulled back from the base} for short, if for every
$v\in\mathcal{V}_{\CC}^{i}$
there exists $s^v\in C_m^\infty\left(\CM_v,\CL_i\right)$ such that for every $\Gamma\in\CC$ with $\bv{i}{\Gamma} = v$, we have
\[
s^\Gamma = \Phi_{\Gamma,i}^*s^v|_{C\cap\CM_\Gamma}.
\]
A multisection $s\in C_m^\infty\left(\CM_{0,B,I}, \CL_i\right)$ is said to be \emph{invariant} if it is invariant under the action of the permutation group $S_B.$

A multisection $s\in C_m^\infty\left(C,\CL_i\right)$ is \emph{special canonical} if it is pulled from the base, and for every $v\in\mathcal{V}_{\CC}^{i}$ the multisection $s^v$ is invariant.
We write $\CS_i = \CS_{i,k,l}$ for the vector space of special canonical multisections of~$\CL_i$ over $C = \partial\oCM_{0,k,l}.$ Below, we use the notation $s^v$ as in this definition.
\end{definition}
\begin{rmk}\label{rmk:consist}
It is straightforward to verify that a multisection which is pulled back from the base is consistent.
\end{rmk}

\begin{rmk}\label{rmk:scc}
Let $s \in C^\infty_m(C,\CL_i)$ be special canonical. By Observation~\ref{obs:fuf}, the map $\Phi_{\Gamma,i}:\CM_\Gamma\to\CM_\bv{i}{\Gamma}$ factors as the composition
\[
\xymatrix{
\CM_\Gamma \ar[rr]^{F_\Gamma} && \CM_{\CB\Gamma}\ar[rr]^{\Phi_{\CB\Gamma,i}} && \CM_{\bv{i}{\Gamma}}.
}
\]
So there exists $s^{\CB\Gamma} \in C^\infty_m(\CM_{\CB\Gamma},\CL_i)$ such that
\[
s^\Gamma = F_\Gamma^* s^{\CB\Gamma}|_{C\cap \CM_\Gamma}.
\]
It follows from Observation~\ref{obs:extension_of_canonical_conds} that the vector space $\bigoplus{\CS}_i^{\oplus a_i}$ is a subvector space of $\CS$. Below, we use the notation $s^{\CB\Gamma}$ as in this remark.
\end{rmk}

\subsection{Forgetful maps, cotangent lines and base}
We introduce notations and formulate the basic properties of pull-backs of cotangent lines by forgetful maps.
\begin{obs}\label{obs:tfor}
Let $\Gamma \in \CG_{odd}$ and $i\in I\left(\Gamma\right)$. Then $\CB for_i(\Gamma)$ is a stable subgraph of $for_i(\CB\Gamma)$ and $For_{for_i(\CB \Gamma),\CB for_i(\Gamma)}$ is a diffeomorphism. Indeed, vertices and markings are in one-to-one correspondence. Moreover, the following diagram commutes.
\[
\xymatrix{
\CM_{\Gamma} \ar[rrrr]^(.45){For_i}\ar[d]^{ F_{\Gamma}} &&&& \CM_{for_i\left(\Gamma\right)} \ar[d]^{F_{for_i\left(\Gamma\right)}}\\
\CM_{\CB\Gamma} \ar[r]^(.4){For_i} & \CM_{for_i(\CB\Gamma)} \ar[rrr]^{For_{for_i(\CB\Gamma),\CB for_i(\Gamma)}} &&& \CM_{\CB for_i\left(\Gamma\right)}
}
\]
This is a consequence of Observation~\ref{obs:fuf}.
\end{obs}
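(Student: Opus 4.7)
I would verify the three claims -- the stable-subgraph inclusion, the fact that the induced forgetful map is a diffeomorphism, and the commutativity of the diagram -- by a combinatorial comparison of $\CB for_i(\Gamma)$ with $for_i(\CB\Gamma),$ followed by two applications of the functoriality Observation~\ref{obs:fuf}. First I would dispose of the trivial case $i \notin I(\Gamma).$ Since every label of $\CB\Gamma$ is of the form $i_v^\Gamma(S)$ for some $S \in I^\Gamma(v) \cup \ell_B^\Gamma(v) \cup E_{legal}^\Gamma(v),$ the label $i$ cannot appear in $I(\CB\Gamma)$ unless $i \in I(\Gamma).$ Hence $for_i(\Gamma) = \Gamma$ and $for_i(\CB\Gamma) = \CB\Gamma,$ and every assertion reduces to the identity. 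So assume $i \in I(\Gamma)$ and set $u = v_i(\Gamma).$ Because $\ell_I^\Gamma(u)$ is nonempty, Observation~\ref{obs:parity} ensures that $u$ survives the $\CB$ operation; under the canonical inclusion of Observation~\ref{obs:Bmaps}, we identify $v_i(\CB\Gamma) = u.$

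Second I would match the two sides combinatorially. The legality of an edge $e$ at a vertex depends only on the parity of $k(\Gamma_{e,v}),$ which is unaffected by forgetting an interior label, so the set of legal edges is preserved under $for_i.$ The subtle point is how the $stab$ procedure interacts with the $\CB$ survival criterion when $u$ itself destabilizes after losing $i.$ I would analyze the possibilities enumerated in the definition of $stab$ applied to $u,$ verifying in each case that the surviving vertex set of $\CB for_i(\Gamma)$ is canonically identified with that of $for_i(\CB\Gamma),$ that their interior labels agree, and that the boundary-label sets in $\CB for_i(\Gamma)$ are contained in the corresponding label sets of $for_i(\CB\Gamma)$ in the sense of Definition~\ref{def:span}, the only possible discrepancy being in the way some edge-derived labels are grouped into unions. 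This establishes the stable-subgraph inclusion, and since the vertex sets and marked-point data match exactly, the induced forgetful map $For_{for_i(\CB\Gamma),\,\CB for_i(\Gamma)}$ is a diffeomorphism.

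Commutativity of the diagram then follows from Observation~\ref{obs:fuf}. The right-and-down composition factors through the chain $\CB for_i(\Gamma) \subset for_i(\Gamma) \subset \Gamma$ of stable subgraphs, so it equals $For_{\Gamma,\,\CB for_i(\Gamma)}.$ The down-and-right composition factors through the chain $\CB for_i(\Gamma) \subset for_i(\CB\Gamma) \subset \CB\Gamma \subset \Gamma,$ using the inclusion just established, and hence it also equals $For_{\Gamma,\,\CB for_i(\Gamma)}.$ The two compositions therefore agree.

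The main obstacle is the combinatorial case analysis in the second step, particularly when $u$ destabilizes and the $stab$ procedure contracts an edge or absorbs labels into a neighboring vertex. In such situations one must track the changes to the injections $i_v^{for_i(\Gamma)}$ relative to $i_v^\Gamma$ at each affected vertex, and compare the resulting $\CB$-labels with the corresponding labels coming from applying $for_i$ to $\CB\Gamma.$ Verifying that these groupings differ only by permitted unions -- and hence that the two graphs are related by a genuine stable-subgraph inclusion inducing a diffeomorphism on moduli -- is where the careful bookkeeping lies.
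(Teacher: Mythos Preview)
Your approach is correct and matches the paper's, which offers no proof beyond the in-statement remark that ``vertices and markings are in one-to-one correspondence'' and the citation of Observation~\ref{obs:fuf} for the diagram. Your sketch is in fact considerably more detailed than what the paper provides: you make explicit the combinatorial comparison (invariance of legality under forgetting an interior label, and the case analysis when $u=v_i(\Gamma)$ destabilizes) that the paper leaves entirely implicit, and you spell out the two chains of stable-subgraph inclusions through which Observation~\ref{obs:fuf} yields commutativity. One minor point: the hypothesis already assumes $i\in I(\Gamma)$, so your opening case $i\notin I(\Gamma)$ is unnecessary, though harmless.
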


\begin{nn}\label{nn:D_i1}
Let $k$ be odd, let $I \subseteq [l+1]$ and let $i \in I\cap[l].$ If $l+1 \in I$, denote by $D_i \subset \oCM_{0,k,I}$ the locus where the marked points $z_i,z_{l+1},$ belong to a sphere component that contains only them and a unique interior node. If $l+1 \notin I,$ set $D_i = \emptyset.$ For $\Gamma \in \partial\Gamma_{0,k,l+1},$ define $D_i \subset \oCM_\Gamma, D_i \subset \CM_{\CB\Gamma},$ similarly.

Write
\[
\partial D_i = D_i\cap\partial\oCM_{0,k,l+1}.
\]
Let $\CGDi \subset \pB\Gamma_{0,k,l+1}$ be the subset such that
\[
\partial D_i = \coprod_{\Gamma \in \CG_{D_i}} \CM_\Gamma.
\]
\end{nn}

\begin{obs}\label{obs:forv}
For $\Gamma \in \pB\Gamma_{0,k,l+1}\setminus \CGDi,$ the following diagram commutes.
\[
\xymatrix{ \CM_\Gamma \ar[r]^(.4){For_{l+1}}\ar[d]^{\Phi_{\Gamma,i}} & \CM_{for_{l+1}{\Gamma}}\ar[d]^{\Phi_{for_{l+1}\Gamma,i}} \\
\CM_{\bv{i}{\Gamma}} \ar[r]^(.4){For_{l+1}} & \CM_{\bv{i}{for_{l+1}\Gamma}}
}
\]
Again, this is a consequence of Observation~\ref{obs:fuf}.
\end{obs}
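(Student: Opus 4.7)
The plan is to unwind the definitions so the square becomes a diagram of forgetful maps composed with the identifications $\phi_v,$ and then to invoke Observation~\ref{obs:fuf} to conclude that the two compositions agree because they end at the same stable subgraph.

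By definition~\eqref{eq:phigi}, writing $w = v_i(\CB\Gamma)$ and $w' = v_i(\CB for_{l+1}\Gamma),$ the asserted commutativity amounts to the identity
\[
\phi_{w'} \circ For_{for_{l+1}\Gamma,\, \CB(for_{l+1}\Gamma)_{w'}} \circ For_{l+1} \;=\; For_{l+1} \circ \phi_w \circ For_{\Gamma,\,\CB\Gamma_w},
\]
where the rightmost $For_{l+1}$ is the forgetful map $\CM_{\bv{i}{\Gamma}} \to \CM_{\bv{i}{for_{l+1}\Gamma}},$ equal to the identity when $l+1 \notin I(\bv{i}{\Gamma}).$

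The core combinatorial step is to establish, under the hypothesis $\Gamma \notin \CGDi,$ the stable subgraph identity
\[
\CB(for_{l+1}\Gamma)_{w'} \;=\; for_{l+1}\bigl(\CB\Gamma_w\bigr),
\]
together with the fact that this common graph is a stable subgraph both of $\Gamma$ and of $for_{l+1}\Gamma.$ The hypothesis $\Gamma \notin \CGDi$ enters precisely here: it rules out the only pathology, namely that $z_i$ and $z_{l+1}$ share an interior sphere bubble carrying no other special points; in every remaining case, stabilization after forgetting $l+1$ preserves the vertex hosting $i,$ its interior and boundary labels, and the legal/illegal character of its incident boundary edges.

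Granted this identity, Observation~\ref{obs:fuf} applied along each route around the square shows that the two forgetful maps $\CM_\Gamma \to \CM_{\CB(for_{l+1}\Gamma)_{w'}}$ coincide. The bijections fixed once and for all in Definition~\ref{def:CM_av} to define $\phi_w$ and $\phi_{w'}$ can be chosen so that dropping the marking $l+1$ in the bijection of $w$ yields the bijection of $w';$ with such a choice the identifications slot into the diagram on the nose, and otherwise differ by an element of $S_{k(w')}$ in the sense of Remark~\ref{rmk:perm_grp_action} and Observation~\ref{obs:bdry_of_base_moduli}.

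The main obstacle is the combinatorial identity displayed above. Its verification proceeds by a case analysis according to whether $l+1$ lies on the same $\CB$-component as $i,$ whether forgetting $l+1$ destabilizes its own vertex, and how the legality of boundary edges at neighboring vertices changes after forgetting; in each case outside $\CGDi$ one checks directly from the definitions of $\CB,$ $for_{l+1},$ and $i_v$ that the two orders of the operations yield the same stable graph with matching label sets. The single excluded scenario $\Gamma \in \CGDi$ is exactly the one where $For_{l+1}$ contracts the unstable bubble carrying $z_i,$ causing $z_i$ to migrate to another component, and where the diagram consequently fails.
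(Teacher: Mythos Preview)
Your approach is correct and is precisely the expansion the paper has in mind: both routes around the square are compositions of forgetful maps (post-composed with the fixed identifications $\phi_v$), and once one checks they share a common target stable subgraph, Observation~\ref{obs:fuf} gives commutativity. The paper's own argument is the single sentence ``this is a consequence of Observation~\ref{obs:fuf},'' so you are supplying the details rather than taking a different route.

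One small imprecision worth flagging: your displayed identity $\CB(for_{l+1}\Gamma)_{w'} = for_{l+1}\bigl(\CB\Gamma_w\bigr)$ is not literally an equality of stable graphs when $l+1$ lies on a different component from $i$. In that case $l+1 \notin I(\CB\Gamma_w)$ so the right-hand side is $\CB\Gamma_w$ unchanged, whereas on the left the compound edge-label containing $l+1$ has had it removed. The correct statement, parallel to Observation~\ref{obs:tfor}, is that one is a stable subgraph of the other with the forgetful map between them a diffeomorphism; this is all that is needed to invoke Observation~\ref{obs:fuf} along both routes. Your remark about the $\phi$-identifications possibly differing by an element of $S_{k(w')}$ is apt and mirrors Observation~\ref{obs:bdry_of_base_moduli}; since the bottom $For_{l+1}$ only touches interior labels, the boundary bijections can indeed be chosen compatibly.
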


\begin{nn}\label{nn:L^{'}}
Write
\[
\CL_i' = For_{l+1}^* \CL_i \to \oCM_{0,k,I}.
\]
For $\Gamma \in \partial\Gamma_{0,k,l+1}$ write
\[
\CL_i' = For_{l+1}^*\CL_i \to \oCM_{\Gamma}, \qquad \CL_i' = For_{l+1}^*\CL_i \to \oCM_{\CB\Gamma}.
\]
Denote by $\CS_i'  = \CS_{i,k,l+1}'\subset C_m^\infty(\partial\oCM_{0,k,l+1},\CL_i')$ the vector space of pull-backs of sections in $\CS_{i,k,l}$ by $For_{l+1}.$ Denote by
\[
\tilde t_i : \CL_i' \to \CL_i
\]
the morphism given by $\tilde t_i|_{\CM_\Gamma} = \mu_i^* t_{\Gamma,for_i(\Gamma)}.$
\end{nn}

\begin{lm}\label{lm:fmor}
\mbox{}
\begin{enumerate}
\item\label{it:tr}
The morphism $\tilde t_i$ considered as a section of $(\CL_i')^*\otimes\CL_i$ vanishes transversally exactly at~$D_i.$
\item\label{it:m1}
$For_{l+1}$ maps $D_i$ diffeomorphically onto $\oCM_{0,k,l}$ carrying the orientation induced on $D_i$ by $\tilde t_i$ to the orientation $o_{0,k,l}$ on~$\oCM_{0,k,l}.$
\item\label{it:pb}
The morphism $\tilde t_i$ satisfies
\[
F_{\Gamma}^*\tilde t_i = \tilde t_i,
\]
and for $\Gamma \in \pB \Gamma_{0,k,l+1}\setminus\CGDi,$
\[
\Phi_{\Gamma,i}^*\tilde t_i = \tilde t_i.
\]
Here, we have used the isomorphisms of Observation~\ref{obs:identification_bundle_over_base} to identify relevant domains and ranges of $\tilde t_i.$
\item\label{it:in}
The morphism $\tilde t_i$ is invariant under permutations of the boundary marked and nodal points as in Remark~\ref{rm:equiv}.
\end{enumerate}
\end{lm}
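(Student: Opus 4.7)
\medskip

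The plan is to deduce all four parts from Observation~\ref{obs:tGGp} together with the functoriality of the universal construction defining $t_{\Gamma,\Gamma'}$, reducing checks to local model computations only where strictly necessary.

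For part~\ref{it:tr}, I would apply Observation~\ref{obs:tGGp} to the forgetful morphism associated with $For_{l+1}$. The map $\widetilde{For}_{l+1}$ on the universal curve contracts a component $C$ precisely when forgetting $z_{l+1}$ makes $C$ unstable. Since $\mu_i$ lands in the component carrying the $i$-th interior marked point, $\mu_i^* t$ vanishes exactly where this component is a sphere bubble whose only special points are $z_i$, $z_{l+1}$, and a single node. By Notation~\ref{nn:D_i1}, this locus is $D_i$. For transversality, I would take a smoothing parameter $\epsilon$ for the unique node of the bubble as a transverse coordinate to $D_i$, and use the standard gluing description of $\CL_i$ to show $\tilde t_i$ vanishes to first order in $\epsilon$: smoothing the node pulls the marked point $z_i$ away by a distance proportional to $\epsilon$ in the cotangent fiber, so $d\tilde t_i/d\epsilon$ is a unit.

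For part~\ref{it:m1}, forgetting $z_{l+1}$ on $D_i$ collapses the unstable sphere bubble and places $z_i$ at the image of the former node, producing a smooth inverse by the standard bubbling construction. To identify orientations, I would use the local coordinate $\epsilon$ from the previous paragraph: a tubular neighborhood of $D_i$ in $\oCM_{0,k,l+1}$ is identified (via gluing) with a neighborhood of $\oCM_{0,k,l}$ times a disk with its complex orientation, and the identification fits the pattern of the fibration $\oCM_{0,k,l+1} \to \oCM_{0,k,l}$. By property~\ref{it:int} of Lemma~\ref{lm:or}, this orientation is $o_{0,k,l+1}$. Since $\tilde t_i$ is complex linear in $\epsilon$, the orientation it induces on $D_i$ agrees with $o_{0,k,l}$ under $For_{l+1}|_{D_i}$.

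Part~\ref{it:pb} is essentially functoriality. For the first identity, Observation~\ref{obs:tfor} exhibits a commutative diagram of forgetful maps relating $\CM_\Gamma, \CM_{\CB\Gamma}, \CM_{for_{l+1}(\Gamma)}, \CM_{\CB for_{l+1}(\Gamma)}$, and the universal construction of $t$ satisfies a cocycle relation under composition of forgetful maps (immediate from the defining diagram of $t$ in Section~\ref{subs:defs_bundles}). Pulling back the composition by $\mu_i$ and invoking Observation~\ref{obs:identification_bundle_over_base} to identify the bundles involved gives $F_\Gamma^*\tilde t_i = \tilde t_i$. For the second identity, the hypothesis $\Gamma \notin \CGDi$ ensures $z_i$ and $z_{l+1}$ do not share an exceptional sphere bubble, so by Observation~\ref{obs:forv} the square
\[
\xymatrix@R=1.2em{
\CM_\Gamma \ar[r]^(.4){For_{l+1}}\ar[d]_{\Phi_{\Gamma,i}} & \CM_{for_{l+1}\Gamma}\ar[d]^{\Phi_{for_{l+1}\Gamma,i}} \\
\CM_{\bv{i}{\Gamma}} \ar[r]^(.4){For_{l+1}} & \CM_{\bv{i}{for_{l+1}\Gamma}}
}
\]
commutes, and the cocycle relation for $t$ applied to this square yields $\Phi_{\Gamma,i}^*\tilde t_i = \tilde t_i$. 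Part~\ref{it:in} then follows at once from Remark~\ref{rm:equiv}, since $\tilde t_i$ is assembled from the equivariant isomorphisms of Observation~\ref{obs:identification_bundle_over_base} and the natural morphisms $t_{\Gamma,for_{l+1}(\Gamma)}$, all of which are $S_{k(v_i)}$-equivariant. The main obstacle I expect is the orientation comparison in part~\ref{it:m1}: axiomatic control of $o_{0,k,l}$ via Lemma~\ref{lm:or} gives us little to work with directly, so the check must be reduced to the explicit local gluing model and property~\ref{it:int} of that lemma.
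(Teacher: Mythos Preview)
Your proposal is correct and follows essentially the same route as the paper. The paper likewise identifies the vanishing locus via Observation~\ref{obs:tGGp}, reduces parts~\ref{it:tr} and~\ref{it:m1} to showing $\tilde t_i$ has a simple zero of multiplicity $+1$ on each fiber $F_p = For_{l+1}^{-1}(p)$, and then carries out an explicit gluing computation: writing the smoothing coordinate as $z$ and the cotangent coordinate on the bubble as $\zeta$, one gets $\xi = z\zeta$ and hence $\tilde t_i(d\xi|_{z_i}) = z\,d\zeta|_{z_i}$, exhibiting the first-order vanishing and the correct complex orientation simultaneously. Your sketch with the smoothing parameter $\epsilon$ and property~\ref{it:int} of Lemma~\ref{lm:or} amounts to exactly this computation, just less explicitly worked out; parts~\ref{it:pb} and~\ref{it:in} are handled identically in the paper, via Observation~\ref{obs:tfor} and the definition of $\tilde t_i$.
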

\begin{proof}
Observation~\ref{obs:tGGp} implies that $\tilde t_i$ vanishes exactly at $D_i.$ It follows from the definitions that $For_{l+1}$ maps $D_i$ diffeomorphically onto $\oCM_{0,k,l}.$ So the transversality and orientation statements are equivalent to the following claim. Let $p \in \oCM_{0,k,l},$ let $F_p = For_{l+1}^{-1}(p)$ and equip $F_p$ with its complex orientation. Then $\tilde t_i|_{F_p}$ vanishes with multiplicity $+1$ at the unique point $\check p \in D_i \cap F_p.$

To prove the claim, we construct a map $\alpha : D^2 \to F_p,$ that preserves complex orientations and calculate $\tilde t_i \circ \alpha$ in an explicit trivialization of $\alpha^*\CL_i.$ Indeed, let $\Sigma = (\{\Sigma_\alpha\},\sim)$ be a stable disk representing $p.$ Denote by $\Sigma_0$ the component of $\Sigma$ containing the marked point $z_{i}.$ Denote by $B_r \subset \C$ the disk of radius $r$ centered at $0.$ Let $U \subset \Sigma_0$ be an open neighborhood of $z_i$ with local coordinate
\[
\xi : U \overset{\sim}{\to} B_2, \qquad \xi(z_1) = 0.
\]
For $z \in B_1,$ let $\Sigma_0^z$ be obtained from $\Sigma_0$ as follows. If $z \neq 0,$ add the marked point $z_{l+1} = \xi^{-1}(z).$ If $z = 0,$ replace $z_i$ with a new marked point $z_0.$ Denote by $S$ the marked sphere $(\C \cup \{\infty\}, z_{-1}, z_i, z_{l+1})$ where $z_{-1} = \infty,\, z_i = 0,$ and $z_{l+1} = 1.$ For $z \neq 0,$ let $\Sigma^z$ be the stable disk $(\{\Sigma_\alpha\}_{\alpha \neq 0}\cup \{\Sigma_0^z\},\sim).$ For $z  = 0,$ let
\[
\Sigma^z = (\{\Sigma_\alpha\}_{\alpha \neq 0}\cup \{\Sigma_0^z,S\},\sim_0),
\]
where $\sim_0$ is obtained from $\sim$ by adding the relation $z_0 \sim_0 z_{-1}.$ Define $\alpha : B_1 \to F_p$ by $\alpha(z) = \Sigma^z.$

For $z \in B_1,$ the stable disk $\Sigma^z$ is the deformation of the stable disk $\Sigma^0$ obtained by removing appropriate disks around the nodal points $z_0 \in \Sigma_0$ and $z_{-1} \in S$ and identifying annuli adjacent to the resulting boundaries. More explicitly, denoting by $\zeta$ the standard coordinate on $S=\C \cup \{\infty\},$ we glue the surfaces
\[
\Sigma^z \setminus \xi^{-1}\left(B_{\sqrt{2|z|}}\right), \qquad B_{\sqrt{3/|z|}} \subset S
\]
along the map $\zeta \mapsto \xi^{-1}(z\zeta)$ for $\zeta \in B_{\sqrt{3/|z|}} \setminus B_{\sqrt{2/|z|}}.$ Thus we take $d \zeta|_{z_i} \in T^*_{z_i}S \simeq T^*_{z_i}\Sigma^z$ as a frame for $\alpha^*\CL_i.$ On the other hand, $d\xi|_{z_i} \in T^*_{z_i}\Sigma$ is a frame for $\alpha^*\CL_i'.$ Since $\xi = z\zeta,$ we have $\tilde t_i\left(d\xi|_{z_i}\right) =  z d\zeta|_{z_i}.$ Thus $\tilde t_i$ vanishes with multiplicity $1$ at $z = 0,$ which is the point $\hat p,$ as claimed. So we have proved parts~\ref{it:tr} and~\ref{it:m1} of the lemma.

Part~\ref{it:pb} follows from Observation~\ref{obs:tfor} and part~\ref{it:in} follows from the definition of $\tilde t_i.$
\end{proof}
\begin{nn}\label{nn:OD}
Denote by $\mathcal{O}\left(D_i\right)$ the line bundle
\[
Hom(\CL_i',\CL_i) = (\CL_i')^*\otimes \CL_i.
\]
So $\tilde t_i$ is a section of $\mathcal{O}\left(D_i\right)$. Write
\[
t_i = \left. \tilde t_i\right|_{\partial\oCM_{0,k,l+1}}.
\]
\end{nn}
Lemma~\ref{lm:fmor} shows that $\mathcal{O}\left(D_i\right)$ is the trivial complex line bundle twisted at $D_i$ as implied by the notation. Moreover, tautologically,
\[
\CL_i \simeq \CL_i' \otimes \mathcal{O}(D_i).
\]

The following observation is a consequence of Observations~\ref{obs:identification_bundle_over_base} and~\ref{obs:tfor} and the relevant definitions.
\begin{obs}
For $\Gamma \in \partial \Gamma_{0,k,l+1}$ and $i \in [l]$, we have
\[
F_{\Gamma}^*\mathcal{O}(D_i) \simeq \mathcal{O}(D_i).
\]
\end{obs}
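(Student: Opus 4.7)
By Notation~\ref{nn:OD}, $\CO(D_i) = (\CL_i')^*\otimes \CL_i$, so the plan is to produce compatible $F_\Gamma$-equivariant isomorphisms $F_\Gamma^*\CL_i \simeq \CL_i$ and $F_\Gamma^*\CL_i' \simeq \CL_i'$ and tensor them. First, I observe that $\Gamma\in\partial\Gamma_{0,k,l+1}$ is necessarily connected with $k(\Gamma)=k$ odd, hence $\Gamma\in\CG_{odd}$, and $l+1\in I(\Gamma)$; Observation~\ref{obs:Bmaps} then guarantees $l+1\in I(\CB\Gamma)$ as well, so the forgetful map $For_{l+1}:\CM_{\CB\Gamma}\to\CM_{for_{l+1}(\CB\Gamma)}$ underlying the definition of $\CL_i'$ on $\CM_{\CB\Gamma}$ in Notation~\ref{nn:L^{'}} makes sense.

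The first isomorphism is furnished directly by $\mu_i^* t_{\Gamma,\CB\Gamma}$ of Observation~\ref{obs:identification_bundle_over_base}. For the second, I apply the same observation to $for_{l+1}\Gamma\in\CG_{odd}$ to obtain $F_{for_{l+1}\Gamma}^*\CL_i \simeq \CL_i$ on $\CM_{for_{l+1}\Gamma}$, and then pull back along $For_{l+1}:\CM_\Gamma\to\CM_{for_{l+1}\Gamma}$ to get $\CL_i' = For_{l+1}^*\CL_i \simeq For_{l+1}^* F_{for_{l+1}\Gamma}^*\CL_i$ on $\CM_\Gamma$. The commutative square of Observation~\ref{obs:tfor} at index $l+1$ rewrites $For_{l+1}^* F_{for_{l+1}\Gamma}^*$ as $F_\Gamma^*\circ For_{l+1}^*\circ For_{for_{l+1}(\CB\Gamma),\CB for_{l+1}\Gamma}^*$, thereby exhibiting $\CL_i'$ on $\CM_\Gamma$ as the $F_\Gamma$-pullback of a line bundle on $\CM_{\CB\Gamma}$.

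\textbf{Main obstacle.} The one place where care is required is reconciling this pullback bundle on $\CM_{\CB\Gamma}$ with $\CL_i'$ on $\CM_{\CB\Gamma}$ as defined directly by $For_{l+1}:\CM_{\CB\Gamma}\to\CM_{for_{l+1}(\CB\Gamma)}$ in Notation~\ref{nn:L^{'}}. The candidate pullback instead factors through $\CM_{\CB for_{l+1}\Gamma}$, so the two are not literally equal. However, Observation~\ref{obs:tfor} asserts precisely that $\CB for_{l+1}\Gamma$ is a stable subgraph of $for_{l+1}(\CB\Gamma)$ and that $For_{for_{l+1}(\CB\Gamma),\CB for_{l+1}\Gamma}$ is a diffeomorphism; pulling $\CL_i$ back along this diffeomorphism canonically identifies the two presentations, so $F_\Gamma^*\CL_i' \simeq \CL_i'$ follows. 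Tensoring with the first isomorphism yields $F_\Gamma^*\CO(D_i)\simeq\CO(D_i)$, completing the proof.
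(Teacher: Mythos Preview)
Your proof is correct and follows essentially the same approach as the paper, which simply attributes the observation to Observations~\ref{obs:identification_bundle_over_base} and~\ref{obs:tfor} together with the relevant definitions. You have spelled out in detail exactly how those two observations combine: the first gives the isomorphism for $\CL_i$, and the commutative square of the second (with $i=l+1$) together with the diffeomorphism $For_{for_{l+1}(\CB\Gamma),\CB for_{l+1}\Gamma}$ handles $\CL_i'$, after which tensoring yields the claim for $\CO(D_i)$.
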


\begin{obs}\label{obs:D_i}
If $s'\in\CS'_i$, then $s = s' t_i$ belongs to $\CS_i$ and vanishes on $D_i$.
\end{obs}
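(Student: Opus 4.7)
The plan is to establish the two assertions separately, using the apparatus of Lemma \ref{lm:fmor} together with the assumption that $s' = For_{l+1}^* s_0$ for some $s_0 \in \CS_{i,k,l}.$ The vanishing claim is essentially immediate: by Lemma \ref{lm:fmor}(i), $\tilde t_i$ vanishes exactly on $D_i,$ hence $t_i = \tilde t_i|_{\partial\oCM_{0,k,l+1}}$ vanishes on $\partial D_i = D_i \cap \partial\oCM_{0,k,l+1},$ and therefore so does $s = s' t_i.$

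For the main assertion that $s \in \CS_i,$ I need to exhibit, for each $v \in \CV^{i}_{\pB\Gamma_{0,k,l+1}},$ an invariant multisection $s^v$ of $\CL_i \simeq \CL_i'\otimes\mathcal{O}(D_i) \to \CM_v$ such that $s^\Gamma = \Phi_{\Gamma,i}^* s^v$ whenever $\bv{i}{\Gamma} = v.$ Let $\{s_0^{v_0}\}$ be the invariant base components given by the special canonicity of $s_0.$ For each $v$ above, let $v_0$ be the abstract vertex obtained from $v$ by forgetting the label $l+1,$ and set
\[
s^v := (For_{l+1})^* s_0^{v_0} \cdot \tilde t_i,
\]
where the factor $\tilde t_i$ is viewed as a section of $\mathcal{O}(D_i) \to \CM_v$ via the identifications $F_\Gamma^* \mathcal{O}(D_i) \simeq \mathcal{O}(D_i)$ supplied by the observation preceding the statement and the isomorphisms of Observation \ref{obs:identification_bundle_over_base}. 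Invariance of $s^v$ under the natural $S_{k(v)}$-action follows because $s_0^{v_0}$ is invariant by hypothesis, $For_{l+1}$ is equivariant with respect to permutations of boundary labels, and $\tilde t_i$ is invariant by Lemma \ref{lm:fmor}(iv).

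To check the pullback relation, fix $\Gamma$ with $\bv{i}{\Gamma} = v.$ If $\Gamma \in \CGDi,$ then $\CM_\Gamma \subset \partial D_i$ and both $s^\Gamma$ and $\Phi_{\Gamma,i}^* s^v$ vanish by the vanishing step. Otherwise, Observation \ref{obs:forv} gives the commutative square $\Phi_{for_{l+1}\Gamma,i} \circ For_{l+1} = For_{l+1} \circ \Phi_{\Gamma,i},$ so
\[
s'|_{\CM_\Gamma} = (For_{l+1})^* \, \Phi_{for_{l+1}\Gamma,i}^* \, s_0^{v_0} = \Phi_{\Gamma,i}^* \, (For_{l+1})^* \, s_0^{v_0}.
\]
Combining with $t_i|_{\CM_\Gamma} = \Phi_{\Gamma,i}^* \tilde t_i$ from Lemma \ref{lm:fmor}(iii) yields the desired identity $s^\Gamma = \Phi_{\Gamma,i}^* s^v.$ Together with Remark \ref{rmk:scc}, this establishes $s \in \CS_i.$

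The main technical point requiring care is the coherent interpretation of $\tilde t_i$ across the various moduli spaces ($\CM_\Gamma,$ $\CM_{\CB\Gamma},$ and the abstract vertex moduli $\CM_v$); all the identifications needed for this are already assembled in Lemma \ref{lm:fmor}(iii) and the subsequent observation, so this obstacle is routine given the framework already in place.
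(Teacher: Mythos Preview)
Your approach is essentially the same as the paper's, and the argument is correct in spirit, but there is a small gap in your handling of the case $\Gamma \in \CGDi$. Your uniform formula $s^v := (For_{l+1})^* s_0^{v_0} \cdot \tilde t_i$ is not literally well-defined for the abstract vertex $v$ arising from $\CGDi$: this $v$ is the closed vertex with exactly three interior labels $\{i,\, l+1,\, [l]\setminus\{i\}\}$, and forgetting $l+1$ leaves only two labels, so $v_0$ is unstable and neither $s_0^{v_0}$ nor $For_{l+1}:\CM_v \to \CM_{v_0}$ makes sense. Your claim that ``$\Phi_{\Gamma,i}^* s^v$ vanishes by the vanishing step'' thus presupposes an $s^v$ that you have not actually defined.

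The fix is exactly what the paper does: simply declare $s^{\bv{i}{\Gamma}} = 0$ for $\Gamma \in \CGDi$. This is consistent with $s^\Gamma = 0$ (since $t_i$ vanishes on $\partial D_i$), and invariance is trivial. For $\Gamma \notin \CGDi$ your argument via Observation~\ref{obs:forv} and Lemma~\ref{lm:fmor}\ref{it:pb},\ref{it:in} matches the paper's formula $s^{\bv{i}{\Gamma}} = For_{l+1}^* s_0^{\bv{i}{for_{l+1}\Gamma}} \cdot \tilde t_i$ and is correct.
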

\begin{proof}
Using Observation~\ref{obs:forv}, we see that for $\Gamma \in \partial\Gamma_{0,k,l}\setminus\CGDi$ we may take
\[
s^\bv{i}{\Gamma} = For_{l+1}^*(s')^\bv{i}{for_{l+1}\Gamma}\tilde t_i.
\]
For $\Gamma \in \CGDi,$ we take $s^\bv{i}{\Gamma} = 0.$
\end{proof}
\begin{rmk}\label{rmk:L'}
Recall Observation~\ref{obs:extension_of_canonical_conds} and Remark~\ref{rmk:scc}. A multisection $s\in\CS'_i$ behaves similarly. Namely, for each $\Gamma \in \pB\Gamma_{0,k,l+1},$ there exists
\[
s^{\CB\Gamma}\in C^\infty_m\left(\CM_{\CB\Gamma}, \CL_i'\right)
\]
such that
\[
s^\Gamma = F_\Gamma^* s^{\CB\Gamma}.
\]
This follows from Observation~\ref{obs:tfor}.
\end{rmk}

\subsection{Construction of multisections and homotopies}
In this section we prove Theorem~\ref{thm:intersection_numbers_well_defined}, namely, the open descendent integrals are well defined. In addition we construct special canonical multisections of special types, which we later use to prove the geometric recursions.
\begin{nn}
For a bundle $E\to M$, we denote by $0$ its $0$-section. Given a multisection $s,$ the notation $s\pitchfork 0$ means that $s$ is transverse to the $0$-section. See Appendix \ref{app:euler}.
\end{nn}
\begin{prop}\label{prop:single_section}
Consider $\CL_i\to\oCM_{0,k,l}$, with $k$ odd.
\begin{enumerate}
\item\label{it:single_section_a}
For any $p\in\partial\oCM_{0,k,l}$ one can find $s \in \CS_i$ which does not vanish at $p$. Hence, one can choose finitely many such multisections which span the fiber of $\CL_i$ over each point of $\partial\oCM_{0,k,l}$.
\item\label{it:single_section_b}
For $i \in [l]$, and
\[
p\in\partial\oCM_{0,k,l+1}\setminus\partial D_{i}, \qquad q \in \partial D_{i},
\]
one can find $s \in \CS_i$ of the form
\[
s = s' t_{i},\qquad s'\in\CS'_i,
\]
that does not vanish at $p$, vanishes on $\partial D_{i}$ and such that $ds|_q$ surjects onto $(\CL_i)_q.$

Hence, one can choose finitely many such multisections that span the fiber of $\CL_i$ over each point of $\partial\oCM_{0,k,l+1}\setminus\partial D_{i}$ and such that images of their derivatives span the fiber of $\CL_i$ at each point of $D_{i}.$
\end{enumerate}
\end{prop}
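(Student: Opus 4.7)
The plan is to build both families of multisections by an inductive extension organized by abstract vertices, symmetrizing over the natural $S_{k(v)}$-actions of Remark~\ref{rm:equiv} to impose invariance.

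For part~\ref{it:single_section_a}, write $\CC = \pB\Gamma_{0,k,l}$. A multisection $s \in \CS_i$ is determined by an $S_{k(v)}$-invariant multisection $s^v \in C_m^\infty(\CM_v, \CL_i)$ for each $v \in \mathcal{V}_\CC^i$, subject to the compatibility that pullbacks via $\Phi_{\Gamma,i}$ across overlapping strata agree. By Observation~\ref{obs:bdry_of_abs_vrtx}, vertices in $\partial^{eff} v \cap \mathcal{V}_\CC^i$ have strictly smaller complex dimension than $v$, so I build the family by induction on $\dim_\C \CM_v$. At the base of the induction, any smooth multisection on $\CM_v$ averaged over $S_{k(v)}$ suffices. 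In the inductive step, the previously built $s^{v'}$ pull back via the maps $\Phi_{\Lambda,v'}$ of Observation~\ref{obs:bdry_of_base_moduli} to coherent boundary data on $\oCM_v$ --- coherence being precisely the content of that observation combined with invariance of each $s^{v'}$. I extend this data into $\CM_v$ using a collar neighborhood and partition of unity, then re-symmetrize over $S_{k(v)}$.

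To achieve non-vanishing at $p$, let $\Gamma$ be the stratum through $p$, set $v_0 = \bv{i}{\Gamma}$ and $\bar p = \Phi_{\Gamma,i}(p) \in \CM_{v_0}$. When constructing $s^{v_0}$, I require the unsymmetrized local extension to be a non-vanishing frame at $\bar p$. This is possible because $\bar p$ lies in the open stratum $\CM_{v_0}$ and is therefore disjoint from the prescribed boundary data. After symmetrization, the branch indexed by the identity of $S_{k(v_0)}$ still equals the original frame at $\bar p$, so the multisection does not vanish there. The resulting $s \in \CS_i$ defined by $s^\Gamma = \Phi_{\Gamma,i}^* s^{\bv{i}{\Gamma}}$ then satisfies $s(p) \neq 0$. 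The spanning statement follows from compactness of $\partial\oCM_{0,k,l}$ (cover by finitely many open sets on which a chosen multisection is non-vanishing) and closure under multiplication by $\sqrt{-1}$.

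For part~\ref{it:single_section_b}, the same scheme applied to $\CL_i' = For_{l+1}^*\CL_i$ via Remark~\ref{rmk:L'} produces $s' \in \CS_i'$ non-vanishing at any prescribed single point; to obtain $s'$ non-vanishing simultaneously at $p$ and $q$, I take a generic $\C$-linear combination of two such multisections. Setting $s = s' t_i$, Observation~\ref{obs:D_i} places $s$ in $\CS_i$ with $s|_{\partial D_i} \equiv 0$. Non-vanishing of $s$ at $p$ uses Lemma~\ref{lm:fmor}\ref{it:tr}: since $p \notin \partial D_i \subset D_i$ we have $t_i(p) \neq 0$, and combined with $s'(p) \neq 0$ this gives $s(p) \neq 0$. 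At $q \in \partial D_i$, the vanishing of $s$ along $\partial D_i$ makes $ds|_q$ a well-defined linear map $T_q\partial\oCM_{0,k,l+1} \to (\CL_i)_q$, explicitly $v \mapsto dt_i|_q(v)\cdot s'(q)$; the transversality in Lemma~\ref{lm:fmor}\ref{it:tr} says $dt_i|_q$ surjects onto the fiber of $\mathcal{O}(D_i)$ at $q$, and $s'(q) \neq 0$ makes evaluation at $s'(q)$ a $\C$-linear isomorphism $\mathcal{O}(D_i)_q \to (\CL_i)_q$, yielding surjectivity of $ds|_q$. Compactness of $\partial\oCM_{0,k,l+1}\setminus\partial D_i$ and of $\partial D_i$ again gives the finite spanning collection. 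The main obstacle throughout is the inductive compatibility step: in principle, boundary data propagated from lower-dimensional $v'$ could constrain $s^{v_0}$ to vanish at $\bar p$. This is avoided precisely because $\bar p$ sits in the open stratum of $\CM_{v_0}$, where the extension retains full freedom.
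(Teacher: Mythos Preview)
Your overall strategy matches the paper's: induct on $\dim_\C \CM_v$ over abstract vertices $v\in\mathcal V^i_{\pB\Gamma_{0,k,l}}$, build $s^v$ by extending boundary data coming from smaller $v'$ via Observation~\ref{obs:bdry_of_base_moduli}, and symmetrize over $S_{k(v)}$. Part~\ref{it:single_section_b} is also handled the same way---reduce to part~\ref{it:single_section_a} on $\oCM_{0,k,l}$ and multiply by $t_i$, with Lemma~\ref{lm:fmor}\ref{it:tr} supplying both non-vanishing of $t_i$ off $D_i$ and surjectivity of $dt_i$ along $D_i$.

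There is, however, a genuine gap in your non-vanishing argument at the end of part~\ref{it:single_section_a}. Recall from Appendix~\ref{app:euler} that a multisection \emph{vanishes} at a point if \emph{any one} of its branches is zero there; so ``does not vanish at $p$'' requires \emph{every} branch to be non-zero. After $S_{k(v_0)}$-symmetrization, the branches of $s^{v_0}$ at $\bar p$ are exactly the values $g\cdot s_1(g^{-1}\bar p)$ for $g\in S_{k(v_0)}$, where $s_1$ is your unsymmetrized extension. You only arranged $s_1(\bar p)\neq 0$, which guarantees the identity branch is non-zero but says nothing about the others. The paper handles this by listing the full $S_{k^*}$-orbit $\hat p_1,\ldots,\hat p_{k^*!}$ of $\bar p$ in $\CM_{v^*}$ and requiring the unsymmetrized extension to be non-zero at \emph{all} of them; this is still unconstrained by the inductive boundary data since the orbit lies in the open stratum $\CM_{v^*}$. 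With that correction your argument goes through, and the same fix propagates to part~\ref{it:single_section_b} so that $s'$ genuinely does not vanish (in the multisection sense) at both $p$ and $q$.
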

\begin{proof}
In both cases the `hence' part follows immediately from the previous part because of the compactness of $\partial\oCM_{0,k,l}$. We first prove part~\ref{it:single_section_a}.
To construct the special canonical multisection $s$, it suffices to construct multisections $\bar s^v \in C_m^\infty(\oCM_{v},\CL_i)$ for each abstract vertex $v \in \mathcal{V}_{\pB\Gamma_{0,k,l}}^i$ that have certain properties.

Let $\CM_{\Gamma^*}$ be the boundary stratum of $\oCM_{0,k,l}$ that contains $p$, let
\[
v^* = \bv{i}{\Gamma^*},
\]
and write $k^*=k\left(v^{*}\right).$ Write
\[
 \hat{p}_1,\ldots,\hat{p}_{k^*!}\in\CM_{v^{*}}
\]
for $\Phi_{\Gamma^*,i}\left(p\right)$ and its conjugates under the action of $S_{k^*}$ on $\CM_{v^{*}}.$

The properties the  multisections $\left\{\bar s^v\right\}$ should satisfy are as follows.
\begin{enumerate}[label=(\roman*)]
\item\label{it:1}
For all $v \in \mathcal{V}^i_{\pB\Gamma_{0,k,l}}$ the multisection $\bar s^v$ is invariant.
\item\label{it:2}
For all $v,v' \in \mathcal{V}^i_{\pB\Gamma_{0,k,l}}$ such that $v'\in\partial^{eff}_{i} v$, we have
\[
\left.\bar s^v\right|_{\partial_{v'}\CM_v} = \Phi_{v,v'}^*\bar s^{v'}.
\]
\item\label{it:4}
No branch of $\bar s^{v^*}$ vanishes at $\hat p_1,\ldots,\hat p_{k^*!}.$
\end{enumerate}
For $v \in \mathcal{V}_{\pB\Gamma_{0,k,l}}^i,$ let
\[
s^v = \bar s^v|_{\CM_v}.
\]
Define the multisection $s$ by
\[
s^\Gamma = \Phi_{\Gamma,i}^*s^{\bv{i}{\Gamma}}, \qquad \Gamma \in \partial^B\Gamma_{0,k,l},
\]
as in Definition~\ref{def:scm}. So, if we show $s$ is smooth, then it is clearly special canonical. To show $s$ is smooth, it suffices to show that
\[
s|_{\oCM_\Gamma} = \Phi_{\Gamma,i}^*\bar s^{\bv{i}{\Gamma}}, \qquad \Gamma \in \partial^B\Gamma_{0,k,l}.
\]
Equivalently, we can prove
\begin{equation}\label{eq:cmg'}
s|_{\CM_{\Gamma'}} =  \Phi_{\Gamma,i}^*\bar s^{\bv{i}{\Gamma}}|_{\CM_{\Gamma'}}, \qquad \Gamma' \in \partial \Gamma, \qquad \Gamma \in \partial^B\Gamma_{0,k,l}.
\end{equation}
Consider the following diagram, which uses the notation of Observation~\ref{obs:bdry_of_base_moduli}. In particular, $v' = \bv{i}{\Gamma'}.$
\[
\xymatrix{
    \CM_{\Gamma'}   \ar[dd]_{\Phi_{\Gamma',i}}   \ar[rd]_{\Phi_{\Gamma,i}^{\Gamma'}}\ar@{^(->}[rr]    &   &    \oCM_\Gamma    \ar[dd]^{\Phi_{\Gamma,i}}\\
    &   \CM_{\Lambda}   \ar@{^(->}[rd]    \ar[ld]^{\Phi_{v,v'}}   \\
    \CM_{v'} &  &   \oCM_v
}
\]
The large triangle commutes by the definition of $\Phi_{\Gamma,i}^{\Gamma'},$ and the small triangle commutes up to the action of $S_{k(v')}$ by Observation~\ref{obs:bdry_of_base_moduli}.
Using the definition of $s$ and the commutativity of the large triangle, equation~\eqref{eq:cmg'} is the same as
\[
\Phi_{\Gamma',i}^*s^{\bv{i}{\Gamma'}} = (\Phi_{\Gamma,i}^{\Gamma'})^* \bar s^{\bv{i}{\Gamma}}.
\]
By the compatibility property~\ref{it:2}, this is equivalent to
\[
\Phi_{\Gamma',i}^*s^{\bv{i}{\Gamma'}} = (\Phi_{\Gamma,i}^{\Gamma'})^* \Phi_{v,v'}^*\bar s^{v'}.
\]
The preceding equation follows from commutativity of the small triangle in the diagram up to the action of $S_{k(v')},$ the invariance property~\ref{it:1}, Observation~\ref{obs:bdry_of_abs_vrtx}, and Remark~\ref{rm:equiv}. Consistency of $s$ follows from Remark~\ref{rmk:consist}. Property~\ref{it:4} implies that $s$ does not vanish at $p.$

We construct the multisections $\bar s^{v}$ by induction on $\dim_\C\CM_v$. Start the induction with $\dim_\C \CM_v = -1.$ Then the multisections $\bar s^v$ exist trivially, since there are no such $v.$

Assume we have constructed multisections $\bar s^u$ that satisfy properties \ref{it:1}-\ref{it:4} for all $u \in \mathcal{V}_{\pB\Gamma_{0,k,l}}^i$ such that $\dim_\C \CM_u \leq m.$
Let $v \in \mathcal{V}_{\pB\Gamma_{0,k,l}}^i$ be an abstract vertex such that $\dim_\C \CM_v = m+1$. By induction we have defined $\bar s^{v'}$ for all $v'\in\partial^{eff}_{i}v,$ as for such $v'$ we have
\[
\dim_\C \CM_{v'} < \dim_\C \CM_v.
\]
Define the section $s_1$ on
\[
\partial \oCM_v = \bigcup_{v'\in\partial_{i}^{eff}v}\;\partial_{v'}\oCM_v
\]
by
\begin{equation}\label{eq:defs1}
\left.s_1\right|_{\partial_{v'}\oCM_v}= \Phi_{v,v'}^*\bar s^{v'}.
\end{equation}
The induction hypotheses on compatibility~\ref{it:2} and invariance~\ref{it:1}, Observation~\ref{obs:bdry_of_abs_vrtx}, Observation~\ref{obs:bdry_of_base_moduli} and Remark~\ref{rm:equiv}, imply that the section $s_1$ thus defined is smooth on $\partial\oCM_v.$ Consistency of $s_1$ follows directly from the defining equation~\eqref{eq:defs1}. So, we may extend $s_1$ smoothly to all $\oCM_v$. If $v = v^{*},$ we make sure that the extension is non-vanishing at $\hat{p}_1,\ldots,\hat{p}_{k^*!}\in \CM_{v^*}$. We denote the resulting multisection by $s_1$ as well. It satisfies the compatibility condition~\ref{it:2} by construction.

Define $\bar s^v$ to be the $S_{k\left(v\right)}$ symmetrization of $s_1.$ See Appendix \ref{app:euler}, Definition \ref{df:sym}. So $\bar s^v$ satisfies the invariance condition~\ref{it:1}. But by the induction hypothesis on
invariance~\ref{it:1} and Remark~\ref{rm:equiv}, $\bar s^v|_{\partial \oCM_v} = s_1.$ So, $\bar s^v$ satisfies the compatibility condition~\ref{it:2} as well.

For case \ref{it:single_section_b}, write
\[
For_{l+1}(p) = p',\qquad For_{l+1}(q) = q'.
\]
Using case \ref{it:single_section_a}, construct a special canonical multisection $s_1$ of
\[
\CL_i\to\partial\oCM_{0,k,l}
\]
that does not vanish at $p'.$ Construct a second special canonical multisection $s_2$ of $\CL_i\to\partial\oCM_{0,k,l}$ that does not vanish at $q'.$ Denote by $s_3$ a linear combination of $s_1$ and $s_2$ that does not vanish at $p',q'$. Then $s = s_3 t_{i}$ satisfies our requirements by Observation \ref{obs:D_i}.

\end{proof}

Another ingredient we need for the proof of Theorem \ref{thm:intersection_numbers_well_defined} is the following transversality theorem.
\begin{thm}\label{tm:hirschnm}
Let $V$ be a manifold, let $N$ be a manifold with corners, and let $\E\to N$ be a vector bundle.  Denote by $p_N : V \times N \to N$ the projection. Let
\[
F: V\to C^\infty\left(N, \E\right), \qquad v \mapsto F_v,
\]
satisfy the following conditions:
\begin{enumerate}
\item
The section
\[
F^{ev}\in C^\infty(V\times N, p_N^*\E), \qquad
F^{ev}\left(v,x\right)= F_v\left(x\right),
\]
is smooth.
\item
$F^{ev}$ is transverse to $0.$

\end{enumerate}
Then the set
\[
\left\{\left.v\in V \right| F_v\pitchfork 0\right\}
\]
is residual.
\end{thm}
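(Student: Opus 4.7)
The plan is to reduce the statement to the classical finite-dimensional parametric transversality theorem, with extra care taken to accommodate the corner structure on $N$ in the sense of Joyce. I would first form the zero locus
\[
Z = (F^{ev})^{-1}(0) \subset V \times N.
\]
By hypothesis $F^{ev} \pitchfork 0$, so $Z$ is a smooth submanifold of $V \times N$ with corners. Stratifying $N$ by the depth filtration, write $N = \bigsqcup_j S_j$ where each $S_j$ is a smooth manifold without boundary of dimension $\dim N - j$, and put $Z_j = Z \cap (V \times S_j)$. Transversality on the open stratum $V \times S_j$ makes each $Z_j$ a smooth submanifold of $V \times S_j$.

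The main step is to identify the set $\{v \in V : F_v \pitchfork 0\}$ with the set of common regular values of the projections $\pi_j : Z_j \to V$, $j = 0, 1, \ldots, \dim N$. Concretely, fix $j$ and a point $(v,x) \in Z_j$. Splitting the differential of $F^{ev}$ at $(v,x)$ as
\[
dF^{ev}_{(v,x)}(\xi,\eta) = \partial_V F(v,x)\cdot\xi + dF_v|_x(\eta), \qquad (\xi,\eta)\in T_vV\oplus T_xS_j,
\]
one checks that $T_{(v,x)}Z_j = \ker dF^{ev}_{(v,x)}$, and a diagram chase using the surjectivity of $dF^{ev}$ shows that $d\pi_j|_{(v,x)}$ is surjective onto $T_vV$ iff $dF_v|_x : T_xS_j \to \E_x$ is surjective. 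Hence $v$ is a regular value of $\pi_j$ iff $F_v|_{S_j}$ is transverse to $0$ in $\E|_{S_j}$.

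Each $\pi_j$ is a smooth map between smooth (boundaryless) manifolds, so finite-dimensional Sard's theorem gives that the set $R_j \subset V$ of regular values of $\pi_j$ is residual. Since the corner stratification has only finitely many strata, the intersection
\[
\{v\in V : F_v \pitchfork 0\} = \bigcap_{j=0}^{\dim N} R_j
\]
is a finite intersection of residual sets in the Baire space $V$, hence residual. The main obstacle, and really the only subtle point, is handling the corner stratification: one must verify that the notion of transversality used throughout the paper (transversality on each corner stratum, following Joyce) is the notion that is equivalent via the above identification to regularity of the stratum-wise projections $\pi_j$, so that assembling the strata-wise conclusions indeed produces the desired global statement.
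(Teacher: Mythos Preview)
Your proposal is correct and follows essentially the same approach as the paper: the paper does not give a detailed proof but instead remarks that the classical parametric transversality argument of Hirsch goes through once one uses Joyce's framework to conclude that $(F^{ev})^{-1}(0)$ is a manifold with corners. Your explicit depth-stratification and stratum-wise application of Sard is precisely the unpacking of that remark, and your closing caveat about matching the Joyce notion of transversality with stratum-wise regularity is exactly the point the paper's remark addresses by invoking Joyce.
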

\begin{rmk}
A similar theorem may be found in \cite[pp. 79-80]{Hirsch} in the more general setting where $C^\infty(N,\E)$ is replaced by the space of smooth maps between two manifolds. However, the manifolds considered do not have boundary or corners. In~\cite{Joyce}, Joyce defines a notion of smooth maps of manifolds with corners that guarantees the existence of fiber products for transverse smooth maps. In Joyce's terminology, a map of manifolds with corners that is smooth in each coordinate chart is called weakly smooth. To be smooth, it must satisfy an additional condition at corners. Since we consider only sections of vector bundles, the section $F^{ev}$ is automatically smooth if it is weakly smooth. Thus $(F^{ev})^{-1}(0)$, being a transverse fiber product, is a manifold with corners, and the proof given in~\cite{Hirsch} goes through for our case as well.
\end{rmk}
As a consequence, we have the following theorem on multisection transversality. Relevant operations on multisections are reviewed in Appendix~\ref{app:euler}. See, in particular, Definition \ref{df:sp} for the definition of summation.
\begin{thm}\label{thm: hirsch} We continue with the notation of Theorem~\ref{tm:hirschnm} in the special case where $V$ is the vector space $\R^n.$ Fix $s_0,\ldots,s_n\in C_m^\infty(N,\E).$ Take
\[
F: V \to C_m^\infty(N,\E)
\]
to be the map
\[
\left(\lambda_{i}\right)_{i\in \left[n\right]}\mapsto s_0 + \sum \lambda_{i} s_{i}.
\]
If  the multisection
\[
F^{ev}\in C^\infty_m(V\times N, p_N^*\E), \qquad
F^{ev}\left(v,x\right)= F_v\left(x\right),
\]
is transverse to $0$, then the set
\[
\left\{\left.v\in V \right| F_v \pitchfork 0\right\}
\]
is residual.
\end{thm}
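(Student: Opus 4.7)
The plan is to reduce Theorem \ref{thm: hirsch} to the single-section parametric transversality result (Theorem \ref{tm:hirschnm}) by passing to local branches. Recall from Appendix \ref{app:euler} that a multisection on $N$ is locally represented, on sufficiently small open sets, by a finite collection of weighted ordinary sections (branches), and transversality of a multisection to $0$ means transversality of each branch to $0$.

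First I would choose a countable locally finite open cover $\{U_\alpha\}$ of $N$ such that on each $U_\alpha$ every one of the multisections $s_0,s_1,\ldots,s_n$ is represented by a finite family of ordinary sections $s_i^{\alpha,j_i}$ with $j_i$ ranging over some finite index set. The affine family $F$ is linear in the parameters, so on each $U_\alpha$ the multisection $F^{ev}$ is represented by the finite family of ordinary sections
\[
G^{\alpha,\mathbf{j}}(v,x) \;=\; s_0^{\alpha,j_0}(x) \;+\; \sum_{i=1}^{n}\lambda_i\, s_i^{\alpha,j_i}(x), \qquad \mathbf{j}=(j_0,\ldots,j_n),
\]
on $V\times U_\alpha$. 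By the hypothesis $F^{ev}\pitchfork 0$, each such branch $G^{\alpha,\mathbf{j}}$ is transverse to the zero section. Since $G^{\alpha,\mathbf{j}}$ is an ordinary smooth section that depends smoothly on $v$, Theorem \ref{tm:hirschnm} applied with $N$ replaced by $U_\alpha$ and $\E$ by $\E|_{U_\alpha}$ yields a residual subset $R_{\alpha,\mathbf{j}}\subset V$ of parameters for which $G^{\alpha,\mathbf{j}}_v \pitchfork 0$ on $U_\alpha$.

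Because the cover $\{U_\alpha\}$ is countable and the branch index sets are finite, the intersection
\[
R \;=\; \bigcap_{\alpha}\bigcap_{\mathbf{j}} R_{\alpha,\mathbf{j}}
\]
is again residual in $V$. For $v\in R$, every local branch of $F_v$ on every chart $U_\alpha$ is transverse to $0$, and this is precisely the definition of $F_v \pitchfork 0$ as a multisection. Thus the desired residual set contains $R$.

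The main technical point will be verifying that the branches one extracts from the locally-finite cover are genuinely compatible with the multisection structure on overlaps, so that the local transversality conclusions assemble into a global one, and checking the corner behavior of $N$ in the sense of Joyce's framework. Fortunately, weak smoothness suffices for sections of a vector bundle (as noted in the remark following Theorem \ref{tm:hirschnm}), so Theorem \ref{tm:hirschnm} applies verbatim on each $U_\alpha$, and no additional corner hypothesis is required beyond what is already assumed for $N$.
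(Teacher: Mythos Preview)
Your proof is correct and follows essentially the same approach as the paper: both arguments pass to a countable open cover on which each $s_i$ splits into finitely many ordinary branches, apply Theorem~\ref{tm:hirschnm} to each resulting affine family of ordinary sections, and then intersect the countably many residual sets. The paper's write-up is terser (it indexes the branches of $F^{ev}|_{V\times W}$ by a single index $j$ running over the product of the branch counts, and does not bother with local finiteness of the cover), but the logic is identical; your closing remarks about compatibility on overlaps and corner behavior are harmless but unnecessary, since transversality of a multisection is a purely local condition.
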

\begin{proof}
Take $p\in N.$ There exists a neighborhood $W$ of $p$ such that each multisection $s_i|_W$ is a weighted combination of $m_i$ sections. Hence $F^{ev}|_{V\times W}$ is a weighted combination of appropriately defined sections $F^{ev}_{W,j}$ for $j = 1,\ldots,\prod_{i = 1}^n m_i.$ Apply Theorem~\ref{tm:hirschnm} to each section $F_{W,j}$ individually to conclude that the set
\[
U_W = \bigcap_j\left\{\left.v\in V \right| F^{ev}_{W,j}(v,-)\pitchfork 0\right\}
\]
is residual. Choose a countable open cover $\{W_l\}$ of $N$. Then for every
\[
v \in U = \bigcap_l U_{W_l}
\]
we have $F_v \pitchfork 0.$ Moreover, $U$ is residual. The theorem follows.
\end{proof}

\begin{lm}\label{lm:tscm}
Fix a sequence of non-negative integers
\[
a_1,\ldots,a_l, \qquad  2\sum a_i = k+2l-3,
\]
and set $E = \bigoplus_{i=1}^l \CL_i^{\oplus a_i}\to \oCM_{0,k,l}.$
\begin{enumerate}
\item \label{it:basic}
One can construct special canonical multisections
\[
s_{ij}\in\CS_i,\qquad i \in [l],\quad j \in [a_i],
\]
such that $\mathbf{s} = \oplus s_{ij}$ vanishes nowhere on $\partial\oCM_{0,k,l}.$ Hence,
$e\left(E ; \mathbf{s}\right)$ is defined.
\item\label{it:better_trans}
Moreover, we may impose the following further condition on the multisections $s_{ij}.$ For all abstract vertices $v\in\mathcal{V}_{\pB\Gamma_{0,k,l}}$, and all
\[
K\subseteq \bigcup_{i\in I(v)}\left\{i\right\}\times[a_{ij}],
\]
we have
\[
\bigoplus_{ab\in K} s_{ab}^v\pitchfork 0.
\]
\end{enumerate}
\end{lm}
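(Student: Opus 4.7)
The plan is a simultaneous induction on $\dim_\C \CM_v$ over abstract vertices $v\in\CV_{\pB\Gamma_{0,k,l}}$, constructing at each stage invariant multisections $s_{ij}^v\in C^\infty_m(\oCM_v,\CL_i)$ for each relevant $i,j$ satisfying compatibility under the boundary structure of $\CV$ and the transversality demanded in part~\ref{it:better_trans}. Once part~\ref{it:better_trans} is established, part~\ref{it:basic} will follow from a rank count: because
\[
\rk_\R E \;=\; 2\sum_{i} a_i \;=\; k+2l-3 \;>\; k+2l-4 \;=\; \dim_\R \partial\oCM_{0,k,l},
\]
any multisection of $E|_{\partial\oCM_{0,k,l}}$ transverse to the zero section must vanish nowhere. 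Transversality on a stratum $\CM_\Gamma\subset\partial\oCM_{0,k,l}$ is obtained from the full-$K$ transversality at each factor $\CM_v$ of $\CM_{\CB\Gamma}\cong\prod_{v\in V(\CB\Gamma)}\CM_v$, pulled back along the submersion $F_\Gamma$; indeed a direct sum of sections pulled back from the factors of a product is transverse exactly when each summand is transverse on its factor.

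For the inductive step at a vertex $v$, I will first define $s_{ij}^v$ on $\partial\oCM_v$ by pullback: on each piece $\partial_{v'}\oCM_v$, set
\[
s_{ij}^v|_{\partial_{v'}\oCM_v} \;=\; \Phi_{v,v'}^*\, s_{i'j}^{v'},
\]
where $i'\in I(v')$ is the element into which $i$ is absorbed under the refinement relation governing $\partial^{eff}$. Consistency on overlaps of different $\partial_{v'}\oCM_v$ follows from Observation~\ref{obs:bdry_of_base_moduli} together with the inductive invariance hypothesis and the equivariance recorded in Remark~\ref{rm:equiv}. I will then extend $s_{ij}^v$ smoothly to all of $\oCM_v$ and symmetrize over $S_{k(v)}$; the boundary data are preserved by this averaging because they are already $S_{k(v)}$-invariant, being pullbacks of $S_{k(v')}$-invariant multisections along equivariant maps.

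To force the transversality of part~\ref{it:better_trans}, I will invoke Proposition~\ref{prop:single_section}\ref{it:single_section_a} together with compactness of $\oCM_v$ to select finitely many special canonical multisections $\sigma_{ij,c}\in\CS_i$ whose $v$-components $\sigma_{ij,c}^v$ span $\CL_i$ pointwise on $\oCM_v$. Fix a smooth cutoff $\rho:\oCM_v\to[0,1]$ with $\rho^{-1}(0)=\partial\oCM_v$ and $\rho>0$ in the interior, and consider the parametric family
\[
s_{ij}^v(\lambda) \;=\; s_{ij}^v + \sum_c \lambda_{ij,c}\,\rho\,\sigma_{ij,c}^v.
\]
The cutoff factor $\rho$ leaves boundary values untouched, so invariance and compatibility survive for every $\lambda$. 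In the interior, the spanning property verifies the hypothesis of Theorem~\ref{thm: hirsch}, and for each of the finitely many admissible subsets $K$ the set of $\lambda$ making $\bigoplus_{ij\in K} s_{ij}^v(\lambda)\pitchfork 0$ inside $\oCM_v$ is residual. Intersecting over $K$ still yields a residual set, from which a common $\lambda$ can be chosen. On $\partial\oCM_v$, transversality is inherited from the inductive hypothesis, since pullback by the submersions $\Phi_{v,v'}$ preserves transversality.

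The main obstacle is coordinating the three demands—compatibility with lower strata, $S_{k(v)}$-invariance, and the $\pitchfork$ condition for every admissible $K$—so that no one condition undoes another. The cutoff $\rho$ decouples the interior perturbation from the boundary data, and the equivariance of the natural identifications of Observation~\ref{obs:identification_bundle_over_base} (Remark~\ref{rm:equiv}) lets symmetrization commute with pullback, so invariance is genuinely preserved. Once these compatibilities are verified at the level of abstract vertices, the induction closes and both parts of the lemma follow.
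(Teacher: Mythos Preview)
Your approach is correct but proceeds quite differently from the paper's. The paper does not perform a fresh induction over abstract vertices here; instead it treats Proposition~\ref{prop:single_section} as a black box supplying a finite spanning family $\{w_{ijk}\}\subset\CS_i$ of special canonical multisections, and then simply takes a \emph{global} generic linear combination $s_{ij}=\sum_k\lambda_{ijk}w_{ijk}$. Theorem~\ref{thm: hirsch} is applied once with $N=\partial\oCM_{0,k,l}$ to obtain part~\ref{it:basic}, and then separately with $N=\CM_v$ for each pair $(v,K)$ to obtain part~\ref{it:better_trans}; since each application imposes only a residual condition on the same parameter space $\R^J$, a common $\lambda$ exists. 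No cutoff is needed because the $w_{ijk}$ are already globally special canonical, so perturbing by them automatically preserves all compatibility and invariance conditions.

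Your route instead rebuilds the inductive machinery of Proposition~\ref{prop:single_section} with the transversality of part~\ref{it:better_trans} woven in at each stage, using the cutoff $\rho$ to decouple interior perturbations from the already-fixed boundary data. This is self-contained and legitimate—the key point that transversality of $F^{ev}$ holds at boundary points of $V\times\oCM_v$ (where the $V$-derivatives vanish because $\rho=0$) follows, as you indicate, from the inductive hypothesis: surjectivity of the derivative restricted to $T_p\partial\oCM_v$ already forces surjectivity of the full derivative. Two small points to tidy: your label ``$i'$'' is just $i$ itself (the interior label $i$ persists in $I(v')$ for $v'\in\partial^{eff}_i v$; only the other labels merge), and you should make explicit that $\rho$ is chosen $S_{k(v)}$-invariant so that the perturbed multisection remains invariant. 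The trade-off is that the paper's argument is shorter and cleanly separates the existence of special canonical sections from the genericity step, while yours avoids invoking Proposition~\ref{prop:single_section} at the cost of redoing its induction with extra bookkeeping.
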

\begin{proof}
We begin with the proof of part~\ref{it:basic}.
Let
\[
w_{ijk} \in \CS_i, \qquad\qquad  i\in [l],\quad j\in [a_i],\quad k \in [m_{ij}],
\]
be a finite collection of special canonical multisections of the $j^{th}$ copy of $\CL_i,$ which together span its fiber $\left(\CL_i\right)_p$ for all $p\in\partial\oCM_{0,k,l}$. Such multisections exist by Proposition~\ref{prop:single_section}, case \ref{it:single_section_a}. We write
\[
J=\left\{ijk\right\}_{i\in [l],j\in [a_i],k\in [m_{ij}]}.
\]

Apply Theorem~\ref{thm: hirsch} with
\[
N = \partial\oCM_{0,k,l}, \qquad  \E = \left.E\right|_N, \qquad V = V_0 = \R^J,
\]
and $F$ given by
\[
F_\lambda = \sum_{ijk \in J} \lambda_{ijk} w_{ijk}, \qquad \lambda =\{\lambda_{ijk}\}_{ijk \in J} \in V_0.
\]
Let $\Lambda_0$ be the set of $\lambda \in V$ such that $F_\lambda \pitchfork 0.$ Theorem~\ref{thm: hirsch} implies that $\Lambda_0$ is residual. Dimension counting shows that for each $\lambda \in \Lambda_0,$ we have $F_\lambda^{-1}(0) = \emptyset.$ Thus for any $\lambda \in \Lambda_0$, we may take
\begin{equation}\label{eq:sijla}
s_{ij} = s_{ij}^\lambda = \sum_k \lambda_{ijk} w_{ijk}.
\end{equation}

We turn to the proof of part~\ref{it:better_trans}. For an abstract vertex $v\in\mathcal{V}_{\Gamma_{0,k,l}},$ and a set $K$ as in the statement of the lemma, write
\[
J_{v,K} = \left\{\left.abc \right| ab\in K, ~c\in [m_{ab}]\right\}\subseteq J.
\]
Apply Theorem~\ref{thm: hirsch} with
\[
N = \CM_v, \qquad \E = \bigoplus_{\left\{ab\in K\right\}}\CL_a, \qquad V = V_{v,K} = \R^{J_{v,K}},
\]
and $F = F_{v,K}$ given by
\[
\left(F_{v,K}\right)_{\lambda} = \sum_{ijk \in J_{v,K}} \lambda_{ijk} w^v_{ijk}, \qquad \lambda =\{\lambda_{ijk}\}_{ijk \in J_{v,K}} \in V_{v,K}.
\]
Let
\[
\Lambda_{v,K} = \{ \lambda \in V_{v,K}| (F_{v,K})_\lambda \pitchfork 0\}.
\]
Theorem~\ref{thm: hirsch} implies that $\Lambda_{v,K}$ is residual. Denote by $p_{v,K} : V_0 \to V_{v,K}$ the projection. It follows that
\[
\Lambda = \Lambda_0 \cap \bigcap_{v,K} p_{v,K}^{-1}\left(\Lambda_{v,K}\right)
\]
is residual.

For any $\lambda \in \Lambda$, take $s_{ij} = s_{ij}^\lambda$ as in equation~\eqref{eq:sijla}.
Then for any abstract vertex $v$ and set~$K$, we have
\[
\bigoplus_{ab\in K} s_{ab}^v = (F_{v,K})_\lambda \pitchfork 0,
\]
as desired.
\end{proof}

\begin{lemma}\label{lem:trickey_homotopy}
Let $E_1, E_2 \to \oCM_{0,k,l}$ be given by
\[
E_1 = \bigoplus_{i\in \left[l\right]}\CL_i^{a_i},\qquad E_2 = \bigoplus_{i\in \left[l\right]}\CL_i^{b_i}.
\]
Put $E = E_1 \oplus E_2,$ and assume $\rk{E} = \frac{k+2l-3}{2}.$ Let $\CC \subseteq \pB\Gamma_{0,k,l}$ and
\[
C = \coprod_{\Gamma\in \CC} \CM_\Gamma \subseteq\partial\oCM_{0,k,l}.
\]
Let $\mathbf{s}, \mathbf{r},$ be two multisections of $\left.E\right|_{\partial\oCM_{0,k,l}}$ which satisfy
\begin{enumerate}
\item\label{it:ass_0}
$\left.\mathbf{s}\right|_C$ and $\left.\mathbf{r}\right|_C$ are canonical.
\item\label{it:ass_1}
The projections of $\mathbf{s}, \mathbf{r},$ to $E_1$ are identical and transverse to $0.$
\end{enumerate}
Then one may find a homotopy $H$ between $\mathbf{s}, \mathbf{r},$ which is transverse to $0$ everywhere, does not vanish anywhere on $C\times\left[0,1\right]$ and such that its projection to $E_1$ is constant in time. Moreover, $H$ can be taken to be of the form
\begin{equation}\label{eq:srw}
H(p,t) = (1-t) \mathbf{s}(p) + t \mathbf{r}(p) + t(1-t) w(p),
\end{equation}
where $w(p)$ is a canonical multisection.
\end{lemma}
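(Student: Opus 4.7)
The ansatz~\eqref{eq:srw} automatically guarantees that the $E_1$-projection of $H$ equals $\pi_1\mathbf{s}$, constant in $t$, provided $w$ is taken to be a canonical multisection of $E_2$. Thus it suffices to produce such a $w$ for which the resulting $H$ is transverse to $0$ on all of $\partial\oCM_{0,k,l}\times[0,1]$ and nowhere vanishing on $C\times[0,1]$. Note that evaluating the latter condition at $t\in\{0,1\}$ already forces the implicit hypothesis that $\mathbf{s}$ and $\mathbf{r}$ are nowhere vanishing on $C$, which we henceforth assume.

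The plan is to build a finite-dimensional family of candidate perturbations and apply Theorem~\ref{thm: hirsch} twice: once for transversality, once for nonvanishing on $C\times[0,1]$. To this end, apply Proposition~\ref{prop:single_section}, case~\ref{it:single_section_a}, to each line bundle $\CL_i$ with $b_i>0$, producing finitely many special canonical multisections $w_1,\ldots,w_N$ of the summands of $E_2$ whose values together span $(E_2)_p$ at every $p\in\partial\oCM_{0,k,l}$. Let $V=\R^N$ parameterize $w_\lambda=\sum_\alpha\lambda_\alpha w_\alpha$, and define
\[
H^{ev}:V\times\partial\oCM_{0,k,l}\times[0,1]\to E,\qquad H^{ev}(\lambda,p,t)=(1-t)\mathbf{s}(p)+t\mathbf{r}(p)+t(1-t)w_\lambda(p).
\]
For $t\in(0,1)$, the $\lambda_\alpha$-derivative of $H^{ev}$ is $t(1-t)w_\alpha(p)$, so varying $\lambda$ covers $(E_2)_p$; combined with the hypothesized transversality of $\pi_1\mathbf{s}$ in the $E_1$-factor, which is the $E_1$-projection of $H^{ev}$ and is independent of $\lambda$ and $t$, this shows $H^{ev}\pitchfork 0$ on $V\times\partial\oCM_{0,k,l}\times(0,1)$. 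At $t\in\{0,1\}$ the perturbation vanishes, but transversality of $H_{w_\lambda}$ there reduces to that of $\mathbf{s},\mathbf{r}$, and is vacuous on $C$ by the assumed nonvanishing. Theorem~\ref{thm: hirsch} then yields a residual set $\Lambda_1\subset V$ on which $H_{w_\lambda}$ is transverse to $0$.

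For the nonvanishing of $H_{w_\lambda}$ on $C\times[0,1]$, I would exploit the base dimension reduction. For each $\Gamma\in\CC$, Remark~\ref{rmk:scc} and canonicity imply that $\mathbf{s}$, $\mathbf{r}$, and $w_\lambda$ all factor through $F_\Gamma:\CM_\Gamma\to\CM_{\CB\Gamma}$, so that
\[
H_{w_\lambda}|_{\CM_\Gamma\times[0,1]}=(F_\Gamma\times\id)^*H^{\CB\Gamma}_{w_\lambda}
\]
for a corresponding homotopy $H^{\CB\Gamma}_{w_\lambda}$ on $\CM_{\CB\Gamma}\times[0,1]$. By Observation~\ref{obs:dim_of_base_mod},
\[
\dim_\R\bigl(\CM_{\CB\Gamma}\times(0,1)\bigr)\leq k+2l-4<k+2l-3=\rk_\R E.
\]
A second application of Theorem~\ref{thm: hirsch}, to the pullback of $H^{ev}$ to $V\times\CM_{\CB\Gamma}\times(0,1)$ and using the same spanning property of the $w_\alpha$, produces a residual set $\Lambda_2^\Gamma\subset V$ on which $H^{\CB\Gamma}_{w_\lambda}\pitchfork 0$; the strict dimension inequality above forces its zero set to be empty. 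Pulling back, $H_{w_\lambda}$ has no zeros on $\CM_\Gamma\times(0,1)$, while nonvanishing on $\CM_\Gamma\times\{0,1\}$ follows from the assumed nonvanishing of $\mathbf{s},\mathbf{r}$ on $C$. Intersecting $\Lambda_1$ with the finitely many $\Lambda_2^\Gamma$ yields a residual, hence nonempty, set of admissible $\lambda$; any such $\lambda$ produces the required homotopy.

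The main obstacle I anticipate is the careful handling of transversality across the stratified manifold-with-corners structure of $\partial\oCM_{0,k,l}\times[0,1]$, particularly at the times $t\in\{0,1\}$ where the perturbation vanishes and the $w_\alpha$ provide no flexibility. This is resolved by the twin facts that at these times $H$ equals the original $\mathbf{s}$ or $\mathbf{r}$, which are nowhere vanishing on $C$, and that on each codimension-one-or-higher stratum $\CM_\Gamma\subset C$ the canonical structure descends the homotopy to the strictly lower-dimensional base $\CM_{\CB\Gamma}$, where the dimension count alone eliminates zeros for generic $\lambda$.
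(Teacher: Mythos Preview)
Your proposal is correct and follows essentially the same approach as the paper's proof: both construct a finite-dimensional family of perturbations from special canonical multisections spanning $E_2$ (via Proposition~\ref{prop:single_section}), apply Theorem~\ref{thm: hirsch} on $\partial\oCM_{0,k,l}\times(0,1)$ for generic transversality, and then, for each $\Gamma\in\CC$, push the homotopy down to $\CM_{\CB\Gamma}\times(0,1)$ and invoke Observation~\ref{obs:dim_of_base_mod} together with a second application of Theorem~\ref{thm: hirsch} to force nonvanishing by dimension count. The paper likewise works only on the open interval $(0,1)$ and, like you, tacitly relies on the applications having $\mathbf{s},\mathbf{r}$ nowhere vanishing at the endpoints.
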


\begin{proof}
Denote by $\mathbf{s}_1$ the projection of $\mathbf{s}$ to $E_1.$
Let 
\[
w_{i}, \qquad   i\in [m],
\]
be a finite set of special canonical multisections which together span the fiber $\left(E_2\right)_p$ for all $p\in\partial\oCM_{0,k,l}$. Such multisections exist by Proposition~\ref{prop:single_section}, case \ref{it:single_section_a}.
Denote by $\pi : \partial\oCM_{0,k,l}\times [0,1] \to \partial\oCM_{0,k,l}$ the canonical projection. Let $\mathbf{h} \in C^\infty_m(\left.\pi^*E\right|_{\partial\oCM_{0,k,l}})$ be given by
\[
\mathbf{h}\left(p,t\right) = \left(1-t\right)\mathbf{s}\left(p\right)+t\mathbf{r}\left(p\right)\qquad p \in \partial\oCM_{0,k,l},\quad t\in \left(0,1\right).
\]
Apply Theorem~\ref{thm: hirsch} with
\[
N = \partial\oCM_{0,k,l}\times \left(0,1\right),\qquad \E = \left.\pi^*E\right|_{\partial\oCM_{0,k,l}}, \qquad V = V_0= \R^m,
\]
and $F = \CF$ given by
\[
\CF_\lambda(p,t) = \mathbf{h}\left(p,t\right) + t\left(1-t\right)\sum \lambda_i w_i, \qquad \lambda \in V_0.
\]
By assumption \ref{it:ass_1}, the derivatives of $\CF^{ev}$ in directions tangent to $\partial\oCM_{0,k,l}$ span the fiber $\left(E_1\right)_p$ at each $p$ where $\mathbf{s}_1$ vanishes. Since the multisections $w_i$ span $(E_2)_p$, the derivatives of $\CF^{ev}$ in the $V$ directions span the fiber $\left(E_2\right)_p$ for all $p \in \partial\oCM_{0,k,l}$. It follows that $\CF^{ev} \pitchfork 0.$ Thus, Theorem \ref{thm: hirsch} implies the set $\Lambda$ of all $\lambda\in V_0$ such that $\CF_\lambda \pitchfork 0$ is residual.

Let $\Gamma \in \CC.$ Denoting by $E_\Gamma \to \CM_{\CB\Gamma}$ the appropriate sum of cotangent line bundles, Observation~\ref{obs:identification_bundle_over_base} implies that
$F_\Gamma^* E_\Gamma = E.$
Write
\[
N_\Gamma = \CM_{\CB\Gamma} \times (0,1)
\]
and denote by $\pi_\Gamma : N_\Gamma \to \CM_{\CB\Gamma}$ the canonical projection. Write
\[
\E_\Gamma = \pi_\Gamma^*E_\Gamma.
\]
It follows from Observation~\ref{obs:extension_of_canonical_conds} and Remark~\ref{rmk:scc} that there exists $\CF^{\CB\Gamma} : V_0 \to C^\infty_m(N_\Gamma,\E_\Gamma)$ such that
\[
\CF_\lambda|_{\CM_\Gamma \times (0,1)} = (F_\Gamma \times \id_{(0,1)})^* \CF^{\CB\Gamma}_\lambda, \qquad \lambda \in V_0.
\]
Apply Theorem~\ref{thm: hirsch} with
\[
N = N_\Gamma, \qquad \E = \E_\Gamma, \qquad V = V_0, \qquad F = \CF^{\CB\Gamma}.
\]
Since $\mathbf{s}_1 \pitchfork 0,$ it follows that $\mathbf{s}_1^{\CB\Gamma}\pitchfork 0.$ Thus the same argument that shows $\CF^{ev}\pitchfork 0$ also shows $(\CF^{\CB\Gamma})^{ev}\pitchfork 0.$ So, the theorem implies the set $\Lambda_\Gamma$ of $\lambda \in V_0$ such that $\CF^{\CB\Gamma}_\lambda \pitchfork 0$ is residual. By Observation~\ref{obs:dim_of_base_mod}, for $\lambda \in \Lambda_\Gamma,$ the homotopy $\CF^{\CB\Gamma}_\lambda$ does not vanish anywhere. Therefore, the homotopy $\CF_\lambda|_{\CM_\Gamma \times (0,1)}$ also does not vanish anywhere. We conclude that for
\[
\lambda \in \Lambda \cap \bigcap_{\Gamma \in \CC} \Lambda_\Gamma,
\]
the homotopy $\CF_\lambda$ satisfies the requirements of the lemma.
\end{proof}

We will also need the following general lemma on the relative Euler class. For a multisection $s$ that is transverse to zero, we denote by $Z(s)$ its vanishing locus considered as a weighted branched submanifold. For a zero dimensional weighted branched submanifold $Z \subset M,$ we denote by $\# Z$ its weighted cardinality. See Appendix~\ref{app:euler} for details.
\begin{lm}\label{lm:H}
Let $E \to M$ be a vector bundle over a manifold with corners with $\rk E = \dim M$, and let $s_0,s_1 \in C_m^\infty(\partial M,E)$ vanish nowhere. Let $p : [0,1] \times M\to M$ denote the projection and let
\[
H \in C_m^\infty([0,1]\times\partial M,p_1^*E)
\]
satisfy
\[
H|_{\{i\}\times M } = s_i, \qquad i = 0,1.
\]
Moreover, assume $H$ is transverse to zero. Then
\[
\int_M e(E;s_1) - \int_M e(E;s_0) = \# Z(H).
\]
\end{lm}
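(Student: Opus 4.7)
My plan is to realize both sides as boundary contributions of a single compact oriented one-dimensional weighted branched submanifold of $[0,1]\times M$, obtained as the vanishing locus of a transverse multisection on the cylinder that interpolates between transverse extensions of $s_0$ and $s_1$ over $M$ while restricting to $H$ on the side walls. The starting point is the definition of the relative Euler class from the appendix: pick transverse extensions $\tilde s_0,\tilde s_1\in C^\infty_m(M,E)$ of $s_0,s_1$, so that by definition
\[
\int_M e(E;s_i) \;=\; \# Z(\tilde s_i),\qquad i=0,1.
\]

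Let $p:[0,1]\times M\to M$ denote the projection. I will first assemble a multisection $G$ of $p^*E$ on the closed subset
\[
K \;=\; \bigl(\{0\}\times M\bigr)\cup\bigl(\{1\}\times M\bigr)\cup\bigl([0,1]\times\partial M\bigr)
\]
by setting $G|_{\{i\}\times M}=\tilde s_i$ and $G|_{[0,1]\times\partial M}=H$. These three pieces are consistent along the corners $\{i\}\times\partial M$ because $H|_{\{i\}\times\partial M}=s_i=\tilde s_i|_{\partial M}$, and by the transversality hypotheses on $\tilde s_0,\tilde s_1,H$ the multisection $G$ is already transverse to the zero section along $K$. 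The next step is to extend $G$ to a multisection $\tilde H$ of $p^*E$ on all of $[0,1]\times M$ that is still transverse to zero; this is done by the standard relative multisection transversality argument behind Theorem~\ref{thm: hirsch}, choosing a finite spanning family of multisections of $p^*E$ cut off by a bump function supported away from $K$ and perturbing $G$ by a generic linear combination.

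Since $\rk p^*E=\rk E=\dim M=\dim([0,1]\times M)-1$, the vanishing locus $Z(\tilde H)$ is a compact oriented one-dimensional weighted branched submanifold of $[0,1]\times M$ whose boundary lies in $K$. The weighted signed count of the boundary of any compact oriented one-dimensional weighted branched manifold is zero, and with the outward-normal convention the three components of $K\cap\partial([0,1]\times M)$ contribute $+\#Z(\tilde s_1)$, $-\#Z(\tilde s_0)$ and $-\#Z(H)$ respectively; the resulting identity is exactly the statement of the lemma. The main obstacle is the relative transversality step: one must extend $G$ through the corners of $[0,1]\times M$ while preserving the transverse behavior already present on $K$. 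This is essentially a parametric version of the argument in Lemmas~\ref{lm:tscm} and~\ref{lem:trickey_homotopy}, and once it is in place the only remaining care is orientation bookkeeping, i.e.\ checking that the outward-normal sign conventions used here match those implicit in the definitions of $\#Z(H)$ and of $\int_M e(E;s_i)$ recalled in the appendix.
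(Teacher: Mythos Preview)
Your proposal is correct and is essentially identical to the paper's own proof: the paper also fixes transverse extensions $\tilde s_0,\tilde s_1$, glues them with $H$ to a multisection $r$ on $\partial([0,1]\times M)$ (your $G$ on $K$), extends to a transverse $\tilde r$ on the cylinder, and reads off the identity from $\#\partial Z(\tilde r)=0$ with the same signed decomposition of the boundary. The only difference is that the paper states the extension step in one line, whereas you spell out the relative transversality mechanism and flag the orientation bookkeeping explicitly.
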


\begin{proof}
For $i = 0,1,$ let $\tilde s_i \in C_m^\infty(M, E)$ be an extension of $s_i$ that is transverse to zero.
Recall that
\[
\partial([0,1]\times M ) = \{1\} \times M - \{0\}\times M - [0,1]\times\partial M.
\]
So, the multisections $\tilde s_0,\tilde s_1,H,$ fit together to give a multisection
\[
r \in C_m^\infty(\partial ([0,1]\times M ) , p_1^*E)
\]
that is transverse to zero. Let $\tilde r \in C_m^\infty([0,1]\times M,p_1^*E)$ be an extension of $r$ that is transverse to zero. Then $Z(\tilde r)$ is a weighted branched $1$-manifold with boundary. The weighted cardinality of the boundary points of such a weighted branched manifold is zero. Thus
\begin{align*}
0 = \# \partial Z(\tilde r) &= \# Z(r) = \# Z(\tilde s_1) - \# Z(\tilde s_0) - \# Z(H) \\
& = \int_M e(E;s_1) - \int_M e(E;s_0) - \# Z(H).
\end{align*}
\end{proof}

\begin{proof}[Proof of Theorem \ref{thm:intersection_numbers_well_defined}]
By Lemma~\ref{lm:tscm}\ref{it:basic} and Remark~\ref{rmk:scc} there exists a nowhere vanishing canonical multisection $\mathbf{s} \in \CS.$ It remains to show that $e(E,\mathbf{s})$ is independent of the choice of $\mathbf{s}.$  By Lemma~\ref{lm:H} it suffices to construct a nowhere vanishing homotopy between any two canonical multisections $\mathbf{s},\mathbf{r},$ that each vanish nowhere. But the existence of such a homotopy is a direct consequence of Lemma~\ref{lem:trickey_homotopy}, with the bundle $E_1 = 0,$ and the collection of boundary strata $C$ being the entire boundary $\partial\oCM_{0,k,l}.$
\end{proof}

We now consider slightly more general bundles, which we shall need later on.
\begin{lemma}\label{lem:main_lem}
Let $1\leq h\leq l$.
Let
\[
E \to \oCM_{0,k,l+1}
\]
be given by $E = E_1 \oplus E_2 \oplus E_3$ where
\[
E_1 = \bigoplus_{i=1}^{l+1}{\CL}_i^{\oplus a_i}, \qquad E_2 = \bigoplus_{i=1}^{l}\left(\CL'_i\right)^{\oplus a'_i}, \qquad E_3 = \CO\left(D_h\right)^{\oplus\varepsilon},
\]
and
\[
\varepsilon \in \left\{0,1\right\} , \qquad \left(a_1+\ldots+a_{l+1}\right)+\left(a'_1+\ldots a'_l\right)+\varepsilon = \frac{k + 2l - 1}{2}.
\]
One can construct
\begin{align*}
&s_{ij}\in\CS_i,\qquad i \in [l+1],\quad j \in [a_i], \\
&s'_{ij}\in\CS'_i, \qquad i \in [l],\quad j \in [a_i'],
\end{align*}
such that
\[
\mathbf{s} = \bigoplus s_{ij} \oplus \bigoplus s_{ij}' \oplus t_h^{\oplus \varepsilon}
\]
does not vanish anywhere. In particular, the relative Euler class
$e(E;\mathbf{s})$
is defined. Moreover, any two choices of such $s_{ij} , s'_{ij},$ define the same relative Euler class.
Furthermore, the following statements are valid simultaneously:
\begin{enumerate}
\item\label{it:D_i}
Suppose $1 \leq i_0 \leq l$ and $1 \leq j_0 \leq a_{i_0}$. If $\varepsilon = 1,$ suppose that $i_0 \neq h.$
Then we may assume
\[
s_{i_0j_0} = s't_{i_0},\qquad s'\in\CS'_{i_0}.
\]
and $s_{i_0j}$ does not vanish anywhere on $\partial D_{i_0}$ for $j \neq j_0.$
\item\label{it:b}
Suppose $a_{l+1}>0$. Then we may assume $s_{(l+1)1}$ does not vanish anywhere on $\partial D_{i}$ for all $i.$
\item\label{it:D_i_2}
Suppose $\rk (E_1 \oplus E_3) = 1$. Then we may assume $\bigoplus_{i = 1}^{l} \bigoplus_{j = 1}^{a'_i} s'_{ij}$ does not vanish anywhere on $\partial \oCM_{0,k,l+1}.$
\end{enumerate}
\end{lemma}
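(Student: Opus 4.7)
The plan is to closely follow the proof of Theorem~\ref{thm:intersection_numbers_well_defined}, combining three now-standard ingredients: spanning families of multisections from Proposition~\ref{prop:single_section}, generic linear combinations yielding transversality via Theorem~\ref{thm: hirsch}, and Lemma~\ref{lem:trickey_homotopy} together with Lemma~\ref{lm:H} for independence of the relative Euler class from the choice of $\mathbf{s}$. The new features to accommodate are the rigid summand $t_h^{\oplus\varepsilon}$ in $E_3$ and the structural refinements (a), (b), (c).

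For the construction phase I will build finite families of candidate multisections slot by slot. For each $\CL_i$ slot of $E_1$ other than $(i_0,j_0)$, Proposition~\ref{prop:single_section}, case~\ref{it:single_section_a}, gives a spanning family in $\CS_i$; for each $\CL'_i$ slot of $E_2$, the analogous spanning family in $\CS'_i$ arises by pulling such sections back along $For_{l+1}$ as in Remark~\ref{rmk:L'}. For refinement (a), Proposition~\ref{prop:single_section}, case~\ref{it:single_section_b}, furnishes in the slot $(i_0,j_0)$ candidates of the form $s' t_{i_0}$ with $s' \in \CS'_{i_0}$, while the strengthened conclusion of the same case equips the remaining $\CL_{i_0}$ slots with multisections spanning $\CL_{i_0}$ over all of $\partial D_{i_0}$. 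Refinement (b) is handled by including in slot $(l+1,1)$ spanning families on $\partial D_i$ for every $i$, again via case~\ref{it:single_section_b}. For (c), the $\CS'_i$-families already produced suffice.

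Next, I would apply Theorem~\ref{thm: hirsch} to the joint parameter space of all the linear coefficients, imposing transversality to zero simultaneously on $\partial\oCM_{0,k,l+1}$, on each base moduli $\CM_{\CB\Gamma}$, and on the restrictions to $\partial D_h$ and (in case (a)) $\partial D_{i_0}$. The resulting intersection of residual subsets is non-empty; pick $\lambda$ there and set $\mathbf{s} = \bigoplus s_{ij}^\lambda \oplus \bigoplus (s'_{ij})^\lambda \oplus t_h^{\oplus\varepsilon}$. The decisive dimension estimate is Observation~\ref{obs:dim_of_base_mod}: on each $\CM_\Gamma$ the special canonical part of $\mathbf{s}$ is pulled back from a base of complex dimension at most $\rk_\C E - 1$, so a transverse pull-back is nowhere vanishing. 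On $\partial D_h$ (where $t_h$ vanishes) and on $\partial D_{i_0}$ in case (a) (where the prescribed slot $(i_0,j_0)$ vanishes), the appropriate sub-bundle still has complex rank strictly greater than the complex dimension of the stratum, which has dropped by one further unit. For property (c), the key extra input is that $\CS'_i$-sections factor through an additional $For_{l+1}$ pull-back; this drops the effective base dimension by one more complex unit, providing exactly the margin needed to force $\bigoplus s'_{ij}$ alone to vanish nowhere on $\partial\oCM_{0,k,l+1}$ when $\rk(E_1\oplus E_3) = 1$.

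Finally, independence of $e(E;\mathbf{s})$ will follow from a version of Lemma~\ref{lem:trickey_homotopy} in which the rigid summand is taken to be $E_3$; the hypothesis that the projection to this summand is transverse to zero is supplied by Lemma~\ref{lm:fmor}, case~\ref{it:tr}, and the proof of Lemma~\ref{lem:trickey_homotopy} adapts with only notational changes. One linearly interpolates between $\mathbf{s}$ and $\mathbf{r}$ and perturbs in the direction of spanning canonical multisections of $E_1 \oplus E_2$ to achieve transversality while preserving the $t_h^{\oplus\varepsilon}$-projection; non-vanishing of this homotopy on the boundary again rests on the Observation~\ref{obs:dim_of_base_mod} dimension count, and Lemma~\ref{lm:H} then yields equality of Euler classes. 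The main obstacle I anticipate is precisely the bookkeeping in this dimension count---verifying a strict dimension drop on each refined stratum, with particular care on $\partial D_h \cap \CM_\Gamma$ and $\partial D_{i_0} \cap \CM_\Gamma$ where $t_h$ or the $s't_{i_0}$-factor vanishes, and ensuring the additional $For_{l+1}$-factorization of $\CS'_i$-sections supplies the extra dimension reduction needed for property (c).
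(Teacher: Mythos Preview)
Your proposal is correct and follows the same strategy as the paper: spanning families from Proposition~\ref{prop:single_section}, generic linear combinations via Theorem~\ref{thm: hirsch}, and independence via an adaptation of Lemma~\ref{lem:trickey_homotopy}. The execution in the paper differs from yours in one streamlining step and one point of emphasis.

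The paper does not route the non-vanishing argument through base moduli and Observation~\ref{obs:dim_of_base_mod}. It applies Theorem~\ref{thm: hirsch} once with $N = \partial\oCM_{0,k,l+1}$ and observes directly that $\dim_\R N = k+2l-2 < k+2l-1 = \rk_\R E$, so transversality already forces non-vanishing. This sidesteps the stratum-by-stratum bookkeeping you flag as the main obstacle. Where the paper does spend effort is on the hypothesis of Theorem~\ref{thm: hirsch}, namely that $F^{ev}\pitchfork 0$: at points of $D_{i_0}$ all candidates in slot $(i_0,j_0)$ vanish, and at points of $D_h$ the fixed $t_h$ vanishes, so the parameter-direction derivatives alone do not span $E_p$. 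The paper fills the gap by a three-case analysis, invoking the derivative-surjectivity clause of Proposition~\ref{prop:single_section}\ref{it:single_section_b} and the transverse vanishing in Lemma~\ref{lm:fmor}\ref{it:tr} to supply the missing directions from derivatives tangent to $\partial\oCM_{0,k,l+1}$; the assumption $i_0\neq h$ when $\varepsilon=1$ guarantees $D_{i_0}\cap D_h=\emptyset$ so the cases do not overlap. Your sketch cites these ingredients but does not quite tie them to this verification.

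Two minor corrections. For refinement~\ref{it:b}, and for the slots $(i_0,j)$ with $j\neq j_0$ in refinement~\ref{it:D_i}, you want multisections that \emph{span} on $\partial D_i$; these come from case~\ref{it:single_section_a}, not case~\ref{it:single_section_b}. The paper then enforces the non-vanishing of $s_{i_0j}$ and $s_{(l+1)1}$ on $\partial D_i$ by observing that the base component $\CM_{v_i}$ is a single point, so the bad parameter set is a finite union of proper linear subspaces and its complement $\Lambda'$ is open and dense.
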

\begin{proof}
The proof is very similar to that of Lemma~\ref{lm:tscm} and Lemma~\ref{lem:trickey_homotopy}.

First, we prove cases~\ref{it:D_i} and~\ref{it:b}.
Using Proposition~\ref{prop:single_section} case~\ref{it:single_section_a}, choose
\[
w_{ijk}\in\CS_{i}, \qquad i\in [l+1],\quad j\in [a_i],\quad {k\in [m_{ij}]},\quad (i,j) \neq (i_0,j_0),
\]
such that for each $i,j,$ the multisections $w_{ijk}$ for $k \in [m_{ij}]$ span the fiber $\left(\CL_i\right)_p$ for all $p\in\partial\oCM_{0,k,l+1}$. Choose
\[
w'_k \in \CS'_{i_0}, \qquad w_{i_0j_0k} = w'_k t_{i_0} \in \CS_{i_0},\qquad k\in [m_{i_0j_0}],
\]
as in Proposition \ref{prop:single_section}, case \ref{it:single_section_b}, that span the fiber $\left(\CL_{i_0}\right)_p$ for all $p$ not in $D_{i_0},$ and such that the images of their derivatives at every $q\in D_{i_0}$ span $\left(\CL_{i_0}\right)_q.$
Using Proposition~\ref{prop:single_section} case~\ref{it:single_section_a} over $\oCM_{0,k,l}$ and pulling back by $For_{l+1},$ choose
\[
w'_{ijk}\in\CS'_i, \qquad i\in [l+1],\quad j\in [a'_i],\quad {k\in [m'_{ij}]},
\]
such that for each $i,j,$ the multisections $w'_{ijk}$ for $k \in [m'_{ij}]$ span the fiber $\left(\CL'_i\right)_p$ for all $p\in\partial\oCM_{0,k,l+1}$.

Write
\[
J=\left\{ijk\right\}_{i\in [l+1],j\in [a_i],k\in [m_{ij}]}, \qquad J'=\left\{ijk\right\}_{i\in [l],j\in [a'_i],k\in [m'_{ij}]}.
\]
Apply Theorem~\ref{thm: hirsch} with
\[
N = \partial\oCM_{0,k,l+1}, \qquad \E = E, \qquad V = \R^{J \cup J'},
\]
and $F$ given by
\[
F_\lambda = \sum_{ijk \in J} \lambda_{ijk} w_{ijk}+\sum_{ijk \in J'}\lambda'_{ijk} w'_{ijk} + \delta_{\varepsilon,1} t_h,
\]
for
\[
\lambda = (\{\lambda_{ijk}\}_{ijk \in J},\{\lambda'_{ijk}\}_{ijk \in J'}) \in V.
\]
We claim that $F^{ev} \pitchfork 0.$ Indeed, if $p \in \oCM_{0,k,l+1}\setminus (D_{i_0} \cup D_h)$ then the derivatives of $F^{ev}$ in the directions tangent to $V$ span the fiber $E_p$. If $p \in D_{i_0}$, then $p \notin D_h$. So, the derivatives of $F^{ev}$ in the directions tangent to $\partial\oCM_{0,k,l+1}$ span the fiber of the $j_0^{th}$ copy of $L_{i_0}$ at $p,$ while the derivatives in the directions tangent to $V$ span the complementary summand of the fiber $E_p.$ If $p \in D_h$, then $p \notin D_{i_0}$. So, the derivatives of $F^{ev}$ in the directions tangent to $\partial\oCM_{0,k,l+1}$ span the fiber $\CO\left(D_h\right)^{\oplus\varepsilon}_p$, while the derivatives in the directions tangent to $V$ span the complementary summand of the fiber $E_p.$

Theorem~\ref{thm: hirsch} implies there exists a residual subset $\Lambda \subset V$ such that if $\lambda \in \Lambda$ then $F_\lambda\pitchfork 0.$ By dimension counting, transversality is equivalent to non-vanishing.

Write $v_i$ for the closed abstract vertex with
\[
I(v_i) = \{i,l+1,[l]\setminus\{i\}\}.
\]
So, $v_i = \bv{i}{\Gamma}$ for all $\Gamma \in \CGDi.$ Let
\[
\Lambda' = \left\{\lambda \in V\left|
\begin{array}{ll}
\sum_k \lambda_{i_0jk}w^{v_{i_0}}_{i_0jk}\neq 0, & j\neq j_0 \\
\sum_k \lambda_{(l+1)jk} w^{v_{i}}_{{l+1}jk} \neq 0, & 1 \leq i \leq l \end{array}
\right.\right\}.
\]
Since $\CM_{v_i}$ is a point, $\Lambda'$ is the complement of a finite union of linear subspaces $U_{j},W_{i} \subseteq V,$ one for each inequality. By choice of the sections $w_{ijk},$ for each $j \geq 2,$ there is a $k \in [m_{i_0j}]$ such that $w_{i_0jk}^{v_{i_0}} \neq 0.$ So $U_j$ is a proper subspace for $j~\geq~2.$ Similarly, for each $i \in [l],$ there is a $k \in [m_{(l+1)1}]$ such that $w_{(l+1)jk}^{v_i} \neq 0.$ So $W_i$ is a proper subspace for $i \in [l].$ It follows that $\Lambda'$ is open and dense in $V.$ Thus we may choose $\lambda \in \Lambda \cap \Lambda'$ and set
\[
s_{ij} = \sum_k \lambda_{ijk} w_{ijk}, \qquad s_{ij}' = \sum_k \lambda'_{ijk} w'_{ijk}, \qquad s' = \sum_k \lambda_{i_0j_0k} w'_k.
\]
This proves cases~\ref{it:D_i} and~\ref{it:b}.

Case~\ref{it:D_i_2} follows from a similar argument and the fact that the multisections $s_{ij}'$ are pulled back from $\partial\oCM_{0,k,l}$, which has complex dimension one less. So, transversality implies non-vanishing even with one less section.

Using Remark~\ref{rmk:L'}, the proof of the existence of non-vanishing homotopies in the present case is analogous to the proof of Lemma~\ref{lem:trickey_homotopy}.
\end{proof}

\section{Geometric recursions}\label{sec:geo_rec}
\subsection{Proof of string equation}
Recall Notations~\ref{nn:D_i1} and~\ref{nn:OD}.
\begin{obs}\label{obs:1}
$D_i \cap D_j =\emptyset$ for $i\neq j$. An immediate consequence is the following. Let
$E \to \oCM_{0,k,l+1}$
be a bundle containing $\mathcal{O}\left(D_i\right)\oplus\mathcal{O}\left(D_j\right)$ as a summand. Let $\mathbf{s} \in C_m^\infty(\partial\oCM_{0,k,l+1},E)$ be a nowhere vanishing multisection that upon projection to $\mathcal{O}\left(D_i\right)\oplus\mathcal{O}\left(D_j\right)$ agrees with $t_i \oplus t_j.$ Then
\[
e\left(E;\mathbf{s}\right) = 0.
\]
\end{obs}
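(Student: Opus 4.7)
The plan is to establish the disjointness $D_i \cap D_j = \emptyset$ directly from the definition of $D_i$, then deduce the Euler class vanishing by exhibiting a nowhere-vanishing transverse extension of $\mathbf{s}$.

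For the disjointness claim, suppose $[\Sigma] \in D_i \cap D_j$ with $i \neq j$. By definition of $D_i$, there is a sphere component $S \subset \Sigma$ containing precisely the marked points $z_i$ and $z_{l+1}$ and a single interior node. By definition of $D_j$, there is a sphere component $S' \subset \Sigma$ containing precisely $z_j$ and $z_{l+1}$ and a single interior node. Since each marked point of a stable disk belongs to a unique component, and $z_{l+1}$ lies on both $S$ and $S'$, we must have $S = S'$. But then $S$ contains $z_j$ as well as $z_i$ and $z_{l+1}$, contradicting the assumption that $S$ carries only two marked points. Hence no such $[\Sigma]$ exists.

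For the Euler class statement, decompose $E = E_0 \oplus \CO(D_i) \oplus \CO(D_j)$, where $E_0$ is the complementary summand, and correspondingly write $\mathbf{s} = \mathbf{s}_0 \oplus t_i \oplus t_j$ on $\partial \oCM_{0,k,l+1}$. By Lemma~\ref{lm:fmor}, $\tilde t_i$ and $\tilde t_j$ are globally defined sections of $\CO(D_i)$ and $\CO(D_j)$ over $\oCM_{0,k,l+1}$ whose zero loci are exactly $D_i$ and $D_j$ respectively. Extend $\mathbf{s}_0$ to a multisection $\tilde{\mathbf{s}}_0 \in C^\infty_m(\oCM_{0,k,l+1}, E_0)$ via the standard extension procedure for multisections (partition of unity on coordinate charts). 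Then
\[
\tilde{\mathbf{s}} := \tilde{\mathbf{s}}_0 \oplus \tilde t_i \oplus \tilde t_j
\]
is an extension of $\mathbf{s}$ to the whole moduli space. A point where $\tilde{\mathbf{s}}$ vanishes must in particular satisfy $\tilde t_i = 0$ and $\tilde t_j = 0$, forcing it to lie in $D_i \cap D_j = \emptyset$. Hence $\tilde{\mathbf{s}}$ is nowhere vanishing.

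To extract the Euler class, apply Theorem~\ref{thm: hirsch} to perturb $\tilde{\mathbf{s}}_0$ slightly inside a compact neighborhood of $\partial \oCM_{0,k,l+1}$ to obtain a transverse extension; by compactness of $\oCM_{0,k,l+1}$ a small enough perturbation remains nowhere vanishing, so the resulting transverse extension of $\mathbf{s}$ has empty zero locus. Hence $e(E;\mathbf{s})$ is represented by the zero cycle, and the claim follows. The only mildly delicate point is that the perturbation must not disturb the boundary values $\mathbf{s}$, which is routine since transversality to the zero section is an open condition and one perturbs only in a collar of the boundary; this will be the main thing to be careful about, but no genuine obstacle is anticipated.
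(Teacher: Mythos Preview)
Your argument is correct and matches the paper's intended reasoning (the paper states this as an Observation with no proof beyond calling it ``an immediate consequence''). One remark: your final paragraph is unnecessary and slightly muddled---once you have shown that $\tilde{\mathbf{s}} = \tilde{\mathbf{s}}_0 \oplus \tilde t_i \oplus \tilde t_j$ is nowhere vanishing on all of $\oCM_{0,k,l+1}$, it is \emph{vacuously} transverse to the zero section, so $Z(\tilde{\mathbf{s}}) = \emptyset$ and $e(E;\mathbf{s}) = 0$ with no perturbation needed; the discussion of perturbing in a collar while preserving boundary values can simply be deleted.
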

\begin{obs}\label{obs:2}
Let $s_i$ be a special canonical multisection of $\CL_i\to\partial\oCM_{0,k,l+1}$ that does not vanish on $\partial D_i.$
Let $E \to \oCM_{0,k,l+1}$ be a bundle that contains $\mathcal{O}\left(D_i\right)\oplus\CL_i$ as a summand. Let $\mathbf{s} \in C_m^\infty\left(\partial\oCM_{0,k,l+1},E\right)$ be a nowhere vanishing multisection that upon projection to $\mathcal{O}\left(D_i\right)\oplus\CL_i,$ agrees with $s_i\oplus t_i.$
Then
\[
e\left(E;\mathbf{s}\right) = 0.
\]
The same holds if we replace $s_i, \CL_i,$ everywhere with $s_{l+1},\CL_{l+1},$ respectively.
\end{obs}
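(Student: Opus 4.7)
The plan is to prove $e(E;\mathbf{s})=0$ by constructing a nowhere vanishing extension $\tilde{\mathbf{s}}$ of $\mathbf{s}$ to all of $\oCM_{0,k,l+1}$. Since the relative Euler class is represented by the weighted vanishing locus of any transverse extension (see Appendix~\ref{app:euler} and the argument of Lemma~\ref{lm:H}), an empty vanishing locus will force $e(E;\mathbf{s})=0$. This is modeled on the implicit argument for Observation~\ref{obs:1}: there one uses that the zero loci of the canonical extensions of $t_i$ and $t_j$ are disjoint, whereas here they will coincide and must be defeated using information about $s_i$.

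For the $\mathcal{O}(D_i)$ component of $\tilde{\mathbf{s}}$ I will take the canonical extension $\tilde t_i$ of $t_i$ from Notation~\ref{nn:OD}, which by Lemma~\ref{lm:fmor}\ref{it:tr} vanishes transversely exactly on $D_i$. Consequently any extension $\tilde{\mathbf{s}}$ is automatically nowhere vanishing off $D_i$, and the task reduces to producing an extension $\tilde s_i$ of $s_i$ whose restriction to $D_i$ is nowhere vanishing; the remaining components of $\mathbf{s}$ can be extended arbitrarily, since their contribution to $\tilde{\mathbf{s}}^{-1}(0)$ is irrelevant once the $\CO(D_i)\oplus\CL_i$ projection does not vanish.

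The heart of the matter is that $\CL_i|_{D_i}$ carries a canonical trivialization. Indeed, for each $[\Sigma]\in D_i$ the bubble component of $\Sigma$ is a sphere with three distinguished marked points $z_i$, $z_{l+1}$, and the node, which determine a unique biholomorphism with $(\mathbb{CP}^1, 0, 1, \infty)$ and hence a canonical identification $T^*_{z_i}\Sigma_{\mathrm{bubble}}\cong T^*_0\mathbb{CP}^1\cong\C$. On the other hand, for each $\Gamma\in\CGDi$, the abstract vertex $\bv{i}{\Gamma}$ represents a three-pointed sphere, so $\CM_{\bv{i}{\Gamma}}$ is a single point and $s_i^{\bv{i}{\Gamma}}$ is a single (multi-valued) nonzero constant in the fiber of $\CL_i$ over that point. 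The special canonical property, together with the canonical trivialization above, identifies $s_i^\Gamma$ with a nowhere vanishing locally constant $\C^*$-valued multisection on $\CM_\Gamma\subset\partial D_i$. Continuity of $s_i$ across adjacent strata then forces these constants to agree, yielding a nowhere vanishing locally constant multisection of $\CL_i|_{D_i}$ which matches $s_i$ on $\partial D_i$. A standard smooth extension of this multisection, combined with $s_i$ on $\partial\oCM_{0,k,l+1}$, yields the desired $\tilde s_i$.

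The principal obstacle I anticipate is the verification that the canonical trivialization of $\CL_i|_{D_i}$ really agrees with the tautological trivializations of $\CL_i$ over the point-moduli $\CM_{\bv{i}{\Gamma}}$ for all $\Gamma\in\CGDi$, and that the resulting locally constant description of $s_i$ propagates coherently across stratum boundaries within $\partial D_i$. Once this compatibility is in place the extension argument goes through as described. The case in which $s_i,\CL_i$ are replaced by $s_{l+1},\CL_{l+1}$ is entirely symmetric, obtained by interchanging the roles of $z_i$ and $z_{l+1}$ in the three-pointed parametrization of the bubble.
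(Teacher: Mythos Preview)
Your proposal is correct and follows the same strategy as the paper: extend $s_i$ to be nonvanishing on $D_i$ using the special canonical structure, so that $Z(\tilde t_i)\cap Z(\tilde s_i)=\emptyset$. You can streamline by observing that $\bv{i}{\Gamma}$ is the \emph{same} abstract closed vertex $v_i$ (with $I(v_i)=\{i,\,l+1,\,[l]\setminus\{i\}\}$) for every $\Gamma\in\CGDi$ and for $\Gamma_i$ itself, so the definition of special canonical already furnishes a single $s^{v_i}$ on the point $\CM_{v_i}$ whose pullback $\Phi_{\Gamma_i,i}^*s^{v_i}$ extends $s_i$ nonvanishingly over all of $D_i$; your anticipated compatibility and continuity checks are then automatic from Observation~\ref{obs:identification_bundle_over_base}.
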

\begin{proof}
Let $\Gamma_i \in \partial \Gamma_{0,k,l+1}$ be the stable graph such that $D_i = \oCM_{\Gamma_i}$, and let $v_i$ be the abstract closed vertex with \[
I(v_i) = \{i,l+1,[l]\setminus\{i\}\}.
\]
So $v_i = \bv{i}{\Gamma_i} = \bv{i}{\Gamma}$ for $\Gamma \in \partial \Gamma_i.$
By the definition of a special canonical multisection, there exists a multisection $s^{v_i}$ of $\CL_i \to \CM_{v_i}$ such that for each $\Gamma \in \CG_{D_i}$ we have $s_i^\Gamma = \Phi_{\Gamma,i}^*s^{v_i}.$ So, we may extend $s_i$ to a multisection $\tilde s_i \in C_m^\infty\left(\oCM_{0,k,l},\CL_i\right)$ such that $\tilde s_i|_{\oCM_{\Gamma_i}} = \Phi_{{\Gamma_i},i}^*s^v$. Since $s_i$ does not vanish anywhere on $\partial D_i,$ it follows that $s_i^{v_i}$ does not vanish and thus $\tilde s_i$ does not vanish anywhere on $D_i.$ Therefore, $Z(\tilde s_i) \cap Z(\tilde t_i) = \emptyset,$ which implies the Euler class of $E$ vanishes. The same argument works for the case of $s_{l+1},\CL_{l+1}.$
\end{proof}
\begin{lm}\label{lm:prod}
Let $E \to X$ be a vector bundle over a manifold with corners with $\rk E = \dim X.$ Suppose that $E = L \oplus E',$ where $L\to X$ is a line bundle, and $L = L_1\otimes L_2$ for line bundles $L_1,L_2 \to X.$ Let $\mathbf{s} \in C^\infty_m(\partial X,E)$ vanish nowhere and satisfy $\mathbf{s} = s \oplus \mathbf{s}'$, where $s \in C^\infty_m(\partial X,L)$, and $s = s_1 \otimes s_2$ for $s_1 \in C^\infty(\partial X,L_1)$ and $s_2 \in C^\infty_m(\partial X,L_2).$ Then
\[
e(E;\mathbf{s}) = e(L_1 \oplus E';s_1\oplus \mathbf{s}') + e(L_2 \oplus E';s_2 \oplus \mathbf{s}').
\]
Since the multisection $\mathbf{s}$ vanishes nowhere, the multisections $s_i\oplus \mathbf{s}'$ for $i = 1,2$ also vanish nowhere. Thus the relative Euler classes on the right-hand side are well-defined.
\end{lm}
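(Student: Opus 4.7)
The plan is to construct a single transverse extension of $\mathbf{s}$ to $X$ whose weighted vanishing set decomposes as the disjoint union of two $0$-cycles Poincar\'e-representing the two relative Euler classes on the right-hand side. Since the relative Euler class is defined as the Poincar\'e dual of the weighted vanishing set of any transverse extension, and Poincar\'e duality takes a disjoint union of cycles to the sum of their duals, the desired identity follows at once.

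First I would extend $s_1,s_2,\mathbf{s}'$ in any smooth way to $X,$ then apply the standard relative multisection transversality (in the style of Theorem~\ref{thm: hirsch}, with boundary values held fixed) to perturb the extensions to
\[
\tilde s_1 \in C^\infty(X,L_1), \qquad \tilde s_2 \in C_m^\infty(X,L_2), \qquad \tilde{\mathbf{s}}' \in C_m^\infty(X,E'),
\]
that are individually transverse to zero and mutually in general position. Since $L_1$ and $L_2$ are complex line bundles of real rank $2$ and $\rk E' = \rk E - 2 = \dim X - 2,$ the expected dimension of $Z(\tilde s_1)\cap Z(\tilde s_2)\cap Z(\tilde{\mathbf{s}}')$ is $-2,$ so general position forces this triple intersection to be empty.

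Now consider the extension
\[
\tilde s := (\tilde s_1 \otimes \tilde s_2) \oplus \tilde{\mathbf{s}}' \in C_m^\infty(X,E)
\]
of $\mathbf{s}.$ Its vanishing locus is
\[
Z(\tilde s) = \bigl(Z(\tilde s_1)\cap Z(\tilde{\mathbf{s}}')\bigr) \sqcup \bigl(Z(\tilde s_2)\cap Z(\tilde{\mathbf{s}}')\bigr),
\]
disjoint by the previous step. At $p\in Z(\tilde s_1)\cap Z(\tilde{\mathbf{s}}')$ one has $\tilde s_2(p)\neq 0,$ and the Leibniz rule gives $d(\tilde s_1 \otimes \tilde s_2)_p = d\tilde s_1(p)\otimes \tilde s_2(p).$ Because tensoring with the nonzero element $\tilde s_2(p)\in L_2|_p$ is a complex-linear, hence orientation-preserving, isomorphism $L_1|_p \to L|_p,$ the section $\tilde s$ is transverse to zero at $p$ and its local multiplicity matches that of $\tilde s_1 \oplus \tilde{\mathbf{s}}'.$ The symmetric argument handles $p\in Z(\tilde s_2)\cap Z(\tilde{\mathbf{s}}').$ Consequently $Z(\tilde s),$ as a weighted $0$-cycle, coincides with the disjoint union of $Z(\tilde s_1 \oplus \tilde{\mathbf{s}}')$ and $Z(\tilde s_2 \oplus \tilde{\mathbf{s}}'),$ and passing to Poincar\'e duals yields the claim.

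The only nontrivial point is verifying that $\tilde s$ remains transverse to zero along $Z(\tilde s_1) \cup Z(\tilde s_2)$ despite the tensor product being fiberwise singular at $0;$ disjointness of the three zero loci combined with the complex-linearity of fiberwise multiplication ensures this works without spurious signs or additional intersection points. The multisection bookkeeping parallels the proofs of Lemma~\ref{lm:tscm} and Lemma~\ref{lem:trickey_homotopy} and is routine.
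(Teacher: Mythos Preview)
Your proof is correct and follows essentially the same approach as the paper: extend $s_1,s_2,\mathbf{s}'$ so that $\tilde s_1\oplus\tilde s_2\oplus\tilde{\mathbf{s}}'$ is transverse (hence nowhere vanishing by rank), deduce that $Z(\tilde s_1\oplus\tilde{\mathbf{s}}')$ and $Z(\tilde s_2\oplus\tilde{\mathbf{s}}')$ are disjoint, and then observe via the Leibniz rule that $(\tilde s_1\otimes\tilde s_2)\oplus\tilde{\mathbf{s}}'$ is a transverse extension of $\mathbf{s}$ whose zero set is exactly this disjoint union. The paper's Remark~\ref{rmk:prb} makes the same point you flag at the end, namely that $\tilde s_1\otimes\tilde s_2$ itself is not transverse where both factors vanish, but such points miss $Z(\tilde{\mathbf{s}}')$.
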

\begin{proof}
Let $\tilde s_1,\tilde s_2$ and $\tilde \s',$ be extensions to $X$ of $s_1,s_2$ and $\s'$ respectively, such that
\[
\tilde s_i \oplus \s' \pitchfork 0, \quad i = 1,2, \qquad \tilde s_1 \oplus \tilde s_2 \oplus \tilde \s' \pitchfork 0.
\]
By assumption $\rk L_1 \oplus L_2 \oplus E' > \dim X,$ so $\tilde s_1 \oplus \tilde s_2 \oplus \tilde \s'$ vanishes nowhere. Therefore,
\[
Z(\tilde s_1 \oplus \tilde\s') \cap Z(\tilde s_2 \oplus \tilde \s') = \emptyset.
\]
Setting $\tilde s = \tilde s_1 \tilde s_2,$ it follows that $\tilde s \oplus \tilde \s'$ is transverse to zero. Thus
\[
Z(\tilde s \oplus \tilde \s') = Z(\tilde s_1 \oplus \tilde \s') \cup Z(\tilde s_2 \oplus \tilde \s'),
\]
which implies the claim.
\end{proof}
\begin{rmk}\label{rmk:prb}
In the proof of the preceding Lemma, we cannot make $\tilde s$ by itself transverse to zero at any point where both $\tilde s_1$ and $\tilde s_2$ vanish. Such points are unavoidable in general, but generically they do not intersect $Z(\tilde \s').$
\end{rmk}

In the following, given pairs
\[
\CE_i = (E_i,\mathcal{Y}_i), \qquad i = 1,2,
\]
of vector bundles $E_i \to X$ and affine subspaces $\mathcal{Y}_i \subset C_m^\infty(E_i|_{\partial X}),$ we write
\[
\CE_1 \oplus \CE_2 = (E_1 \oplus E_2, \mathcal{Y}_1 \oplus \mathcal{Y}_2).
\]
For
\begin{gather*}
\ba = (a_1,\ldots,a_{l+1}) \in \Z_{\geq 0}^{l+1}, \\
\bb = (b_1,\ldots,b_{l+1})\in \Z_{\geq 0}^{l+1}, \qquad \bc = (c_1,\ldots,c_{l},0) \in \Z_{\geq 0}^{l+1},\\
\bb + \bc = \ba,
\end{gather*}
write
\[
E_{\bb,\bc} = \bigoplus_{i = 1}^{l+1} \CL_i^{\oplus b_i} \oplus \bigoplus_{i = 1}^l (\CL_i')^{\oplus c_i} \to \oCM_{0,k,l+1}.
\]
Let
\[
\CS_{\bb,\bc} = \bigoplus_{i= 1}^{l+1} \CS_i^{\oplus b_i} \oplus \bigoplus_{i = 1}^l (\CS'_i)^{\oplus c_i} \subset C_m^\infty\left(E_{\bb,\bc}|_{\partial\oCM_{0,k,l+1}}\right).
\]
and
\[
\CE_{\bb,\bc} = (E_{\bb,\bc}, \CS_{\bb,\bc}).
\]
Let $\be_i \in \Z^{l+1}_{\geq 0}$ be the vector with $1$ for its $i^{th}$ coordinate and $0$ for the others. Let $\bze \in \Z^{l+1}$ denote the zero vector and abbreviate
\[
E_\ba = E_{\ba,\bze}, \qquad \CS_\ba = \CS_{\ba,\bze}, \qquad \CE_{\ba} = \CE_{\ba,\bze}.
\]
Let
\[
\hat \ba = (a_1,\ldots,a_l) \in \Z_{\geq 0}^l.
\]
Thus $\CE_{\hat \ba}$ is a vector bundle over $\oCM_{0,k,l}.$ Finally, let
\[
\CODt{i} = (\CO(D_i),t_i).
\]

Write $|\ba| = \sum_i a_i.$ For $|\ba|  = k + 2l - 1,$ Lemma~\ref{lem:main_lem} shows that there exists $\s \in \CS_{\bb,\bc}$ that vanishes nowhere, so the relative Euler class $e(E_{\bb,\bc};\s)$ is defined. Furthermore, the same lemma shows that $e(E_{\bb,\bc};\s)$ is independent of the choice of such $\s.$ So we define
\[
e(\CE_{\bb,\bc}) = e(E_{\bb,\bc};\s).
\]
Similarly, for $|\ba| = k + 2l- 3,$ Lemma~\ref{lem:main_lem} allows us to define
\[
e(\CE_{\bb,\bc}\oplus \CODt{i}) = e(E_{\bb,\bc}\oplus \CO(D_i);\s \oplus t_i),
\]
for some $\s \in \CS_{\bb,\bc}$ such that $\s \oplus t_i$ vanishes nowhere.

\begin{proof}[Proof of Theorem \ref{thm:string_dilaton}, string equation]
We start with the open string equation.
Consider the intersection number
\[
\left\langle \tau_0\prod_{i = 1}^l \tau_{a_i}\sigma^k\right\rangle_{0}^o = 2^{-\frac{k-1}{2}}\int_{\oCM_{0,k,l+1}}e\left(\CE_\ba\right),
\]
where $\ba = (a_1,\ldots,a_l,0),$ and $|a| = k + 2l-3.$

Let $\bb,\bc \in \Z_{\geq 0}^{l+1}$ satisfy $\bb + \bc = \ba.$
For $q \in [l]$ such that $b_q \geq 1,$ Lemma~\ref{lm:prod} and case~\ref{it:D_i} of Lemma~\ref{lem:main_lem} with $i_0 = q$ and $j_0$ arbitrary, imply that
\[
e(\CE_{\bb,\bc}) = e(\CE_{\bb-\be_q,\bc + \be_q}) + e(\CE_{\bb-\be_q,\bc} \oplus \CODt{q}).
\]
For $b_q \geq 2,$ Observation~\ref{obs:2} implies the second summand vanishes. Similarly, Lemma~\ref{lem:main_lem}\ref{it:D_i}, Lemma~\ref{lm:prod} and Observation~\ref{obs:1}, imply that for $q\neq r \in [l],$
\[
e(\CE_{\bb,\bc} \oplus \CODt{q}) =  e(\CE_{\bb-\be_r,\bc + \be_r} \oplus \CODt{q}).
\]
By induction,
\begin{equation}\label{eq:eEa}
e(\CE_\ba) = e(\CE_{\bze,\ba}) + \sum_{\substack{q \in [l],\\ a_q \geq 1}} e(\CE_{\bze,\ba-\be_q}\oplus \CODt{q}).
\end{equation}

By definition,
\[
e(\CE_{\bze,\ba}) = PD[Z(\tilde \s)],
\]
where $\tilde \s$ is any transverse extension of a nowhere vanishing section $\s \in \CS_{\bze,\ba}.$ By definition of $\CS_{\bze,\ba},$ there exists $\hat \s \in \CS_{\hat \ba}$ such that $\s = For_{l+1}^*\hat \s.$ Since $\rk E_{\hat \ba} > \dim \oCM_{0,k,l},$ we can choose a nowhere vanishing extension $\bar \s$ of $\hat \s$ by transversality. Taking $\tilde \s = For_{l+1}^*\bar \s,$ we obtain
\begin{equation}\label{eq:0a}
e(\CE_{\bze,\ba}) = 0.
\end{equation}

Similarly,
\[
e(\CE_{\bze,\ba-\be_q}\oplus \CODt{q}) = PD [Z(\tilde t_i) \cap Z(\tilde \s)],
\]
where $\tilde \s$ is any transverse extension of a nowhere vanishing section $\s \in \CS_{\bze,\ba-\be_q}.$ Such $\s$ exists by Lemma~\ref{lem:main_lem}\ref{it:D_i_2}. Let $\hat \s \in \CS_{\hat \ba  - \hat \be_q}$ such that $\s = For_{l+1}^*\hat \s.$ Denote by $\bar \s$ a transversal extension of $\hat \s$ and choose $\tilde \s = For_{l+1}^*\bar \s.$ Using Lemma~\ref{lm:fmor}\ref{it:m1}, we obtain
\begin{multline}\label{eq:aq}
\int_{\oCM_{0,k,l+1}}e(\CE_{\bze,\ba-\be_q}\oplus \CODt{q}) = \# Z(\tilde t_i) \cap Z(\tilde \s) = \\
 = \# Z(\hat \s)
 = \int_{\oCM_{0,k,l}} e\left(\CE_{\hat\ba-\hat \be_q}\right) = 2^{\frac{k-1}{2}}\left\langle \tau_{a_q-1} \prod_{i\neq q} \tau_{a_i}\sigma^k\right\rangle_{0}^o.
\end{multline}
Equations~\eqref{eq:eEa},~\eqref{eq:0a},~\eqref{eq:aq} together imply the open string equation.
\end{proof}

\subsection{Proof of dilaton equation}
We continue with the notations of the previous section. The following lemma is the key additional ingredient in the proof of the dilaton equation. In the case $k = 3$ and $l = 0,$ the proof of the following lemma calculates the integral $\langle \tau_1 \sigma^3 \rangle$ directly from the definition.
\begin{lm}\label{lm:cool}
Let $p \in \CM_{0,k,l}$ and $F_p = For_{l+1}^{-1}(p)$ equipped with its complex orientation. Let $s$ be a nowhere vanishing special canonical multisection of $\CL_i|_{\partial F_p}$ and let $\tilde s$ be an extension of $s$ to $F_p$ that is transverse to zero. Then
\[
\# Z(\tilde s) = k + l - 1.
\]
\end{lm}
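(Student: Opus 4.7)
My plan is to compute $\#Z(\tilde s)$ by leveraging the topological structure of $F_p$ together with the special canonical boundary conditions. Geometrically, $F_p$ is a closed topological disk: its interior parametrizes positions of $z_{l+1}$ in $\ior\Sigma$ together with the $l$ distinguished sphere-bubble configurations $D_{i'}\cap F_p$ for $i'\in[l]$, while $\partial F_p$ is a topological circle subdivided into $2k$ arcs --- $k$ ``attachment'' arcs where $z_{l+1}$ sits on a disk bubble attached at a variable point $y\in\partial\Sigma$ between consecutive $x_j$'s, and $k$ ``$x_j$-bubble'' arcs, each parametrizing the one-dimensional moduli $\CM_{0,2,1}$ where $z_{l+1}$ shares a bubble with some $x_j$.

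By Lemma~\ref{lm:H} applied to $F_p$, $\#Z(\tilde s)$ depends only on the homotopy class of $s$ among nowhere-vanishing special canonical multisections of $\CL_i|_{\partial F_p}$, so I may choose a convenient representative. For $i\in[l]$, I use the canonical factorization $\CL_i\simeq\CL_i'\otimes\CO(D_i)$ and take $s=s'\otimes t_i$ with $s'\in\CS_i'$ nowhere vanishing on $\partial F_p$; the existence of such an $s'$ is provided by Lemma~\ref{lem:main_lem}\ref{it:D_i_2}. Lemma~\ref{lm:prod} then gives
\[
\#Z(\tilde s) = \#Z(\tilde s'|_{F_p}) + \#Z(\tilde t_i|_{F_p}),
\]
while Lemma~\ref{lm:fmor} yields $\#Z(\tilde t_i|_{F_p})=1$. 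The case $i=l+1$, where this particular decomposition is unavailable, I would handle by a parallel direct analysis, trivializing $\CL_{l+1}|_{F_p}$ as the cotangent along the fibers of $For_{l+1}$.

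It remains to show $\#Z(\tilde s'|_{F_p}) = k+l-2$. Away from $D_i$ the bundle $\CL_i'=For_{l+1}^*\CL_i$ is canonically trivialized by the constant fiber $T^*_{z_i}p$, so $\#Z(\tilde s'|_{F_p})$ equals the degree of $s'|_{\partial F_p}\colon\partial F_p\to\C^*$ in this trivialization. The special canonical structure of $s'$ controls this winding: on each attachment arc $s'$ reduces to evaluation at a single point of $\CM_{0,k,l}$ and is thus locally constant, whereas on each $x_j$-bubble arc it pulls back from a nontrivial one-parameter family in the base and can contribute to the winding. Summing local winding contributions from the $2k$ arcs and the corner transitions, together with the $l-1$ interior sphere-bubble loci $D_{i'}\cap F_p$ with $i'\neq i$, yields the total count $k+l-2$.

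The main obstacle I anticipate is the precise bookkeeping of the winding contribution from each $x_j$-bubble arc, since it is exactly this transition that carries the monodromy responsible for the multivaluedness described in Remark~\ref{rm:msec}. I would pin down the contributions by a direct hand calculation in the base case $k=3,\,l=0$ (which recovers $\langle\tau_1\sigma^3\rangle=2$, as indicated in the remark preceding the lemma) and extend to the general case by induction on $k+l$, using compatibility of the forgetful maps with the canonical identifications from Observation~\ref{obs:identification_bundle_over_base}.
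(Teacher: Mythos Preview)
Your proposal misidentifies the bundle. Despite the notation in the statement, the lemma concerns $\CL_{l+1}$ (the cotangent line at the \emph{new} marked point), as is clear from the paper's proof and from its sole application in the dilaton equation. Your main argument treats the case $i\in[l]$ and relegates $i=l+1$ to an unspecified ``parallel direct analysis''; but $i=l+1$ is the entire content of the lemma.

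Moreover, your decomposition for $i\in[l]$ actually \emph{contradicts} the claimed formula. Any $s'\in\CS_i'$ is by definition $For_{l+1}^*\hat s$ for some (extension of a) special canonical multisection $\hat s$ on $\oCM_{0,k,l}$. Restricted to the fiber $F_p$ over the single point $p$, this is the constant $\hat s(p)$; in your trivialization of $\CL_i'|_{F_p}$ the winding of $s'|_{\partial F_p}$ is therefore $0$, not $k+l-2$. Your split would then give $\#Z(\tilde s)=0+1=1$. Your winding bookkeeping over the $x_j$-bubble arcs is built on a false premise: for $i\in[l]$ the base components $\bv{i}{\Gamma}$ along $\partial F_p$ are \emph{not} zero-dimensional, so the special canonical condition does not force any rotation there, and the independence-of-$s$ argument (which you invoke via Lemma~\ref{lm:H}) breaks down.

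The paper does use a product decomposition in the spirit of yours, but pointed the right way: it writes $\CL_{l+1}\simeq For_l^*\CL_{l+1}\otimes\CO(D_l)$ (Lemma~\ref{lm:fmor} with the roles of $l$ and $l+1$ swapped) to reduce the count over $F_p$ to a count over the fiber in $\oCM_{0,k,[l+1]\setminus\{l\}}$, gaining $+1$ at each step. This reduces to $l=0$, which is where the real work lies: $F_p$ is identified with the oriented real blowup $\widetilde\Sigma$ of the disk at the $k$ boundary marked points, an explicit section $\bar s=\tau\,d\nu/\nu$ is constructed with a phase factor $\tau$ engineered so that the boundary restriction is special canonical (this verification is the delicate part and occupies most of the proof), and finally $\#Z(\tilde s)$ is read off as a winding number $W(d\nu,\hat\gamma)+k=-\chi(\widetilde\Sigma)+k=k-1$. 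None of this base-case analysis is present in your outline.
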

\begin{proof}
The section $s$ is determined by its value at a single point. Indeed, on each stratum of $\partial F_p,$ the section $s$ is pulled back from a zero dimensional moduli space. In particular, $s$ can only vanish at a given point if it vanishes identically.

It follows that if $s'$ is another section satisfying the same hypotheses as $s,$ then $s$ and $s'$ can be connected by a non-vanishing homotopy. Indeed, the complement of zero in a single fiber of $\CL_{l+1}$ is connected. Thus $\# Z(\tilde s)$ is independent of the choice of $s$ by Lemma~\ref{lm:H}.

To begin, we reduce the calculation to the case $l = 0.$ Applying Lemma~\ref{lm:fmor}\ref{it:tr} with interior labels $l$ and $l+1$ switched, we obtain a canonical map of line bundles
\[
\tilde t: For_l^* \CL_{l+1} \to \CL_{l+1},
\]
which vanishes transversally exactly at $D_l.$ Write $t = \tilde t|_{\partial\oCM_{0,k,l+1}}.$

Let $\hat p  = For_l(p)$ and let $F_{\hat p} = For_{l+1}^{-1}(p) \subset \oCM_{0,k,[l+1]\setminus \{l\}}.$ Let $\hat s$ be a special canonical multisection of $\CL_{l+1} \to \CM_{0,k,[l+1]\setminus \{l\}}$ that vanishes nowhere on $\partial F_{\hat p}.$ By Observation~\ref{obs:D_i} with interior labels $l$ and $l+1$ switched, we conclude that $(For_l^* \hat s) t \in \CS_{l+1}$ and vanishes nowhere on $\partial F_p.$ Thus we may take $s = (For_l^*\hat s) t.$ Let $q \in F_p$ be the unique point in $D_l \cap F_p$ and let $\hat q = For_l(q) \in F_{\hat p}.$ Let $\bar s$ be a transverse extension of $\hat s$ that does not vanish at $\hat q$. Then we may take $\tilde s = (For_l^* \bar s) \tilde t$ and
\[
\# Z(\tilde s|_{F_p})  =  \# Z(For_l^*\bar s|_{F_{p}}) + \#Z(\tilde t|_{F_{p}}) = \# Z(\bar s|_{F_{\hat p}}) + 1.
\]
In the last equality, we have used the fact that $For_l$ maps $F_p$ diffeomorphically to $F_{\hat p}$ as well as Lemma~\ref{lm:fmor}\ref{it:m1}. By induction, appropriately relabelling interior marked points, it suffices to prove the lemma when $l = 0.$

The case $l = 0,\,k = 1,$ is exceptional. In this case we take $F_p = \CM_{0,1,1},$ which is a point, and the claim is trivial. Below, we assume $l = 0$ and $k \geq 3.$

Let $(\Sigma,\mathbf x,\emptyset)$, where $\mathbf x = \{x_1,\ldots,x_k\},$ be a marked surface representing $p\in \CM_{0,k,0}.$ Then $F_p$ is diffeomorphic to the oriented real blowup $\widetilde \Sigma$ of $\Sigma$ at the boundary marked points $x_1,\ldots,x_k.$ Indeed, denote by $\pi : \widetilde\Sigma \to \Sigma$ the blowup map. Denote by $\HH\subset \C$ the upper half-plane. Denote by $\HB_r(s)\subset \HH$ the half-disk of radius $r$ centered at $s \in \R =  \partial\HH.$ For $i = 1,\ldots,k,$ let $U_i\subset \Sigma$ be an open neighborhood of $x_i$ with a local coordinate
\[
\xi_i : U_i \overset{\sim}{\longrightarrow} \HB_2(0), \qquad \xi_i(x_i) = 0.
\]
Possibly shrinking the $U_i,$ we arrange that $U_i \cap U_j = \emptyset$ for $i \neq j.$
Write $\widetilde U_i = \pi^{-1}(U_i).$ Then we have coordinates
\[
r_i : \widetilde U_i \to [0,2), \qquad \theta_i : \widetilde U_i \to [0,\pi],
\]
such that $\xi_i\circ\pi(r_i,\theta_i) = r_i e^{\sqrt{-1}\theta_i}.$ For $z \in \mathring{\Sigma},$ denote by $\Sigma_z$ the marked surface $(\Sigma,\mathbf x,\{z_1\})$ where $z_1 = z.$ For $z \in \partial \Sigma\setminus \mathbf x,$ denote by $Q_z$ the  marked surface $(\Sigma,\{x_0,x_1,\ldots,x_k\},\emptyset),$ where $x_0 = z.$ For $i \in [k]$ denote by $P_i$ the marked surface $(\Sigma,(\mathbf x\setminus \{x_i\})\cup \{x_0\},\emptyset),$ where $x_0 = x_i.$ For $\theta = 0,\pi,$ define
\[
R_{i,\theta} = (\HH\cup \{\infty\},\{x_{-2},x_{-3},x_i\},\emptyset), \quad x_{-2} = \infty, \quad x_{-3} = \cos{\theta}, \quad x_i = 0.
\]
Furthermore, we define
\begin{gather*}
S = (\HH\cup \{\infty\},\{x_{-1}\},\{z_1\}), \qquad x_{-1} = \infty, \quad z_1 = \sqrt{-1}, \\
T_{i,\theta} = (\HH \cup \{\infty\},\{x_{-1},x_i\},\{z_1\}), \quad x_{-1} = \infty, \quad x_i = 0, \quad z_1 = e^{\sqrt{-1}\theta}.
\end{gather*}
For $z \in \partial \Sigma\setminus \mathbf x,$ let $\Sigma_z$ denote the stable surface $(\{Q_z,S\},\sim)$ where $x_0 \sim x_{-1}.$ For $i \in [k]$ and $\theta \in (0,\pi),$ let $\Sigma_{i,\theta}$ denote the stable surface $(\{P_i,T_{i,\theta}\},\sim)$ where $x_0 \sim x_{-1}.$ For $i \in [k]$ and $\theta = 0,\pi,$ let $\Sigma_{i,\theta}$ denote the stable surface $(\{P_i,R_{i,\theta},S\},\sim)$ where $x_0\sim x_{-2}$ and $x_{-1} \sim x_{-3}.$
We define a diffeomorphism $f : \widetilde \Sigma \to F_p$ by
\[
f(z) =
\begin{cases}
[\Sigma_{\pi(z)}], & z \in \pi^{-1}(\Sigma\setminus \mathbf x)\\
[\Sigma_{i,\theta_i(z)}] & z \in \pi^{-1}(x_i).
\end{cases}
\]
Write $g = f^{-1}.$ Then there is a tautological isomorphism
\[
g^* T^*\widetilde\Sigma|_{\mathring{\widetilde\Sigma}} \simeq \CL_i|_{\mathring{F_p}}.
\]

We aim to construct a section $\bar s$ of $T^*\widetilde\Sigma|_{\ior \widetilde\Sigma}$ such that $g^*\bar s$ extends to a continuous section $\check s$ of $\CL_i|_{F_p}$ with $\check s|_{\partial F_p}$ special canonical. Indeed, let $\hat \nu : \Sigma \to \R$ satisfy
\begin{gather*}
\hat \nu(z) > 0, \quad z \in \mathring{\Sigma}, \qquad \qquad   \hat\nu(z) = 0, \quad z \in \partial \Sigma,\\
d\hat\nu_z \neq 0, \quad z \in \partial\Sigma,
\end{gather*}
and set $\nu = \hat \nu \circ \pi : \widetilde \Sigma \to \R.$
Let $\{\eta_0,\ldots,\eta_k\}$ be a partition of unity on~$\widetilde \Sigma$ subordinate to the cover $\{\widetilde\Sigma\setminus \pi^{-1}(\mathbf x),\widetilde U_1,\ldots,\widetilde U_k\}.$ Let
\[
\tau = \eta_0 + \sum_i \eta_i e^{-2\sqrt{-1}\theta_i}, \qquad \qquad \bar s = \tau \frac{d\nu}{\nu}.
\]

For $w \in \partial F_p,$ we calculate $\lim_{w' \to w} g^*\bar s(w')$ as follows. Write $z = g(w)$ and $z' = g(w').$ Suppose first that $z \in \partial \widetilde \Sigma \setminus \pi^{-1}(\mathbf x).$ Let $U \subset \Sigma$ be an open neighborhood of $\pi(z)$ with a local coordinate $\xi : U \overset{\sim}{\rightarrow} \HB_2(0)$ such that $\xi(\pi(z)) = 0.$ For $\epsilon >0$ and $a \in \R,$ let $\mu_{\epsilon,a} : \HH \to \HH$ be given by $\zeta \mapsto \epsilon \zeta + a.$
For $\pi(z') \in U,$ taking
\[
\epsilon = \epsilon(w')  = \im (\xi(\pi(z'))), \qquad a = a(w') = \re(\xi(\pi(z'))),
\]
we have
\[
\mu_{\epsilon,a}^{-1}(\xi(z')) = \sqrt{-1} = z_1 \in S.
\]
For $w'$ sufficiently close to $w,$ the smooth surface $\Sigma_{\pi(z')}$ is a deformation of the nodal surface $\Sigma_{\pi(z)}$ obtained by removing half-disks around the nodal points $x_0 \in Q_z$ and $x_{-1} \in S,$ and identifying half-annuli adjacent to the resulting boundaries. More explicitly, let
\[
A_\epsilon = \HB_{\sqrt{2/\epsilon}}(0)\setminus \HB_{1/\sqrt{2\epsilon}}(0).
\]
We glue the surfaces
\[
Q_z\setminus \xi^{-1}\left(\HB_{\sqrt{\epsilon/2}}(a)\right), \qquad\qquad \HB_{\sqrt{2/\epsilon}}(0)\subset S,
\]
along the map $\xi^{-1}\circ \mu_{\epsilon,a}|_{A_\epsilon}.$  The identification of $\Sigma_{\pi(z')}$ with the above deformation of $\Sigma_{\pi(z)}$ trivializes $\CL_i|_{F_p}$ near $w.$ We use this trivialization to compute
\begin{equation}\label{eq:dy1}
\lim_{w' \to w} g^*\bar s(w') = \lim_{\substack{\epsilon \to 0 \\ a \to 0}} \left. \left(\pi^{-1} \circ \xi^{-1}\circ\mu_{\epsilon,a}\right)^* \bar s \right|_{\sqrt{-1}} \in T^*_{\sqrt{-1}} S = T_{z_1}^*\Sigma_z.
\end{equation}
Writing $\xi = x + iy,$ we have $\hat\nu(x,y) = y \chi(x,y)$ where $\chi(x,0) > 0.$ So,
\begin{multline}\label{eq:dy2}
\lim_{\substack{\epsilon \to 0 \\ a \to 0}} \left. \left(\pi^{-1} \circ \xi^{-1}\circ\mu_{\epsilon,a}\right)^* \bar s\right|_{\sqrt{-1}} = \lim_{\substack{\epsilon \to 0 \\ a \to 0}} \left. \left(\xi^{-1}\circ\mu_{\epsilon,a}\right)^* \frac{d\hat\nu}{\hat\nu} \right|_{\sqrt{-1}} = \\ =\lim_{\substack{\epsilon \to 0 \\ a \to 0}} \left.\frac{dy}{y} + \frac{d(\chi \circ \mu_{\epsilon,a})}{\chi\circ \mu_{\epsilon,a}}\right|_{\sqrt{-1}} = dy.
\end{multline}
Here, the first equality holds because $\tau|_{\partial \widetilde \Sigma\setminus \pi^{-1}(\mathbf x)} \equiv 1.$

If $z \in \pi^{-1}(x_i)$ and $\theta_i(z) \neq 0, \pi,$ we proceed as follows. If $z' \in \widetilde U_i,$ taking $\epsilon = \epsilon(w') = r_i(z'),$ we have
\[
\lim_{w' \to w} \mu_{\epsilon,0}^{-1}(\xi_i(\pi(z'))) = e^{\sqrt{-1}\theta_i(z)} = z_1 \in T_{i,\theta_i(z)}.
\]
Thus by reasoning similar to the above, we have
\begin{align}\label{eq:scint}
\lim_{w' \to w} g^*
\bar s(w') &= \lim_{\epsilon \to 0} \left.(\pi^{-1}\circ\xi_i^{-1} \circ \mu_{\epsilon,0})^* \bar s \right|_{e^{\sqrt{-1}\theta_i(z)}} \\
 &= \tau(z)\lim_{\epsilon \to 0} \left. (\xi_i^{-1}\circ\mu_{\epsilon,a})^* \frac{d\hat\nu}{\hat\nu} \right|_{e^{\sqrt{-1}\theta_i(z)}} \notag \\
&= e^{-2\sqrt{-1}\theta_i(z)} \left.\frac{dy}{y}\right|_{e^{\sqrt{-1}\theta_i(z)}} \notag\\
&= \frac{e^{-2\sqrt{-1}\theta_i(z)}}{\sin \theta_i(z)} dy \in T_{e^{\sqrt{-1}\theta_i(z)}}^*T_{i,\theta_i(z)} = T_{z_1}^*\Sigma_{i,\theta_i(z)}. \notag
\end{align}
If $\theta_i(z) = 0,\pi,$ the situation is slightly more complicated because of the double bubble, but similar reasoning still shows that
\[
\lim_{w' \to w} g^*\bar s(w') = dy \in T^*_{\sqrt{-1}}S = T^*_{z_1}\Sigma_{i,\theta_i(z)}.
\]
Therefore, $g^*\bar s$ does indeed extend to a continuous section $\check s$ of $\CL_i|_{F_p}.$

Moreover, we deduce from the preceding calculations that $\check s|_{\partial F_p}$ is special canonical. Indeed, for $z \in \widetilde \Sigma \setminus \pi^{-1}(\mathbf x),$ equations~\eqref{eq:dy1} and~\eqref{eq:dy2} show that $\check s(w) = dy,$ independent of $w.$ Thus $\check s$ is pulled back from the base on the corresponding components of $\partial F_p.$ For $z \in \pi^{-1}(x_i)$ and $\theta_i(z) \neq 0,\pi,$ equation~\eqref{eq:scint} shows that
\[
\check s(w) = \frac{e^{-2\sqrt{-1}\theta_i(z)}}{\sin \theta_i(z)} dy.
\]
The map to the base component forgets $x_{-1} \in T_{i,\theta_i(z)}.$ So the remaining marked points $x_i$ and $z_1$ can be brought to a standard position by a Mobius transformation. Explicitly, let $\beta_\theta : \HH \to \HH$ be given by
\[
\beta_\theta(\zeta) = \frac{\zeta}{\zeta \cos \theta + \sin \theta}.
\]
So,
\[
\beta_\theta(\sqrt{-1}) = e^{i\theta} = z_1 \in T_{i,\theta}, \qquad \beta_{\theta}(0) = 0 = x_i \in T_{i,\theta}.
\]
Then
\[
\beta_\theta'(\zeta) = \frac{\sin \theta}{(\zeta \cos \theta + \sin \theta)^2}.
\]
In particular, $\beta'_\theta(\sqrt{-1}) = -e^{2\sqrt{-1}\theta}\sin \theta.$ It follows that
\[
\beta_{\theta_i(z)}^* \check s(w) = -dy,
\]
independent of $w.$ So, $\check s$ is pulled back from the base on the remaining components of $\partial F_p.$ The case $\theta_i(z) = 0,\pi,$ corresponds to a codimension $2$ corner of $F_p,$ so it follows by continuity of $\check s.$

Finally, choose $\tilde s$ to be a transverse perturbation of $\check s$ that agrees with $\check s$ in a neighborhood $V$ of $\partial F_p$ where $\check s$ does not vanish. We calculate $\# Z(\tilde s)$ by expressing it as a winding number. Let $\Xi$ be a Riemann surface, let $L \to \Xi$ be a complex line bundle, and let $\gamma \subset \Xi$ be a homologically trivial curve. Then $L|_{\gamma}$ has a distinguished trivialization. Thus if $\sigma$ is a section of $L,$ the winding number $W(\sigma,\gamma)$ of $\sigma$ around $\gamma$ is well defined. Let $\gamma \subset V \cap \ior F_p$ be a curve isotopic to $\partial F_p$ and let $\hat \gamma$ be the corresponding curve in $\ior \widetilde \Sigma.$ Then
\[
\# Z(\tilde s) = W(\tilde s,\gamma) = W(\check s,\gamma) = W(\bar s, \hat \gamma) = W(d\nu, \hat \gamma) + k.
\]
But it is well known that $W(d \nu,\hat \gamma)$ is negative the Euler characteristic of $\widetilde \Sigma,$ which in our case is $-1.$ The lemma follows.
\end{proof}

\begin{proof}[Proof of Theorem~\ref{thm:string_dilaton}, dilaton equation]
We have
\[
2^{\frac{k-1}{2}}\left\langle \tau_1\prod_{i = 1}^l \tau_{a_i}\sigma^k\right\rangle_0^o = \int_{\oCM_{0,k,l+1}} e(\CE_{\ba}),
\]
where $\ba = (a_1,\ldots,a_l,1).$ Let $\bb,\bc \in \Z_{\geq 0}^{l+1}$ satisfy $\bb + \bc = \ba.$ For all $q \in [l]$ such that $b_q \geq 1,$ by Lemma~\ref{lem:main_lem} cases~\ref{it:D_i} and~\ref{it:b}, Lemma~\ref{lm:prod} and Observation~\ref{obs:2}, we have
\[
e(\CE_{\bb,\bc}) = e(\CE_{\bb - \be_q,\bc + \be_q}).
\]
By induction, we obtain
\[
e(\CE_{\ba}) = e(\CE_{\be_{l+1},\ba - \be_{l+1}}).
\]
Let $\s' \in \CS_{\bze,\ba - \be_{l+1}}$ and $s \in \CS_{l+1}$ be such that $s\oplus\s'$ vanishes nowhere. By definition of $\CS_{\bze,\ba - \be_{l+1}},$ there exists $\hat \s' \in \CS_{\hat \ba}$ such that $\s' = For_{l+1}^* \hat \s'.$ Let $\bar \s'$ be a transverse extension of $\hat \s'$ to $\oCM_{0,k,l}$ and let $\tilde \s' = For_{l+1}^* \bar \s'.$ Since $Z(\hat \s') \subset \CM_{0,k,l}$ and $For_{l+1}|_{For_{l+1}^{-1}(\CM_{0,k,l})}$ is a submersion, it follows that $\tilde \s'$ is a transverse extension of $\s'.$ Choose an extension $\tilde s$ of $s$ such that $\tilde s \oplus \tilde \s'$ is transverse. Then,
\begin{multline*}
\int_{\oCM_{0,k,l+1}} e(\CE_{\be_{l+1},\ba - \be_{l+1}}) =
\# Z(\tilde s) \cap Z(\tilde \s') = (k + l - 1) \# Z(\hat \s') = \\
 = (k + l - 1) \int_{\oCM_{0,k,l}} e(\CE_{\hat \ba}) = (k + l-1) 2^{\frac{k-1}{2}}\left\langle \prod_{i = 1}^l \tau_{a_i} \sigma^k\right\rangle_0^o,
\end{multline*}
where in the second equality, we have used Lemma~\ref{lm:cool}
\end{proof}

\subsection{Proofs of TRR I and II}
Let
\[
E=\bigoplus\CL_i^{\oplus a_i}\rightarrow\oCM_{0,k,l},
\]
with $a_1 = n,$ and
\[
E_1 =  \CL_1^{\oplus n-1} \oplus \bigoplus_{i=2}^l \CL_i^{\oplus a_i} \to \oCM_{0,k,l}.
\]
Take
\[
\mathbf{s} = \bigoplus_{i\in\left[l\right]~,j\in \left[a_i\right]}s_{ij}
\]
with $s_{ij}\in \CS_i$, and
\[
\sr=\bigoplus_{\substack{i\in\left[l\right]~,j\in\left[a_i\right],\\ \left(i,j\right)\neq\left(1,1\right)}}s_{ij}.
\]
The proof hinges on a section $\tilde \tr \in C^\infty(\oCM_{0,k,l},\CL_1)$ defined as follows. At a smooth marked disk $D = \left(D,\mathbf{x},\mathbf{z}\right)$, which we identify with the upper half plane, set
\begin{equation}\label{eq:t}
\tilde \tr\left(D\right) = dz\left.\left(\frac{1}{z - x_1}-\frac{1}{z-\bar{z}_1}\right)\right|_{z = z_1} \in T_{z_1}^*D.
\end{equation}
We show the section $\tilde \tr$ extends smoothly to the compactified moduli space. Indeed, let $(\Sigma,\mathbf x,\mathbf z)$ be a stable marked disk, and let
\[
\Sigma_\C = \Sigma \coprod_{\partial \Sigma} \overline \Sigma
\]
be its complex double, a stable marked sphere. Consider the unique meromorphic differential $\omega_\Sigma$ on the normalization of $\Sigma_\C$ with the following properties. At $x_1$ it has a simple pole with residue $1$. At $\bar{z}_1$ it has a simple pole with residue $-1$. For any node the two preimages have at most simple poles, and the residues at these poles sum to zero. Apart from these points, $\omega_\Sigma$ is holomorphic. Then $\tilde \tr\left(\Sigma\right)$ is the evaluation of $\omega_\Sigma$ at $z_1$. As $z_1$ never coincides with a node, $\bar z_1$ or $x_1,$ it follows that $\tilde \tr$ is smooth. Write $\tr= \tilde \tr|_{\partial\oCM_{0,k,l}}$.

Let $\widetilde T \subset \partial \Gamma_{0,k,l}$ be the collection of stable graphs $\Gamma$ with exactly one open vertex $v^o_\Gamma$ and exactly one closed vertex $v^c_\Gamma$, such that $1 \in \ell_I(v^c_\Gamma).$ So for $\Gamma \in \widetilde T,$ we have $\oCM_\Gamma = \oCM_{v^o_\Gamma} \times \oCM_{v^c_\Gamma}.$ Equip $\oCM_\Gamma$ with the orientation $o_\Gamma$ given by the product of $o_{0,k(v^o_\Gamma),l(v^o_\Gamma)}$ and the complex orientation on $\oCM_{v^c_\Gamma}.$
\begin{lm}\label{lm:tr}
The zero locus of $\tilde \tr$ is $\bigcup_{\Gamma \in \widetilde T} \oCM_{\Gamma}.$ For $\Gamma \in \widetilde T,$ the subspace $\CM_\Gamma\subset \oCM_{0,k,l}$ is cut out transversally by $\tilde \tr$ with induced orientation $o_\Gamma.$
\end{lm}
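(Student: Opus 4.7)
The plan is to identify the vanishing locus of $\tilde\tr$ by analyzing the canonical meromorphic differential $\omega_\Sigma$ on the normalization of $\Sigma_\C$, verify transversality via a plumbing coordinate at the separating interior node, and compare orientations using the complex structure on the smoothing parameter.

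For the inclusion $\bigcup_{\Gamma \in \widetilde T}\oCM_\Gamma \subseteq Z(\tilde\tr)$, I would argue as follows. If the dual graph of $\Sigma$ lies in $\bigcup_{\Gamma \in \widetilde T}\pu\Gamma,$ then $\Sigma$ contains a connected subcurve $\Sigma_c$ consisting entirely of sphere components, containing $z_1$, and meeting the rest of $\Sigma$ at a single interior node $n$. Since neither $x_1$ nor $\bar z_1$ lies on $\Sigma_c,$ the differential $\omega_\Sigma|_{\Sigma_c}$ has poles only at internal nodes of $\Sigma_c$ and at $n$. Applying residue theorem to each component of $\Sigma_c$ and summing, the internal residues cancel in pairs, forcing the external residue at $n$ to vanish. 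Hence $\omega_\Sigma|_{\Sigma_c}$ is a holomorphic differential on a nodal tree of rational curves of arithmetic genus zero, and therefore identically zero. In particular $\tilde\tr(\Sigma) = \omega_\Sigma(z_1) = 0.$

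For the opposite inclusion, let $C_0$ denote the irreducible component of the normalization of $\Sigma_\C$ containing $z_1.$ By residue theorem applied to each branch of the dual graph away from $C_0,$ the residue of $\omega_\Sigma$ at a node of $C_0$ on the $C_0$ side equals the signed count of $\{x_1, \bar z_1\}$ in the opposite subtree. When $\Gamma(\Sigma) \notin \bigcup_{\Gamma \in \widetilde T}\pu\Gamma,$ a case analysis on whether $C_0$ is a sphere or a disk double, and on whether $x_1, \bar z_1$ lie on $C_0,$ shows that the nontrivial node residues and pole residues collapse to exactly two simple poles on $C_0$ with residues $\pm 1$ at distinct points $p_{\pm} \in C_0;$ each $p_\pm$ is either $x_1$ or $\bar z_1$ itself (if on $C_0$) or a node of $C_0$ toward $x_1$ or $\bar z_1.$ Thus
\[
\omega_\Sigma|_{C_0} = \frac{dz}{z-p_+} - \frac{dz}{z-p_-}
\]
in a suitable coordinate, and
\[
\omega_\Sigma(z_1) = \frac{(p_+ - p_-)\,dz}{(z_1 - p_+)(z_1 - p_-)} \neq 0,
\]
since $p_+, p_-, z_1$ are pairwise distinct special points of $C_0.$

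For transversality at a point of $\CM_\Gamma$ with $\Gamma \in \widetilde T,$ I would use a plumbing coordinate $t \in \C$ at the unique interior edge of $\Gamma,$ giving a local product structure $\CM_\Gamma \times \{|t|<\epsilon\}.$ Directions tangent to $\CM_\Gamma$ preserve the node and keep $\omega_\Sigma|_{v^c_\Gamma}\equiv 0,$ while varying $t$ smooths the node; the standard deformation theory of meromorphic differentials across a plumbed node gives $\omega_\Sigma(z_1) = c\cdot t + O(t^2)$ with $c\neq 0,$ so $\tilde\tr\pitchfork 0$ along $\CM_\Gamma$ with vanishing multiplicity one. For orientation, the complex smoothing parameter $t$ identifies the normal bundle of $\CM_\Gamma$ in $\oCM_{0,k,l}$ with $\CL_1|_{\CM_\Gamma}$ as complex line bundles, so the co-orientation induced on $\CM_\Gamma$ by transversal vanishing of $\tilde\tr$ agrees with the complex co-orientation of the normal direction. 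Combined with the product decomposition $\CM_\Gamma = \oCM_{v^o_\Gamma} \times \oCM_{v^c_\Gamma},$ the complex orientation of $\oCM_{v^c_\Gamma},$ and Lemma~\ref{lm:or} applied to $\oCM_{v^o_\Gamma},$ this reproduces exactly the orientation $o_\Gamma$ defined before the lemma. The main obstacle will be the case analysis for the reverse inclusion, together with the plumbing-theoretic verification that $c\neq 0,$ both of which require careful combinatorial tracking of residues of $\omega_\Sigma$ across the nodes of $\Sigma_\C.$
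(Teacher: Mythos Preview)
Your identification of the zero locus and the transversality via a plumbing coordinate are correct and match the paper's argument: the paper observes that $\omega_\Sigma$ is nonzero precisely on components lying on the path in $\Sigma_\C$ from $x_1$ to $\bar z_1$, which amounts to your residue bookkeeping, and it dispatches transversality by pointing to the same local-model computation carried out in Lemma~\ref{lm:fmor}. One small inaccuracy: in the non-vanishing direction you speak of a case analysis on whether $C_0$ is a sphere or a disk double, but when $\Gamma(\Sigma)\notin\bigcup_{\Gamma\in\widetilde T}\pu\Gamma$ the component $C_0$ is \emph{always} the double of a disk, so $\bar z_1\in C_0$ automatically; the only genuine case split is whether $x_1\in C_0$.

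The orientation step is where you diverge from the paper. The paper does not argue directly from the complex structure on the normal bundle; instead it runs an induction on $\dim\oCM_{0,k,l}$ patterned on the proof of Lemma~\ref{lm:Bor}, using commuting squares of forgetful maps to reduce to a low-dimensional check. Your route is legitimate, but the sentence ``combined with the product decomposition \ldots\ and Lemma~\ref{lm:or} \ldots\ this reproduces exactly $o_\Gamma$'' hides the actual content: you still must show that near $\CM_\Gamma$ the orientation $o_{0,k,l}$ factors as $o_\Gamma$ times the complex orientation on the normal line. This is not an immediate restatement of Lemma~\ref{lm:or}; one way to complete it is to apply property~(c) of that lemma iteratively to the forgetful map that drops all markings in $\ell_I(v^c_\Gamma)$, identify the fiber near $\CM_\Gamma$ with $\CM_{v^c_\Gamma}\times(\text{node position})\times N$, and regroup the complex factors. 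Also, the phrase ``identifies the normal bundle \ldots\ with $\CL_1|_{\CM_\Gamma}$'' is misleading: the normal bundle is the tensor product of the tangent lines at the node, not $\CL_1$; what you mean is that the linearization $d\tilde\tr:N\to\CL_1$ is complex-linear, which is what is needed.
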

\begin{proof}
On a component of $\Sigma_\C$ containing $x_1$ or $\bar{z}_1,$ the differential $\omega_\Sigma$ vanishes nowhere. Similarly, $\omega_\Sigma$ vanishes nowhere on components whose removal disconnects $x_1$ from $\bar{z}_1$. On other components it vanishes identically. Thus, $\tilde\tr$ vanishes exactly on stable disks $\Sigma$ such that in $\Sigma_\C$ the component containing $z_1$ is not on the route of components between the components of $x_1$ and $\bar{z}_1$. This is the case if and only if $z_1$ belongs to a sphere component of~$\Sigma.$ So the vanishing locus of $\tilde\tr$ is as claimed. The proof of transversality is similar to the proof of Lemma~\ref{lm:fmor}. The equality of orientations follows from induction on dimension by an argument similar to the proof of Lemma~\ref{lm:Bor}.
\end{proof}

Choose $\mathbf{s}$ satisfying the strong transversality condition of Lemma~\ref{lm:tscm} part \ref{it:better_trans}. Put $\trs = \tr\oplus\sr$.
We show that $\trs$ does not vanish, so $e\left(E;\trs\right)$ is defined. Indeed, $Z(\tr)$ consists of boundary strata of codimension at least $3$ in $\oCM_{0,k,l}.$ So, the transversality requirement of Lemma \ref{lm:tscm} part \ref{it:better_trans} guarantees that on such boundary strata $\sr$ does not vanish. Thus, $\trs$ does not vanish on $\partial \oCM_{0,k,l}.$

\begin{lm}\label{lm:br}
We have
\[
\int_{\oCM_{0,k,l}} e(E;\trs) = 2^{\frac{k-1}{2}}\sum_{S \coprod R = \{2,\ldots,l\}}\left\langle \tau_0\tau_{n-1}\prod_{i \in S}\tau_{a_i}\right\rangle^c_0
\left\langle \tau_0\prod_{i\in R}\tau_{a_i}\sigma^k\right\rangle^o_0.
\]
\end{lm}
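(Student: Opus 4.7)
\medskip

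\noindent\textbf{Proof plan.} The strategy is to compute $\int_{\oCM_{0,k,l}} e(E;\trs) = \# Z(\widetilde{\trs})$ for a transverse extension $\widetilde{\trs} = \widetilde{\tr} \oplus \widetilde{\sr}$ over $\oCM_{0,k,l}$, and use the section $\widetilde{\tr}$ of equation~\eqref{eq:t} to localize the computation onto the strata $\bigcup_{\Gamma\in \widetilde T}\oCM_{\Gamma}$. Specifically, by Theorem~\ref{thm: hirsch} I extend $\sr$ to a multisection $\widetilde\sr$ over $\oCM_{0,k,l}$ such that $\widetilde{\trs} = \widetilde{\tr}\oplus\widetilde{\sr}$ is transverse to zero. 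Because $\widetilde{\tr}$ cuts out the union $\bigcup_{\Gamma\in\widetilde T}\oCM_\Gamma$ transversely on the interior strata $\CM_\Gamma$ with induced orientation $o_\Gamma$ (Lemma~\ref{lm:tr}), the transversality of $\widetilde\sr|_{Z(\widetilde\tr)}$ forces $Z(\widetilde{\trs})$ to lie in the open strata, and yields
\[
\int_{\oCM_{0,k,l}} e(E;\trs) \;=\; \sum_{\Gamma\in\widetilde T} \#\bigl(Z(\widetilde\sr)\cap \CM_{\Gamma}\bigr),
\]
where each summand is computed with respect to the orientation $o_\Gamma$.

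For each $\Gamma \in \widetilde T$, there is a canonical product decomposition $\CM_\Gamma = \CM_{v^o_\Gamma}\times\CM_{v^c_\Gamma}$, with $v^c_\Gamma$ carrying the interior labels $\{1\}\cup S\cup\{\text{node}\}$ and $v^o_\Gamma$ carrying the boundary labels $[\rz{k}]$ and the interior labels $R\cup\{\text{node}\}$ for some partition $S\sqcup R = \{2,\ldots,l\}$. Under this decomposition, $E_1|_{\CM_\Gamma}$ splits as $\pi_c^* E_c^{(1)}\oplus \pi_o^* E_o$, where
\[
E_c^{(1)} \;=\; \CL_1^{\oplus n-1}\oplus\bigoplus_{i\in S}\CL_i^{\oplus a_i} \to \CM_{v^c_\Gamma}, \qquad E_o \;=\; \bigoplus_{i\in R}\CL_i^{\oplus a_i}\to \CM_{v^o_\Gamma}.
\]
The next step is to apply Theorem~\ref{thm: hirsch} one more time, choosing the transverse extension $\widetilde\sr$ so that on a neighborhood of each $\CM_\Gamma$ its restriction to $\CM_\Gamma$ is a direct sum $\pi_c^*\sr^c_\Gamma \oplus \pi_o^* \sr^o_\Gamma$ of pulled-back transverse multisections. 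Since $\CM_\Gamma$ is an interior codimension-two stratum, adjusting $\widetilde\sr$ there does not conflict with the special canonical boundary conditions imposed on $\partial\oCM_{0,k,l}$; meanwhile, the restriction of $\sr$ to $\partial\CM_{v^o_\Gamma}\subset \partial\oCM_{0,k,l}$ remains canonical, making $\sr^o_\Gamma$ a legitimate canonical boundary condition for $E_o$ in the sense of Section~\ref{sec:bdry}. Consequently,
\[
\#\bigl(Z(\widetilde\sr)\cap\CM_\Gamma\bigr) \;=\; \#Z(\widetilde{\sr^c_\Gamma})\cdot \#Z(\widetilde{\sr^o_\Gamma}),
\]
with the sign determined by the product orientation $o_\Gamma$.

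The final step is to identify each factor with a descendent integral. On the closed side, $\CM_{v^c_\Gamma}$ has no boundary, so
\[
\#Z(\widetilde{\sr^c_\Gamma}) \;=\; \int_{\oCM_{0,|S|+2}}\psi_1^{n-1}\prod_{i\in S}\psi_i^{a_i} \;=\; \Bigl\langle\tau_0\tau_{n-1}\prod_{i\in S}\tau_{a_i}\Bigr\rangle^c_0,
\]
the factor of $\tau_0$ corresponding to the nodal marked point. On the open side, the normalization in the definition of $\langle\cdot\rangle^o_0$ gives
\[
\#Z(\widetilde{\sr^o_\Gamma}) \;=\; \int_{\oCM_{0,k,|R|+1}} e(E_o;\sr^o_\Gamma) \;=\; 2^{\frac{k-1}{2}}\Bigl\langle\tau_0\prod_{i\in R}\tau_{a_i}\sigma^k\Bigr\rangle^o_0.
\]
Summing over $\Gamma\in\widetilde T$, equivalently over partitions $S\sqcup R = \{2,\ldots,l\}$, yields the lemma.

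\medskip\noindent\textbf{Main obstacle.} The delicate step is the simultaneous control of transversality of $\widetilde\tr\oplus\widetilde\sr$ on $\oCM_{0,k,l}$, the product form of $\widetilde\sr|_{\CM_\Gamma}$, and the special canonical behavior on $\partial\oCM_{0,k,l}$. All three can be achieved by a careful parametrized application of Theorem~\ref{thm: hirsch} with a parameter space combining the parameters used in Lemma~\ref{lm:tscm} and additional parameters indexed over sections pulled back from the two factors at each interior stratum $\CM_\Gamma$; verifying that the evaluation multisection is transverse near each $\CM_\Gamma$ uses the strong transversality already imposed by Lemma~\ref{lm:tscm}\ref{it:better_trans}. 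Finally, matching the orientation $o_\Gamma$ to the product of $o_{0,k,l(v^o_\Gamma)}$ with the complex orientation on $\oCM_{v^c_\Gamma}$ is ensured by the second claim of Lemma~\ref{lm:tr}, so all signs are positive.
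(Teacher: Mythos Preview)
Your overall strategy---localize via $\tilde\tr$ to the strata $\oCM_\Gamma$ for $\Gamma\in\widetilde T$, decompose each as a product of an open and a closed factor, and identify the two factors as descendent integrals---matches the paper's. The gap is in how you obtain the product decomposition of the section. You propose to \emph{choose} the extension $\widetilde\sr$ so that its restriction to the open stratum $\CM_\Gamma$ has the form $\pi_c^*\sr^c_\Gamma\oplus\pi_o^*\sr^o_\Gamma$. But $\widetilde\sr$ must agree with the fixed boundary multisection $\sr$ on $\partial\oCM_\Gamma\subset\partial\oCM_{0,k,l}$, so such a choice on the interior is only possible if $\sr|_{\partial\oCM_\Gamma}$ already has product form. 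You do not establish this; you only assert that ``the restriction of $\sr$ to $\partial\CM_{v^o_\Gamma}$ remains canonical'', which conflates a section on $\partial\oCM_\Gamma$ with one on $\partial\oCM_{\Gamma^o}$ and does not by itself give a factorization. In particular, your $\sr^o_\Gamma$ is defined only on the interior $\CM_{v^o_\Gamma}$ and has no reason to extend to a canonical boundary condition on $\partial\oCM_{v^o_\Gamma}$, so the identification $\#Z(\widetilde{\sr^o_\Gamma})=2^{(k-1)/2}\langle\cdots\rangle^o_0$ is unjustified.

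The paper fills this gap by using the \emph{special} canonical structure, not merely the canonical one. Because each $s_{ij}$ is pulled back from an abstract-vertex moduli $\CM_{\bv{i}{\Lambda}}$, and because for $\Lambda\in\pB\Gamma$ one has $\bv{i}{\Lambda}=\bv{i}{\Lambda^o}$ when $i\in\ell_I(v^o_\Gamma)$ and $\bv{i}{\Lambda}=\bv{i}{\Lambda^c}$ when $i\in\ell_I(v^c_\Gamma)$, the restriction $\sr|_{\partial\oCM_\Gamma}$ is \emph{automatically} of the form $b^*(p_o^*\s^o_\Gamma\oplus p_c^*\s^c_\Gamma)$ for explicit multisections built from the $s_{ij}^v$. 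The strong transversality of Lemma~\ref{lm:tscm}\ref{it:better_trans} then makes both $\s^o_\Gamma$ and $\s^c_\Gamma$ transverse, and $\s^o_\Gamma|_{\partial\oCM_{\Gamma^o}}$ is canonical by construction, giving the open descendent integral. Once you supply this factorization of $\sr|_{\partial\oCM_\Gamma}$, no further adjustment of $\widetilde\sr$ via Theorem~\ref{thm: hirsch} is needed: you may simply take $\widetilde\sr|_{\oCM_\Gamma}=\tilde b^*(\tilde p_o^*\tilde\s^o_\Gamma\oplus\tilde p_c^*\s^c_\Gamma)$ for any transverse extension $\tilde\s^o_\Gamma$.
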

\begin{proof}
Choose an extension $\tsr$ of $\sr$ to $\oCM_{0,k,l}$ that does not vanish on $\oCM_\Gamma \setminus \CM_\Gamma$ for $\Gamma \in \tilde T$ and such that $\tilde{\trs} = \tilde \tr \oplus \tsr$ is transverse. Such transversality is generic because $\tilde \tr$ is transverse along $\CM_\Gamma$ and non-zero outside of $\oCM_\Gamma$ by Lemma~\ref{lm:tr}. Again by Lemma~\ref{lm:tr}, we obtain
\begin{equation}\label{eq:etst}
\int_{\oCM_{0,k,l}} e(E;\trs) = \# Z(\tilde \tr \oplus \tsr) = \sum_{\Gamma \in \widetilde T}\int_{\oCM_\Gamma} e\left(E_1|_{\oCM_\Gamma};\sr|_{\partial\oCM_\Gamma}\right).
\end{equation}
Recall Definitions~\ref{def:si} and~\ref{def:span}. For $\Gamma \in \widetilde T,$ and $\Lambda \in \pu \Gamma,$ abbreviate
\[
\Lambda^c = \Lambda_{\varsigma_{\Lambda,\Gamma}^{-1}(v_\Gamma^c)}, \qquad \Lambda^o = \Lambda_{\varsigma_{\Lambda,\Gamma}^{-1}(v_\Gamma^o)}.
\]
Thus we have a bijection
\[
\pu \Gamma \overset{\sim}{\longrightarrow} \pu \Gamma^o \times \pu \Gamma^c, \qquad \Lambda \mapsto (\Lambda^o,\Lambda^c),
\]
and a corresponding diffeomorphism
\[
\tilde b : \oCM_\Gamma \longrightarrow \oCM_{\Gamma^o} \times \oCM_{\Gamma^c}.
\]
given by
\[
\tilde b|_{\oCM_\Lambda} = For_{\Lambda,\Lambda^o}\times For_{\Lambda,\Lambda^c}, \qquad \Lambda \in \pu\Gamma.
\]
Additionally, we have a bijection
\begin{equation}\label{eq:biG}
\pB\Gamma \overset{\sim}{\longrightarrow} \pB \Gamma^o \times \pu \Gamma^c, \qquad \Lambda \mapsto (\Lambda^o,\Lambda^c),
\end{equation}
and a corresponding diffeomorphism
\[
b : \partial \oCM_\Gamma \to \partial \oCM_{\Gamma^o} \times \oCM_{\Gamma^c}
\]
given by $b = \tilde b |_{\partial \oCM_\Gamma}.$
Denote by
\begin{gather*}
\tilde p_o : \oCM_{\Gamma^o} \times \oCM_{\Gamma^c} \longrightarrow \oCM_{\Gamma^o}, \qquad \tilde p_c: \oCM_{\Gamma^o} \times \oCM_{\Gamma^c} \longrightarrow \oCM_{\Gamma^c}, \\
p_o : \partial \oCM_{\Gamma^o} \times \oCM_{\Gamma^c} \longrightarrow \partial \oCM_{\Gamma^o}, \qquad p_c: \partial \oCM_{\Gamma^o} \times \oCM_{\Gamma^c} \longrightarrow \oCM_{\Gamma^c},
\end{gather*}
the projection maps.
Let
\begin{gather*}
E^c_\Gamma = \CL_1^{\oplus n-1}\oplus \bigoplus_{i \in \ell_I(v^c_\Gamma)\setminus \{1\}} \CL_i^{ \oplus a_i} \longrightarrow \oCM_{\Gamma^c}, \\
E^o_\Gamma = \bigoplus_{i \in \ell_I(v^o_\Gamma)} \CL_i^{\oplus a_i} \longrightarrow \oCM_{\Gamma^o}.
\end{gather*}
Thus
\[
\tilde b^* (\tilde p_o^* E^o \oplus \tilde p_c^* E^c) = E_1|_{\oCM_\Gamma}.
\]
For $\Lambda \in \pu\Gamma,$ we have
\begin{gather*}
\bv{i}{\Lambda} = \bv{i}{\Lambda^c}, \quad i \in \ell_I(\Lambda^c), \qquad \bv{i}{\Lambda} = \bv{i}{\Lambda^o}, \quad i \in \ell_I(\Lambda^o), \\
\ell_I(\Lambda) = \ell_I(\Lambda^c) \cup \ell_I(\Lambda^o).
\end{gather*}
So, bijection~\eqref{eq:biG} implies
\begin{equation}\label{eq:biV}
\CV^i_{\pB\Gamma} = \CV^i_{\pB\Gamma^o} \cup \CV^i_{\pu\Gamma^c}, \qquad i \in [l].
\end{equation}
Since $\pB\Gamma \subset \pB\Gamma_{0,k,l},$ by the definition of a special canonical multisection, we have
\[
s_{ij}^v \in C^\infty_m(\CM_v,\CL_i), \qquad v \in \CV^i_{\pB\Gamma},
\]
such that $s_{ij}^\Lambda = \Phi_{\Lambda,i}^* s_{ij}^v$ for all $\Lambda \in \pB\Gamma$ with $\bv{i}{\Lambda} = v.$  Let $\s_\Gamma^c \in C^\infty_m(E^c_\Gamma)$ and $\s_\Gamma^o \in C^\infty_m(E^o_\Gamma|_{\partial\oCM_{\Gamma^o}})$ be given by
\begin{gather*}
(\s^c_\Gamma)^{\Psi} = \bigoplus_{\substack{i \in \ell_I(v^c_\Gamma),\; j \in [a_{ij}]\\ (i,j) \neq (1,1)}} \Phi_{\Psi,i}^*s_{ij}^{\bv{i}{\Psi}}, \qquad \Psi \in \pu \Gamma^c, \\
(\s^o_\Gamma)^\Omega = \bigoplus_{i \in \ell_I(v^o_\Gamma),\; j \in [a_{ij}]} \Phi_{\Omega,i}^*s_{ij}^{\bv{i}{\Omega}}, \qquad \Omega \in \pB \Gamma^o.
\end{gather*}
Here, $s_{ij}^{\bv{i}{\Psi}}$ and $s_{ij}^{\bv{i}{\Omega}}$ are predetermined because of equation~\eqref{eq:biV}.
Since we have chosen $\s$ to satisfy the strong transversality condition of Lemma~\ref{lm:tscm} part \ref{it:better_trans}, the multisections $\s^c_\Gamma$ and $\s^o_\Gamma$ are transverse to $0.$
For $\Lambda \in \pB\Gamma,$ Observation~\ref{obs:fuf} and equation~\eqref{eq:phigi} imply that
\begin{gather*}
\Phi_{\Lambda,i} = \Phi_{\Lambda^o,i} \circ p_o \circ b, \qquad i \in \ell_I(v^o_\Gamma), \\
\Phi_{\Lambda,i} = \Phi_{\Lambda^c,i} \circ p_c \circ b, \qquad i \in \ell_I(v^c_\Gamma).
\end{gather*}
It follows that for $\Gamma \in \widetilde T,$ we have
\begin{equation}\label{eq:s'pro}
\sr|_{\partial\oCM_\Gamma} = b^* (p_o^* \s_\Gamma^o \oplus p_c^* \s_\Gamma^c).
\end{equation}
Choose a transverse extension $\tilde \s_\Gamma^o$ of $\s_\Gamma^o$ to $\oCM_{\Gamma^o}.$ Then equation~\eqref{eq:s'pro} implies that $\tilde b^*(\tilde p_o^*\tilde\s_\Gamma^o \oplus \tilde p_c^*\s_\Gamma^c)$ is a transverse extension of $\sr|_{\partial \oCM_\Gamma}$ to $\oCM_\Gamma.$
Therefore,
\begin{equation}
\int_{\oCM_\Gamma} e\left(E_1|_{\oCM_\Gamma};\sr|_{\partial\oCM_\Gamma}\right) = \# Z(\tilde p_o^*\tilde\s_\Gamma^o) \cap Z(\tilde p_c^*\s_\Gamma^c). \label{eq:gpr}
\end{equation}
Dimension counting and transversality show this number vanishes unless $\rk E^o_\Gamma = \dim_\C \oCM_{\Gamma^o}$ and $\rk E^c_\Gamma = \dim_\C \oCM_{\Gamma^c}.$ In that case, transversality implies that $\s^o_\Gamma$ vanishes nowhere. Thus
\begin{equation}\label{eq:prdi}
\# Z(\tilde p_o^*\tilde\s_\Gamma^o) \cap Z(\tilde p_c^*\s_\Gamma^c) = \left( \int_{\oCM_{\Gamma^o}} e(E^o_\Gamma,\s^o_\Gamma)\right)\left(\int_{\oCM_{\Gamma^c}} e(E^c_\Gamma) \right).
\end{equation}
The graph $\Gamma^o$ (resp. $\Gamma^c$) has a single vertex $v^o_\Gamma$ (resp. $v^c_\Gamma$). By construction, $\s_\Gamma^o$ is a canonical boundary condition. So,
\begin{gather}
\int_{\oCM_{\Gamma^o}} e(E^o_\Gamma,\s^o_\Gamma) = 2^{\frac{k-1}{2}}\left\langle \tau_0\prod_{i\in \ell_I(v^o_\Gamma)}\tau_{a_i}\sigma^k\right\rangle^o_0, \label{eq:go}\\
\int_{\oCM_{\Gamma^c}} e(E^c_\Gamma) =
\left\langle \tau_0\tau_{n-1}\prod_{\substack{i\in \ell_I(v^c_\Gamma)\\ i \neq 1}}\tau_{a_i}\right\rangle^c_0. \label{eq:gc}
\end{gather}
Equations~\eqref{eq:etst},~\eqref{eq:gpr},~\eqref{eq:prdi},~\eqref{eq:go} and~\eqref{eq:gc}, together imply the lemma.
\end{proof}

It remains to analyze the difference between $\tr$ and a canonical multisection. Let $U \subset \pB\Gamma_{0,k,l}$ be the collection of graphs $\Gamma$ with exactly two vertices $v^\pm_\Gamma,$ both open, such that
\[
1 \in \ell_I(v^-_\Gamma), \qquad \rz{1} \in \ell_B(v^{+}_\Gamma),
\]
and the unique edge $e_\Gamma$ of $\Gamma$ is legal for $v^+_\Gamma$ and thus illegal for $v^-_\Gamma.$ Let
\begin{equation}\label{eq:UV}
V = \pB \Gamma_{0,k,l}\setminus {\pu U}.
\end{equation}
\begin{lm}\label{lm:tcan}
Let $\Gamma \in V.$ Then $\tr|_{\CM_\Gamma}$ is canonical.
\end{lm}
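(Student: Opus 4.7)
The plan is to construct, for each $\Gamma \in V$, a smooth section $r^{\CB\Gamma}$ of $\CL_1 \to \CM_{\CB\Gamma}$ such that $\tr|_{\CM_\Gamma} = F_\Gamma^* r^{\CB\Gamma}$. The approach uses the description from the proof of Lemma~\ref{lm:tr}: $\tilde\tr(\Sigma) = \omega_\Sigma(z_1)$, where $\omega_\Sigma$ is the unique meromorphic differential on the normalization of $\Sigma_\C$ with simple pole of residue $+1$ at $x_1$, simple pole of residue $-1$ at $\bar z_1$, and vanishing on every component of $\Sigma_\C$ off the unique path in its dual graph from the component of $x_1$ to that of $\bar z_1$.

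First I would dispose of the case in which $v_1(\Gamma)$ is closed: there $z_1$ lies on a sphere component of $\Sigma$, so Lemma~\ref{lm:tr} gives $\tr|_{\CM_\Gamma} \equiv 0$, which is trivially canonical. Assume henceforth that $v_1(\Gamma)$ is open, and let $P$ be the unique path in the tree $\Gamma$ from $v_1(\Gamma)$ to the open vertex carrying the boundary label $\rz{1}$. The first combinatorial step is: for every boundary edge $e$ of $P$, the hypothesis $\Gamma \notin \pu U$ forces $e$ to be legal for its $v_1(\Gamma)$-side endpoint. Indeed, smoothing all edges of $\Gamma$ other than $e$ produces a two-vertex graph $\Gamma^\circ \in \pB\Gamma_{0,k,l}$ in which both vertices are open, since $v_1(\Gamma)$ and the $\rz{1}$-vertex are open and lie on opposite sides of $e$. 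If $e$ were legal for its $\rz{1}$-side in $\Gamma$, then $\Gamma^\circ$ would lie in $U$ and $\Gamma \in \partial\Gamma^\circ \subset \pu U$, contradicting $\Gamma \in V$.

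Let $e_0$ be the edge of $P$ incident to $v_1(\Gamma)$. By the previous step, if $e_0$ is a boundary edge then it is legal for $v_1(\Gamma)$, so Definition~\ref{def:base_operator} places $i_{v_1}^\Gamma(e_0) \in \ell_B^{\CB\Gamma}(v_1(\Gamma))$; and if $e_0$ is an interior edge, then $i_{v_1}^\Gamma(e_0) \in \ell_I^{\CB\Gamma}(v_1(\Gamma))$. In either case this label picks out a distinguished marked point $p_1$ on every base disk $D \in \CM_{\bv{1}{\Gamma}}$. I would then define $r^{\CB\Gamma}$ as the pullback from the $v_1(\Gamma)$-factor of $\CM_{\CB\Gamma}$ of the section $D \mapsto \omega'(z_1)$, where $\omega'$ is the unique meromorphic differential on the normalization of $D_\C$ with residue $-1$ at $\bar z_1$, residue $+1$ at $p_1$ (or at its conjugate $\bar p_1$, when $p_1$ is an interior point, with the sign fixed by the combinatorics of $P$), and no other poles.

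To check $F_\Gamma^* r^{\CB\Gamma} = \tr|_{\CM_\Gamma}$, fix $\Sigma \in \CM_\Gamma$ and restrict $\omega_\Sigma$ to $S_0 := (\Sigma_{v_1(\Gamma)})_\C \subset \Sigma_\C$. By Lemma~\ref{lm:tr} together with the standard residue balance along the nodes of $P$, the restriction $\omega_\Sigma|_{S_0}$ has exactly the pole data defining $\omega'$; uniqueness of meromorphic differentials with prescribed simple poles on a genus-zero curve then yields $\omega_\Sigma|_{S_0} = \omega'$, hence $\omega_\Sigma(z_1) = r^{\CB\Gamma}(F_\Gamma(\Sigma))$. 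The main obstacle is the case in which $e_0$ is an interior edge and $P$ traverses several sphere bubbles before the route in $\Sigma_\C$ crosses the equator: there one must show that the choice between $p_1$ and $\bar p_1$, dictated by which of the two doubled copies of each intermediate bubble the route enters, depends only on the combinatorial type $\Gamma$ and so is constant on $\CM_\Gamma$.
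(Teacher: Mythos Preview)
Your approach is correct and matches the paper's: both arguments observe that $\tr(\Sigma)=\omega_\Sigma(z_1)$ depends only on the component $\Sigma_{v_1(\Gamma)}$ together with the position of the node on that component leading toward $x_1$, and then show that for $\Gamma\in V$ this node is retained by $F_\Gamma$. Your smoothing argument for legality of $e_0$ is exactly the right justification (the paper's proof merely asserts the conclusion), and your explicit construction of $r^{\CB\Gamma}$ via a differential with pole at the distinguished marked point $p_1$ is a faithful unpacking of the paper's phrase ``$\tr$ does not depend on the position of an illegal nodal point.''

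Your stated ``main obstacle,'' however, is a phantom. If $v_1(\Gamma)$ is open (the case you have reduced to), then by condition~\ref{it:blabla} of Definition~\ref{def:stg} (see Remark~\ref{rmk:4}), the open vertices $v_1(\Gamma)$ and $v_{\rz{1}}(\Gamma)$ are connected in the subgraph spanned by $V^O$. Since $\Gamma$ is a tree, this forces every vertex on the path $P$ to be open, hence every edge of $P$---in particular $e_0$---is a boundary edge. So the interior-edge scenario you worry about, with sphere bubbles along $P$ and a choice between $p_1$ and $\bar p_1$, simply does not occur; the point $p_1$ is always a boundary marking on the base disk and your $\omega'$ has its pole there unambiguously. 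Once this is noted, your proof is complete and somewhat more detailed than the paper's.
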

\begin{proof}
If $\Gamma \in V$ and $(\Sigma,\mathbf{x},\mathbf z) \in \CM_\Gamma$, then either $z_1$ and $x_1$ are in the same component, or the nodal point is legal for the component of $z_1$. In the first case, $\omega_\Sigma$ has does not depend on the position of the nodal point, so neither does $\tr.$ In the second case, $\omega_\Sigma$ may have a pole at the nodal point and so $\tr$ may depend on the position of the nodal point on the component of $z_1.$ But the nodal point is legal for that component. In both cases, $\tr$ does not depend on the position of an illegal nodal point, so it is canonical.
\end{proof}

The following observation and lemma quantify the difference between $\tr$ and a canonical multisection on $\oCM_\Gamma$ for $\Gamma \in U.$ Let $p \in \CM_{\CB\Gamma}.$ Let $F_p$ be the fiber over $p$ of the map $F_\Gamma : \oCM_\Gamma \to \oCM_{\CB\Gamma}$ equipped with its natural orientation. So $F_p$ is a collection of $a = |\ell_B(v^-_\Gamma)|$ closed intervals corresponding to the $a$ segments between marked points where the illegal nodal point can move.
The following observation is a consequence of Observation~\ref{obs:identification_bundle_over_base}.
\begin{obs}\label{obs:tatr}
The restriction of the tautological line $\CL_i|_{F_p}$ is canonically trivial.
\end{obs}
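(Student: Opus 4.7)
The plan is to deduce this directly from Observation~\ref{obs:identification_bundle_over_base}, so there is not really a ``main obstacle'' beyond unwinding definitions; the observation is essentially a bookkeeping statement about how the cotangent-at-$i$ line bundle on $\CM_\Gamma$ factors through $F_\Gamma$.

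First I would note that the interior label $i$ belongs to $I(\Gamma)$ and hence, by Observation~\ref{obs:Bmaps}, to $I(\CB\Gamma)$ as well, so $\CL_i \to \CM_{\CB\Gamma}$ and the forgetful morphism $F_\Gamma : \oCM_\Gamma \to \oCM_{\CB\Gamma}$ are both defined. Observation~\ref{obs:identification_bundle_over_base} then supplies a canonical isomorphism
\[
\mu_i^* t_{\Gamma,\CB\Gamma} : F_\Gamma^*\CL_i \xrightarrow{\;\sim\;} \CL_i
\]
of line bundles over $\oCM_\Gamma$; this uses exactly the fact that, for $i \in I(\Gamma)$ and $\Gamma \in \CG_{odd}$, the morphism $\widetilde{For}_{\Gamma,\CB\Gamma}$ does not contract the component carrying the $i$-th interior marked point (Observation~\ref{obs:parity}), so the map $t_{\Gamma,\CB\Gamma}$ is an isomorphism along $\mu_i$.

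Next I would restrict this isomorphism to the fiber $F_p = F_\Gamma^{-1}(p) \subset \oCM_\Gamma$. By definition, $F_\Gamma^*\CL_i$ restricted to $F_p$ is the pullback of the single vector space $(\CL_i)_p$ over a connected base, i.e., canonically the trivial bundle $F_p \times (\CL_i)_p$. Composing with the isomorphism above yields a canonical trivialization
\[
\CL_i\big|_{F_p} \;\simeq\; F_p \times (\CL_i)_p,
\]
which is the claim of the observation. Since this trivialization is built from the canonically defined morphism $t_{\Gamma,\CB\Gamma}$ and requires no auxiliary choice (no frame, no local coordinate), it is indeed canonical; in particular, it is equivariant under the symmetric group actions permuting boundary labels of $v^-_\Gamma$, in the sense of Remark~\ref{rm:equiv}, which is the property that will be used subsequently when analyzing the behavior of $\tr$ along $F_p$.
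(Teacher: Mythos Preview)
Your proof is correct and follows exactly the approach the paper indicates: the paper simply states that Observation~\ref{obs:tatr} is a consequence of Observation~\ref{obs:identification_bundle_over_base}, and you have unwound precisely that deduction. One tiny quibble: $F_p$ is a disjoint union of closed intervals rather than connected, but this is irrelevant to your argument since the pullback of a single fiber $(\CL_i)_p$ along the constant map $F_p \to \{p\}$ is canonically trivial regardless of connectedness.
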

So, we think of sections of $\CL_i|_{F_p}$ as complex valued functions well-defined up to multiplication by a constant in $\C^\times.$ The following observation is immediate from the definition of a canonical section.
\begin{obs}\label{obs:ccon}
A canonical section of $\CL_i|_{F_p}$ is constant.
\end{obs}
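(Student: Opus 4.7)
The plan is to unwind the definitions and show that the trivialization of $\CL_i|_{F_p}$ furnished by Observation~\ref{obs:tatr} is the very one with respect to which pullbacks under $F_\Gamma$ are tautologically constant along fibers. First, recall that by Remark~\ref{rmk:scc} (and Observation~\ref{obs:extension_of_canonical_conds}), a canonical section $s$ of $\CL_i$ over $\CM_\Gamma$ has the form $s = F_\Gamma^* s^{\CB\Gamma}$ for some section $s^{\CB\Gamma}$ of $\CL_i \to \CM_{\CB\Gamma}$, where the pullback is taken with respect to the isomorphism $\CL_i \simeq F_\Gamma^* \CL_i$ of Observation~\ref{obs:identification_bundle_over_base}, given by $\mu_i^* t_{\Gamma,\CB\Gamma}$.

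Next, I would note that the trivialization of $\CL_i|_{F_p}$ appearing in Observation~\ref{obs:tatr} is obtained precisely by restricting this same isomorphism $\mu_i^* t_{\Gamma,\CB\Gamma}$ to the fiber $F_p = F_\Gamma^{-1}(p)$: along $F_p$, the bundle $F_\Gamma^* \CL_i$ is the constant bundle with fiber $(\CL_i)_p$, and the quoted observation takes this constant bundle as the trivialization. Here the fact that $F_\Gamma$ does not contract the component carrying the $i^{th}$ marked point, which is guaranteed by Observation~\ref{obs:parity} and underpins Observation~\ref{obs:identification_bundle_over_base}, is what makes $\mu_i^* t_{\Gamma,\CB\Gamma}$ an honest isomorphism rather than merely a morphism.

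With these identifications in place, the claim is immediate: under the trivialization of Observation~\ref{obs:tatr}, the section $s|_{F_p} = F_\Gamma^* s^{\CB\Gamma}|_{F_p}$ corresponds to the constant function with value $s^{\CB\Gamma}(p)$, since $F_\Gamma$ maps the whole of $F_p$ to the single point $p$. The only non-routine step is the compatibility check between the two trivializations in the previous paragraph; I expect this to be a direct matter of tracing through the definition of $t_{\Gamma,\CB\Gamma}$ and noting that it does not depend on the position of the illegal boundary node within a segment of $\partial \Sigma_{v^-_\Gamma}$, precisely because the forgetful map $\widetilde{For}_{\Gamma,\CB\Gamma}$ identifies the component of $v^-_\Gamma$ carrying the $i^{th}$ marked point with its image in $\CB\Gamma$ independently of that position.
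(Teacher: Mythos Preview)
Your proof is correct and is precisely the reasoning the paper intends: the paper states only that the observation ``is immediate from the definition of a canonical section,'' and what you have written is the unwinding of that immediacy. The key point---that the trivialization of Observation~\ref{obs:tatr} is the restriction to $F_p$ of the isomorphism $\mu_i^* t_{\Gamma,\CB\Gamma}$ of Observation~\ref{obs:identification_bundle_over_base}, so that a section pulled back by $F_\Gamma$ is tautologically constant along $F_p$---is exactly right, and your final paragraph's ``compatibility check'' is not really an extra step but simply the observation that the same isomorphism is used in both places.
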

On the other hand, the TRR section $\tr$ twists non-trivially around $F_p$ as follows. For $i \in \ell_B(v^{-}_\Gamma),$ let $\Gamma_i$ be the unique stable graph in $\pB\Gamma$ with three open vertices $v_i^0,v_i^\pm$ and two boundary edges $e^\pm = \{v_i^\pm,v_i^0\}$ such that $\ell_B(v_i^0) = i.$
The boundary $\partial F_p$ consists of two stable disks modelled on each graph $\Gamma_i$ for $i \in \ell_B(v^-_\Gamma),$ one for each cyclic order of the $3 = k(v^0_i)$ boundary marked points on the component corresponding to $v^0_i.$ Let $\hat F_p$ be the quotient space of $F_p$ obtained by identifying the two boundary points corresponding to $\Gamma_i$ for each $i \in \ell_B(v^-_\Gamma).$ Thus $\hat F_p$ is homeomorphic to the circle $S^1.$ The following observation follows from the definition of $\tr.$
\begin{obs}\label{obs:desc}
The section $\tr|_{F_p}$ descends to a continuous function $\hat \tr_p : \hat F_p \to \C^\times.$
\end{obs}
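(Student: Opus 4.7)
The plan is to verify two things. First, $\tr|_{F_p}$ is nowhere zero, so that it has image in $\C^\times$. Second, its two limit values at the pair of boundary points of $F_p$ glued together in $\hat F_p$ coincide; continuity of $\hat \tr_p$ away from these identified pairs will then be immediate from the smoothness of $\tilde \tr$.

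For the non-vanishing, I would appeal to Lemma~\ref{lm:tr}, which identifies the zero locus of $\tilde \tr$ as $\bigcup_{\Lambda \in \widetilde T} \oCM_\Lambda$, i.e., the stable disks where the label $1$ sits on a closed component. Every point of $F_p$ lies in a stratum $\CM_{\Gamma'}$ with $\Gamma' \in \pu \Gamma$ or $\Gamma' = \Gamma_i$ for some $i \in \ell_B(v^-_\Gamma)$, and in every such graph the label $1$ sits on the open vertex $v^-_\Gamma$ or $v^-_i$. Hence $F_p$ misses the vanishing locus entirely.

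For the matching of limits, fix $i \in \ell_B(v^-_\Gamma)$ and let $\Sigma_1, \Sigma_2$ be the two stable disks in $\partial F_p \cap \CM_{\Gamma_i}$ distinguished only by the cyclic order of the boundary marked points on $v^0_i$. The $v^-_i$-components of $\Sigma_1$ and $\Sigma_2$ agree as marked disks, and the trivialization of $\CL_1|_{\oCM_\Gamma}$ supplied by Observation~\ref{obs:identification_bundle_over_base}---equivalently, the trivialization on $F_p$ from Observation~\ref{obs:tatr}---is pulled back from $\CM_{\bv{1}{\Gamma}}$ and so blind to the $v^0_i$-component. It therefore suffices to check that $\omega_{\Sigma_j}$ agrees on the doubled sphere of $v^-_i$ for $j=1,2$. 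Such a differential is pinned down by its simple poles at $\bar z_1$ (residue $-1$) and at the node $e^-$; I would trace the residue-sum-zero condition through the chain $v^+_\Gamma \to v^0_i \to v^-_i$, using the simple pole of residue $+1$ at $x_1$ on the doubled sphere of $v^+_\Gamma$ and the absence of any other poles on the doubled sphere of $v^0_i$, to force the residue at $e^-$ on the $v^-_i$-side to equal $+1$ regardless of the cyclic order on $v^0_i$.

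The main obstacle I anticipate is the bookkeeping at the codimension-one boundary: one must confirm that the trivialization of Observation~\ref{obs:identification_bundle_over_base} extends to $\CM_{\Gamma_i} \subset \partial\oCM_\Gamma$ so as to make $\tr(\Sigma_1)$ and $\tr(\Sigma_2)$ literally comparable complex numbers, and that the labels of the nodes $e^\pm$ do not silently swap when the cyclic order on $v^0_i$ does. Granted these, the residue analysis yields $\omega_{\Sigma_1}|_{v^-_i} = \omega_{\Sigma_2}|_{v^-_i}$, evaluation at $z_1$ produces the same number in the common trivialization, and $\hat \tr_p : \hat F_p \to \C^\times$ is therefore well defined and continuous.
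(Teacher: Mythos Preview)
Your proposal is correct and is essentially a careful unpacking of what the paper leaves implicit: the paper states only that the observation ``follows from the definition of $\tr$,'' and your residue-chasing through the chain $v_i^+ \to v_i^0 \to v_i^-$ is exactly the computation that makes this precise. The obstacles you flag are not genuine problems---the trivialization of Observation~\ref{obs:tatr} is pulled back from $\CM_{\CB\Gamma}$ and hence depends only on the $v_i^-$-component, and the edges $e^\pm$ are determined by which of $v_i^\pm$ they attach to, not by the cyclic order on $v_i^0$---so the matching $\tr(\Sigma_1)=\tr(\Sigma_2)$ goes through as you outline.
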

\begin{lm}\label{lm:wino}
The winding number of $\hat \tr_p$ is $-1.$
\end{lm}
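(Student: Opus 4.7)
The plan is to compute $\hat\tr_p$ explicitly in a convenient coordinate model of the fiber and then read off the winding number by inspection.

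Fix $p \in \CM_{\CB\Gamma}$ and choose coordinates on the $v^-_\Gamma$-component of a representative $\Sigma \in F_p$ so that this component is identified with the closed upper half plane, with $z_1 = i$ and the $a$ fixed boundary markings at prescribed real points determined by $p$. The position of the illegal nodal point is then a single coordinate $t \in \R \cup \{\infty\}$, so $\hat F_p$ is exactly this topological circle. Under the canonical trivialization of Observation~\ref{obs:tatr}, the frame $dz|_{z_1 = i}$ of $T^*_{z_1}\Sigma^-$ is constant in $t$, so $\hat\tr_p$ becomes a $\C^\times$-valued function of $t$ in this frame.

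The differential $\omega_\Sigma$, restricted to the doubled $v^-$-component $\Sigma^-_\C \cong \mathbb{P}^1$, has a pole of residue $-1$ at $\bar z_1 = -i$ and a pole at the node $t$, with no other poles on this component. Since the residues on $\mathbb{P}^1$ sum to zero, the residue at $t$ must be $+1$; this reasoning is self-contained on the $v^-$ side and does not depend on the structure of $v^+$ or any bubbling in $p$. As $\mathbb{P}^1$ carries no nonzero holomorphic differential, the prescribed poles determine
\[
\omega_\Sigma|_{\Sigma^-_\C} = \frac{dz}{z-t} - \frac{dz}{z+i},
\]
and evaluating at $z_1 = i$ gives
\[
\hat\tr_p(t) = \frac{1}{i-t} - \frac{1}{2i}.
\]

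The winding number is then immediate. A direct computation yields $|\hat\tr_p(t)|^2 = \tfrac{1}{4}$ for every $t \in \R\cup\{\infty\}$, so the image lies on the circle of radius $\tfrac{1}{2}$ about $0$. The sample values at $t = -\infty, -1, 0, 1, +\infty$ are $\tfrac{i}{2}, \tfrac{1}{2}, -\tfrac{i}{2}, -\tfrac{1}{2}, \tfrac{i}{2}$, which traverse this circle once clockwise as $t$ increases. The natural orientation on $F_p$ is the boundary orientation of $\HH$ (outward normal $-i$ followed by tangent $+1$ yields the complex basis $\{1,i\}$), so $t$ increasing is the positive direction; hence the winding number equals $-1$. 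The main obstacle is bookkeeping of signs — pinning down the residue at the node on the $v^-$ side and the boundary orientation convention on $\partial\HH$ — but once these are fixed, the explicit formula for $\hat\tr_p$ makes the answer immediate.
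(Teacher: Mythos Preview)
Your proof is correct and takes essentially the same approach as the paper: both identify the $v^-$-component with a standard domain, pin down $\omega_\Sigma$ on that component from the residue conditions, and read off the winding number from the resulting explicit formula. The only difference is cosmetic—the paper uses the unit disk with $z_1=0$ (so $\bar z_1=\infty$ and the $\bar z_1$-term drops, giving $\tr(\Sigma_b)=-e^{-ib}$), while you use the upper half plane with $z_1=i$; your added verification that the image lies on a circle of radius $\tfrac12$ is a nice sanity check but not required.
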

\begin{proof}
We define a map from a subset $B \subset (0,2\pi),$ to $\ior F_p$ as follows. To each $b \in B,$ we assign a surface $\Sigma_b = (\Sigma^-_b,\Sigma^+)$. The component $\Sigma^+$ corresponds to the vertex $v^+_\Gamma$ and the component $\Sigma_b^-$ corresponds to the vertex $v^-_\Gamma.$ As implied by the notation, $\Sigma^+$ is independent of~$b.$ The exact form of $\Sigma^+$ is not important for the present calculation, and its isomorphism class is determined uniquely by $p.$ We fix $\Sigma^-_b$ as follows. Let $\nu = i_{v^-_\Gamma}\left(e_\Gamma\right) \in \LL$ and choose an arbitrary $\rz{i_0} \in \ell_B(v^-_\Gamma).$ Identify $\Sigma^-_b$ with the unit disk $D^2 \subset \C$ in such a way that $z_1 = 0$ and $x_{i_0} = 1.$ The position of the remaining boundary marked points in $\partial D^2$ is then uniquely determined by $p.$ Take $B$ to be the set of arguments of the complement of the marked points in $\partial D^2.$ The parameter $b \in B$ determines the nodal point $x_\nu \in \partial \Sigma^-_b$ by the formula $x_\nu = e^{\sqrt{-1}b}.$ The complex double $(\Sigma^-_b)_\C$ is naturally identified with the extended complex plane $\C\cup \infty$. The point conjugate to $z_1$ is $\infty$ and
\[
\omega_{\Sigma_b}|_{\Sigma_b^-} = \frac{dz}{z - x_\nu} = \frac{dz}{z - e^{\sqrt{-1}b}}.
\]
So
\begin{equation}\label{eq:tform}
\tr(\Sigma_b) = \omega_{\Sigma_b}|_{z_1}  =  -e^{-\sqrt{-1}b}.
\end{equation}
The continuity of $\hat \tr_p$ and formula~\eqref{eq:tform} imply that $\hat \tr_p$ rotates once in the negative direction around the fiber $\hat F_p.$
\end{proof}

\begin{lm}\label{lm:brbs}
We have
\begin{multline*}
\int_{\oCM_{0,k,l}} e(E;\mathbf{s}) - \int_{\oCM_{0,k,l}} e(E;\trs) =  \\
= 2^{\frac{k-1}{2}}\sum_{\substack{S \coprod R = \{2,\ldots,l\} \\ k_1 + k_2 = k-1}} \binom{k}{k_1} \left\langle \tau_{n-1} \prod_{i \in S} \tau_{a_i} \sigma^{k_1}\right\rangle^o_0 \left\langle \prod_{i \in R} \tau_{a_i} \sigma^{k_2+2}\right\rangle^o_0.
\end{multline*}
\end{lm}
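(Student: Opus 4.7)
My strategy is to express the difference of Euler numbers as a count of zeros of a transverse homotopy between $\s$ and $\trs$ using Lemma~\ref{lm:H}. I would consider a homotopy of the form
\[
H(p,t) = \bigl((1-t)s_{11}(p) + t\tr(p) + t(1-t)w(p)\bigr) \oplus \sr(p), \qquad w \in \CS_1,
\]
so that $H$ only modifies the first copy of $\CL_1$. Since $\tr|_{\CM_\Gamma}$ is canonical for $\Gamma \in V$ by Lemma~\ref{lm:tcan}, and $s_{11} \in \CS_1$, Lemma~\ref{lem:trickey_homotopy} applied with $C = \bigcup_{\Gamma \in V}\CM_\Gamma$ and $E_1 = \bigoplus_{(i,j)\neq(1,1)}\CL_i^{\oplus}$ produces a choice of $w$ making $H$ transverse to zero globally and nowhere vanishing on $V \times [0,1]$. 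Lemma~\ref{lm:H} then gives
\[
\int_{\oCM_{0,k,l}} e(E;\s) - \int_{\oCM_{0,k,l}} e(E;\trs) = \# Z(H),
\]
with $Z(H) \subset \pu U \times (0,1)$. A dimension count using Observation~\ref{obs:dim_of_base_mod} localizes $Z(H)$ to the top strata, reducing the problem to computing the contribution from each $\CM_\Gamma \times (0,1)$ for $\Gamma \in U$.

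For a fixed $\Gamma \in U$, I would analyze $Z(H) \cap (\CM_\Gamma \times (0,1))$ via the fibration $F_\Gamma : \CM_\Gamma \to \CM_{\CB\Gamma}$ whose fibers are the circles of intervals described in Lemma~\ref{lm:wino}. By Observation~\ref{obs:extension_of_canonical_conds}, the canonical multisection $\sr$ is pulled back from some $\sr^{\CB\Gamma}$ on $\CM_{\CB\Gamma}$, which after transverse extension has $\int_{\CM_{\CB\Gamma}} e(E_1^{\CB\Gamma})$ signed zeros (a rank count shows this is the top dimension). Over each zero $p$ of $\sr^{\CB\Gamma}$, the restriction $H_1|_{F_p \times [0,1]}$ is a homotopy of sections of $\CL_1|_{F_p}$ connecting the constant section $s_{11}|_{F_p}$ (constant by Observation~\ref{obs:ccon}) to $\tr|_{F_p}$, whose quotient to $\hat F_p \cong S^1$ has winding number $-1$ by Lemma~\ref{lm:wino}. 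The signed count of zeros of a transverse perturbation of such a homotopy equals the winding number difference, contributing $\pm 1$ per base zero $p$.

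The product decomposition $\CM_{\CB\Gamma} \cong \CM_{v^-_\Gamma}\times\CM_{v^+_\Gamma}$ with the splitting $E_1^{\CB\Gamma} = E_1^- \boxtimes E_1^+$ then factors $\int_{\CM_{\CB\Gamma}} e(E_1^{\CB\Gamma})$ as a product of two open descendent integrals, one on each side, producing precisely the two brackets appearing in the stated identity with the prefactor $2^{(k-1)/2}$ from the sum of the individual prefactors. Summing over $\Gamma \in U$ amounts to summing over partitions $\{1\}\sqcup S \sqcup R$ of the interior labels and over the distribution of boundary labels between $v^-_\Gamma$ and $v^+_\Gamma$, which yields the combinatorial factor $\binom{k}{k_1}$ in the stated identity.

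The main obstacle will be the careful bookkeeping of orientations: the sign of the winding number from Lemma~\ref{lm:wino}, the product orientation $o_{\CB\Gamma}$ from Lemma~\ref{lm:Bor}, the natural orientation on the fibers of $F_\Gamma$, and the ordering convention for the $[0,1]$-factor in $Z(H)$ must all be reconciled to yield the overall positive sign. Ensuring that transversality choices are compatible across different strata so that $Z(H)$ is indeed supported precisely on the top strata of $U$, and not on higher codimension strata, will also require some care.
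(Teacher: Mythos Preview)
Your approach is correct and essentially identical to the paper's proof: homotopy via Lemma~\ref{lem:trickey_homotopy} with $E_2=\CL_1$ and $\CC=V$, localization to $U$ via Lemma~\ref{lm:H}, fiberwise winding via Observations~\ref{obs:ccon}--\ref{obs:desc} and Lemma~\ref{lm:wino}, and factorization of $\#Z\bigl(\sr^{\CB\Gamma}\bigr)$ along the product $\CM_{\CB\Gamma}\simeq\CM_{\CB\Gamma^+}\times\CM_{\CB\Gamma^-}$. The one point to tighten is your phrase ``after transverse extension'': $\sr^{\CB\Gamma}$ already lives on all of $\CM_{\CB\Gamma}$ and cannot be perturbed without disturbing the homotopy, so instead the paper chooses $\s$ from the outset with the strong transversality of Lemma~\ref{lm:tscm}\ref{it:better_trans}, which simultaneously makes $\sr^{\CB\Gamma}$ and its factors $\s^\pm_{\CB\Gamma}$ transverse and guarantees (via the special canonical structure) that the factors restrict to canonical boundary conditions on $\partial\oCM_{\CB\Gamma^\pm}$, so their zero counts really are the open descendent integrals.
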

\begin{proof}
Let
\[
E_2 = \CL_1 \to \oCM_{0,k,l},
\]
so $E = E_1 \oplus E_2.$ Let $\CC =  V.$ Lemma~\ref{lm:tcan} shows that $\s$ and $\trs$ satisfy the hypotheses of Lemma~\ref{lem:trickey_homotopy} with the preceding choice of $E_1,E_2,\CC.$ So, we obtain a homotopy $H$ between $\mathbf{s}$ and $\trs$ of the form~\eqref{eq:srw} that is transverse to zero, vanishes nowhere on $\CM_\Gamma \times [0,1]$ for $\Gamma \in V,$ and such that the projection of $H$ to $E_1$ equals $\sr$ independent of time. By Lemma~\ref{lm:H} and equation~\eqref{eq:UV}, we have
\begin{multline}\label{eq:rsH}
\int_{\oCM_{0,k,l}} e(E;\mathbf{s}) - \int_{\oCM_{0,k,l}} e(E;\trs) =\\
 = -\# Z(H) = -\sum_{\Gamma \in U} \# Z\left(H|_{\oCM_\Gamma\times[0,1]}\right).
\end{multline}
Denote by $\pi : \partial \oCM_{0,k,l} \times [0,1] \to \partial\oCM_{0,k,l}$ the projection. Decompose $H = H_1 \oplus H_2,$ where $H_i \in C^\infty_m(\pi^* E_i).$ Then $H_1 = \pi^* \sr.$ Transversality implies that
\[
Z\left(\sr|_{\oCM_\Gamma}\right) \subset \CM_\Gamma.
\]
By Remark~\ref{rmk:scc}, for each $\Gamma \in \pB\Gamma_{0,k,l}$ we have $\sr^{\Gamma} = F^*_\Gamma \sr^{\CB\Gamma}.$ Write
\[
\# Z\left(\sr^{\CB\Gamma}\right) = \sum_{p \in Z\left(\sr^{\CB\Gamma}\right)} \epsilon(p),
\]
where $\epsilon(p)$ is the weight of $p$ as in Definition~\ref{df:or}.
It follows from Lemma~\ref{lm:Bor} that for $\Gamma \in U,$ we have
\begin{align}\label{eq:HG}
\# Z\left(H|_{\oCM_\Gamma\times[0,1]}\right) &= \# Z\left(\pi^* F_\Gamma^* \sr^{\CB\Gamma}\right) \cap Z(H_2) \\
& = \sum_{p \in Z\left(\sr^{\CB\Gamma}\right)}\epsilon(p)\cdot \# Z\left(H_2|_{F_p \times [0,1]}\right).\notag
\end{align}
Since $H$ is of the form~\eqref{eq:srw}, we have
\begin{equation}\label{eq:H2}
H_2(q,t) = \rho(q)t + s_{11}(q)(1-t) + t(1-t) w_2(q),
\end{equation}
where $w_2$ is a canonical multisection.
Let $p \in Z(\sr^{\CB\Gamma}).$ It follows from equation~\eqref{eq:H2} and Observations~\ref{obs:tatr},~\ref{obs:ccon} and~\ref{obs:desc}, that $H_2|_{F_p \times [0,1]}$ descends to a homotopy $\hat H_{2,p}$ on $\hat F_p \times [0,1],$ which we may think of as taking values in $\C^\times.$ Thus
\begin{equation}\label{eq:HhH}
\#Z\left(H_2|_{F_p \times [0,1]}\right) = \#Z\left(\hat H_{2,p}\right).
\end{equation}
Since $\hat H_{2,p}|_{\hat F_p \times \{0\}}$ is canonical and $\hat H_{2,p}|_{\hat F_p\times \{1\}} = \hat \rho_p,$ Observation~\ref{obs:ccon} and Lemma~\ref{lm:wino} imply that
\begin{equation}\label{eq:hH-1}
\#Z\left(\hat H_{2,p}\right) = -1.
\end{equation}
Combining equations~\eqref{eq:rsH},~\eqref{eq:HG},~\eqref{eq:HhH} and~\eqref{eq:hH-1}, we obtain
\begin{equation}\label{eq:sumU}
\int_{\oCM_{0,k,l}} e(E;\mathbf{s}) - \int_{\oCM_{0,k,l}} e(E;\trs) = \sum_{\Gamma \in U} \# Z\left(\sr^{\CB \Gamma}\right).
\end{equation}

It remains to analyze $\# Z\left(\sr^{\CB \Gamma}\right)$ for $\Gamma \in U.$ Denote by $\hat v^\pm_\Gamma \in V(\CB \Gamma)$ the vertices corresponding to $v^\pm_\Gamma \in V(\Gamma).$ Recall Definitions~\ref{def:si} and~\ref{def:span}. For $\Lambda \in \pu\CB\Gamma,$ abbreviate
\[
\Lambda^\pm = \Lambda_{\varsigma_{\Lambda,\CB\Gamma}^{-1}\left(\hat v^\pm_\Gamma\right)}.
\]
Thus we have a bijection
\begin{equation}\label{eq:biG2}
\pu\CB\Gamma \overset{\sim}{\longrightarrow} \pu \CB\Gamma^+ \times \pu\CB\Gamma^-, \qquad \Lambda \mapsto (\Lambda^+,\Lambda^-),
\end{equation}
and a corresponding diffeomorphism
\[
d : \oCM_{\CB\Gamma} \longrightarrow \oCM_{\CB\Gamma^+} \times \oCM_{\CB\Gamma^-}
\]
given by
\[
d|_{\oCM_\Lambda} = For_{\Lambda,\Lambda^+}\times For_{\Lambda,\Lambda^-}, \qquad \Lambda \in \pu\CB\Gamma.
\]
Denote by
\[
p_\pm : \oCM_{\CB\Gamma^+} \times \oCM_{\CB\Gamma^-} \longrightarrow \oCM_{\CB\Gamma^\pm}
\]
the projection maps.
Let
\begin{gather*}
E^+_{\CB\Gamma} = \CL_1^{\oplus n-1}\oplus\bigoplus_{i \in \ell_I(v^+_\Gamma)\setminus\{1\}}\CL_i^{\oplus a_i} \longrightarrow \oCM_{\CB\Gamma^+}, \\
E^-_{\CB\Gamma} = \bigoplus_{i \in \ell_I(v^-_\Gamma)} \CL_i^{\oplus a_i} \longrightarrow \oCM_{\CB\Gamma^-}, \\
E_{\CB\Gamma} = \CL_1^{\oplus n-1} \oplus \bigoplus_{i = 2}^l \CL_i^{\oplus a_i} \longrightarrow \oCM_{\CB\Gamma}.
\end{gather*}
Thus
\[
d^*\left(p_+^*E^+ \oplus p_-^* E^-\right) = E_{\CB\Gamma}.
\]
Observation~\ref{obs:BpB} and bijection~\eqref{eq:biG2} imply that
\begin{equation}\label{eq:V2}
\CV^i_{\pu \Gamma} = \CV^i_{\pu \CB\Gamma} = \CV^i_{\pu \CB\Gamma^+} \cup \CV^i_{\pu \CB\Gamma^-}.
\end{equation}
Since $\pu \Gamma \subset \pB \Gamma_{0,k,l},$ by definition of a special canonical multisection, we have
\[
s^v_{ij} \in C^\infty_m(\CM_v,\CL_i), \qquad v \in \CV^i_{\pu\Gamma},
\]
such that $s_{ij}^\Lambda = \Phi^*_{\Lambda,i}s^v_{ij}$ for all $\Lambda \in \pB \Gamma$ with $\bv{i}{\Lambda} = v.$
Let
\[
\s^\pm_{\CB\Gamma} \in C^\infty_m(E^\pm_{\CB\Gamma})
\]
be given by
\begin{gather*}
\left(\s^+_{\CB\Gamma}\right)^\Omega = \bigoplus_{\substack{i \in \ell_I(v_\Gamma^+),\; j \in [a_{ij}], \\ (i,j) \neq (1,1)}} \Phi_{\Omega,i}^*s_{ij}^{\bv{i}{\Omega}}, \qquad \Omega \in \pu\CB\Gamma^+, \\
\left(\s^-_{\CB\Gamma}\right)^\Omega = \bigoplus_{i \in \ell_I(v_\Gamma^-),\; j \in [a_{ij}]} \Phi_{\Omega,i}^*s_{ij}^{\bv{i}{\Omega}}, \qquad \Omega \in \pu\CB\Gamma^-.
\end{gather*}
Here, $s_{ij}^\bv{i}{\Omega}$ are predetermined because of equation~\eqref{eq:V2}. Since we have chosen $\s$ to satisfy the strong transversality condition of Lemma~\ref{lm:tscm} part \ref{it:better_trans}, the multisections $\s^\pm_{\CB\Gamma}$ are transverse to $0.$
For $\Omega \in \pu \CB\Gamma,$ Observation~\ref{obs:fuf} and equation~\eqref{eq:phigi} imply that
\[
\Phi_{\Omega,i} = \Phi_{\Omega^\pm,i} \circ p_\pm \circ d, \qquad i \in \ell_I(v^\pm_\Gamma).
\]
It follows that
\begin{equation*}
\sr^{\CB\Gamma} = d^* (p_+^* \s_{\CB\Gamma}^+ \oplus p_-^* \s_{\CB\Gamma}^-).
\end{equation*}
Thus
\begin{equation}\label{eq:spBG}
\# Z\left(\sr^{\CB\Gamma}\right) = \# Z\left(p_+^* \s_{\CB \Gamma}^+\right) \cap Z\left(p_-^* \s_{\CB \Gamma}^-\right).
\end{equation}
Dimension counting and transversality show this number vanishes unless $\rk E_{\CB \Gamma}^\pm = \dim_\C \oCM_{\CB\Gamma}^\pm.$ In that case, transversality implies that $\s_{\CB\Gamma}^\pm|_{\partial \oCM_{\CB\Gamma}^\pm}$ vanishes nowhere. Thus
\begin{multline}\label{eq:pmEu}
\# Z\left(p_+^* \s_{\CB \Gamma}^+\right) \cap Z\left(p_-^* \s_{\CB \Gamma}^-\right) = \\ =\left(\int_{\oCM_{\CB\Gamma^+}} e\left(E_{\CB\Gamma}^+,\s_{\CB\Gamma^+}|_{\partial \oCM_{\CB\Gamma}^+}\right)\right) \left(\int_{\oCM_{\CB\Gamma^-}} e\left(E_{\CB\Gamma}^-,\s_{\CB\Gamma^-}|_{\partial \oCM_{\CB\Gamma}^-}\right)\right).
\end{multline}
The graph $\CB\Gamma^\pm$ has a single vertex $v_{\CB \Gamma}^\pm.$ By construction, $\s_{\CB\Gamma}^\pm|_{\partial \oCM_{\CB\Gamma}^\pm}$ is a canonical boundary condition. So,
\begin{equation}\label{eq:pmde}
\int_{\oCM_{\CB\Gamma^\pm}} e\left(E_{\CB\Gamma}^\pm,\s_{\CB\Gamma^\pm}|_{\partial \oCM_{\CB\Gamma}^\pm}\right) = 2^\frac{k\left(\hat v_{\Gamma}^\pm\right)-1}{2} \left\langle \prod_{i \in \ell_I(\hat v_{\Gamma}^\pm)} \tau_{a_i} \sigma^{k\left(\hat v_{\Gamma}^\pm\right)} \right \rangle_0^o.
\end{equation}
For each $\Gamma \in U,$ we have $\rz{1} \in \ell_B(v^{+}_\Gamma)$ and $e_\Gamma$ is legal for $v^+_\Gamma$. It follows that
\[
k\left(\hat v_{\Gamma}^+\right) \geq 2, \qquad \Gamma \in U.
\]
Keeping in mind that
\[
k\left(\hat v_{\Gamma}^+\right) + k\left(\hat v_{\Gamma}^-\right) = k + 1, \qquad \ell_I(\hat v_\Gamma^+) \cup \ell_I(\hat v_\Gamma^-) = \{2,\ldots,l\},
\]
equations~\eqref{eq:sumU},~\eqref{eq:spBG},~\eqref{eq:pmEu} and~\eqref{eq:pmde}, imply the lemma.
\end{proof}
\begin{proof}[Proof of Theorem \ref{thm:trr1_2}]
The differential equation TRR I is equivalent to the following recursion relation:
\begin{multline}\label{eq:TRRr}
\left\langle \tau_n \prod_{i = 2}^l \tau_{a_i} \sigma^k \right \rangle^o_0 = \\
\begin{aligned}
= &\sum_{S \coprod R = \{2,\ldots,l\}}\left\langle \tau_0\tau_{n-1}\prod_{i \in S}\tau_{a_i}\right\rangle^c_0
\left\langle \tau_0\prod_{i\in R}\tau_{a_i}\sigma^k\right\rangle^o_0 +\\
& + \sum_{\substack{S \coprod R = \{2,\ldots,l\} \\ k_1 + k_2 = k-1}} \binom{k}{k_1} \left\langle \tau_{n-1} \prod_{i \in S} \tau_{a_i} \sigma^{k_1}\right\rangle^o_0 \left\langle \prod_{i \in R} \tau_{a_i} \sigma^{k_2+2}\right\rangle^o_0.
\end{aligned}
\end{multline}
By definition
\[
\left\langle \tau_n \prod_{i = 2}^l \tau_{a_i} \sigma^k \right \rangle^o_0 = \int_{\oCM_{0,k,l}} e(E;\mathbf{s}).
\]
So, recursion~\eqref{eq:TRRr} follows immediately from Lemmas~\ref{lm:br} and~\ref{lm:brbs}. The proof of TRR II is similar, except that we define $\omega_\Sigma$ to be the unique meromorphic differential on the normalization of $\Sigma$ with the following properties. At $\bar z_1$ it has a simple pole with residue $-1$, and at $z_2$ it has a simple pole with residue $1.$ For any node the two preimages have at most simple poles and the residues at these poles sum to zero. Apart from these points, $\omega_\Sigma$ is holomorphic. As in the proof of TRR I, the section $\tilde \tr\left(\Sigma\right)$ is the evaluation of $\omega_\Sigma$ at $z_1.$
\end{proof}

\section{Proof of Theorem \ref{thm:vir}}  \label{VirSec}
\subsection{Virasoro in genus \texorpdfstring{$0$}{0}}
The open Virasoro operators $\mathcal{L}_n$ and the
 partitions functions $F_0^c$ and $F_0^o$ were
defined in Section \ref{deq}. Define
$$ G_r = \mathcal{L}_r \ \exp(u^{-2}F^c_0+u^{-1}F^o_0)$$
for $r\geq -1$.
The genus 0 term of $G_r$ is defined by
$$\text{Coeff}_{\ u^{-1}} \Big(  G_r \exp(-u^{-2}F^c_0-u^{-1}F^o_0)
\Big) \ .$$
The claim of Theorem \ref{thm:vir} is:
$$\forall r\geq -1, \ \ \ \text{Coeff}_{\ u^{-1}} \Big(  G_r \exp(-u^{-2}F^c_0-u^{-1}F^o_0)
\Big) = 0 \ .$$

By the open string and dilaton equations, genus 0 terms of
$G_{-1}$ and $G_0$ vanish. Using the Virasoro bracket, Theorem \ref{thm:vir}
follows from
the vanishing
\begin{equation} \label{fvvt2}
\text{Coeff}_{\ u^{-1}} \Big(  G_2 \exp(-u^{-2}F^c_0-u^{-1}F^o_0)
\Big) =0\ .
\end{equation}
However, for the proof of \eqref{fvvt2}, we will require
the vanishing
\begin{equation} \label{fvvt1}
\text{Coeff}_{\ u^{-1}} \Big(  G_1 \exp(-u^{-2}F^c_0-u^{-1}F^o_0)
\Big) =0\ .
\end{equation}

\subsection{Vanishing for \texorpdfstring{$r=1$}{r=1}}
By unravelling the definition of $\mathcal{L}_1$, we can write the
vanishing \eqref{fvvt1} explicitly for $G_1$.
Using the Virasoro bracket
$$[\mathcal{L}_{-1}, \mathcal{L}_1]= -2 \mathcal{L}_0, \ $$
we need only check the vanishing of $G_1$ at coefficients
independent of $t_0$. The resulting equation
is
\begin{multline} \label{zzzz}
-\frac{15}{4} \blangle \tau_2  \prod_{i=1}^l \tau_{a_i}
\ \sigma^k \brangle_0^o
+\sum_{i=1}^l  \frac{(2a_i+1)(2a_i+3)}{4}
\blangle \tau_{a_i+1}  \prod_{j\neq i}\tau_{a_j} \
\sigma^k\brangle _0^o\\
+ \sum_{S\cup T=\{1,\ldots,l\}}
\blangle \prod_{i\in S} \tau_{a_i} \ \sigma^{k_S}\brangle_0^o\
k\binom{k-1}{k_S-1} \blangle  \prod_{i\in T} \tau_{a_i} \ \sigma^{k_T}
\brangle_0^o\ = 0.
\end{multline}
for $a_i\geq 1$ for all $i$.

Following the notation \eqref{zzr},
the number of boundary markings in \eqref{zzzz},
is set by the dimension constraint
\begin{eqnarray*}
k&= &5+ 2A -2l\\
k_S &=&3+2A_S - 2 l_S \ ,
\end{eqnarray*}
where the conventions
\begin{equation*}
 A= \sum_{i=1}^l a_i,    \ \ \ \ \ \ \forall i \ a_i \geq 1\ , \
\end{equation*}
\begin{equation*}
A_S= \sum_{i\in S} a_i, l_S=|S|, \forall S\subseteq\{1,2,\ldots,l\}\
\end{equation*}
are used.

After substituting the evaluation of Theorem \ref{ttxx} and
cancelling factors and simplifying, we reduce \eqref{zzzz} to the
identity:
\begin{multline}\label{vxxz2}
\frac{20+8A-8l}{4} (3+2A-l)! =
\\ \sum_{S\cup T=\{1,\ldots,l\}}
(5+2A-2l) \binom{4+2A-2l}{2+2A_S-2l_S} (1+2A_S-l_S)!(1+2A_T-l_T)!\ .
\end{multline}

\subsection{Closed TRR}
In order to prove \eqref{vxxz2},
we use the closed TRR in
genus 0 to derive combinatorial identities. The following identity is obtained from closed TRR:
\begin{multline*}
\blangle \tau_2 \tau_2 \tau_0 \prod_{i=1}^l
\tau_{2a_i-1} \ \tau_0^{4+2A-2l}\brangle_0^c = \\
\sum_{S\cup T=\{1,\ldots,l\}}
\blangle \tau_1 \tau_0 \prod_{i\in S} \tau_{2a_i-1}\
\tau_0^{2+2A_S-2l_S}\brangle_0^c
\binom{4+2A-2l}{2+2A_S-2l_S}
\\ \cdot \blangle \tau_2 \tau^2_0 \prod_{i\in T} \tau_{2a_i-1}\
\tau_0^{2+2A_T-2l_T}\brangle_0^c
\end{multline*}
After substituting the closed genus 0 evaluation,
cancelling factors, and simplifying, we find:
\begin{multline}\label{vxxz3}
\frac{4+2A-l}{4} (3+2A-l)! =
\\ \sum_{S\cup T=\{1,\ldots,l\}}
\frac{4+2A-l}{4} \binom{4+2A-2l}{2+2A_S-2l_S} (1+2A_s-l_s)!(1+2A_T-l_T)!\ .
\end{multline}
And Equation \ref{vxxz3} is clearly equivalent to Equation \ref{vxxz2}, as needed.

The proof of the vanishing \eqref{fvvt1} for $r=1$ is complete.
Hence, the open Virasoro constraint $\mathcal{L}_1$
is established in genus 0.

\subsection{Vanishing for \texorpdfstring{$r=2$}{r=2}}
By the definition of $\mathcal{L}_2$, we can write the
vanishing \eqref{fvvt2} explicitly for $G_2$.
Using the Virasoro bracket
$$[\mathcal{L}_{-1}, \mathcal{L}_2]= -3 \mathcal{L}_1 \ $$
and the validity of the constraint $\mathcal{L}_1$ in
genus 0,
we need only check the vanishing of $G_2$ at coefficients
independent of $t_0$.

After unravelling the definition of $\mathcal{L}_2$ (just as we
did for $\mathcal{L}_1$), we must prove the following
identity:
\begin{multline}\label{xz2}
\frac{42+12A-12l}{8} (5+2A-l)! =
\\ \sum_{S\cup T\cup U=\{1,\ldots,l\}}
(7+2A-2l) \binom{6+2A-2l}{2+2A_S-2l_S,
2+2A_T-2l_T,2+2A_U-2L_U}\\ \cdot (1+2A_S-l_S)!(1+2A_T-l_T)!
(1+2A_U-l_U)!
\ .
\end{multline}

By applying the closed TRR twice, we obtain the following
relation among closed descendent invariants:
\begin{multline*}
\blangle \tau_2 \tau_2 \tau_2 \prod_{i=1}^l
\tau_{2a_i-1} \ \tau_0^{6+2A-2l}\brangle_0^c = \\
\sum_{S\cup T\cup U=\{1,\ldots,l\}}
\binom{6+2A-2l}{2+2A_S-2l_S,
2+2A_T-2l_T,2+2A_U-2l_U}
\\ \cdot \blangle \tau_1 \tau_0 \prod_{i\in S} \tau_{2a_i-1}\
\tau_0^{2+2A_S-2l_S}\brangle_0^c
\\ \cdot \blangle \tau_2 \tau^2_0 \prod_{i\in T} \tau_{2a_i-1}\
\tau_0^{2+2A_T-2l_T}\brangle_0^c
\\ \cdot \blangle \tau_1 \tau_0 \prod_{i\in U} \tau_{2a_i-1}\
\tau_0^{2+2A_U-2l_U}\brangle_0^c\ .
\end{multline*}
After substituting the closed genus 0 evaluation,
we find the identity
\begin{multline}\label{xz23}
\frac{6+2A-l}{8} (5+2A-l)! =
\\ \sum_{S\cup T\cup U=\{1,\ldots,l\}}
\frac{6+2A-l}{6} \binom{6+2A-2l}{2+2A_S-2l_S,
2+2A_T-2l_T,2+2A_U-2L_U}\\ \cdot (1+2A_S-l_S)!(1+2A_T-l_T)!
(1+2A_U-l_U)!
\ .
\end{multline}
Identity \ref{xz23} is clearly equivalent to Identity \ref{xz2}.

The proof vanishing \eqref{fvvt2} for $r=2$ is complete.
Hence, the open Virasoro constraint $\mathcal{L}_2$
is established in genus 0, and the proof of Theorem \ref{thm:vir} is
complete. \qed

\section{Proof of Theorem \ref{tt22}} \label{KDVSec}

\subsection{KdV}
Our goal is to prove the open KdV relation in genus 0:
\begin{equation} \label{lala3}
(2n+1)\blangle \blangle \tau_n  \brangle \brangle_0^o
=  \blangle \blangle \tau_{n-1} \tau_0\brangle \brangle^c_0
\blangle \blangle \tau_0  \brangle \brangle^o_0
+ 2    \bblangle \tau_{n-1}  \bbrangle^o_0
\bblangle\sigma\bbrangle^o_0 \
\end{equation}
for $n\geq 1$.
After differentiating both sides by $s$, we obtain
\begin{multline} \label{jqq2}
(2n+1)\blangle \blangle \tau_n \sigma \brangle \brangle_0^o
= \\ \blangle \blangle \tau_{n-1} \tau_0\brangle \brangle^c_0
\blangle \blangle \tau_0 \sigma \brangle \brangle^o_0
+ 2    \bblangle \tau_{n-1} \sigma \bbrangle^o_0
\bblangle\sigma\bbrangle^o_0
+ 2    \bblangle \tau_{n-1}  \bbrangle^o_0
\bblangle\sigma^ 2 \bbrangle^o_0 \
\end{multline}
for $n\geq 1$.
Since all nonvanishing genus 0 open invariants have
at least a single $\sigma$ insertion, equation \eqref{jqq2}
implies the open KdV \eqref{lala3} in genus 0.

Since we already have proven the TRR relation
$$\blangle \blangle \tau_n \sigma \brangle\brangle^o_0  =
\blangle\blangle \tau_{n-1} \tau_0 \brangle\brangle^c_0
\blangle
\blangle \tau_0 \sigma \brangle\brangle^o_0 + \blangle\blangle
\tau_{n-1} \brangle\brangle^o_0
\blangle\blangle\sigma^2 \brangle\brangle^o_0\ ,$$
equation \eqref{jqq2}  follows from the
differential equation
\begin{equation} \label{jjxx12}
2n \blangle \blangle \tau_n \sigma \brangle \brangle_0^o
= \\
 2    \bblangle \tau_{n-1} \sigma \bbrangle^o_0
\bblangle\sigma\bbrangle^o_0
+   \bblangle \tau_{n-1}  \bbrangle^o_0
\bblangle\sigma^ 2 \bbrangle^o_0 \
\end{equation}
for $n\geq 1$.

We observe
equation \eqref{jjxx12}  holds trivially for $n=0$.
The
compatibility of \eqref{jjxx12} with the open string
equation is easily checked.
Hence, to prove equation \eqref{jjxx12}, we need only
consider additional insertions $\tau_{a_i}$ with $a_i\geq 1$.
Using \eqref{ckkq} for such insertions, we reduce
\eqref{jjxx12} to the relation
\begin{multline}\label{ckkqq}
(2n-1)\blangle \tau_n \tau_{a_1} \ldots \tau_{a_l} \sigma^k\brangle_0^o =
\\
2\sum_{S \cup T=\{1,\ldots, l\}}
\blangle \tau_{n-1} \prod_{i \in S} \tau_{a_i} \
\sigma^{k_S}
\brangle^o_0\  \binom{k-1}{k_S-1}
\blangle \prod_{i \in T} \tau_{a_i}\ \sigma^{k-k_S+1}\brangle^o_0
\ .
\end{multline}
The sum is over all disjoint unions $S\cup T$ of the
index set $\{1,\ldots, l\}$.
The number of boundary markings in \eqref{ckkqq},
\begin{eqnarray*}
k&= &2n+ 2A -2l +1\\
k_S &=&2n+2A_S - 2 l_S -1 \ ,
\end{eqnarray*}
is as in \eqref{ckkq}. As before, we use the notation \eqref{zzr}.

\subsection{Binomial identities}
Recall the evaluation of Theorem 3,
\begin{equation} \label{gzzzq}
\blangle \tau_n \tau_{a_1} \ldots \tau_{a_l} \sigma^k\brangle_0^o =
\frac{(2n+ 2A -l)!}{(2n-1)!!\prod_{i=1}^l (2a_i-1)!!}
\end{equation}
in case $n\geq 1$ and $a_i\geq 1$ for all $i$.
After substituting evaluation \eqref{gzzzq}, relation \eqref{ckkqq}
reduces to the following binomial identity (after
cancelling all the equal factors on both sides):
\begin{equation}\label{xxzz}
{2n+2A-l} =2 \sum_{S\cup T=\{1,\ldots,l\}}
\frac{\binom{2n+2A-2l}{2n+2A_S-2l_S-2}}
{\binom{2n+2A-l-1}{2n+2A_S-l_S-2}}\ .
\end{equation}
The sum is over all disjoint unions $S\cup T$ of the
index set $\{1,\ldots, l\}$.

\subsection{Closed TRR}
As before, instead of a combinatorial proof of \eqref{xxzz},
we present a geometric argument using the following closed  genus 0 topological recursion
relation
 in genus 0,
\begin{equation}\label{ggssz}
\blangle \blangle \tau_{2n-2} \tau_0 \tau_2 \brangle \brangle^c_0
= \blangle \blangle \tau_{2n-2} \tau_0^2 \brangle \brangle^c_0
\blangle \blangle \tau_0 \tau_1 \brangle \brangle^c_0\ .
\end{equation}
Expanding  \eqref{ggssz} explicitly, we find
\begin{multline}\label{ggtsss}
\blangle \tau_{2n-2} \tau_0\tau_2 \prod_{i=1}^l \tau_{2a_i-1}
 \cdot \tau_0^{2n+2A-2l} \brangle^c_0
= \\ \sum_{S\cup T=\{1,\ldots,l\}}
\blangle \tau_{2n-2} \tau_0^2  \prod_{i\in S}\tau_{2a_i-1}
\cdot \tau_0^ {2n+2A_S-2l_S-2}
\brangle^c_0  \\
\hspace{75pt} \cdot \binom{2n+2A-2l}{2n+2A_S-2l_S-2}
\\ \cdot \blangle \tau_0\tau_1\prod_{i\in T} \tau_{2a_i-1}
\cdot  \tau_0^ {2A_T-2l_T+2}
\brangle^c_0\ .
\end{multline}
We substitute the closed genus 0 formula
$$\blangle \tau_{b_1} \ldots \tau_{b_m} \brangle_0^c =
\binom{m-3}{b_1,\ldots,b_m}  $$
in \eqref{ggtsss}. After cancelling equal
factors on both sides, we arrive exactly at the
desired binomial identity \eqref{xxzz}.
\qed

\section{Proof of Theorem \ref{ttxx}} \label{EVSec}
\subsection{TRR}
Our goal is to prove the evaluation
\begin{equation} \label{gzzq}
\blangle \tau_{a_1} \ldots \tau_{a_l} \sigma^k\brangle_0^o =
\frac{(\sum_{i=1}^l 2a_i -l +1)!}{\prod_{i=1}^l (2a_i-1)!!}
\end{equation}
in case $a_i\geq 1$ for all $i$.
The  dimension constraint for the bracket  \eqref{gzzq} yields
$$-3+k+2l = \sum_{i=1}^l 2a_i \ .$$
 Hence, $k$ must be odd (and at least 1).
The dilaton equation,
$$\blangle \tau_1 \tau_{a_1} \ldots \tau_{a_l} \sigma^k\brangle_0^o
= (-1+k+l)\ \blangle \tau_{a_1} \ldots \tau_{a_l} \sigma^k\brangle_0^o,$$
is easily seen to be compatible with the evaluation \eqref{gzzq}.

Writing the TRR relation
$$\blangle \blangle \tau_n \sigma \brangle\brangle^o_0  =
\blangle\blangle \tau_{n-1} \tau_0 \brangle\brangle^c_0
\blangle
\blangle \tau_0 \sigma \brangle\brangle^o_0 + \blangle\blangle
\tau_{n-1} \brangle\brangle^o_0
\blangle\blangle\sigma^2 \brangle\brangle^o_0\ $$
of Theorem 4
explicitly, we find
\begin{multline}\label{ckkq}
\blangle \tau_n \tau_{a_1} \ldots \tau_{a_l} \sigma^k\brangle_0^o =
\\
\sum_{S \cup T=\{1,\ldots, l\}}
\blangle \tau_{n-1} \prod_{i \in S} \tau_{a_i}\
\sigma^{k_S}
\brangle^o_0\  \binom{k-1}{k_S}
\blangle \prod_{i \in T} \tau_{a_i} \ \sigma^{k-k_S+1}\brangle^o_0
\ .
\end{multline}
The sum is over all disjoint unions $S\cup T$ of the
index set $\{1,\ldots, l\}$.
The number of boundary markings in \eqref{ckkq},
\begin{eqnarray*}
k&= &2n+ 2\sum_{i=1}^l a_i -2l +1\\
k_S &=&2n+2\sum_{i\in S} a_i - 2 |S| -1 \ ,
\end{eqnarray*}
is set by the dimension constraint.
The condition $a_i\geq 1$ forces the term
$$\blangle\blangle \tau_{n-1} \tau_0 \brangle\brangle^c_0
\blangle
\blangle \tau_0 \sigma \brangle\brangle^o_0$$
of the TRR to vanish.
The right side of \eqref{ckkq} is obtained from the
second term of the TRR.

\subsection{Induction}
We prove the evaluation \eqref{gzzq} by descending
induction on the $a_i$.
The base of the induction is when $a_i=1$ for all $i$.
By the compatibility of the evaluation
\eqref{gzzq} and the dilation equation, the base case is
easily established.

By further use of the compatibility with the dilaton equation,
we need only consider invariants
$$\blangle \tau_n \tau_{a_1} \ldots \tau_{a_l} \sigma^k\brangle_0^o$$
where $n \geq 2$ and $a_i\geq 1$. We will prove the induction step by applying
the TRR relation \eqref{ckkq}.
We observe the right side of \eqref{ckkq} contains
{\em no}
disk invariants with $\tau_0$ insertions.
To complete the induction step, we need only prove
the evaluation \eqref{gzzq} satisfies the TRR relation \eqref{ckkq}.
We are left with a combinatorial formula to verify.

\subsection{Binomial identities}

The combinatorial formula which arises in the induction step
can be written as the following binomial identity (after
cancelling all the equal factors on both sides):
\begin{equation}\label{xxz}
\frac{2n+2A-l}{2n-1} =\sum_{S\cup T=\{1,\ldots,l\}}
\frac{\binom{2n+2A-2l}{2n+2A_S-2l_S-1}}
{\binom{2n+2A-l-1}{2n+2A_S-l_S-2}}\ .
\end{equation}
The sum is over all disjoint unions $S\cup T$ of the
index set $\{1,\ldots, l\}$, and
\begin{equation}\label{zzr}
 A= \sum_{i=1}^l a_i, \ \ A_S= \sum_{i\in S} a_i, \ \
A_T= \sum_{i\in T} a_i, \ \
l_S=|S|, \ \ l_T=|T|\ .
\end{equation}

Instead of a direct combinatorial proof of \eqref{xxz},
we present a geometric argument using the closed topological
recursion relations in genus 0,
\begin{equation}\label{ggss}
\blangle \blangle \tau_{2n-1} \tau_0\tau_1 \brangle \brangle^c_0
= \blangle \blangle \tau_{2n-2} \tau_0 \brangle \brangle^c_0
\blangle \blangle \tau_0^2\tau_1 \brangle \brangle^c_0\ .
\end{equation}
First, we write \eqref{ggss} explicitly in the following specially
chosen case:
\begin{multline}\label{ggtss}
 \blangle \tau_{2n-1} \tau_0\tau_1 \prod_{i=1}^l \tau_{2a_i-1}
 \cdot \tau_0^{2n+2A-2l} \brangle^c_0
= \\ \sum_{S\cup T=\{1,\ldots,l\}}
\blangle \tau_{2n-2} \tau_0  \prod_{i\in S}\tau_{2a_i-1}
\cdot \tau_0^ {2n+2A_S-2l_S-1}
\brangle^c_0  \\
\hspace{75pt} \cdot \binom{2n+2A-2l}{2n+2A_S-2l_S-1}
\\ \cdot \blangle \tau^2_0\tau_1\prod_{i\in T} \tau_{2a_i-1}
\cdot  \tau_0^ {2A_T-2l_T+1}
\brangle^c_0\ .
\end{multline}
Second, we substitute the closed genus 0 formula
$$\blangle \tau_{b_1} \ldots \tau_{b_m} \brangle_0^c =
\binom{m-3}{b_1,\ldots,b_m}  $$
in \eqref{ggtss}. After cancelling equal
factors on both sides, we arrive precisely at the
binomial identity \eqref{xxz}. \qed

\appendix
\section{Multisections and the relative Euler class}\label{app:euler}
We summarise relevant definitions concerning multisections and their zero sets. For the most part, we follow~\cite{CRS}.  As usual, all manifolds may have corners.
\begin{definition}
Let $M$ be a $n-$dimensional manifold. A \emph{weighted branched submanifold} $N$ of dimension $k$ is a function
\[
\mu: M\to\Q\cap\left[0,\infty\right),\qquad \supp(\mu)= N,
\]
which satisfies the following condition. For each $x\in M$ there exists an open neighborhood $U$ of $x,$ a finite collection of $k-$dimensional submanifolds, $N_1,\ldots,N_m,$ of $M$ which are relatively closed in $U$ and positive rational numbers $\mu_1,\ldots,\mu_m,$ such that
\[
\forall y\in U, \qquad \left.\mu\right|_U = \sum_{i=1}^m\mu_i\chi_{N_i}.
\]
Here, $\chi_{N_i}$ is the characteristic function of $N_i.$

We call the submanifolds $N_i$ \emph{branches} of $N$ in $U$ and the numbers $\mu_i$ their \emph{weights}.

A weighted branched submanifold is \emph{compact} if the support of $\mu$ is compact.
\end{definition}
Throughout the article we refer to branched weighted manifolds by their support, $N.$ In this appendix, however, it is more convenient to work with the representing function $\mu,$ and this is indeed what we do. We say that $N$ is represented by $\mu$ and we use both notations for the same notion.
\begin{rmk}\label{rmk:stndrd_mfld}
A usual submanifold $N\hookrightarrow M$ is a special case of a weighted branched submanifold.
Indeed, take
\[
\mu = \chi_N, \qquad m=1,\qquad  N_1 = N,\qquad  \mu_1 = 1.
\]
\end{rmk}

\begin{nn}
For a vector space $V,$ denote by $Gr_k\left(V\right)$ the Grassmannian of $k$-dimensional vector subspaces of $V,$ and by $Gr^+_k\left(V\right)$ the Grassmannian of oriented $k$-dimensional vector subspaces of $V.$ The oriented Grassmannian of zero dimensional subspaces $Gr^+_0$ consists of two points labeled $+$ and $-.$ Given a vector bundle $E\to M,$ we denote the associated (oriented) Grassmannian bundle by
\[
Gr_k^{\left(+\right)}\left(E\right) = \left\{\left(x,W\right)\left|x\in M,W\in Gr_k^{\left(+\right)}\left(E_x\right)\right.\right\}.
\]
\end{nn}
\begin{definition}\label{df:or}
Let $M$ be a manifold of dimension $n$, and $\mu$ a weighted branched submanifold of dimension $k.$
\begin{enumerate}
\item
The \emph{tangent bundle} of $\mu$ is the unique $k$-dimensional weighted branched submanifold $T{\mu}$ of $Gr_{k}\left(TM\right)$, such that
\[
T{\mu}\left(x,W\right) = \sum_{T_x N_i = W}\mu_i,
\]
where $\mu_i,N_i$ are weights and branches at $x$ respectively.
\item
An \emph{orientation} of $\mu$ is a function
\[
\mu^+:Gr^+_k\left(TM\right)\to\Q,
\]
which satisfies the following condition.
For all
\[
\left(x,W\right)\in Gr_k^+\left(TM\right),
\]
there exists an open neighborhood $U$ of $x$ in which there are branches $N_i$ of $\mu$ each with a given orientation, and weights $\mu_i$ of $\mu,$
such that
\[
\mu^+\left(x,W\right) = \sum_{T_x N_i = W}\mu_i - \sum_{T_x N_i = -W}\mu_i.
\]
Here, vector spaces are oriented and $-W$ stands for the vector space $W$ with orientation reversed.
\item
If $\mu$ is compact, oriented, of dimension $0$ and $(\supp \mu) \cap \partial M = \emptyset,$ the \emph{weighted cardinality} of $\mu$ is given by
\[
\# \mu = \sum_{x \in M} \mu^+(x,+).
\]
\end{enumerate}
\end{definition}
The existence of the tangent bundle was established in~\cite{CRS} .
\begin{rmk}
Again, the definitions generalize the standard ones for submanifolds. Indeed, let $\mu$ be as in Remark~\ref{rmk:stndrd_mfld}. We take $T\mu\left(x,W\right)$ to be $1$ if and only if $\mu\left(x\right)=1$ and $W = T_x N.$ Otherwise, it is $0.$ Similarly, if $N$ is oriented, we define
\[
\mu^+(x,W) =
\begin{cases}
1, & \mu(x) = 1 \text{ and } W = T_xN,\\
-1, & \mu(x) = 1 \text{ and } W = - T_xN,\\
0, & \text{otherwise}.
\end{cases}
\]
\end{rmk}
We can now define weighted versions of unions and intersections.
\begin{definition}
Let $\mu,\lambda,$ be two branched weighted submanifolds of $M$ of dimensions $k,l,$ respectively. We say that $\mu$ is \emph{transverse} to $\lambda$ and write $\mu\pitchfork\lambda$ if for all $x\in M,$ $W\in Gr_k\left(TM\right), V\in Gr_l\left(TM\right),$ with
\[
T\mu\left(x,W\right),T\lambda\left(x,V\right)>0,
\] W and V intersect transversally.

If $\mu \pitchfork \lambda$, we define the \emph{intersection} $\mu\cap\lambda$ of $\mu$ and $\lambda$ by
\[
\mu\cap\lambda\left(x\right) = \mu\left(x\right)\lambda\left(x\right).
\]
Given orientations $\mu^+, \lambda^+$ of $\mu, \lambda,$ respectively, and given an orientation on $M,$ we define the \emph{induced orientation} of $\mu \cap \lambda$,
\[
\mu^+\cap\lambda^+: Gr_{k+l-\dim M}^+\to\Q,
\]
by
\[
\mu^+\cap\lambda^+ \left(x,U\right) =\sum_{U = V\cap W}\mu^+\left(W\right)\lambda^+\left(V\right).
\]
Here, we need the orientation on $M$ in order to induce the orientation on $V\cap W.$ If $k+l = \dim M,$ we define the \emph{intersection number}
by
\[
\left(\mu,\mu^+\right)\cdot\left(\lambda,\lambda^+\right)
=
\# (\mu\cap\lambda, \mu^+\cap\lambda^+).
\]

If $k = l,$ we define the \emph{union} of $\mu$ and $\lambda$ by
\[
\mu\cup\lambda(x) = \mu(x) + \lambda(x).
\]

\end{definition}
The transverse intersection of branched weighted manifolds of dimensions $k$ and $l$ has dimension
$k+l-\dim M.$
\begin{rmk}
It is easy to see that both intersection and union are commutative and associative. In addition, we have the distributive property. That is, any three branched weighted submanifolds $\lambda,\mu,\nu,$ satisfy
\[
\left(\mu\cup\lambda\right)\cap\nu = \left(\mu\cap\nu\right)\cup\left(\lambda\cap\nu\right).
\]
\end{rmk}
We now move to multisections and operations between them.
\begin{definition}\label{def:multisection_stuff}
Let $p:E\to M$ be a rank $k$ vector bundle over an $n$-dimensional manifold. A \emph{multisection} $s$ of $E,$
is a weighted branched submanifold
\[
\sigma:E\to\Q\cap\left[0,\infty\right),
\]
of the following special form. For all $x\in M$ there exists a neighborhood $U$, smooth sections $s_1,\ldots,s_m:U\to E$ called \emph{branches}, and rational numbers $\sigma_1\ldots,\sigma_m,$ called \emph{weights}, with sum $1,$ such that
\[
\sigma\left(x,v\right) =\sum_{s_i\left(x\right)=v}\sigma_i , \qquad \forall \left(x,v\right)\in \left.E\right|_U.
\]
That is, the total weight of the fiber is $1.$
We say that $s$ is represented by $\sigma$ and we use both notations for the same notion.

Given a submanifold $N\subseteq M$ and a multisection $s$ of $E\to M,$ we define the \emph{restriction} of $s$ to $N$ by
\[
\left. s\right|_{N } = \left.\sigma\right|_{ p^{-1}\left(N\right)}.
\]

Let $f : M \to N$ be a map of smooth manifolds with corners and let $E \to N$ be a vector bundle. Denote by $\tilde f : f^*E \to E$ the canonical map covering $f.$ Let $\sigma$ be a multisection of $E.$ Then the \emph{pull-back} $f^*\sigma$ is the multisection of $f^*E$ given by
\[
(f^*\sigma)(x,v) = \sigma(\tilde f(x,v)).
\]
A multisection is said to be \emph{transverse} if it and the zero section are transverse as weighted branched manifolds.
\end{definition}

\begin{definition}\label{df:sp}
Let $\chi_0$ denote the indicator function of the zero section. Given a scalar $a$ in the base field and a multisection $\sigma,$ we define the product multisection $a\sigma$ by
\[
(a\sigma)(x,v) =
\begin{cases}
\sigma(x,a^{-1}v), & a \neq 0,\\
\chi_0, & a = 0.
\end{cases}
\]

Given several multisections $\sigma_1,\ldots,\sigma_m,$ we define their \emph{sum}
\[
\sigma = \sigma_1+\ldots +\sigma_m
\]
by
\[
\sigma\left(x,v\right) = \sum_{v_1+\ldots+v_m = v}\prod_{i = 1}^m \sigma_i\left(x,v_i\right).
\]
The sum of multisections is commutative and associative.
\end{definition}

Let $pr: [0,1] \times M \to M$ denote the projection. A \emph{homotopy} between two multisections $\sigma_1,\sigma_2,$ of $E \to M$ is a multisection $\sigma$ of
\[
pr^*E\to M\times\left[0,1\right],
\]
such that
\[
\left.\sigma\right|_{E\times\left\{0\right\}} = \sigma_1,\qquad \left.\sigma\right|_{E\times\left\{1\right\}} = \sigma_2.
\]

We say that a multisection \emph{vanishes} at a point if one of its branches vanishes there.

Given multisections $\sigma_i$ of $E_i\to M$ for $i=1,2,$ we define the multisection $\sigma_1\oplus \sigma_2$ of $E_1 \oplus E_2$ by
\[
\sigma\left(\left(x,v_1\oplus v_2\right)\right) = \sigma_1\left(x,v_1\right)\sigma_2\left(x,v_2\right).
\]

Given a multisection $\sigma$ of $E\to M,$ and a section $t$ of a line bundle $L\to M$, we define the multisection
$\sigma t$ of $E\otimes L,$ by
\[
(\sigma t)\left(x,v\otimes w\right) = \sigma\left(x,v\right)\delta_{t\left(x\right)-w},
\]
where $\delta_{t\left(x\right)-w} = 1$ if $t\left(x\right)=w,$ and otherwise it is $0.$

Let $G$ be a discrete group, and let $E\to M$ be a $G$-equivariant vector bundle. Given a multisection $\sigma$ of $E,$ we define the multisection $g\cdot \sigma$ by
\[
\left(g\cdot\sigma\right)\left(x,v\right) = \sigma\left(g^{-1}\cdot\left(x,v\right)\right).
\]
We say that $\sigma$ is $G$-\emph{equivariant} if
\[
\sigma = g\cdot\sigma, \qquad \forall g\in G.
\]
\begin{definition}\label{df:sym}
In case $G$ is finite we define the $G-$\emph{symmetrization} of $\sigma$ by
\[
\sigma^G\left(x,v\right) = \frac{1}{\left|G\right|}\sum_{g \in G} g\cdot \sigma\left(x,v\right).
\]
The symmetrization is $G$ invariant.
\end{definition}

Given a multisection $s$ of $E\to\partial M,$ an extension of $s$ to all $M$ is a multisection $s'$ whose restriction to $\partial M$ is $s.$ It is shown in Chapter~14 of~\cite{HWZ21} that multisections that satisfy a condition called structurability admit extensions. A multisection is structurable if one can equip it with a collection of auxiliary data called a structure. See Definition 13.3.36 of~\cite{HWZ21}. A multisection that arises from an ordinary section admits a natural structure, and the operations on multisections described above act naturally on structures as well. It will be shown in~\cite{HS} that the notion of structure can be modified so that if $\mathscr{S}$ is a structure on the multisection $s,$ there exists an extension $s'$ with structure $\mathscr{S}'$ such that the restriction of $\mathscr{S'}$ to $\partial M$ coincides with $\mathscr{S}.$ If multisections with structures are given on the individual components of $\partial M$ that agree with each other when restricted to the corners, then they piece together to form a multisection with structure on the whole $\partial M.$

\begin{nn}
We denote by $C_m^\infty\left(E\right),$ the space of multisections of $E.$ If a group $G$ acts on $E,$ we use the notation $C_m^\infty\left(E\right)^G$ for the $G-$invariant multisections.
\end{nn}
In case $M$ is oriented of dimension $n$, the image of a section $s$ of a vector bundle $E \to M$ inherits a canonical orientation through the diffeomorphism
\[
s:M\to s\left(M\right).
\]
In a similar manner, every multisection $s\in C_m^\infty\left(E\right),$ carries a natural orientation described as follows.
Assume $s$ is represented by $\sigma,$ take $x\in M, W\in Gr^+_n\left(T_x M\right),$ and let $U,\sigma_i,s_i$ be as in the definition of a multisection. We define
\[
\sigma^+\left(x,W\right) = {\sum}^+\sigma_i-{\sum}^-\sigma_i,
\]
where $\sum^{\pm}$ is taken over indices $i$ such that
\[
W = \pm \left(ds_i\left(T_xU\right)\right).
\]
This definition agrees with the usual orientation for sections.
With these definitions in hand we define the zero set of a multisection as follows.
\begin{definition}
Let $s\in C_m^\infty(E)$ be a transverse multisection. We define its \emph{unoriented zero set} $\tilde{Z}\left(s\right),$ as the intersection of the multisections $s$ and $0$ as branched weighted submanifolds.

In case $M$ and $E\to M$ are oriented we define the zero set $Z\left(s\right),$ to be $\tilde{Z}\left(s\right)$ with the orientation induced from the canonical orientations of $s$ and $0.$
\end{definition}
\begin{rmk}
Let $E \to M$ be a vector bundle with $\rk E = \dim M$ and let $s \in C^\infty_m(E)$ be transverse. Suppose that at a point $x$ several branches $s_{i_j}$ vanish. Then the weight of $x$ in the zero set of $s$ is the signed sum of $\sigma_{i_j}.$ The sign is the sign of the intersection of $s_{i_j}$ and the zero section at $x.$
\end{rmk}

We will use the following theorem. In \cite{CRS}, a proof of this theorem is given in the case that $M$ has no boundary. The proof for a manifold with corners is similar and will be omitted.
\begin{thm}
Let $E\to M$ be a rank $n$ bundle over a manifold of dimension $n.$ Let $s\in C_m^\infty\left(E|_{\partial M}\right)$ vanish nowhere and let $\tilde s \in C_m^\infty(E)$ be a transverse extension. Then $\# Z(\tilde s)$ depends only on $s$ and not on the choice of $\tilde s.$
\end{thm}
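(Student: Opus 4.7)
The plan is to compare two transverse extensions $\tilde s_1,\tilde s_2$ of $s$ by producing a transverse homotopy between them whose zero set is a compact oriented weighted branched $1$-manifold, and then invoking the fact that the weighted boundary count of such a $1$-manifold vanishes. This parallels Lemma~\ref{lm:H} above, but without the auxiliary intermediate boundary multisection.

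First I would define the preliminary homotopy $H_0 \in C_m^\infty([0,1]\times M, pr^*E)$ by the weighted sum
\[
H_0(t,x) = (1-t)\tilde s_1(x) + t\,\tilde s_2(x),
\]
using Definition~\ref{df:sp}. By construction $H_0|_{\{i\}\times M} = \tilde s_{i+1}$ for $i=0,1$, and since $\tilde s_1|_{\partial M} = \tilde s_2|_{\partial M} = s$ we have $H_0|_{[0,1]\times \partial M} = pr^*s$, which vanishes nowhere. Next, to arrange transversality, I would choose finitely many smooth sections $w_1,\ldots,w_N$ of $E$ whose values span each fiber, pick a smooth cutoff $\rho:[0,1]\times M \to [0,1]$ which vanishes identically on a neighborhood of $\{0,1\}\times M \cup [0,1]\times \partial M$ and is positive on the complement, and set
\[
H_\lambda(t,x) = H_0(t,x) + \rho(t,x)\sum_{i=1}^N \lambda_i \, pr^*w_i(x),\qquad \lambda \in \R^N.
\]
Applying Theorem~\ref{thm: hirsch} on the open interior $(0,1)\times \mathring M$ (where the perturbations span the fiber of $pr^*E$), the set of $\lambda$ for which $H_\lambda$ is transverse to zero there is residual; choosing $\lambda$ in addition small enough that $H_\lambda$ remains nonvanishing on a chosen neighborhood of $[0,1]\times \partial M$ (possible by compactness and continuity of $H_0$), we obtain a globally transverse multisection $H = H_\lambda$ with $H|_{\{i\}\times M} = \tilde s_{i+1}$, and with $H$ nonvanishing on a neighborhood of $[0,1]\times \partial M$.

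Now $Z(H)$ is a compact oriented weighted branched $1$-manifold in $[0,1]\times M$, entirely contained in $[0,1]\times M \setminus (\text{nbhd of }[0,1]\times \partial M)$. Its weighted boundary therefore lies in $\{0,1\}\times M$, where transversality gives
\[
\partial Z(H) = \{1\}\times Z(\tilde s_2) \; - \; \{0\}\times Z(\tilde s_1),
\]
with signs dictated by the usual convention $\partial([0,1]\times M) = \{1\}\times M - \{0\}\times M - [0,1]\times \partial M$. Since a compact oriented weighted branched $1$-manifold has vanishing weighted boundary count (at each branch point of $Z(H)$ the local branches pair up with canceling signs, exactly as in the smooth submanifold case generalized as in the appendix conventions), we conclude
\[
\# Z(\tilde s_2) - \# Z(\tilde s_1) \;=\; \#\partial Z(H) \;=\; 0,
\]
which is the desired statement. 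The main obstacle is the transversality step: one must verify that the bumped perturbation genuinely makes $H$ transverse while leaving the fixed data at $t=0,1$ and on $[0,1]\times\partial M$ untouched; this reduces to a direct invocation of Theorem~\ref{thm: hirsch} on the open locus where $\rho>0$, together with the elementary observation that $\tilde s_1,\tilde s_2$ are already transverse on $\{0,1\}\times M$ so no perturbation is needed there. A minor bookkeeping point is the orientation convention at the boundary, which follows from Definition~\ref{df:or} applied to the product orientation on $[0,1]\times M$.
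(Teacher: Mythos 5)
Your overall strategy (a transverse homotopy over the cylinder plus the vanishing of the weighted boundary count of a compact weighted branched $1$-manifold) is the right one, and it is the same mechanism the paper uses in Lemma~\ref{lm:H}; note the paper itself never proves this theorem but defers to~\cite{CRS}. However, your construction has a genuine gap at the very first step: the claim that $H_0|_{[0,1]\times \partial M}=pr^*s$ and hence vanishes nowhere. By Definition~\ref{df:sp}, the sum of multisections is a convolution over branches, not a fiberwise affine combination of single values. On $[0,1]\times\partial M$ the branches of $(1-t)\tilde s_1+t\tilde s_2$ are all combinations $(1-t)s_i+ts_j$ of pairs of branches of $s$; if at some boundary point $s$ has branches $v$ and $-v$ (both nonzero, so $s$ is still nowhere vanishing --- and genuinely multivalued boundary data is exactly the situation of Remark~\ref{rm:msec}), then the branch $(1-t)v+t(-v)$ vanishes at $t=1/2$. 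So $H_0$ may vanish on $[0,1]\times\partial M$, and your later step ``choose $\lambda$ small so that $H_\lambda$ remains nonvanishing near $[0,1]\times\partial M$'' presupposes precisely the nonvanishing that fails. Worse, since $\dim\left([0,1]\times\partial M\right)=n=\rk E$, such zeros are stable under small perturbation, so no genericity argument removes them; then $Z(H)$ can have boundary points over $[0,1]\times\partial M$ and the identity $\#\partial Z(H)=\#Z(\tilde s_2)-\#Z(\tilde s_1)$ breaks. This is exactly the subtlety the paper confronts in Lemma~\ref{lem:trickey_homotopy}, where nonvanishing of the interpolating homotopy on the boundary is obtained not from convexity but from a dimension-drop argument special to canonical multisections.

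The repair is to abandon the straight-line formula over the boundary altogether: prescribe a multisection $r$ of $pr^*E$ on $\partial\left([0,1]\times M\right)$ by $\tilde s_1$ on $\{0\}\times M$, $\tilde s_2$ on $\{1\}\times M$, and $pr^*s$ on $[0,1]\times\partial M$ (these agree along the corners since $\tilde s_i|_{\partial M}=s$), extend $r$ to a multisection of $pr^*E$ over $[0,1]\times M$ that is transverse to zero, and only then count boundary points of the resulting weighted branched $1$-manifold; since $pr^*s$ has no zeros, the boundary count is $\#Z(\tilde s_2)-\#Z(\tilde s_1)=0$. This is exactly the shape of the proof of Lemma~\ref{lm:H}, with the homotopy $H$ there replaced by the zero-free $pr^*s$. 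A secondary wrinkle in your write-up: because your cutoff $\rho$ vanishes on a whole neighborhood of $\{0,1\}\times M$, on that neighborhood (minus the end faces) neither your perturbation nor any hypothesis on $H_0$ gives transversality, so Theorem~\ref{thm: hirsch} does not cover it; the standard remedy is a factor such as $t(1-t)$, positive on all of $(0,1)$ as in Lemma~\ref{lem:trickey_homotopy}, with transversality at $t=0,1$ coming from the transversality of $\tilde s_1,\tilde s_2$ themselves --- but in the extension-based argument above this issue disappears.
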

In other words, the homology class $[Z(\tilde s)] \in H_0(M)$ depends only on $E$ and $s.$ It is Poincar\'e dual to a relative cohomology class in $H^n(M,\partial M),$ which we call the \emph{relative Euler class of $E$} with respect to $s$.

\bibliographystyle{../../amsabbrvcnobysame}
\bibliography{../../bibli}

\end{document}